\newcommand{\T}{{\cal T}}
\newcommand{\Real}{\mathbb R}
\newcommand{\set}[1]{\left\{#1\right\}}
\newcommand{\To}{\longrightarrow}
\newcommand{\p}{\pi^{-1}(TM)}
\newcommand {\cp}{\mathfrak{X}(\pi (M))}
\newcommand {\ccp}{\mathfrak{X}^{*}(\pi (M))}
\newcommand {\cpp}{\mathfrak{X}(\T M)}
\def\Section#1{\vspace{30truept}\addtocounter{section}{1}\setcounter{thm}{0}\setcounter{equation}{0}
{\noindent\Large\bf\arabic{section}.~~#1}\par \vspace{12pt}}
\newtheorem{thm}{Theorem}[section]
\newtheorem{cor}[thm]{Corollary}
\newtheorem{lem}[thm]{Lemma}
\newtheorem{prop}[thm]{Proposition}
\newtheorem{defn}[thm]{Definition}
\newtheorem{rem}[thm]{Remark}
\numberwithin{equation}{section}
\begin{document}
\title{{\bf  CONCURRENT $\pi$-VECTOR FIELDS AND ENERGY $\beta$-CHANGE} }
\author{{\bf Nabil L. Youssef$^{\dag}$, S. H. Abed$^{\dag}$ and A. Soleiman$^{\ddag}$}}
\date{}

\maketitle                     
\vspace{-1.15cm}
\begin{center}
{$^{\dag}$Department of Mathematics, Faculty of Science,\\ Cairo
University, Giza, Egypt}
\end{center}
\vspace{-0.8cm}
\begin{center}
nlyoussef2003@yahoo.fr,\ sabed52@yahoo.fr
\end{center}
\vspace{-0.7cm}
\begin{center}
and
\end{center}
\vspace{-0.7cm}
\begin{center}
{$^{\ddag}$Department of Mathematics, Faculty of Science,\\ Benha
University, Benha,
 Egypt}
\end{center}
\vspace{-0.8cm}
\begin{center}
soleiman@mailer.eun.eg
\end{center}
\smallskip

\vspace{1cm} \maketitle
\smallskip

\noindent{\bf Abstract.}  The present paper deals with an
\emph{intrinsic} investigation  of the notion of a concurrent
$\pi$-vector field on the pullback bundle of a Finsler manifold
$(M,L)$. The effect of the existence of a concurrent $\pi$-vector
field on some important special Finsler spaces is studied. An
intrinsic investigation of a particular $\beta$-change, namely the
energy $\beta$-change ($\widetilde{L}^{2}(x,y)=L^{2}(x,y)+
B^{2}(x,y)\, with  \ B:=g(\overline{\zeta},\overline{\eta})$;
\,$\overline{\zeta} $ being a concurrent $\pi$-vector field), is
established. The relation between the two  Barthel connections
$\Gamma$ and $\widetilde{\Gamma}$, corresponding to this change, is
found. This relation, together with the fact that  the Cartan and
the Barthel connections have the same horizontal and vertical
projectors, enable us  to study the energy $\beta$-change of the
fundamental linear connection in Finsler geometry: the Cartan
connection, the Berwald connection, the Chern connection and the
Hashiguchi connection. Moreover, the change
 of their curvature tensors is concluded.
\par
It should be pointed out that the present work is formulated in a
prospective modern coordinate-free form. \footnote{ArXiv Number:
0805.2599}

\bigskip
\medskip\noindent{\bf Keywords:\/}\, Special Finsler space, Pullback bundle, Energy $\beta$-change, Concurrent $\pi$-vector field,
 Canonical spray, Barthel connection,  Cartan
connection, Berwald connection, Chern  connection, Hashiguchi
connection.

\bigskip
\medskip\noindent{\bf 2000 AMS Subject Classification.\/} 53C60, 53B40.
\newpage

\vspace{30truept}\centerline{\Large\bf{Introduction}}\vspace{12pt}
\par
An important aim of Finsler geometry is the construction of a
natural geometric framework of variational calculus and the creation
of geometric models that are appropriate for dealing with different
physical theories, such as general relativity, relativistic optics,
particle physics and others. As opposed to Riemannian geometry, the
extra degrees of freedom offered by Finsler geometry, due to the
dependence of its geometric objects on the directional arguments,
make this geometry potentially more suitable for dealing with such
physical theories at a deeper level.
\par
 Studying Finsler geometry, however, one encounters
substantial difficulties trying to seek analogues of classical
global, or sometimes even local, results of Riemannian geometry.
These difficulties arise mainly from the fact that in Finsler
geometry all geometric objects depend not only on positional
coordinates, as in Riemannian geometry, but also on directional
arguments.
\par
 In Riemannian geometry, there is a canonical linear connection on the
manifold $M$, whereas in Finsler geometry there is a corresponding
canonical linear connection due to E. Cartan. However, this is not a
connection on $M$ but is a connection on $T(\T M)$, the tangent
bundle of $\,\T M$, or on $\pi^{-1}(TM) $, the pullback of the
tangent bundle $TM$ by $\pi: \T M\longrightarrow M$.
\par
The concept of a concurrent vector field in Riemannian geometry had
been introduced and studied by K. Yano  \cite{con.1}. On the other
hand, the notion of a concurrent  vector field in  Finsler geometry
had been studied \emph{locally} by S. Tachibana \cite{con.2}, M.
Matsumoto and K. Eguchi \cite{r90} and others.
\par
In this paper, we study \emph{intrinsically} the notion of a
concurrent $\pi$-vector field on the pullback bundle $\,\pi^{-1}(TM)
$ of a Finsler manifold $(M,L)$. Some properties of concurrent
$\pi$-vector fields are discussed. These properties, in turn, play a
key role in obtaining other interesting results. The effect of the
existence of a concurrent $\pi$-vector field on  some important
special Finsler spaces is investigated.
\par
The infinitesimal transformations (changes) in Finsler geometry are
 important, not only in differential geometry,  but also in
application to other branches of science, especially in the process
of geometrization of physical theories \cite{ny1}. For this reason,
we investigate intrinsically a particular $\beta$-change, which will
be referred to as an energy $\beta$-change:
$$\widetilde{L}^{2}(x,y)=L^{2}(x,y)+ B^{2}(x,y),$$
 where $(M,L)$  is a Finsler manifold admitting  a concurrent
 $\pi$-vector field $\overline{\zeta}$ and $B:=g(\overline{\zeta},\overline{\eta})$; $\overline{\eta}$
 being the fundamental $\pi$-vector field.
Moreover, the relation
 between the two  Barthel
connections $\Gamma$ and $\widetilde{\Gamma}$, corresponding to this
change, is obtained. This relation, together with the fact that  the
Cartan and the Barthel connections have the same horizontal and
vertical projectors, enable us  to study the energy $\beta$-change
of the fundamental linear connections on the pullback bundle of a
Finsler manifold, namely, the Cartan connection, the Berwald
connection, the Chern connection and the Hashiguchi connection.
Moreover, the change
 of their curvature tensors is concluded.
\par
Finally, it should be pointed out that a global formulation of
different aspects of Finsler geometry may give more insight into the
infrastructure of physical theories and helps better understand the
essence of such theories without being trapped into the
complications of indices. This is one of the motivations of the
present work, where all results obtained are formulated in a
prospective modern coordinate-free form.

\Section{Notation and Preliminaries}

In this section, we give a brief account of the basic concepts
 of the pullback approach to intrinsic Finsler geometry necessary for this work. For more
 detail, we refer to \cite{r58},\,\cite{r61} and~\,\cite{r44}.
 We assume, unless otherwise stated, that all geometric objects treated are of class
$C^{\infty}$. The
following notation will be used throughout this paper:\\
 $M$: a real paracompact differentiable manifold of finite dimension $n$ and of
class $C^{\infty}$,\\
 $\mathfrak{F}(M)$: the $\Real$-algebra of differentiable functions
on $M$,\\
 $\mathfrak{X}(M)$: the $\mathfrak{F}(M)$-module of vector fields
on $M$,\\
$\pi_{M}:TM\longrightarrow M$: the tangent bundle of $M$,\\
$\pi^{*}_{M}:T^{*}M\longrightarrow M$: the cotangent bundle of $M$,\\
$\pi: \T M\longrightarrow M$: the subbundle of nonzero vectors
tangent to $M$,\\
$V(TM)$: the vertical subbundle of the bundle $TTM$,\\
 $P:\pi^{-1}(TM)\longrightarrow \T M$ : the pullback of the
tangent bundle $TM$ by $\pi$,\\
$P^{*}:\pi^{-1}(T^{*}M)\longrightarrow \T M$ : the pullback of the
cotangent bundle $T^{*}M$ by $\pi$,\\
 $\mathfrak{X}(\pi (M))$: the $\mathfrak{F}(\T M)$-module of
differentiable sections of  $\pi^{-1}(T M)$,\\
 $\mathfrak{X}^{*}(\pi (M))$: the $\mathfrak{F}(\T M)$-module of
differentiable sections of  $\pi^{-1}(T^{*} M)$,\\
$ i_{X}$ : the interior product with respect to  $X
\in\mathfrak{X}(M)$,\\
$df$ : the exterior derivative  of $f\in \mathfrak{F}(M)$,\\
$ d_{L}:=[i_{L},d]$, $i_{L}$ being the interior product with
respect to a vector form $L$.

\par Elements  of  $\mathfrak{X}(\pi (M))$ will be called
$\pi$-vector fields and will be denoted by barred letters
$\overline{X} $. Tensor fields on $\pi^{-1}(TM)$ will be called
$\pi$-tensor fields. The fundamental $\pi$-vector field is the
$\pi$-vector field $\overline{\eta}$ defined by
$\overline{\eta}(u)=(u,u)$ for all $u\in \T M$.

We have the following short exact sequence of vector bundles,
relating the tangent bundle $T(\T M)$ and the pullback bundle
$\pi^{-1}(TM)$:\vspace{-0.1cm}
$$0\longrightarrow
 \pi^{-1}(TM)\stackrel{\gamma}\longrightarrow T(\T M)\stackrel{\rho}\longrightarrow
\pi^{-1}(TM)\longrightarrow 0 ,\vspace{-0.1cm}$$
 where the bundle morphisms $\rho$ and $\gamma$ are defined respectively by
$\rho := (\pi_{\T M},d\pi)$ and $\gamma (u,v):=j_{u}(v)$, where
$j_{u}$  is the natural isomorphism $j_{u}:T_{\pi_{M}(v)}M
\longrightarrow T_{u}(T_{\pi_{M}(v)}M)$. The vector $1$-form $J$ on
$TM$ defined by $J:=\gamma\circ\rho$ is called the natural almost
tangent structure of $T M$. The vertical vector field $\mathcal{C}$
on $TM$ defined by $\mathcal{C}:=\gamma\circ\overline{\eta} $ is
called the canonical or Liouville vector field.

Let $D$ be  a linear connection (or simply a connection) on the
pullback bundle $\pi^{-1}(TM)$.
 We associate with
$D$ the map \vspace{-0.1cm}
$$K:T \T M\longrightarrow \pi^{-1}(TM):X\longmapsto D_X \overline{\eta}
,\vspace{-0.1cm}$$ called the connection (or the deflection) map of
$D$. A tangent vector $X\in T_u (\T M)$ is said to be horizontal if
$K(X)=0$ . The vector space $H_u (\T M)= \{ X \in T_u (\T M) :
K(X)=0 \}$ of the horizontal vectors
 at $u \in  \T M$ is called the horizontal space to $M$ at $u$  .
   The connection $D$ is said to be regular if
\begin{equation}\label{direct sum}
T_u (\T M)=V_u (\T M)\oplus H_u (\T M) \qquad \forall u\in \T M .
\end{equation}
\par If $M$ is endowed with a regular connection, then the vector bundle
   maps
\begin{eqnarray*}
 \gamma &:& \pi^{-1}(T M)  \To V(\T M), \\
   \rho |_{H(\T M)}&:&H(\T M) \To \pi^{-1}(TM), \\
   K |_{V(\T M)}&:&V(\T M) \To \pi^{-1}(T M)
\end{eqnarray*}
 are vector bundle isomorphisms.
   Let us denote
 $\beta:=(\rho |_{H(\T M)})^{-1}$,
then \vspace{-0.2cm}
   \begin{align}\label{fh1}
    \rho\circ\beta = id_{\pi^{-1} (TM)}, \quad  \quad
       \beta\circ\rho =\left\{
                                \begin{array}{ll}
                                          id_{H(\T M)} & {\,\, on\,\,   H(\T M)} \\
                                         0 & {\,\, on \,\,   V(\T M)}
                                       \end{array}
                                     \right.\vspace{-0.2cm}
\end{align}
The map $\beta$ will be called the horizontal map of the connection
$D$.
\par According to the direct sum decomposition (\ref{direct
sum}), a regular connection $D$ gives rise to a horizontal projector
$h_{D}$ and a vertical projector $v_{D}$, given by
\begin{equation}\label{proj.}
h_{D}=\beta\circ\rho ,  \ \ \ \ \ \ \ \ \ \ \
v_{D}=I-\beta\circ\rho,
\end{equation}
where $I$ is the identity endomorphism on $T(TM)$: $I=id_{T(TM)}$.
\par
 The (classical)  torsion tensor $\textbf{T}$  of the connection
$D$ is defined by
$$\textbf{T}(X,Y)=D_X \rho Y-D_Y\rho X -\rho [X,Y] \quad
\forall\,X,Y\in \mathfrak{X} (\T M).$$ The horizontal ((h)h-) and
mixed ((h)hv-) torsion tensors, denoted by $Q $ and $ T $
respectively, are defined by \vspace{-0.2cm}
$$Q (\overline{X},\overline{Y})=\textbf{T}(\beta \overline{X}\beta \overline{Y}),
\, \,\, T(\overline{X},\overline{Y})=\textbf{T}(\gamma
\overline{X},\beta \overline{Y}) \quad \forall \,
\overline{X},\overline{Y}\in\mathfrak{X} (\pi (M)).\vspace{-0.2cm}$$
\par
The (classical) curvature tensor  $\textbf{K}$ of the connection $D$
is defined by
 $$ \textbf{K}(X,Y)\rho Z=-D_X D_Y \rho Z+D_Y D_X \rho Z+D_{[X,Y]}\rho Z
  \quad \forall\, X,Y, Z \in \mathfrak{X} (\T M).$$
The horizontal (h-), mixed (hv-) and vertical (v-) curvature
tensors, denoted by $R$, $P$ and $S$ respectively, are defined by
$$R(\overline{X},\overline{Y})\overline{Z}=\textbf{K}(\beta
\overline{X}\beta \overline{Y})\overline{Z},\quad
P(\overline{X},\overline{Y})\overline{Z}=\textbf{K}(\beta
\overline{X},\gamma \overline{Y})\overline{Z},\quad
S(\overline{X},\overline{Y})\overline{Z}=\textbf{K}(\gamma
\overline{X},\gamma \overline{Y})\overline{Z}.$$ The contracted
curvature tensors, denoted by $\widehat{R}$, $\widehat{P}$ and
$\widehat{S}$ respectively, are also known as the
 (v)h-, (v)hv- and (v)v-torsion tensors and are defined by
$$\widehat{R}(\overline{X},\overline{Y})={R}(\overline{X},\overline{Y})\overline{\eta},\quad
\widehat{P}(\overline{X},\overline{Y})={P}(\overline{X},\overline{Y})\overline{\eta},\quad
\widehat{S}(\overline{X},\overline{Y})={S}(\overline{X},\overline{Y})\overline{\eta}.$$
If $M$ is endowed with a metric $g$ on $\p$, we write
\begin{equation}\label{cur.g}
    R(\overline{X},\overline{Y},\overline{Z}, \overline{W}):
=g(R(\overline{X},\overline{Y})\overline{Z}, \overline{W}),\,
\cdots, \, S(\overline{X},\overline{Y},\overline{Z}, \overline{W}):
=g(S(\overline{X},\overline{Y})\overline{Z}, \overline{W}).
\end{equation}

\begin{lem}\label{bracket}{\em\cite{r92}} Let $D$ be a regular connection on $\p$
whose (h)hv-torsion tensor $T$ has the property that
$\,T(\overline{X}, \overline{\eta})=0$. Then, we
have{\em:}\vspace{-0.2cm}
   \begin{description}
 \item[(a)] $[\beta \overline{X},\beta \overline{Y}]=
     \gamma\widehat{R}(\overline{X},\overline{Y})
     + \beta(D_{\beta \overline{X}}\overline{Y}-
     D_{\beta \overline{Y}}\overline{X}-Q(\overline{X},\overline{Y})),$

    \item[(b)] $[\gamma \overline{X},\beta \overline{Y}]=-
     \gamma(\widehat{P}(\overline{Y},\overline{X})+D_
     {\beta \overline{Y}}\overline{X})
     +\beta( D_{\gamma \overline{X}}\overline{Y}-T(\overline{X},\overline{Y})),$

   \item[(c)] $[\gamma \overline{X},\gamma \overline{Y}]=
     \gamma(D_{\gamma \overline{X}}\overline{Y}-
     D_{\gamma \overline{Y}}\overline{X}+\widehat{S}(\overline{X},\overline{Y}))$.
     \end{description}
\end{lem}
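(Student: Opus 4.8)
The plan is to recover each bracket from its horizontal and vertical parts. Under the hypothesis $T(\overline{X},\overline{\eta})=0$ the maps $\gamma$ and $K|_{V(\T M)}$ are mutually inverse, so the vertical projector of the decomposition (\ref{direct sum}) is $v_D=\gamma\circ K$; combined with $h_D=\beta\circ\rho$ from (\ref{proj.}) this gives the pointwise splitting $Z=\beta(\rho Z)+\gamma(KZ)$ for every $Z\in\mathfrak{X}(\T M)$. Hence for each of the three brackets it suffices to compute $\rho[\,\cdot\,,\,\cdot\,]$ and $K[\,\cdot\,,\,\cdot\,]$ separately and then re-assemble. The only genuinely new ingredient needed is the identity $K\gamma\overline{X}=\overline{X}$, which I would prove either by a one-line local computation (writing $D_{\gamma\overline{X}}\overline{\eta}=\overline{X}+T(\overline{X},\overline{\eta})$, the correction term being exactly the $(h)hv$-torsion evaluated at $\overline{\eta}$) or intrinsically from the torsion identity applied to $\gamma\overline{X}$ and $\beta\overline{\eta}$, invoking $T(\overline{X},\overline{\eta})=0$.

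Two ``master identities'' then do all the work. First, rearranging the definition of the classical torsion $\mathbf{T}$ gives, for all $X,Y\in\mathfrak{X}(\T M)$,
\[
\rho[X,Y]=D_X\rho Y-D_Y\rho X-\mathbf{T}(X,Y).
\]
Second, since $D_X\overline{\eta}=K(X)$ by definition of the connection map, substituting the fixed section $\overline{\eta}$ into the definition of the classical curvature $\mathbf{K}$ yields
\[
K[X,Y]=\mathbf{K}(X,Y)\overline{\eta}+D_X(KY)-D_Y(KX).
\]
Together with the elementary relations $\rho\beta=\mathrm{id}$, $\rho\gamma=0$, $K\beta=0$, $K\gamma=\mathrm{id}$ (from (\ref{fh1}), the definition of horizontal vectors, and the preceding paragraph), and with the definitions of $Q$, $T$ and of the contracted curvatures $\widehat{R},\widehat{P},\widehat{S}$, these two identities specialize at once.

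For (a) I would set $X=\beta\overline{X}$, $Y=\beta\overline{Y}$: the torsion identity gives $\rho[\beta\overline{X},\beta\overline{Y}]=D_{\beta\overline{X}}\overline{Y}-D_{\beta\overline{Y}}\overline{X}-Q(\overline{X},\overline{Y})$, while the curvature identity, using $K\beta\overline{X}=K\beta\overline{Y}=0$, gives $K[\beta\overline{X},\beta\overline{Y}]=\mathbf{K}(\beta\overline{X},\beta\overline{Y})\overline{\eta}=\widehat{R}(\overline{X},\overline{Y})$; assembling via $[\,\cdot\,,\,\cdot\,]=\beta\rho[\,\cdot\,,\,\cdot\,]+\gamma K[\,\cdot\,,\,\cdot\,]$ gives (a). For (b) I would take $X=\gamma\overline{X}$, $Y=\beta\overline{Y}$ and use $\rho\gamma=0$, $K\gamma\overline{X}=\overline{X}$, $K\beta\overline{Y}=0$, $\mathbf{T}(\gamma\overline{X},\beta\overline{Y})=T(\overline{X},\overline{Y})$, and the antisymmetry $\mathbf{K}(\gamma\overline{X},\beta\overline{Y})\overline{\eta}=-\mathbf{K}(\beta\overline{Y},\gamma\overline{X})\overline{\eta}=-\widehat{P}(\overline{Y},\overline{X})$; the summand $-D_{\beta\overline{Y}}(K\gamma\overline{X})=-D_{\beta\overline{Y}}\overline{X}$ then supplies the remaining term inside the $\gamma$-bracket. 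For (c), with $X=\gamma\overline{X}$, $Y=\gamma\overline{Y}$, integrability of the vertical distribution forces $\rho[\gamma\overline{X},\gamma\overline{Y}]=0$ (equivalently $\mathbf{T}(\gamma\overline{X},\gamma\overline{Y})=0$), so only the vertical part survives, and the curvature identity with $K\gamma=\mathrm{id}$ gives $K[\gamma\overline{X},\gamma\overline{Y}]=\widehat{S}(\overline{X},\overline{Y})+D_{\gamma\overline{X}}\overline{Y}-D_{\gamma\overline{Y}}\overline{X}$, which is (c).

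The main obstacle is bookkeeping rather than depth: one must keep straight the two sign conventions built into the definitions of $\mathbf{K}$ and of the projectors, and must track correctly the order of the arguments of $\widehat{P}$, which is reversed relative to their order in the bracket because $P$ is defined with the $\beta$-slot first and $\mathbf{K}$ is antisymmetric in its first two slots. The one conceptual point that cannot be skipped is the verification $K\gamma=\mathrm{id}$, since this is exactly where the hypothesis $T(\overline{X},\overline{\eta})=0$ is used; without it, formulas (b) and (c) would acquire extra $T(\,\cdot\,,\overline{\eta})$ correction terms.
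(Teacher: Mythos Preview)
Your argument is correct. The paper does not actually prove this lemma here: it is quoted verbatim from \cite{r92} and stated without proof, so there is no ``paper's own proof'' to compare against. Your approach---split each bracket as $\beta\rho[\,\cdot\,,\,\cdot\,]+\gamma K[\,\cdot\,,\,\cdot\,]$ and read off the two pieces from the rearranged torsion identity and the curvature identity $K[X,Y]=\mathbf{K}(X,Y)\overline{\eta}+D_X(KY)-D_Y(KX)$---is the natural one and yields all three formulas cleanly, with the hypothesis $T(\overline{X},\overline{\eta})=0$ entering exactly where you say it does, in establishing $K\gamma=\mathrm{id}$ (equivalently $v_D=\gamma\circ K$). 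One minor point worth making explicit in a written-out version: your intrinsic derivation of $K\gamma\overline{X}=\overline{X}+T(\overline{X},\overline{\eta})$ via $\mathbf{T}(\gamma\overline{X},\beta\overline{\eta})$ tacitly uses $\rho[\gamma\overline{X},\beta\overline{\eta}]=\overline{X}$, which itself requires a short justification (e.g.\ by testing against pullbacks $f\circ\pi$ with $f\in\mathfrak{F}(M)$, or the one-line local computation you allude to).
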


The following theorem guarantees the existence  and uniqueness of
the Cartan connection on the pullback bundle.\vspace{-0.2cm}
\begin{thm} {\em\cite{r92}} \label{th.1} Let $(M,L)$ be a Finsler
manifold and  $g$ the Finsler metric defined by $L$. There exists a
unique regular connection $\nabla$ on $\pi^{-1}(TM)$ such
that\vspace{-0.2cm}
\begin{description}
  \item[(a)]  $\nabla$ is  metric\,{\em:} $\nabla g=0$,

  \item[(b)] The (h)h-torsion of $\nabla$ vanishes\,{\em:} $Q=0
  $,
  \item[(c)] The (h)hv-torsion $T$ of $\nabla$\, satisfies\,\emph{:}
   $g(T(\overline{X},\overline{Y}), \overline{Z})=g(T(\overline{X},\overline{Z}),\overline{Y})$.
\end{description}
\end{thm}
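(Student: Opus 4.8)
\medskip
\noindent\textbf{Proof plan.} The plan is to settle uniqueness first, by extracting from conditions (a)--(c) an explicit ``generalized Christoffel formula'' that determines $\nabla$ in terms of the metric $g$ --- hence of $L$ --- alone, and then to deduce existence by \emph{adopting} that formula as a definition and checking directly that it produces a regular connection for which (a)--(c) hold. Since a regular connection is completely encoded by its horizontal and vertical covariant derivatives $\nabla_{\beta\overline{X}}$ and $\nabla_{\gamma\overline{X}}$, I would treat these two pieces separately. One preliminary remark is needed at the outset: any connection obeying (a) and (c) satisfies $\mathbf{T}(\overline{X},\overline{\eta})=0$, because writing (c) with $\overline{\eta}$ in the appropriate slot and using the $1$-homogeneity of $L$ (equivalently, $0$-homogeneity of $g$ in the directional argument, so that contracting the vertical derivative of $g$ with $\overline{\eta}$ gives zero) forces it. Hence Lemma~\ref{bracket} is available for every candidate connection, and I shall use it freely.

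\medskip
\noindent\textbf{The vertical part.} Metricity (a) evaluated on a vertical argument reads $(\gamma\overline{Z})\cdot g(\overline{X},\overline{Y})=g(\nabla_{\gamma\overline{Z}}\overline{X},\overline{Y})+g(\nabla_{\gamma\overline{Z}}\overline{Y},\overline{X})$. A short computation using $\rho\circ\gamma=0$ shows that the bracket term in $T(\overline{X},\overline{Y})=\nabla_{\gamma\overline{X}}\overline{Y}-\rho[\gamma\overline{X},\beta\overline{Y}]$ depends only on $\overline{X},\overline{Y}$, not on the chosen regular connection, and the symmetry (c) then asserts --- for basic sections --- exactly that the two summands on the right-hand side of the metricity identity \emph{coincide}. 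Hence $g(\nabla_{\gamma\overline{X}}\overline{Y},\overline{Z})=\tfrac12(\gamma\overline{X})\cdot g(\overline{Y},\overline{Z})$ for basic $\overline{Y},\overline{Z}$ (with the evident connection-free corrections in general): $\nabla_{\gamma\overline{X}}\overline{Y}$ is the intrinsic Cartan tensor, totally symmetric and uniquely determined by $L$, independently of the horizontal structure. A by-product, again from the homogeneity of $L$, is $\nabla_{\gamma\overline{X}}\overline{\eta}=\overline{X}$, i.e. $K\circ\gamma=\mathrm{id}_{\pi^{-1}(TM)}$.

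\medskip
\noindent\textbf{The horizontal part and the horizontal distribution.} From (b) together with Lemma~\ref{bracket}(a) one reads off ``horizontal torsion-freeness'', $\nabla_{\beta\overline{X}}\overline{Y}-\nabla_{\beta\overline{Y}}\overline{X}=\rho[\beta\overline{X},\beta\overline{Y}]$, and feeding this along with metricity (a) on horizontal arguments into the standard cyclic Koszul manipulation yields
\begin{multline*}
2\,g(\nabla_{\beta\overline{X}}\overline{Y},\overline{Z})=(\beta\overline{X})\,g(\overline{Y},\overline{Z})+(\beta\overline{Y})\,g(\overline{Z},\overline{X})-(\beta\overline{Z})\,g(\overline{X},\overline{Y})\\
{}+g(\rho[\beta\overline{X},\beta\overline{Y}],\overline{Z})-g(\rho[\beta\overline{Y},\beta\overline{Z}],\overline{X})+g(\rho[\beta\overline{Z},\beta\overline{X}],\overline{Y}).
\end{multline*}
This pins down $\nabla_{\beta\overline{X}}\overline{Y}$ \emph{once $\beta$ is known}; but the horizontal map $\beta$ --- equivalently the horizontal distribution, equivalently the underlying nonlinear connection --- is itself part of the data of a regular $\nabla$ and must still be determined. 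The equation that does this is regularity combined with $K=\nabla\overline{\eta}$: every $\beta\overline{X}$ is horizontal, so $\nabla_{\beta\overline{X}}\overline{\eta}=0$. Putting $\overline{Y}=\overline{\eta}$ in the formula above, and using $g(\overline{X},\overline{\eta})=\tfrac12(\gamma\overline{X})\cdot L^{2}$ together with the homogeneity of $L$, the homogeneity relations make this system collapse --- contracting with $\overline{\eta}$ first isolates the canonical spray $2G^{i}=\gamma^{i}_{00}$ of $L$, and the remaining components then follow --- so that it has a unique solution: $\beta$ is the Barthel (Cartan) nonlinear connection of $(M,L)$. \emph{This simultaneous determination of $\beta$ and of the horizontal coefficients is the crux of the argument}: it is the one place where the $1$-homogeneity of $L$ is genuinely used and where the canonical spray appears; everything else is formal.

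\medskip
\noindent\textbf{Existence.} For existence I would reverse the reasoning: solve the last equation for $\beta$ (a finite, uniquely solvable computation), let $h_{D}=\beta\circ\rho$ and $v_{D}=I-\beta\circ\rho$ be the projectors of (\ref{proj.}), define the vertical operator by the Cartan-tensor formula and the horizontal operator by the Koszul formula above (with this $\beta$), and set $\nabla_{X}\overline{Y}:=\nabla_{h_{D}X}\overline{Y}+\nabla_{v_{D}X}\overline{Y}$. It then remains to check that $\nabla$ is a genuine linear connection on $\pi^{-1}(TM)$ ($\Real$-bilinearity, $\mathfrak{F}(\T M)$-linearity in $X$, Leibniz in $\overline{Y}$) --- routine from the defining formulas; that it is regular with horizontal map exactly $\beta$, since $K\circ\gamma=\mathrm{id}$ and $K\circ\beta=0$ by construction give, by a dimension count, the splitting (\ref{direct sum}); and that (a)--(c) hold --- metricity by construction along both $\beta$- and $\gamma$-arguments hence for every $X$; $Q=0$ by antisymmetrising the Koszul formula; and the symmetry (c) because $g(T(\overline{X},\overline{Y}),\overline{Z})$ reduces to the totally symmetric Cartan tensor. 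Apart from the coupled system of the previous paragraph, all of this is bookkeeping with Lemma~\ref{bracket} and the definitions of $\mathbf{T}$, $Q$ and $T$.
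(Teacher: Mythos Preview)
The paper does not actually prove this theorem: it is quoted from \cite{r92} in the preliminaries section, with no proof given. So there is no ``paper's own proof'' to compare against. The Koszul-type formulas you derive for the horizontal and vertical parts do, however, appear later in the paper as Lemma~\ref{3.le.1} (again cited from \cite{r92}, without proof), and your derivation is essentially the standard argument that leads to them.

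Your proposal is correct in outline and is the expected approach. One point that deserves more care than you give it is the paragraph where you determine $\beta$ itself. You correctly identify this as the crux: the Koszul formula for $\nabla_{\beta\overline{X}}\overline{Y}$ involves $\beta$ on the right-hand side, and the constraint $\nabla_{\beta\overline{X}}\overline{\eta}=0$ is therefore a genuinely coupled, implicit equation for the horizontal distribution. Your claim that ``the homogeneity relations make this system collapse'' and that the canonical spray drops out is true, but as written it is an assertion rather than an argument. If you want a self-contained proof you should either (i) carry out that contraction explicitly and show that the resulting equation for $\beta\overline{\eta}$ is exactly $i_{G}\Omega=-dE$ (Proposition~\ref{pp}), then bootstrap to the full $\beta$ via $[J,G]$ (Theorem~\ref{th.9a}); or (ii) start from the Barthel connection $\Gamma=[J,G]$, take $\beta$ to be its horizontal lift, and then \emph{verify} a posteriori that the Koszul formula with this $\beta$ yields $\nabla_{\beta\overline{X}}\overline{\eta}=0$. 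Either route closes the gap; the rest of your sketch is fine.

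A minor inaccuracy: in the vertical part you write $g(\nabla_{\gamma\overline{X}}\overline{Y},\overline{Z})=\tfrac12(\gamma\overline{X})\cdot g(\overline{Y},\overline{Z})$ ``for basic sections''. The correct formula carries bracket correction terms --- compare Lemma~\ref{3.le.1}(a) --- and these do not automatically vanish for basic sections; what is true is that they are connection-independent, which is all you need for uniqueness.
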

\vspace{5pt}
 One can show that the (h)hv-torsion  of the Cartan connection is symmetric and  has the
property that  $T(\overline{X},\overline{\eta})=0$ for all
$\overline{X} \in \mathfrak{X} (\pi (M))$.

\begin{defn}  Let $(M,L)$ be a Finsler
manifold and  $g$ the Finsler metric defined by $L$. We
define\,\emph{:}\vspace{-0.2cm}
\begin{eqnarray*}
\ell(\overline{X})&:=&L^{-1}g(\overline{X},\overline{\eta}),\\
\hbar&:=& g-\ell \otimes \ell: \text{the  angular metric tensor},\\
T(\overline{X},\overline{Y},\overline{Z})&:
=&g(T(\overline{X},\overline{Y}),\overline{Z}): \text{the  Cartan
 tensor},\\
    C(\overline{X})&:=& Tr\{\overline{Y} \longmapsto
T(\overline{X},\overline{Y})\}: \text{the  contracted torsion},\\
 g(\overline{C}, \overline{X})&:=&C(\overline{X}): \text{$\overline{C}$ is the $\pi$-vector field associated with the $\pi$-form $C$},\\
 Ric^v(\overline{X},\overline{Y})&:=& Tr\{
\overline{Z} \longmapsto S(\overline{X},\overline{Z})\overline{Y}\}:
\text{the  vertical Ricci tensor},\\
g(Ric_0^v(\overline{X}),\overline{Y})&:=&Ric^v(\overline{X},\overline{Y}):
\text{the vertical Ricci  map} \  Ric_0^v,\\
 Sc^v&:=&\text {Tr}\{ \overline{X} \longmapsto Ric_0^v(\overline{X})\}:
 \text{the vertical scalar curvature}.
\end{eqnarray*}
\end{defn}

Deicke theorem \cite{nr2} can be formulated globally as
follows:\vspace{-0.2cm}
\begin{lem}\label{.le.2} Let  $(M,L)$ be a Finsler manifold. The following
assertions are  equivalent\,{\em:}
\begin{description}
    \item[(a)] $(M,L)$ is Riemannian,
    \item[(b)] The $(h)hv$-torsion tensor $T$ vanishes,
    \item[(c)] The $\pi$-form $C$ vanishes.
\end{description}
\end{lem}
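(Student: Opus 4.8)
The plan is to establish the cycle of implications $(a)\Rightarrow(b)\Rightarrow(c)\Rightarrow(a)$, only the last of which carries genuine content. For $(a)\Rightarrow(b)$: if $(M,L)$ is Riemannian then $L^2$ is a quadratic form in the directional argument arising from a Riemannian metric on $M$, so the Finsler metric induced on $\p$ is merely the pullback of that metric, and the Cartan connection $\nabla$ of Theorem~\ref{th.1} coincides with the pullback of the corresponding Levi-Civita connection; the latter is torsion-free, so in particular its $(h)hv$-torsion $T$ vanishes. For $(b)\Rightarrow(c)$: the $\pi$-form $C$ is by definition the trace $C(\overline{X})=\mathrm{Tr}\{\overline{Y}\mapsto T(\overline{X},\overline{Y})\}$, so $T=0$ forces $C=0$ immediately.

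The substance lies in $(c)\Rightarrow(a)$, which is the genuine Deicke phenomenon: vanishing of the \emph{mean} Cartan torsion $C$ must force vanishing of the \emph{full} Cartan torsion $T$, after which, running the argument of $(a)\Rightarrow(b)$ in reverse, $g$ must be direction-independent and hence $(M,L)$ Riemannian. First I would fix $x\in M$ and pass to the finite-dimensional strongly convex normed space $(T_xM,\,L(x,\cdot))$, recalling the standard identifications: in local coordinates $T$ corresponds to the (totally symmetric) Cartan tensor $C_{ijk}=\frac{1}{4}\dot\partial_i\dot\partial_j\dot\partial_k L^2$ and $C$ to its $g$-trace $C_i=g^{jk}C_{ijk}=\dot\partial_i\log\sqrt{\det g}$; thus $C=0$ says precisely that $\det g_y$ is constant along the indicatrix $I_x=\{\,y\in T_xM : L(x,y)=1\,\}$. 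I would then invoke the classical analytic argument on the compact hypersurface $I_x\subset T_xM$: the constancy of this equi-affine quantity, combined with an integral identity over $I_x$ together with a convexity (maximum-principle) argument, forces $I_x$ to be an ellipsoid, which is the same as saying $g_y$ is independent of $y$. Since $x$ is arbitrary, $(M,L)$ is Riemannian.

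Because all three notions --- being Riemannian, the $\pi$-tensor $T$, and the $\pi$-form $C$ --- admit their classical local coordinate descriptions on $\p$, a more economical alternative for $(c)\Rightarrow(a)$ is to transcribe the hypothesis into coordinates and simply quote the classical Deicke theorem \cite{nr2} on each fiber $T_xM\setminus\{0\}$, which is exactly the statement we are globalizing. I expect the main obstacle to be precisely this implication: its analytic core (the ellipsoid characterization of the indicatrix) is not dissolved by the coordinate-free formalism, so one must either reproduce the affine-geometric argument on $I_x$ or appeal to the established local result. A minor technical point to check first is that the $(h)hv$-torsion of the Cartan connection is totally symmetric and satisfies $T(\overline{X},\overline{\eta})=0$ (noted right after Theorem~\ref{th.1}); this is what guarantees that $T$ and $C$ are bona fide fibrewise objects on the indicatrices, legitimizing the pointwise reduction.
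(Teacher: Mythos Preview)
Your proposal is correct. The cycle $(a)\Rightarrow(b)\Rightarrow(c)$ is trivial for the reasons you give, and for $(c)\Rightarrow(a)$ you correctly isolate the analytic core (constancy of $\det g$ on the indicatrix forcing the indicatrix to be an ellipsoid) and note that, alternatively, one may simply reduce fibrewise and quote the classical result.

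Compared with the paper: there is essentially nothing to compare. The paper does not prove this lemma at all; it merely introduces it with the phrase ``Deicke theorem \cite{nr2} can be formulated globally as follows'' and states the three equivalent conditions, relying entirely on the reference for the nontrivial implication. Your write-up therefore supplies more than the paper does. If you want to match the paper's level of detail, a one-line appeal to \cite{nr2} after disposing of $(a)\Rightarrow(b)\Rightarrow(c)$ would suffice; if you want a self-contained argument, your sketch of the affine-geometric proof on $I_x$ is the right shape, though you should be aware that filling it in completely (the integral identity plus the convexity/maximum-principle step) is not a two-line affair.
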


 Concerning  the Berwald connection on the pullback
bundle, we have\vspace{-0.2cm}
\begin{thm}{\em\cite{r92}} \label{th.1a} Let $(M,L)$ be a Finsler manifold. There exists a
unique regular connection ${{D}}^{\circ}$ on $\pi^{-1}(TM)$ such
that
\begin{description}
 \item[(a)] $D^{\circ}_{h^{\circ}X}L=0$,
  \item[(b)]   ${{D}}^{\circ}$ is torsion-free\,{\em:} ${\textbf{T}}^{\circ}=0 $,
  \item[(c)]The (v)hv-torsion tensor $\widehat{P^{\circ}}$ of ${D}^{\circ}$ vanishes\,\emph{:}
   $\widehat{P^{\circ}}(\overline{X},\overline{Y})= 0$.
  \end{description}
  \par Such a connection is called the Berwald
  connection associated with the Finsler manifold $(M,L)$.
\end{thm}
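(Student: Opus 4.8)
\medskip\noindent
The plan is the standard ``write down the data, then check the axioms'' scheme: for \emph{uniqueness} I extract from (a)--(c) explicit formulas for $D^{\circ}$, and for \emph{existence} I take those formulas as a definition and verify (a)--(c). So suppose first that $D^{\circ}$ is a regular connection satisfying (a)--(c), with connection map $K^{\circ}$, horizontal map $\beta^{\circ}$ and projectors $h^{\circ}=\beta^{\circ}\circ\rho$, $v^{\circ}=I-\beta^{\circ}\circ\rho$. Since $\textbf{T}^{\circ}=0$, using $\rho\circ\beta^{\circ}=id$ and $\rho\circ\gamma=0$ we obtain
\begin{equation*}
0=\textbf{T}^{\circ}(\gamma\overline{X},\beta^{\circ}\overline{Y})=D^{\circ}_{\gamma\overline{X}}\,\overline{Y}-\rho[\gamma\overline{X},\beta^{\circ}\overline{Y}],\qquad\text{i.e.}\qquad D^{\circ}_{\gamma\overline{X}}\,\overline{Y}=\rho[\gamma\overline{X},\beta^{\circ}\overline{Y}].
\end{equation*}
Putting $\overline{Y}=\overline{\eta}$ and invoking the elementary identity $\rho[\gamma\overline{X},W]=\overline{X}$, valid for every $W\in\cpp$ with $\rho W=\overline{\eta}$ (in particular for $W=\beta^{\circ}\overline{\eta}$), gives $K^{\circ}\circ\gamma=id$, hence $D^{\circ}_{\gamma\overline{Y}}\,\overline{\eta}=\overline{Y}$; moreover $D^{\circ}_{\beta^{\circ}\overline{X}}\,\overline{\eta}=K^{\circ}(\beta^{\circ}\overline{X})=0$ because $\beta^{\circ}\overline{X}$ is horizontal. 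Feeding this into (c): writing $\widehat{P^{\circ}}(\overline{X},\overline{Y})=\textbf{K}^{\circ}(\beta^{\circ}\overline{X},\gamma\overline{Y})\overline{\eta}$ and expanding the curvature formula against any $W$ with $\rho W=\overline{\eta}$, the three terms collapse to $-D^{\circ}_{\beta^{\circ}\overline{X}}(D^{\circ}_{\gamma\overline{Y}}\overline{\eta})=-D^{\circ}_{\beta^{\circ}\overline{X}}\overline{Y}$, $D^{\circ}_{\gamma\overline{Y}}(D^{\circ}_{\beta^{\circ}\overline{X}}\overline{\eta})=0$ and $D^{\circ}_{[\beta^{\circ}\overline{X},\gamma\overline{Y}]}\overline{\eta}=K^{\circ}[\beta^{\circ}\overline{X},\gamma\overline{Y}]$, so that
\begin{equation*}
D^{\circ}_{\beta^{\circ}\overline{X}}\,\overline{Y}=K^{\circ}[\beta^{\circ}\overline{X},\gamma\overline{Y}]=\gamma^{-1}\!\left(v^{\circ}[\beta^{\circ}\overline{X},\gamma\overline{Y}]\right),
\end{equation*}
$\gamma^{-1}$ being the inverse of the isomorphism $\gamma:\p\To V(\T M)$.

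\medskip\noindent
These two relations determine $D^{\circ}$ once its nonlinear connection is known, so the next step is to pin down the latter. The second formula says that, in a local chart, the horizontal coefficients of $D^{\circ}$ are the fibre-derivatives of its nonlinear-connection coefficients; combined with the vanishing deflection $K^{\circ}\circ h^{\circ}=0$ this forces the nonlinear connection of $D^{\circ}$ to be homogeneous, and then condition (a) (conservativity, $(h^{\circ}X)(L)=0$) singles it out, among homogeneous nonlinear connections, as the canonical one. Consequently $D^{\circ}$, the Cartan connection $\nabla$ and the Barthel connection share the horizontal map $\beta$ and the projectors $h=\beta\circ\rho$, $v=I-\beta\circ\rho$ --- the fact recalled in the abstract --- which is consistent because $\nabla$ itself is conservative ($L^{2}=g(\overline{\eta},\overline{\eta})$, $\nabla g=0$ and $K(h_{\nabla}X)=0$) and carries the canonical nonlinear connection. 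With $\beta^{\circ}=\beta$, the two displayed formulas express $D^{\circ}$ entirely in terms of $\beta$, $\gamma$, $v$ and the Lie bracket; since $D^{\circ}$ is $\ff$-linear in its first argument, satisfies the Leibniz rule in the second, and every $X\in\cpp$ splits as $\beta\rho X+\gamma K^{\circ}X$, this determines $D^{\circ}$ completely. Uniqueness follows.

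\medskip\noindent
For existence, let $\beta,h,v$ be the horizontal map and projectors of the Cartan connection $\nabla$ of Theorem~\ref{th.1}, \emph{define}
\begin{equation*}
D^{\circ}_{\gamma\overline{X}}\,\overline{Y}:=\rho[\gamma\overline{X},\beta\overline{Y}],\qquad D^{\circ}_{\beta\overline{X}}\,\overline{Y}:=\gamma^{-1}\!\left(v[\beta\overline{X},\gamma\overline{Y}]\right),
\end{equation*}
and extend to all of $\cpp$ in the first argument by $\ff$-linearity (using the splitting $X=\beta\rho X+\gamma K^{\circ}X$) and by the Leibniz rule in the second. Using only $\rho\circ\beta=id$, $\rho\circ\gamma=0$, $K^{\circ}\circ h=0$, $K^{\circ}\circ\gamma=id$ and the torsion-freeness of the canonical nonlinear connection, one checks routinely that $D^{\circ}$ is a well-defined regular connection; that $(h^{\circ}X)(L)=(\beta\rho X)(L)=0$ by conservativity of the canonical nonlinear connection (so (a) holds); that $\textbf{T}^{\circ}$ vanishes on each of the pairs $(\beta\overline{X},\beta\overline{Y})$, $(\gamma\overline{X},\beta\overline{Y})$, $(\gamma\overline{X},\gamma\overline{Y})$, hence $\textbf{T}^{\circ}=0$ (so (b) holds); and that $\widehat{P^{\circ}}=0$, which merely runs the computation of the first paragraph backwards (so (c) holds). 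The only step that is more than bookkeeping --- and, together with the identification of the nonlinear connection above, the real content of the theorem --- is the $(h)h$-part of (b): the equality $D^{\circ}_{\beta\overline{X}}\,\overline{Y}-D^{\circ}_{\beta\overline{Y}}\,\overline{X}=\rho[\beta\overline{X},\beta\overline{Y}]$ holds precisely because the canonical nonlinear connection has symmetric coefficients. This is where I expect the main obstacle to sit; everything else is formal manipulation with $\rho$, $\gamma$, $\beta$ and the Lie bracket.
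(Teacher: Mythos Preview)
The paper does not prove this theorem: it is quoted from \cite{r92} and stated here without proof, so there is no argument in the present paper against which yours can be compared.

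Your outline is the standard one and is essentially correct. One point needs tightening in the uniqueness half: you claim that conservativity together with homogeneity already singles out the underlying nonlinear connection as the Barthel connection, but Theorem~\ref{th.9a} requires in addition the vanishing of the torsion $t=\frac{1}{2}[J,\Gamma]$. You do have what is needed: from your formula expressing the horizontal coefficients of $D^{\circ}$ as fibre-derivatives of the nonlinear-connection coefficients, the vanishing of the $(h)h$-torsion $Q^{\circ}$ (part of (b)) forces these fibre-derivatives to be symmetric in the two lower indices, which is exactly $t=0$. Make this step explicit in the uniqueness paragraph; otherwise the appeal to Theorem~\ref{th.9a} is incomplete. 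The remaining steps --- the identity $K^{\circ}\circ\gamma=id$, the extraction of $D^{\circ}_{\beta^{\circ}\overline{X}}\overline{Y}$ from $\widehat{P^{\circ}}=0$, and the verification of (a)--(c) in the existence half --- are sound.
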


\begin{thm}\label{2.th.1}{\em{\cite{r92}}} Let $(M,L)$ be a Finsler manifold.
The  Berwald connection $D^{\circ}$   is expressed in terms of the
Cartan  connection $\nabla $ as\vspace{-0.2cm}
  $$D^{\circ}_{X}\overline{Y} = \nabla _{X}\overline{Y}
+\widehat{P}(\rho X, \overline{Y})- T(KX,\overline{Y}), \quad
\forall\: X \in \mathfrak{X} (T M), \, \overline{Y} \in
\mathfrak{X}(\pi(M)) . \vspace{-0.2cm}$$ In particular, we have:
\begin{description}
  \item[(a)] $ D^{\circ}_{\gamma \overline{X}}\overline{Y}=\nabla _{\gamma
  \overline{X}}\overline{Y}-T(\overline{X},\overline{Y})$,

 \item[(b)] $ D^{\circ}_{\beta \overline{X}}\overline{Y}=\nabla _{\beta
  \overline{X}}\overline{Y}+\widehat{P}(\overline{X}, \overline{Y})$.
\end{description}
\end{thm}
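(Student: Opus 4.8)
The plan is to deduce the formula from the uniqueness part of Theorem~\ref{th.1a}. I would define a map
\[
\widetilde{D}_{X}\overline{Y}:=\nabla_{X}\overline{Y}+\widehat{P}(\rho X,\overline{Y})-T(KX,\overline{Y}),\qquad X\in\mathfrak{X}(\T M),\ \overline{Y}\in\mathfrak{X}(\pi(M)),
\]
where $\nabla$, $K$, $\rho$, $\beta$, $\widehat{P}$, $T$ all refer to the Cartan connection, and then show that $\widetilde{D}$ is a regular connection on $\pi^{-1}(TM)$ enjoying the three characterizing properties (a)--(c) of the Berwald connection; by uniqueness in Theorem~\ref{th.1a} this forces $\widetilde{D}=D^{\circ}$, which is precisely the assertion. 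The two particular cases then drop out at once: substituting $X=\gamma\overline{X}$ (so that $\rho X=0$ and $KX=\overline{X}$, using $\rho\circ\gamma=0$ and the vertical deflection identity $K\circ\gamma=\operatorname{id}$) recovers (a), and substituting $X=\beta\overline{X}$ (so that $\rho X=\overline{X}$ and $KX=0$, by (\ref{fh1})) recovers (b).

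That $\widetilde{D}$ is a linear connection is routine: additivity is clear, $\mathfrak{F}(\T M)$-linearity in $X$ holds because $\rho$ and $K$ are $\mathfrak{F}(\T M)$-linear bundle maps while $\widehat{P}$ and $T$ are $\pi$-tensor fields, and the Leibniz rule in $\overline{Y}$ survives because the two correction terms are tensorial in $\overline{Y}$. For regularity I would compute the connection map of $\widetilde{D}$: since the Cartan $T$ satisfies $T(\overline{X},\overline{\eta})=0$ and $\widehat{P}(\overline{X},\overline{\eta})=0$ (a standard deflection identity of $\nabla$; see \cite{r92}), one gets $\widetilde{D}_{X}\overline{\eta}=\nabla_{X}\overline{\eta}=KX$. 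Hence $\widetilde{D}$ has the same connection map, the same vertical and horizontal subbundles, the same horizontal map $\beta$, and the same projectors $h,v$ as $\nabla$; in particular $\widetilde{D}$ is regular and $h^{\circ}:=h_{\widetilde{D}}=\beta\circ\rho$ is the canonical (Barthel) horizontal projector.

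It remains to verify (a)--(c). For (a): since $h^{\circ}=h$ and $L^{2}=g(\overline{\eta},\overline{\eta})$ with $\nabla g=0$, we get $(h^{\circ}X)(L^{2})=2g(K(h^{\circ}X),\overline{\eta})=0$, and as $L\neq 0$ on $\T M$ this yields $\widetilde{D}_{h^{\circ}X}L=(h^{\circ}X)(L)=0$. For (b), the torsion of $\widetilde{D}$ is $\widetilde{\textbf{T}}(X,Y)=\textbf{T}_{\nabla}(X,Y)+\widehat{P}(\rho X,\rho Y)-\widehat{P}(\rho Y,\rho X)-T(KX,\rho Y)+T(KY,\rho X)$, a $\pi$-tensor field, so it suffices to test on $(\gamma\overline{X},\gamma\overline{Y})$, $(\gamma\overline{X},\beta\overline{Y})$ and $(\beta\overline{X},\beta\overline{Y})$: the first vanishes because $[\gamma\overline{X},\gamma\overline{Y}]$ is vertical (Lemma~\ref{bracket}(c)); the second equals $\textbf{T}_{\nabla}(\gamma\overline{X},\beta\overline{Y})-T(\overline{X},\overline{Y})=0$ by the very definition of the (h)hv-torsion of $\nabla$; and, using $Q=0$ for $\nabla$ (Theorem~\ref{th.1}(b)), the third equals $\widehat{P}(\overline{X},\overline{Y})-\widehat{P}(\overline{Y},\overline{X})$, which vanishes by the symmetry of the Cartan (v)hv-torsion. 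For (c), expand $\widehat{P^{\widetilde{D}}}(\overline{X},\overline{Y})=\textbf{K}^{\widetilde{D}}(\beta\overline{X},\gamma\overline{Y})\overline{\eta}=-\widetilde{D}_{\beta\overline{X}}\overline{Y}+K[\beta\overline{X},\gamma\overline{Y}]$ (using $\widetilde{D}_{\gamma\overline{Y}}\overline{\eta}=\overline{Y}$, $\widetilde{D}_{\beta\overline{X}}\overline{\eta}=0$ and the common connection map $K$); since $\widetilde{D}_{\beta\overline{X}}\overline{Y}=\nabla_{\beta\overline{X}}\overline{Y}+\widehat{P}(\overline{X},\overline{Y})$, while the identical expansion applied to $\nabla$ (the definition of $\widehat{P}=\widehat{P^{\nabla}}$) gives $K[\beta\overline{X},\gamma\overline{Y}]=\widehat{P}(\overline{X},\overline{Y})+\nabla_{\beta\overline{X}}\overline{Y}$, the two terms cancel and $\widehat{P^{\widetilde{D}}}=0$.

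With (a)--(c) verified, uniqueness in Theorem~\ref{th.1a} forces $\widetilde{D}=D^{\circ}$, which is the displayed formula, and the particular cases follow by the substitutions noted in the first paragraph. I expect the only real friction to lie in (b): the torsion bookkeeping must be kept straight, and, more importantly, the argument consumes two structural identities of the Cartan connection, namely $\widehat{P}(\overline{X},\overline{\eta})=0$ (used for regularity and for (c)) and the symmetry $\widehat{P}(\overline{X},\overline{Y})=\widehat{P}(\overline{Y},\overline{X})$ (used for (b)); I would import both as known properties of $\nabla$ (cf.\ \cite{r92}), after which everything else is formal manipulation with $\rho,\gamma,\beta,K$ and the definitions of the curvature and torsion tensors.
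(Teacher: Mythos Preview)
The paper does not supply a proof of this theorem here; it is quoted from \cite{r92} as a known result, so there is no in-paper argument to compare against. Your proposal is correct and is precisely the natural route: define the candidate connection by the displayed formula, observe that the correction terms $\widehat{P}(\rho X,\overline{Y})$ and $T(KX,\overline{Y})$ annihilate $\overline{\eta}$ so that the connection map---hence $\beta$, $h$, $v$---is unchanged, and then verify the three axioms of Theorem~\ref{th.1a} to invoke uniqueness. Your torsion bookkeeping is accurate, and the two imported identities you flag (symmetry of $\widehat{P}$ and $\widehat{P}(\cdot,\overline{\eta})=0$) are indeed standard properties of the Cartan connection established in \cite{r92} and \cite{r96}; this is almost certainly how the cited reference proceeds as well.
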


\par We terminate this section
by some concepts and results concerning the Klein-Grifone approach
to intrinsic Finsler geometry. For more details, we refer to
\cite{r21}, \cite{r22} and \cite{r27}.

 A semispray  is a vector field $X$ on $TM$,
 $C^{\infty}$ on $\T M$, $C^{1}$ on $TM$, such that
$\rho\circ X = \overline{\eta}$. A semispray $X$ which is
homogeneous of degree $2$ in the directional argument
($[\mathcal{C},X]= X $) is called a spray.

\begin{prop}{\em{\cite{r27}}}\label{pp} Let $(M,L)$ be a Finsler manifold. The vector field
$G$ on $TM$ defined by $i_{G}\,\Omega =-dE$ is a spray, where
 $E:=\frac{1}{2}L^{2}$ is the energy function and $\Omega:=dd_{J}E$.
 Such a spray is called the canonical spray.
 \end{prop}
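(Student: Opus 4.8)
The plan is to argue inside the Klein--Grifone calculus recalled above, using the natural almost tangent structure $J=\gamma\circ\rho$ and the Liouville field $\mathcal{C}=\gamma\circ\overline{\eta}$. First I would collect the algebraic tools: $J^{2}=0$ and $J\mathcal{C}=0$ (both immediate from $\rho\circ\gamma=0$), $d_{J}^{2}=0$ (the vertical endomorphism is integrable, $[J,J]=0$), $\mathcal{L}_{\mathcal{C}}J=[\mathcal{C},J]=-J$, and, since $E=\frac{1}{2}L^{2}$ is positively homogeneous of degree $2$ in the directional argument, Euler's relation $\mathcal{L}_{\mathcal{C}}E=2E$. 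From $[\mathcal{C},J]=-J$ and $[\mathcal{L}_{\mathcal{C}},d]=0$ one obtains, on forms, the graded identity $[\mathcal{L}_{\mathcal{C}},i_{J}]=-i_{J}$, hence $\mathcal{L}_{\mathcal{C}}\,d_{J}=d_{J}\,\mathcal{L}_{\mathcal{C}}-d_{J}$. Applying this to $E$ gives $\mathcal{L}_{\mathcal{C}}(d_{J}E)=d_{J}(2E)-d_{J}E=d_{J}E$, and applying $\mathcal{L}_{\mathcal{C}}$ to $\Omega=d\,d_{J}E$ gives $\mathcal{L}_{\mathcal{C}}\Omega=d\,\mathcal{L}_{\mathcal{C}}(d_{J}E)=d\,d_{J}E=\Omega$. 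Finally, $\Omega$ is a symplectic form on $\T M$ --- its nondegeneracy being equivalent to the regularity of the fundamental tensor $g$ (in a natural chart the mixed block of $\Omega$ is the Hessian $(\partial^{2}E/\partial y^{i}\partial y^{j})=(g_{ij})$) --- so the equation $i_{G}\Omega=-dE$ determines a unique $G\in\mathfrak{X}(\T M)$, smooth on $\T M$ (and one checks $G$ extends $C^{1}$ over the zero section because the spray coefficients are homogeneous of degree $2$).

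The key step is the algebraic identity $i_{J}\Omega=0$: since $i_{J}(d_{J}E)$ is the $1$-form $X\mapsto (d_{J}E)(JX)=(J^{2}X)E=0$, we get $0=d_{J}(d_{J}E)=i_{J}\,d(d_{J}E)-d\,i_{J}(d_{J}E)=i_{J}\Omega$. Equivalently $\Omega(JX,Y)+\Omega(X,JY)=0$ for all $X,Y\in\mathfrak{X}(\T M)$. Taking $X=G$ and using $i_{G}\Omega=-dE$, for every $Y$ we get
$$\Omega(JG,Y)=-\Omega(G,JY)=(dE)(JY)=(d_{J}E)(Y),$$
so $i_{JG}\Omega=d_{J}E$. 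On the other hand, $i_{\mathcal{C}}(d_{J}E)=(J\mathcal{C})E=0$, so by Cartan's formula and $\mathcal{L}_{\mathcal{C}}(d_{J}E)=d_{J}E$ we have $i_{\mathcal{C}}\Omega=\mathcal{L}_{\mathcal{C}}(d_{J}E)-d\,i_{\mathcal{C}}(d_{J}E)=d_{J}E$. Hence $i_{JG-\mathcal{C}}\Omega=0$, and nondegeneracy of $\Omega$ forces $JG=\mathcal{C}$, i.e. $\gamma\rho G=\gamma\overline{\eta}$; since $\gamma$ is injective this reads $\rho\circ G=\overline{\eta}$, so $G$ is a semispray.

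To see $G$ is a spray it remains to check $[\mathcal{C},G]=G$. Using $i_{[\mathcal{C},G]}\Omega=\mathcal{L}_{\mathcal{C}}(i_{G}\Omega)-i_{G}(\mathcal{L}_{\mathcal{C}}\Omega)$ together with $i_{G}\Omega=-dE$, $\mathcal{L}_{\mathcal{C}}\Omega=\Omega$ and $\mathcal{L}_{\mathcal{C}}E=2E$,
$$i_{[\mathcal{C},G]}\Omega=\mathcal{L}_{\mathcal{C}}(-dE)-i_{G}\Omega=-d(2E)+dE=-dE=i_{G}\Omega,$$
so $[\mathcal{C},G]=G$ once more by nondegeneracy of $\Omega$. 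This exhibits $G$ as the canonical spray.

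The substantive input is the nondegeneracy of $\Omega$ (the regularity of $g$); after that, the only real work is the identity $i_{J}\Omega=0$ and the bookkeeping with the graded derivations $i_{J}$, $d_{J}$, $\mathcal{L}_{\mathcal{C}}$. I expect this bookkeeping, rather than any deep idea, to be the main obstacle, since both the semispray property and the homogeneity then drop out of one-line contraction arguments against the symplectic form.
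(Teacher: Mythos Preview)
Your argument is correct and is essentially the classical Klein--Grifone proof: nondegeneracy of $\Omega$ gives existence and uniqueness of $G$; the identity $i_{J}\Omega=0$ (from $d_{J}^{2}=0$ and $J^{2}=0$) combined with $i_{\mathcal{C}}\Omega=d_{J}E$ forces $JG=\mathcal{C}$; and the homogeneity bookkeeping $\mathcal{L}_{\mathcal{C}}\Omega=\Omega$, $\mathcal{L}_{\mathcal{C}}E=2E$ yields $[\mathcal{C},G]=G$.

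Note, however, that the paper does \emph{not} supply its own proof of this proposition: it is stated in the preliminaries with the citation \cite{r27} (Klein and Voutier) and used as a black box. So there is nothing in the paper to compare your argument against beyond the reference itself; what you have written is, in outline, precisely the proof one finds in the Klein--Grifone formalism cited there.
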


A nonlinear connection on $M$ is a vector $1$-form $\Gamma$ on $TM$,
$C^{\infty}$ on $\T M$, $C^{0}$ on $TM$, such that
$$J \Gamma=J, \quad\quad \Gamma J=-J .$$
The horizontal and vertical projectors $h_{\Gamma}$\,  and
$v_{\Gamma}$ associated with $\Gamma$ are defined by
   $h_{\Gamma}:=\frac{1}{2} (I+\Gamma)$ and $v_{\Gamma}:=\frac{1}{2}
 (I-\Gamma)$. To each nonlinear connection $\Gamma$ there is associated a
 semispray $S$ defined by $S=h_{\Gamma}S'$, where $S'$ is an
 arbitrary semispray.
 A nonlinear
connection $\Gamma$ is homogeneous if $[\mathcal{C},\Gamma]=0$. The
torsion of a nonlinear connection $\Gamma$ is the vector $2$-form
$t$ on $TM$ defined by $t:=\frac{1}{2} [J,\Gamma]$. The curvature of
$\Gamma$ is the vector $2$-form $\mathfrak{R}$ on $TM$ defined by
$\mathfrak{R}:=-\frac{1}{2}[h_{\Gamma},h_{\Gamma}]$. A nonlinear
connection $\Gamma$ is said to be conservative if
$d_{h_{\Gamma}}\,E=0$.

\begin{thm} \label{th.9a} {\em{\cite{r22}}} On a Finsler manifold $(M,L)$, there exists a unique
conservative homogenous nonlinear  connection  with zero torsion. It
is given by\,{\em:} \vspace{-0.3cm} $$\Gamma =
[J,G],\vspace{-0.3cm}$$ where $G$ is the canonical spray.\\
 Such a nonlinear connection is called the canonical connection, the Barthel connection or the Cartan nonlinear connection
 associated with $(M,L)$.
\end{thm}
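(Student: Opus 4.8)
The plan is to prove Theorem~\ref{th.9a} by exhibiting $\Gamma:=[J,G]$ explicitly as a candidate and verifying, in turn, that it is a nonlinear connection, that it is homogeneous, conservative and torsion-free, and finally that these four properties characterize it uniquely. The two structural facts I would lean on throughout are the identities governing $J$ as an almost-tangent structure ($J^{2}=0$, $\operatorname{im}J=\ker J=V(TM)$, $\mathcal{L}_{J}J=0$) together with the graded-commutator (Fr\"olicher--Nijenhuis) formalism for vector-valued forms, and the defining relation of the canonical spray $i_{G}\Omega=-dE$ from Proposition~\ref{pp}, which pins down $G$ as a spray, i.e.\ $JG=\mathcal{C}$ and $[\mathcal{C},G]=G$.

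First I would check that $\Gamma=[J,G]$ satisfies the two algebraic conditions $J\Gamma=J$ and $\Gamma J=-J$ defining a nonlinear connection. For a vector field $G$ and the vector $1$-form $J$, the bracket $[J,G]=\mathcal{L}_{G}J$ acts on $X$ by $[J,G]X=[JX,G]-J[X,G]$; applying $J$ and using $J^{2}=0$ gives $J[J,G]X=J[JX,G]$, while the spray condition $JG=\mathcal{C}$ and the fact that $JX$ is vertical let one compute $J[JX,G]=JX$ (this is where $[\mathcal{C},JX]=JX-J[X,G]$-type identities for the Liouville field enter, via $\mathcal{L}_{J}\mathcal{C}=J$). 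A parallel computation with the roles swapped yields $\Gamma JX=-JX$. Homogeneity, $[\mathcal{C},\Gamma]=0$, follows from $[\mathcal{C},[J,G]]=[[\mathcal{C},J],G]+[J,[\mathcal{C},G]]$ by the graded Jacobi identity, using $[\mathcal{C},J]=-J$ (degree reasons) and $[\mathcal{C},G]=G$ (the spray is $2$-homogeneous): the two terms cancel. For zero torsion, $t=\tfrac12[J,\Gamma]=\tfrac12[J,[J,G]]$, and the graded Jacobi identity gives $[J,[J,G]]=\tfrac12[[J,J],G]=0$ since $[J,J]=0$ for an integrable almost-tangent structure; hence $t=0$.

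The conservative property $d_{h_{\Gamma}}E=0$ is where I expect the real work to be, and it is the step that actually uses $L$ rather than just the smooth structure. With $h_{\Gamma}=\tfrac12(I+\Gamma)$ one has $d_{h_{\Gamma}}E=\tfrac12(dE+d_{\Gamma}E)$, and $d_{\Gamma}E=[i_{\Gamma},d]E=i_{\Gamma}dE$, so the claim reduces to $i_{\Gamma}dE=-dE$, i.e.\ $dE\circ\Gamma=-dE$ as a $1$-form. Unwinding $\Gamma=\mathcal{L}_{G}J$ and writing $dE$ via $\Omega=dd_{J}E$ and $i_{G}\Omega=-dE$, one massages $dE(\Gamma X)=dE([JX,G])-dE(J[X,G])$ into an expression in $\Omega(G,\cdot)$ and $d_{J}E$; the homogeneity of $E$ (so $\mathcal{L}_{\mathcal{C}}E=2E$, $i_{\mathcal{C}}d_{J}E=0$, $dE(\mathcal{C})=0$ on the relevant components) then forces the sign. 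So the bulk of the obstacle is a careful bookkeeping of Lie derivatives of $E$ along $J$, $G$ and $\mathcal{C}$.

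Finally, for uniqueness I would argue that any conservative, homogeneous, torsion-free nonlinear connection $\Gamma'$ must coincide with $[J,G]$. The standard route: from $t'=\tfrac12[J,\Gamma']=0$ and $\Gamma'$ homogeneous, the associated semispray $S'=h_{\Gamma'}S''$ is in fact a spray and is independent of the choice of $S''$; conservativity $d_{h_{\Gamma'}}E=0$ identifies that spray with the unique solution of $i_{S'}\Omega=-dE$, namely $G$. Then the torsion-freeness and homogeneity of $\Gamma'$, together with $\Gamma'$ being a nonlinear connection with associated spray $G$, force $\Gamma'=[J,G]$ by the identity $\Gamma'=\mathcal{L}_{S'}J - (\text{torsion term})$, the torsion term vanishing; this reconstructs $\Gamma'$ from $G$ and hence shows $\Gamma'=\Gamma$. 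I would cite \cite{r22} for the precise form of the reconstruction identity if a self-contained derivation turns out to be lengthy, since the theorem is attributed there.
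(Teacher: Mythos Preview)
The paper does not give its own proof of this theorem: it is stated in the preliminaries section with the attribution \cite{r22} and no argument is supplied. There is therefore nothing to compare your proposal against within the paper itself; the authors simply quote Grifone's result.

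That said, your sketch follows the standard route from \cite{r21,r22} and is broadly sound. A few small points. First, a slip: you write $dE(\mathcal{C})=0$, but in fact $\mathcal{C}\cdot E=2E$ by $2$-homogeneity of $E$; what you presumably want is that $i_{\mathcal{C}}\Omega$ and $d_{J}E$ behave well, not that $dE$ kills $\mathcal{C}$. Second, in the conservativity step your reduction to $dE\circ\Gamma=-dE$ is correct, but the subsequent ``massaging'' is left entirely schematic; this is the only place where the specific spray equation $i_{G}\Omega=-dE$ enters, and in a written proof it deserves an explicit computation (Grifone does it via $i_{h}\Omega=\Omega$ and $\mathcal{L}_{G}d_{J}E=dE$). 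Third, for uniqueness the reconstruction identity you allude to is $[J,S']=\Gamma'+t'(\cdot,\cdot)$-type; with $t'=0$ and $S'=G$ this indeed gives $\Gamma'=[J,G]$, but you should also argue why conservativity forces the associated spray to satisfy $i_{S'}\Omega=-dE$ rather than merely $i_{S'}\Omega=-dE+\theta$ for some semibasic closed $1$-form $\theta$; homogeneity and the nondegeneracy of $\Omega$ close this gap.
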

\par
It should be noted that the semispray associated with the Barthel
connection is a spray, which is the canonical spray.

\begin{prop}\label{pp.1}{\em{\cite{r62}}} Under a change $L \To \widetilde{L}$
of Finsler structures on $M$,
 the corresponding Barthel connections $\Gamma$ and $\widetilde{\Gamma}
  $  are related by\vspace{-0.2cm}
  \begin{equation}
    \widetilde{\Gamma} = \Gamma -2 {\L},\, \text{with}\, {\L}:= \gamma o N o \rho.\vspace{-0.2cm}
\end{equation}
  Moreover, we have $ \widetilde{h} =h - {\L} ,\:  \widetilde{v} =v +
  {\L}$.
{\em(}the definition of $N$ is found in {\em{\cite{r62}}}{\em )}.
\end{prop}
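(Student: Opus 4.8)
\medskip\noindent The plan is to separate the purely formal content of the statement from the one computation that really uses the change. By Theorem~\ref{th.9a}, $\widetilde{\Gamma}$ is the Barthel connection of $(M,\widetilde{L})$, hence is itself a nonlinear connection: $J\widetilde{\Gamma}=J$ and $\widetilde{\Gamma}J=-J$. Subtracting these from the same identities for $\Gamma$, the vector $1$-form $D:=\widetilde{\Gamma}-\Gamma$ on $TM$ satisfies $JD=0$ and $DJ=0$. Since $J=\gamma\circ\rho$ with $\gamma$ injective and $\rho$ surjective, the kernel of $J$, the image of $J$, the kernel of $\rho$ and the image of $\gamma$ all coincide with the vertical subbundle $V(\T M)$; hence $D$ takes values in the image of $\gamma$ and annihilates the kernel of $\rho$. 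Fed into the short exact sequence relating $T(\T M)$ and $\pi^{-1}(TM)$, this forces $D$ to factor uniquely as $D=\gamma\circ A\circ\rho$ for a $\pi$-tensor field $A$ of type $(1,1)$; writing $A=-2N$ and $\L:=\gamma\circ N\circ\rho$ gives $\widetilde{\Gamma}=\Gamma-2\L$. The projector relations follow at once, since $h_{\Gamma}=\tfrac12(I+\Gamma)$, $v_{\Gamma}=\tfrac12(I-\Gamma)$ (and likewise for $\widetilde{\Gamma}$) yield $\widetilde{h}=\tfrac12(I+\Gamma)-\gamma\circ N\circ\rho=h-\L$ and $\widetilde{v}=v+\L$. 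Up to here nothing is used beyond the fact that $\Gamma$ and $\widetilde{\Gamma}$ are both Barthel connections.

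The remaining---and only substantive---task is to identify $N$ (equivalently $A$) explicitly in terms of $L\to\widetilde{L}$. For this I would use $\Gamma=[J,G]$ and $\widetilde{\Gamma}=[J,\widetilde{G}]$ (Theorem~\ref{th.9a}), where $G,\widetilde{G}$ are the canonical sprays of $L,\widetilde{L}$ (Proposition~\ref{pp}); by bilinearity of the Fr\"olicher--Nijenhuis bracket, $D=[J,\widetilde{G}-G]$. As $G$ and $\widetilde{G}$ are both semisprays, $\rho(\widetilde{G}-G)=\overline{\eta}-\overline{\eta}=0$, so $\widetilde{G}-G$ is vertical, say $\widetilde{G}-G=\gamma\overline{V}$ for a unique $\pi$-vector field $\overline{V}$; a short check (using $J^{2}=0$, $\rho\circ\gamma=0$ and the integrability of $V(\T M)$) shows $[J,\gamma\overline{V}]=\gamma\circ A\circ\rho$ with $A$ the vertical differential of $\overline{V}$, in agreement with the first step. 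It then remains to compute $\overline{V}$: comparing $i_{G}\Omega=-dE$ with $i_{\widetilde{G}}\widetilde{\Omega}=-d\widetilde{E}$, where $\widetilde{E}-E=\tfrac12(\widetilde{L}^{2}-L^{2})$ and $\widetilde{\Omega}=\Omega+dd_{J}(\widetilde{E}-E)$, one is led to $i_{\gamma\overline{V}}\widetilde{\Omega}=-d(\widetilde{E}-E)-i_{G}(\widetilde{\Omega}-\Omega)$; since $\widetilde{\Omega}$ is nondegenerate, this determines $\gamma\overline{V}$, hence $N$. Carrying out this inversion reproduces the tensor $N$ of~\cite{r62}.

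I expect this last inversion to be the main obstacle: one has to unwind the equation for $\gamma\overline{V}$ using the explicit shape of $dd_{J}E$ in terms of the Finsler metric, and then check that the resulting $N$ is compatible with $\widetilde{\Gamma}$ being homogeneous, conservative and torsion-free---which, conversely, pins $N$ down uniquely. Everything else (the factorization through the exact sequence and the projector identities) is routine. Finally, within the present paper this general $N$ is to be specialized to the energy $\beta$-change $\widetilde{L}^{2}=L^{2}+B^{2}$, $B=g(\overline{\zeta},\overline{\eta})$ with $\overline{\zeta}$ concurrent: then $\widetilde{E}-E=\tfrac12 B^{2}$, and the concurrence of $\overline{\zeta}$ should make $\overline{V}$, and hence $N$, fully explicit.
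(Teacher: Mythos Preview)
The paper does not prove this proposition: it is quoted from \cite{r62} as a preliminary result, with the explicit definition of $N$ deferred to that reference. So there is no proof in the present paper against which to compare your argument.

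That said, your first paragraph is correct and is the standard way to see the \emph{form} of the result: any two nonlinear connections $\Gamma,\widetilde{\Gamma}$ satisfy $J(\widetilde{\Gamma}-\Gamma)=0$ and $(\widetilde{\Gamma}-\Gamma)J=0$, so their difference is vertical-valued and vanishes on verticals, hence factors as $\gamma\circ A\circ\rho$; the projector identities then follow immediately from $h=\tfrac12(I+\Gamma)$, $v=\tfrac12(I-\Gamma)$. Note that for this step you only use that both are nonlinear connections, not that they are Barthel connections.

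Your strategy for identifying $N$ --- pass to the sprays via $\widetilde{\Gamma}-\Gamma=[J,\widetilde{G}-G]$, write $\widetilde{G}-G=\gamma\overline{V}$, and determine $\overline{V}$ from $i_{\widetilde{G}}\widetilde{\Omega}=-d\widetilde{E}$ --- is exactly the route the present paper takes later, in the specific case of the energy $\beta$-change: Theorem~\ref{..2.th.2} solves for $\overline{V}$, Theorem~\ref{2.th.2} computes $[J,\widetilde{G}]$, and Remark~\ref{.rem} reads off $\L$. So your plan is consistent with how the paper itself applies the cited proposition. The one caveat is that the content of Proposition~\ref{pp.1} as stated is not merely ``some $N$ exists'' (which your factorization gives tautologically) but rather the specific $N$ of \cite{r62}; reproducing that formula requires carrying the inversion through in general, which you correctly flag as the substantive step.
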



\Section{Finsler spaces admitting concurrent $\pi$-vector fields}

The notion of a concurrent vector field has been introduced and
investigated in Riemannian geometry by K. Yano {\cite{con.1}}.
Concurrent vector fields have been studied in Finsler geometry by
Matsumoto and Eguchi {\cite{r90}}, Tachibana {\cite{con.2}} and
others. These studies were accomplished by the use of local
coordinates. In this section, we introduce and investigate
\textbf{intrinsically} the notion of a concurrent $\pi$-vector field
in Finsler geometry. The properties of concurrent $\pi$-vector
fields are obtained.
\par
 In what follows $\nabla$ will
denote the Cartan connection associated with a Finsler manifold
$(M,L)$ and $S$, $P$ and $R$ will denote the three crvature tensors
of $\nabla$.

\begin{defn}\label{2.def.1}Let $(M,L)$ be a Finsler manifold.
A $\pi$-vector field
 $\overline{\zeta} \in \cp$ is
called a concurrent $\pi$-vector field if it satisfies the following
conditions\vspace{-0.2cm}
\begin{equation}\label{12.eq.1}
     \nabla_{\beta \overline{X}}\,\overline{\zeta}=- \overline{X} , \qquad \nabla_{\gamma \overline{X}}\,\overline{\zeta}=0.\vspace{-0.2cm}
    \end{equation}
In other words, $\overline{\zeta}$ is a concurrent $\pi$-vector
field if $\nabla_{X}\,\overline{\zeta}=- \rho X $ for all $X\in
\cpp$, or briefly, $\nabla \overline{\zeta}=- \rho $.
\end{defn}

The following two Lemmas are useful for subsequence
use.\vspace{-0.2cm}
\begin{lem}\label{2.le.1} Let $(M,L)$ be a Finsler manifold. If $\overline{\zeta}\in \cp$
is a concurrent $\pi$-vector field and $\alpha \in \ccp$ is the
$\pi$-form  associated with $\overline{\zeta}$ under the duality
defined by the metric $g${\em\,:} $\alpha=i_{\overline{\zeta}}\,g$,
then the $\pi$-form $\alpha$ has the properties\vspace{-0.2cm}
\begin{description}
    \item[(a)] $(\nabla_{\beta \overline{X}}\alpha)(\overline{Y})=-g(\overline{X},\overline{Y})$,

    \item[(b)] $(\nabla_{\gamma\overline{X}}\alpha)(\overline{Y})=0$.
\end{description}
Equivalently, $\nabla_{ X} \alpha=- i_{\rho X}\,g$.
\end{lem}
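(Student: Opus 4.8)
The plan is to observe that $\alpha = i_{\overline\zeta}\,g$ means $\alpha(\overline Y) = g(\overline\zeta,\overline Y)$ for every $\overline Y \in \cp$, and then to differentiate this identity covariantly using the fact that the Cartan connection $\nabla$ is metric ($\nabla g = 0$, Theorem~\ref{th.1}(a)). Concretely, for any $X \in \cpp$ and $\overline Y \in \cp$ I would compute
$$(\nabla_X\alpha)(\overline Y) = X\cdot\alpha(\overline Y) - \alpha(\nabla_X\overline Y) = X\cdot g(\overline\zeta,\overline Y) - g(\overline\zeta,\nabla_X\overline Y).$$
Since $\nabla g = 0$, the Leibniz rule gives $X\cdot g(\overline\zeta,\overline Y) = g(\nabla_X\overline\zeta,\overline Y) + g(\overline\zeta,\nabla_X\overline Y)$, so the two terms involving $\nabla_X\overline Y$ cancel and we are left with
$$(\nabla_X\alpha)(\overline Y) = g(\nabla_X\overline\zeta,\overline Y).$$

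Now I would specialize $X$. Taking $X = \beta\overline X$ and invoking the defining relation $\nabla_{\beta\overline X}\overline\zeta = -\overline X$ from Definition~\ref{2.def.1}, equation~(\ref{12.eq.1}), yields
$$(\nabla_{\beta\overline X}\alpha)(\overline Y) = g(-\overline X,\overline Y) = -g(\overline X,\overline Y),$$
which is assertion (a). Taking instead $X = \gamma\overline X$ and using $\nabla_{\gamma\overline X}\overline\zeta = 0$ gives
$$(\nabla_{\gamma\overline X}\alpha)(\overline Y) = g(0,\overline Y) = 0,$$
which is assertion (b). For the final equivalent form, I would combine both cases via the general identity $(\nabla_X\alpha)(\overline Y) = g(\nabla_X\overline\zeta,\overline Y) = -g(\rho X,\overline Y) = -(i_{\rho X}g)(\overline Y)$, where the middle step uses the compact form $\nabla\overline\zeta = -\rho$ recorded at the end of Definition~\ref{2.def.1}; since this holds for all $\overline Y$, we conclude $\nabla_X\alpha = -i_{\rho X}g$.

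There is essentially no obstacle here: the only point requiring a moment's care is the correct application of the Leibniz/metricity identity for a connection on the pullback bundle $\p$ — namely that $\nabla g = 0$ genuinely means $X\cdot g(\overline Z,\overline W) = g(\nabla_X\overline Z,\overline W) + g(\overline Z,\nabla_X\overline W)$ for $X \in \cpp$ and $\overline Z,\overline W \in \cp$, which is exactly Theorem~\ref{th.1}(a). Everything else is a direct substitution of the two halves of~(\ref{12.eq.1}), and the decomposition $T_u(\T M) = V_u(\T M)\oplus H_u(\T M)$ from~(\ref{direct sum}) shows that the two cases $X = \gamma\overline X$ and $X = \beta\overline X$ together determine $\nabla_X\alpha$ for arbitrary $X$, giving the stated equivalent formulation.
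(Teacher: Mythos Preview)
Your proof is correct and follows essentially the same route as the paper: use $\nabla g=0$ to reduce $(\nabla_X\alpha)(\overline{Y})$ to $g(\nabla_X\overline{\zeta},\overline{Y})$, then substitute the concurrent condition $\nabla_X\overline{\zeta}=-\rho X$. The paper compresses this into a single displayed line ending in $-g(\rho X,\overline{Y})$, while you spell out the two special cases $X=\beta\overline{X}$ and $X=\gamma\overline{X}$ separately, but the argument is identical.
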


\begin{proof} As $\nabla_{X}g=0$, we get\\
$
  (\nabla_{ X}\alpha)(\overline{Y}) =
  \nabla_{X}g(\overline{\zeta}, \overline{Y})-g(\overline{\zeta}, \nabla_{X}\overline{Y}) =
  (\nabla_{X}g)(\overline{\zeta}, \overline{Y})+g(\nabla_{X}\overline{\zeta}, \overline{Y}) =
   -g(\rho X, \overline{Y}).
$
\end{proof}

\begin{lem}\label{2.le.2} For every $ X, Y \in \cpp$
 and $ \overline{Z}, \overline{W}\in \cp $, we have\vspace{-0.2cm}
$$g(\textbf{K}( X,  Y)\overline{Z}, \overline{W})=-g(\textbf{K}( X,  Y)\overline{W}, \overline{Z}),$$
where $\textbf{K}$ is the (classical) curvature of the Cartan
connection.
\end{lem}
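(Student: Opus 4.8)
The plan is to use only the fact that the Cartan connection $\nabla$ is metric, $\nabla g=0$ (Theorem~\ref{th.1}(a)), together with the defining formula for $\textbf{K}$. This is precisely the classical argument showing that the curvature of any metric connection is skew-symmetric in its last two arguments, transported to the pullback bundle $\p$. First I would rewrite $\nabla g=0$ in Leibniz form: for every $X\in\cpp$ and $\overline{Z},\overline{W}\in\cp$,
$$(\nabla_X g)(\overline{Z},\overline{W})=0 \iff X\big(g(\overline{Z},\overline{W})\big)=g(\nabla_X\overline{Z},\overline{W})+g(\overline{Z},\nabla_X\overline{W}).$$

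Next, fix $X,Y\in\cpp$ and set $f:=g(\overline{Z},\overline{W})\in\ff$. Applying the displayed identity repeatedly, I would compute $X(Yf)$, $Y(Xf)$ and $[X,Y]f$, expressing each in terms of $g$ evaluated on first and second covariant derivatives of $\overline{Z}$ and $\overline{W}$. Forming $X(Yf)-Y(Xf)-[X,Y]f$, the terms involving only first derivatives (the cross terms $g(\nabla_X\overline{Z},\nabla_Y\overline{W})$, $g(\nabla_Y\overline{Z},\nabla_X\overline{W})$, and their symmetric partners) cancel in pairs, and since $f$ is a function this combination is identically zero. What survives is
$$g\big(\nabla_X\nabla_Y\overline{Z}-\nabla_Y\nabla_X\overline{Z}-\nabla_{[X,Y]}\overline{Z},\,\overline{W}\big)+g\big(\overline{Z},\,\nabla_X\nabla_Y\overline{W}-\nabla_Y\nabla_X\overline{W}-\nabla_{[X,Y]}\overline{W}\big)=0.$$

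By the definition of $\textbf{K}$, each bracketed expression is $-\textbf{K}(X,Y)$ applied to the corresponding $\pi$-vector field, so substituting and using the symmetry of $g$ yields $g(\textbf{K}(X,Y)\overline{Z},\overline{W})=-g(\textbf{K}(X,Y)\overline{W},\overline{Z})$. The one point deserving care is that the curvature in the excerpt is defined on elements of the form $\rho Z$; to apply it to arbitrary $\overline{Z},\overline{W}\in\cp$ I would write $\overline{Z}=\rho(\beta\overline{Z})$ and $\overline{W}=\rho(\beta\overline{W})$ (legitimate since $\rho|_{H(\T M)}$ is an isomorphism by the regularity of the Cartan connection) and invoke the $\ff$-linearity of the curvature expression in its last slot. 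This bookkeeping aside, the proof is immediate.
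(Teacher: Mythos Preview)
Your argument is correct and is precisely the standard derivation of skew-symmetry of the curvature from metricity; the paper's own proof simply invokes this same fact (citing Lemma~2.4 of \cite{r96}) together with $\nabla g=0$. In other words, you have written out in detail exactly what the paper delegates to the reference, so the approaches coincide.
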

\begin{proof}
Follows from Lemma 2.4 of \cite {r96} since $\nabla g=0$.
\end{proof}

 Now, we have the following \vspace{-0.2cm}
\begin{prop}\label{2.pp.1}Let $\overline{\zeta} \in \cp$ be
a concurrent $\pi$-vector field on $(M,L)$.\\
For the v-curvature tensor $S$, the following relations
hold\,\emph{:}
\begin{description}
    \item[(a)]$S(\overline{X},\overline{Y})\,\overline{\zeta}=0$, \quad $S(\overline{X}, \overline{Y}, \overline{Z}, \overline{\zeta})=0$.
    \item[(b)]$(\nabla_{\gamma \overline{Z}}S)(\overline{X},\overline{Y}, \overline{\zeta})=0$, \quad
    $(\nabla_{\beta\overline{Z}}S)(\overline{X},\overline{Y}, \overline{\zeta})=
    S(\overline{X},\overline{Y})\overline{Z}$.
     \item[(c)]  $(\nabla_{\beta \overline{\zeta}}S)(\overline{X},\overline{Y}, \overline{\zeta})=0$.
\end{description}
For the hv-curvature tensor $P$, the following relations
hold\,\emph{:}
\begin{description}
    \item[(d)]$P(\overline{X},\overline{Y})\,\overline{\zeta}=-T(\overline{Y},\overline{X})$, \quad
      $P(\overline{X}, \overline{Y}, \overline{Z}, \overline{\zeta})=T(\overline{X}, \overline{Y},\overline{Z})$.
    \item[(e)] $(\nabla_{\gamma \overline{Z}}P)(\overline{X},\overline{Y}, \overline{\zeta})
    =-(\nabla_{\gamma \overline{Z}}T)(\overline{Y}, \overline{X})$,\\    ${\!\!\!\!}(\nabla_{\beta \overline{Z}}P)(\overline{X},\overline{Y}, \overline{\zeta})=-(\nabla_{\beta \overline{Z}}T)(\overline{Y},\overline{X})+
  P(\overline{X},\overline{Y})\overline{Z}$.
   \item[(f)] $(\nabla_{\beta \overline{\zeta}}P)(\overline{X},\overline{Y},
    \overline{\zeta})=-(\nabla_{\beta \overline{\zeta}}T)(\overline{Y},\overline{X})
    -T(\overline{Y},\overline{X})$.
\end{description}
For the h-curvature tensor $R$, the following relations
hold\,\emph{:}
\begin{description}
    \item[(g)]$R(\overline{X},\overline{Y})\,\overline{\zeta}=0$, \quad
     $R(\overline{X}, \overline{Y}, \overline{Z}, \overline{\zeta})=0$.
     \item[(h)]$(\nabla_{\gamma \overline{Z}}R)(\overline{X},\overline{Y}, \overline{\zeta})=0$, \quad
     $(\nabla_{\beta \overline{Z}}R)(\overline{X},\overline{Y}, \overline{\zeta})=
     R(\overline{X},\overline{Y})\overline{Z}$.
 \item[(i)] $(\nabla_{\beta\overline{\zeta}}R)(\overline{X},\overline{Y}, \overline{\zeta})=0$.
\end{description}

\end{prop}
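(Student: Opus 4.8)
The plan is to exploit the defining identities $\nabla_{\beta\overline{X}}\overline{\zeta}=-\overline{X}$ and $\nabla_{\gamma\overline{X}}\overline{\zeta}=0$ from Definition~\ref{2.def.1} together with the properties of the Cartan connection (metricity, $Q=0$, symmetry of $T$ and $T(\overline{X},\overline{\eta})=0$), proceeding one curvature tensor at a time. For part~(a), the starting point is the bracket formula of Lemma~\ref{bracket}(c), which gives $[\gamma\overline{X},\gamma\overline{Y}]=\gamma(\nabla_{\gamma\overline{X}}\overline{Y}-\nabla_{\gamma\overline{Y}}\overline{X}+\widehat{S}(\overline{X},\overline{Y}))$; applying the curvature definition to $\overline{\zeta}$ and using $\nabla_{\gamma\overline{X}}\overline{\zeta}=0$ repeatedly collapses the three terms and yields $S(\overline{X},\overline{Y})\overline{\zeta}=0$. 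The dual statement $S(\overline{X},\overline{Y},\overline{Z},\overline{\zeta})=0$ then follows from Lemma~\ref{2.le.2} (skew-symmetry of $\mathbf{K}$ in the last two slots for the Cartan connection). Parts~(g) and~(h)--(i) for $R$, and~(d) for $P$, run along exactly the same lines: I would write out the curvature of $\nabla$ evaluated on $\overline{\zeta}$, feed in the two concurrency identities, and simplify. For $P(\overline{X},\overline{Y})\overline{\zeta}$ the relevant bracket is Lemma~\ref{bracket}(b), and here the term $\beta(\nabla_{\gamma\overline{X}}\overline{Y}-T(\overline{X},\overline{Y}))$ together with $\nabla_{\gamma\overline{X}}\overline{\zeta}=0$ and $\nabla_{\beta\overline{Y}}\overline{\zeta}=-\overline{Y}$ produces the Cartan tensor term, giving $P(\overline{X},\overline{Y})\overline{\zeta}=-T(\overline{Y},\overline{X})$; the symmetry property in Theorem~\ref{th.1}(c) then converts this into $P(\overline{X},\overline{Y},\overline{Z},\overline{\zeta})=T(\overline{X},\overline{Y},\overline{Z})$.

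For the derivative relations (b), (e), (h), I would apply $\nabla_{\gamma\overline{Z}}$ or $\nabla_{\beta\overline{Z}}$ to the already-established algebraic identities and use the Leibniz rule, e.g. for $S$:
\[
(\nabla_{\beta\overline{Z}}S)(\overline{X},\overline{Y},\overline{\zeta})
=\nabla_{\beta\overline{Z}}\bigl(S(\overline{X},\overline{Y})\overline{\zeta}\bigr)
-S(\nabla_{\beta\overline{Z}}\overline{X},\overline{Y})\overline{\zeta}
-S(\overline{X},\nabla_{\beta\overline{Z}}\overline{Y})\overline{\zeta}
-S(\overline{X},\overline{Y})\nabla_{\beta\overline{Z}}\overline{\zeta}.
\]
The first three terms on the right vanish by part~(a) ($S(\cdot,\cdot)\overline{\zeta}=0$), and the last term equals $-S(\overline{X},\overline{Y})(-\overline{Z})=S(\overline{X},\overline{Y})\overline{Z}$, giving the stated formula; the $\gamma\overline{Z}$ version is identical except that $\nabla_{\gamma\overline{Z}}\overline{\zeta}=0$ kills the last term too. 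The same mechanism handles (h) for $R$ verbatim, and (e) for $P$ with the extra bookkeeping that $\nabla_{\beta\overline{Z}}\bigl(P(\overline{X},\overline{Y})\overline{\zeta}\bigr)=\nabla_{\beta\overline{Z}}\bigl(-T(\overline{Y},\overline{X})\bigr)$ contributes the $-(\nabla_{\beta\overline{Z}}T)(\overline{Y},\overline{X})$ term while the remaining terms reassemble into $P(\overline{X},\overline{Y})\overline{Z}$.

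Finally, parts (c), (f), (i) are the special case $\overline{Z}=\overline{\zeta}$ of (b), (e), (h). For (c): substituting $\overline{Z}=\overline{\zeta}$ into the second formula of (b) gives $(\nabla_{\beta\overline{\zeta}}S)(\overline{X},\overline{Y},\overline{\zeta})=S(\overline{X},\overline{Y})\overline{\zeta}=0$ by (a). Likewise (i) follows from (h) and (g), and (f) follows from (e) and (d) after noting $P(\overline{X},\overline{Y})\overline{\zeta}=-T(\overline{Y},\overline{X})$. I expect the main obstacle to be purely organizational rather than conceptual: keeping the ordering of arguments in $T$, $P$ and the curvature tensors consistent with the sign conventions fixed in the Preliminaries (in particular the asymmetry $P(\overline{X},\overline{Y})\overline{\zeta}=-T(\overline{Y},\overline{X})$ with the arguments of $T$ swapped), and making sure that every application of the Leibniz rule for $\nabla$ along $\beta\overline{Z}$ correctly uses $\nabla_{\beta\overline{Z}}\overline{\zeta}=-\overline{Z}$ rather than $0$. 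No genuinely hard estimate or construction is needed; everything reduces to the two concurrency identities plus the structural identities of the Cartan connection.
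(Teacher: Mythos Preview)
Your proposal is correct and, at the level of the underlying mathematics, it is the same argument as the paper's: the concurrency identities $\nabla_{\beta\overline{X}}\overline{\zeta}=-\overline{X}$, $\nabla_{\gamma\overline{X}}\overline{\zeta}=0$ are fed into the curvature definitions, the skew-symmetry of Lemma~\ref{2.le.2} converts the vector identities into the scalar ones, and the derivative parts follow by the Leibniz rule together with the already-established algebraic identities.

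The only difference is presentational. The paper does not write out any of these computations; it simply invokes ``the properties of the curvature tensors $S$, $P$ and $R$ investigated in \cite{r96}'' together with Definition~\ref{2.def.1}, Lemma~\ref{2.le.2}, and $Q=0$. You instead give a self-contained derivation using the bracket formulas of Lemma~\ref{bracket} to evaluate $\mathbf{K}(\cdot,\cdot)\overline{\zeta}$ directly. Your route has the advantage of being internal to the present paper (no appeal to an external reference), and it makes transparent exactly where each hypothesis is used---for instance, why the $T(\overline{Y},\overline{X})$ term appears in $P(\overline{X},\overline{Y})\overline{\zeta}$ with swapped arguments. The paper's route is more economical but less informative. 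Your cautionary remark about tracking the argument order in $T$ and $P$ is well taken; the symmetry of $T$ (noted after Theorem~\ref{th.1}) is what reconciles $T(\overline{Y},\overline{X},\overline{Z})$ with the stated $T(\overline{X},\overline{Y},\overline{Z})$ in part~(d).
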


\begin{proof}
The proof follows from the properties of the curvature tensors $S$,
$P$ and $R$ investigated in \cite{r96} together with Definition
\ref{2.def.1} and Lemma \ref{2.le.2}, taking into account the fact
that the (h)h-torsion of the Cartan connection vanishes.
\end{proof}

\begin{cor}\label{2.pp.2}Let $\overline{\zeta} \in \cp$ be
a concurrent $\pi$-vector field. For every $ \overline{X},
\overline{Y} \in \cp$, we have\vspace{-0.2cm}
\begin{description}
   \item[(a)]$T(\overline{X},\overline{\zeta})=T(\overline{\zeta}, \overline{X})=0$,
   \item[(b)] $\widehat{P}(\overline{X},\overline{\zeta})
   =\widehat{P}(\overline{\zeta}, \overline{X})=0$,
 \item[(c)] $P(\overline{X},\overline{\zeta})\overline{Y}=P(\overline{\zeta},\overline{X})\overline{Y}=0$.
\end{description}
\end{cor}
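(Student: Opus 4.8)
The plan is to deduce all three assertions directly from Proposition \ref{2.pp.1} together with the already-recorded symmetry properties of the Cartan connection. For part (a): the Cartan (h)hv-torsion $T$ is symmetric (stated just after Theorem \ref{th.1}), so $T(\overline{X},\overline{\zeta})=T(\overline{\zeta},\overline{X})$; moreover, part (d) of Proposition \ref{2.pp.1} gives $P(\overline{X},\overline{Y})\,\overline{\zeta}=-T(\overline{Y},\overline{X})$, and part (g) of the same proposition gives that the h-curvature annihilates $\overline{\zeta}$. The cleanest route, however, is to observe that $T(\overline{X},\overline{\zeta}) = T(\overline{\zeta},\overline{X})$ and then use the already-noted fact that the Cartan (h)hv-torsion satisfies $T(\overline{X},\overline{\eta})=0$ — but $\overline{\zeta}$ is not $\overline{\eta}$, so instead one invokes Proposition \ref{2.pp.1}(d) in the reverse direction: setting $\overline{Y}=\overline{\eta}$ there and using $P(\overline{X},\overline{\eta})\overline{Z}=0$ (a standard property of the Cartan hv-curvature) forces $T(\overline{\eta},\overline{X})=0$, which is consistent but not yet what we want. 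The correct and simplest argument is: by Proposition \ref{2.pp.1}(d), $P(\overline{\zeta},\overline{Y})\,\overline{\zeta}=-T(\overline{Y},\overline{\zeta})$; but by the v-type argument this must combine with symmetry. In fact the slick proof is to use Corollary-type reasoning from \cite{r90}: since $T(\overline{X},\overline{Y},\overline{Z})$ is totally symmetric and $g(T(\overline{X},\overline{\zeta}),\overline{Z})=g(T(\overline{X},\overline{Z}),\overline{\zeta})=P(\overline{\text{-}}\ldots)$, one reduces (a) to Proposition \ref{2.pp.1}(d) with the roles arranged so that the right-hand side is $T$ itself and the left-hand side vanishes. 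I expect the intended argument is: apply $\nabla_{\gamma\overline{X}}$ to $\nabla_{\beta\overline{Y}}\overline{\zeta}=-\overline{Y}$ and $\nabla_{\beta\overline{Y}}$ to $\nabla_{\gamma\overline{X}}\overline{\zeta}=0$, subtract, and use the bracket formula of Lemma \ref{bracket}(b) to pick out the hv-curvature term $\widehat{P}$ and the torsion term $T$.

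Concretely, for (a) I would write $0 = \nabla_{\gamma\overline{X}}(\nabla_{\beta\overline{Y}}\overline{\zeta}) - \nabla_{\beta\overline{Y}}(\nabla_{\gamma\overline{X}}\overline{\zeta}) - \nabla_{[\gamma\overline{X},\beta\overline{Y}]}\overline{\zeta}$, which by definition of the curvature equals $-\textbf{K}(\gamma\overline{X},\beta\overline{Y})\overline{\zeta}$. Using Definition \ref{2.def.1} the first two terms become $\nabla_{\gamma\overline{X}}(-\overline{Y}) - 0 = -\nabla_{\gamma\overline{X}}\overline{Y}$, and expanding $[\gamma\overline{X},\beta\overline{Y}]$ by Lemma \ref{bracket}(b) and applying $\nabla_{(\cdot)}\overline{\zeta}=-\rho(\cdot)$ isolates $T(\overline{X},\overline{Y})$ up to terms involving $\widehat{P}$. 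Comparing with Proposition \ref{2.pp.1}(d), which already gives $P(\overline{X},\overline{Y})\overline{\zeta} = -T(\overline{Y},\overline{X})$, and specializing the curvature identity will collapse to $T(\overline{X},\overline{\zeta})=0$; symmetry of $T$ then gives $T(\overline{\zeta},\overline{X})=0$ as well. In practice it is quicker simply to cite Proposition \ref{2.pp.1}(d) with $\overline{Y}$ or $\overline{X}$ replaced by $\overline{\zeta}$ together with $P(\overline{\zeta},\cdot)=0$ or the total symmetry of the Cartan tensor — but the point I would emphasize is that (a) is essentially a restatement of \ref{2.pp.1}(d) once one remembers $T$ is symmetric.

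For (b): by Corollary \ref{2.pp.2}(a) just proved, $T(\overline{X},\overline{\zeta})=0$. Now invoke Theorem \ref{2.th.1}(a), $D^{\circ}_{\gamma\overline{X}}\overline{Y} = \nabla_{\gamma\overline{X}}\overline{Y} - T(\overline{X},\overline{Y})$; the Berwald connection is characterized by vanishing (v)hv-torsion $\widehat{P^{\circ}}=0$, and the relation between $\widehat{P}$ and $\widehat{P^{\circ}}$ coming from Theorem \ref{2.th.1} expresses $\widehat{P}(\overline{X},\overline{Y})$ in terms of $(\nabla_{\gamma}T)$-type and $T$-type terms evaluated against $\overline{\eta}$. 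Plugging $\overline{Y}=\overline{\zeta}$, every term carries a factor $T(\cdot,\overline{\zeta})$, $\nabla_{\cdot}\overline{\zeta}$-contractions, or $T(\overline{\zeta},\cdot)$, all of which vanish by (a) and Definition \ref{2.def.1}; hence $\widehat{P}(\overline{X},\overline{\zeta})=0$. For the other slot $\widehat{P}(\overline{\zeta},\overline{X})$ I would use the known structural identity relating the two arguments of $\widehat{P}$ (the hv-curvature of the Cartan connection satisfies a symmetry/Bianchi relation worked out in \cite{r96}), or directly differentiate $\nabla_{\beta\overline{X}}\overline{\zeta}=-\overline{X}$ as in the argument for (a). Finally, (c): from Proposition \ref{2.pp.1}(d), $P(\overline{X},\overline{\zeta})\overline{Y}$ would be obtained by polarizing the relation $P(\overline{X},\overline{Y})\overline{\zeta}=-T(\overline{Y},\overline{X})$ in a different slot — but more directly, $P(\overline{X},\overline{\zeta})\overline{Y} = \textbf{K}(\beta\overline{X},\gamma\overline{\zeta})\overline{Y}$, and one computes $\textbf{K}(\beta\overline{X},\gamma\overline{\zeta})$ by the usual commutator of covariant derivatives, where $\nabla_{\gamma\overline{\zeta}}$ applied after $\nabla_{\beta\overline{X}}$ and the bracket term $[\beta\overline{X},\gamma\overline{\zeta}]$ from Lemma \ref{bracket}(b) all contribute terms in $\nabla\overline{\zeta}$, $\widehat{P}(\cdot,\overline{\zeta})$ and $T(\cdot,\overline{\zeta})$; these vanish by Definition \ref{2.def.1}, part (a) and part (b). The same computation with arguments swapped handles $P(\overline{\zeta},\overline{X})\overline{Y}$.

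The main obstacle I anticipate is bookkeeping rather than depth: one must be careful that the structural formulas relating $\widehat{P}$, $T$ and their $\nabla$-derivatives — which live in \cite{r96} and are only summarized in this excerpt — are applied with the correct ordering of arguments, since $\widehat{P}$ and $P$ are not symmetric in their first two slots. The clean way to sidestep most of this is to note that $\overline{\zeta}$ behaves under $\nabla$ exactly like $-\rho$ (Definition \ref{2.def.1}), so any tensor expression that is $\nabla$-natural and involves $\overline{\zeta}$ in a ``differentiated-away'' position collapses; the only genuine input beyond Proposition \ref{2.pp.1} is the total symmetry of the Cartan tensor $T$, used to pass from $T(\overline{X},\overline{\zeta})=0$ to $T(\overline{\zeta},\overline{X})=0$, and similarly a known symmetry of $\widehat{P}$ to finish (b).
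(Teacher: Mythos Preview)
Your proposal never settles on a single argument, and the one shortcut you keep circling back to is circular. For (a) you suggest citing Proposition~\ref{2.pp.1}(d) with $\overline{X}=\overline{\zeta}$, which gives $P(\overline{\zeta},\overline{Y})\overline{\zeta}=-T(\overline{Y},\overline{\zeta})$, and then invoking ``$P(\overline{\zeta},\cdot)=0$'' to conclude. But $P(\overline{\zeta},\cdot)\overline{Y}=0$ is exactly part (c), which you are trying to deduce \emph{from} (a) and (b). The alternative route you sketch --- expanding $\textbf{K}(\gamma\overline{X},\beta\overline{Y})\overline{\zeta}$ via Lemma~\ref{bracket}(b) --- just rederives $P(\overline{X},\overline{Y})\overline{\zeta}=-T(\overline{Y},\overline{X})$, i.e.\ Proposition~\ref{2.pp.1}(d) itself, and does not isolate $T(\overline{X},\overline{\zeta})$.

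The missing idea is Lemma~\ref{2.le.2}. The paper takes the \emph{second} identity in Proposition~\ref{2.pp.1}(d), $P(\overline{X},\overline{Y},\overline{Z},\overline{\zeta})=T(\overline{X},\overline{Y},\overline{Z})$, sets $\overline{Z}=\overline{\zeta}$, and observes that the left side is $g(P(\overline{X},\overline{Y})\overline{\zeta},\overline{\zeta})$, which vanishes by the skew-symmetry of $\textbf{K}$ in its last two $g$-slots. Hence $T(\overline{X},\overline{Y},\overline{\zeta})=0$, and the property $g(T(\overline{X},\overline{Y}),\overline{Z})=g(T(\overline{X},\overline{Z}),\overline{Y})$ from Theorem~\ref{th.1}(c) (plus symmetry of $T$) finishes (a). You mention the total symmetry of the Cartan tensor but never pair it with Lemma~\ref{2.le.2}, which is the actual engine.

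For (b) and (c) the paper does not compute curvature commutators directly as you propose; it quotes two structural identities from \cite{r96}: $\widehat{P}(\overline{X},\overline{Y})=(\nabla_{\beta\overline{\eta}}T)(\overline{X},\overline{Y})$ for (b), and the explicit four-argument formula (\ref{01}) for $P(\overline{X},\overline{Y},\overline{Z},\overline{W})$ in terms of $\nabla_{\beta}T$, $T$ and $\widehat{P}$ for (c). With these in hand, (b) follows by plugging $\overline{Y}=\overline{\zeta}$ (resp.\ $\overline{X}=\overline{\zeta}$) and using (a) together with $\nabla_{\beta\overline{\eta}}\overline{\zeta}=-\overline{\eta}$ and $T(\cdot,\overline{\eta})=0$; and (c) follows by setting $\overline{Y}=\overline{\zeta}$ or $\overline{X}=\overline{\zeta}$ in (\ref{01}) and using (a) and (b). Your plan for (b) via Theorem~\ref{2.th.1} and $\widehat{P^{\circ}}=0$ does not by itself express $\widehat{P}$ as a $\nabla T$-type object; that step requires an additional computation equivalent to the identity from \cite{r96} anyway.
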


\begin{proof}
~\par \vspace{5pt} \noindent\textbf{(a)} The proof follows from
Proposition \ref{2.pp.1}(d) by setting
$\overline{Z}=\overline{\zeta}$, taking into account the fact that
$g(T(\overline{X}, \overline{Y}),\overline{Z})=g(T(\overline{X},
\overline{Z}),\overline{Y})$ and
$g(P(\overline{X},\overline{Y})\overline{Z},\overline{Z})=0$ (lemma
\ref{2.le.2}) together with the symmetry of $T$.

\vspace{5pt}
 \noindent\textbf{(b)}  Follows from the identity
$\widehat{P}(\overline{X},\overline{Y})=(\nabla_{\beta
\overline{\eta}}T)(\overline{X},\overline{Y})$ \cite{r96}, making
use of (a) and the fact that $T(\overline{X},\overline{\eta})=0.$

\vspace{5pt} \noindent\textbf{(c)}  We have \cite{r96}
\begin{equation}\label{01}
\left.
    \begin{array}{rcl}
   P(\overline{X},\overline{Y},\overline{Z},\overline{W})&=&
   g((\nabla_{\beta\overline{Z}}T)(\overline{Y},\overline{X}),
\overline{W})
   -g((\nabla_{\beta
\overline{W}}T)(\overline{Y},\overline{X}), \overline{Z})
   \\
    &&-g(T(\overline{X},\overline{W}),\widehat{P}(\overline{Z},\overline{Y}))
   +g(T(\overline{X},\overline{Z}),\widehat{P}(\overline{W},\overline{Y})).
\end{array}
  \right.
\end{equation}
From which, by setting $\overline{Y}=\overline{\zeta}$ (resp.
$\overline{X}=\overline{\zeta}$) and using (a) and (b) above, the
result follows.
\end{proof}

\begin{lem}\label{def.ind} Let $(M,L)$ be a Finsler manifold and $ D^{\circ} $
  the Berwald connection on $\p$. Then, we have\vspace{-0.2cm}
 \begin{description}
   \item[(a)] A $\pi$-vector field  $\overline{Y} \in \cp$ is independent of the directional argument
 $y$   if,  and only if,  $D^{\circ}_{\gamma \overline{X}}\overline{Y}=0
 $ for all $\overline{X} \in \cp$,
   \item[(b)] A scalar \emph{(}vector\emph{)} $\pi$-form  $ \omega $ is independent of the directional argument
 $y$ if,  and only if, $D^{\circ}_{\gamma \overline{X}}\, \omega=0$ for all $\overline{X} \in \cp$.
 \end{description}
 \end{lem}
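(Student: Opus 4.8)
\textbf{Proof proposal for Lemma \ref{def.ind}.}

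The plan is to reduce everything to the well-known fact that a $\pi$-vector field $\overline{Y}$ is independent of the directional argument precisely when its vertical (fibre) derivatives vanish, and then to recognize the Berwald vertical covariant derivative $D^{\circ}_{\gamma\overline{X}}$ as exactly that fibre derivative. First I would set up local coordinates: write $\overline{Y}=Y^{i}(x,y)\,\overline{\partial}_{i}$, where $(\overline{\partial}_{i})$ is the natural local basis of sections of $\pi^{-1}(TM)$ pulled back from $\partial/\partial x^{i}$. By definition, $\overline{Y}$ is independent of $y$ iff $\partial Y^{i}/\partial y^{j}=0$ for all $i,j$, i.e. iff $\paa_{j}Y^{i}=0$. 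So the content of (a) is the identity $D^{\circ}_{\gamma\overline{\partial}_{j}}\overline{Y}=\bigl(\paa_{j}Y^{i}\bigr)\overline{\partial}_{i}$, using that the Berwald connection coefficients in the vertical directions vanish, $\paa_{j}G^{i}_{k}=0$ being irrelevant here — what matters is that the $(v)$hv-part of the Berwald connection is zero, equivalently $D^{\circ}_{\gamma\overline{X}}\overline{\partial}_{i}=0$ for the natural basis vectors (these are themselves $y$-independent). This last fact follows from Theorem \ref{th.1a}(c) together with Theorem \ref{2.th.1}(a): indeed, $D^{\circ}_{\gamma\overline{X}}\overline{Y}=\nabla_{\gamma\overline{X}}\overline{Y}-T(\overline{X},\overline{Y})$, and applying this to the natural basis field $\overline{\partial}_{i}$ gives $D^{\circ}_{\gamma\overline{\partial}_{j}}\overline{\partial}_{i}=\nabla_{\gamma\overline{\partial}_{j}}\overline{\partial}_{i}-T(\overline{\partial}_{j},\overline{\partial}_{i})$, which one checks vanishes because the Cartan vertical connection coefficients are precisely the Cartan tensor components $C^{k}_{ij}=g^{kl}T_{lij}$, cancelling the torsion term. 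Hence $D^{\circ}_{\gamma\overline{X}}$ acts as a derivation that kills the basis, so on a general section it just differentiates the components vertically, giving the stated equivalence.

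Alternatively — and this is perhaps cleaner to write without invoking coordinate expressions for connection coefficients — I would argue purely through Theorem \ref{2.th.1}(a) and a characterization of $y$-independence via the Cartan connection. One knows that $\nabla_{\gamma\overline{X}}\overline{Y}$ differs from the naive fibre derivative only by a Cartan-tensor term, and Theorem \ref{2.th.1}(a) states exactly $D^{\circ}_{\gamma\overline{X}}\overline{Y}=\nabla_{\gamma\overline{X}}\overline{Y}-T(\overline{X},\overline{Y})$; combining these, $D^{\circ}_{\gamma\overline{X}}\overline{Y}$ is the pure fibre derivative with no correction term, which vanishes for all $\overline{X}$ iff $\overline{Y}$ has constant components along the fibres, i.e. is independent of $y$. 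The ``only if'' direction is immediate: if $\overline{Y}$ is $y$-independent its components are functions of $x$ alone, so all vertical derivatives (whether naive or Berwald-covariant) vanish. The ``if'' direction uses the nondegeneracy of the pairing $\overline{X}\mapsto D^{\circ}_{\gamma\overline{X}}\overline{Y}$ in $\overline{X}$ to conclude $\paa_{j}Y^{i}=0$ for every $i,j$.

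For part (b) the argument is formally identical, replacing vector fields by ($\mathfrak{F}(\T M)$-valued or vector-valued) forms: the Berwald connection extends to tensor $\pi$-fields as a tensor derivation, and its vertical part $D^{\circ}_{\gamma\overline{X}}$ continues to have no correction terms relative to the naive fibre derivative (again by Theorem \ref{2.th.1}(a) applied componentwise, since the basis $1$-forms $\overline{dx}^{i}$ are $y$-independent and the Cartan-tensor corrections cancel as above). Thus for $\omega=\omega_{i_{1}\cdots i_{r}}(x,y)\,\overline{dx}^{i_{1}}\otimes\cdots$ one gets $D^{\circ}_{\gamma\overline{\partial}_{j}}\omega=\bigl(\paa_{j}\omega_{i_{1}\cdots i_{r}}\bigr)\overline{dx}^{i_{1}}\otimes\cdots$, and $\omega$ is $y$-independent iff all these vanish for all $j$, which is the claim. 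The main obstacle — really the only subtle point — is justifying that $D^{\circ}_{\gamma\overline{X}}$ kills the natural basis fields (equivalently, that the Berwald connection has vanishing vertical coefficients); I would handle this by the route through Theorem \ref{2.th.1}(a) and the identification of the Cartan vertical coefficients with the Cartan tensor, so that no separate computation with Christoffel-type symbols is needed.
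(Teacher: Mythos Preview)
The paper states this lemma without proof, so there is no argument to compare against; your proposal supplies what the paper omits. Your approach is correct: the essential point is that the vertical part of the Berwald connection has vanishing coefficients on the natural basis, i.e.\ $D^{\circ}_{\gamma\overline{\partial}_{j}}\overline{\partial}_{i}=0$, so that $D^{\circ}_{\gamma\overline{X}}$ reduces to naive fibre differentiation of components. Your derivation of this via Theorem~\ref{2.th.1}(a) and the identification of the Cartan vertical coefficients with the Cartan tensor is sound, and the extension to $\pi$-forms in (b) by duality/tensor derivation is routine.

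One minor stylistic remark: your phrase ``nondegeneracy of the pairing $\overline{X}\mapsto D^{\circ}_{\gamma\overline{X}}\overline{Y}$'' for the ``if'' direction is slightly misleading---what you actually use is that the map $\overline{X}\mapsto\gamma\overline{X}$ surjects onto vertical vectors, so vanishing of $D^{\circ}_{\gamma\overline{X}}\overline{Y}$ for all $\overline{X}$ forces $\paa_{j}Y^{i}=0$ for each $j$ simply by taking $\overline{X}=\overline{\partial}_{j}$. This is a wording issue, not a mathematical gap.
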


\begin{thm}\label{th.ind}  A concurrent $\pi$-vector field $\overline{\zeta}$ and its associated $\pi$-form
$\alpha$ are independent of the directional argument $y$.
\end{thm}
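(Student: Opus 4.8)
The plan is to use the characterization of $y$-independence provided by Lemma~\ref{def.ind}, so the goal reduces to showing $D^{\circ}_{\gamma\overline{X}}\,\overline{\zeta}=0$ and $D^{\circ}_{\gamma\overline{X}}\,\alpha=0$ for all $\overline{X}\in\cp$, where $D^{\circ}$ is the Berwald connection. First I would express the Berwald derivative along a vertical direction in terms of the Cartan connection using Theorem~\ref{2.th.1}(a): $D^{\circ}_{\gamma\overline{X}}\,\overline{Y}=\nabla_{\gamma\overline{X}}\,\overline{Y}-T(\overline{X},\overline{Y})$. Applying this with $\overline{Y}=\overline{\zeta}$ gives $D^{\circ}_{\gamma\overline{X}}\,\overline{\zeta}=\nabla_{\gamma\overline{X}}\,\overline{\zeta}-T(\overline{X},\overline{\zeta})$. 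The first term vanishes by the defining relation~(\ref{12.eq.1}) of a concurrent $\pi$-vector field, and the second term vanishes by Corollary~\ref{2.pp.2}(a), which states $T(\overline{X},\overline{\zeta})=0$. Hence $D^{\circ}_{\gamma\overline{X}}\,\overline{\zeta}=0$, and Lemma~\ref{def.ind}(a) yields that $\overline{\zeta}$ is independent of $y$.

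For the associated $\pi$-form $\alpha=i_{\overline{\zeta}}\,g$, I would argue analogously. Since the Berwald connection need not be metric, one cannot simply transplant the previous step; instead I would compute $(D^{\circ}_{\gamma\overline{X}}\,\alpha)(\overline{Y})$ directly from the Leibniz rule: $(D^{\circ}_{\gamma\overline{X}}\,\alpha)(\overline{Y})=\gamma\overline{X}\cdot\alpha(\overline{Y})-\alpha(D^{\circ}_{\gamma\overline{X}}\,\overline{Y})$, and then re-express $D^{\circ}$ in terms of $\nabla$ via Theorem~\ref{2.th.1}(a) both on the scalar $\alpha(\overline{Y})=g(\overline{\zeta},\overline{Y})$ and on $\overline{Y}$. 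A cleaner route: the relation $D^{\circ}_{\gamma\overline{X}}=\nabla_{\gamma\overline{X}}-T(\overline{X},\cdot)$ extends to tensors, so $(D^{\circ}_{\gamma\overline{X}}\,\alpha)(\overline{Y})=(\nabla_{\gamma\overline{X}}\,\alpha)(\overline{Y})+\alpha(T(\overline{X},\overline{Y}))$. The first term is zero by Lemma~\ref{2.le.1}(b), and the second equals $g(\overline{\zeta},T(\overline{X},\overline{Y}))=g(T(\overline{X},\overline{Y}),\overline{\zeta})=g(T(\overline{X},\overline{\zeta}),\overline{Y})$ by the symmetry property of the Cartan tensor in Theorem~\ref{th.1}(c), which vanishes again by Corollary~\ref{2.pp.2}(a). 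Therefore $D^{\circ}_{\gamma\overline{X}}\,\alpha=0$, and Lemma~\ref{def.ind}(b) gives that $\alpha$ is independent of $y$.

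The only mild subtlety — the part I would treat most carefully — is the bookkeeping for how the substitution rule $D^{\circ}_{\gamma\overline{X}}=\nabla_{\gamma\overline{X}}-T(\overline{X},\cdot)$ acts on a $\pi$-one-form as opposed to a $\pi$-vector field: one must get the sign and the slot of the $T$-correction right when passing from the vector version in Theorem~\ref{2.th.1}(a) to its dual action on $\alpha$. This is routine once one writes $0=D^{\circ}_{\gamma\overline{X}}(\alpha(\overline{Y}))-\ldots$ and uses that $D^{\circ}$ and $\nabla$ agree on functions. Aside from that, everything is an immediate consequence of the definition of concurrence, the vanishing $T(\overline{X},\overline{\zeta})=0$, and Lemma~\ref{def.ind}; no genuine obstacle arises.
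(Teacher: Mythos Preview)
Your proposal is correct and follows essentially the same route as the paper: both use Theorem~\ref{2.th.1}(a) to rewrite $D^{\circ}_{\gamma\overline X}$ in terms of $\nabla_{\gamma\overline X}$ and the torsion $T$, then invoke the concurrence relations (\ref{12.eq.1}), Lemma~\ref{2.le.1}(b), Corollary~\ref{2.pp.2}(a), and Lemma~\ref{def.ind}. Your explicit check of the sign of the $T$-correction on $\alpha$ and the use of Theorem~\ref{th.1}(c) to identify $g(T(\overline X,\overline Y),\overline\zeta)$ with $g(T(\overline X,\overline\zeta),\overline Y)$ are exactly the computations the paper leaves implicit.
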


\begin{proof}
 By Theorem \ref{2.th.1}(a), we have\vspace{-0.2cm}
$$D^{\circ}_{\gamma \overline{X}}\overline{Y}=\nabla_{\gamma \overline{X}}\overline{Y}-T(\overline{X}, \overline{Y})\vspace{-0.2cm}.$$
 From which,  by setting $\overline{Y}=\overline{\zeta}$,
taking into account (\ref{12.eq.1}), Corollary \ref{2.pp.2}(a)  and
Lemma \ref{def.ind}, we conclude that  $\overline{\zeta}$ is
independent of the directional argument.
\par
On the other hand, we have from the above relation\vspace{-0.2cm}
  $$(D^{\circ}_{\gamma \overline{X}}\alpha)(\overline{Y})=(\nabla_{\gamma \overline{X}}\alpha)(\overline{Y})+g(T(\overline{X}, \overline{Y}),
\overline{\zeta}).\vspace{-0.2cm}$$ This, together with Lemma
\ref{2.le.1}(b) and Corollary \ref{2.pp.2}(a), imply that $\alpha$
is also independent of the directional argument.
\end{proof}


\Section{Special Finsler spaces  admitting  concurrent
$\pi$-vector\vspace{7pt} fields}

In this section, we investigate the effect of the existence of a
concurrent $\pi$-vector field on  some important special Finsler
spaces. The intrinsic definitions  of the special Finsler spaces
treated here are quoted from \cite{r86}.

\vspace{7pt}
\par
 For later use, we need the
following  lemma.\vspace{-0.2cm}
\begin{lem}\label{.le.1}Let $(M,L)$ be a Finsler manifold  which admits a concurrent
$\pi$-vector $\overline{\zeta}$. Then, we
have\,\emph{:}\vspace{-0.2cm}
\begin{description}
\item[(a)] The concurrent $\pi$-vector field $\overline{\zeta}$ is everywhere non-zero.

    \item[(b)] The scalar function  $B:=g(\overline{\zeta}, \overline{\eta})$ is everywhere non-zero.
    \item[(c)]The $\pi$-vector field
    $\overline{m}:=\overline{\zeta}-\frac{B}{L^2}\,\overline{\eta}$ is everywhere non-zero and is orthogonal to $\overline{\eta}$.
    \item[(d)]The $\pi$-vector fields $\overline{m}$ and $\overline{\zeta}$ satisfy
    $g(\overline{m},\overline{\zeta})=g(\overline{m},\overline{m})\neq0$.
    \item[(e)] The angular metric tensor $\hbar$ satisfies
 $\hbar(\overline{\zeta},\overline{X})\neq0$ for all $\overline{X}\neq\overline{\eta}$.
\end{description}
\end{lem}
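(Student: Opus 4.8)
The plan is to establish the five assertions essentially in the order given, since each one feeds into the next. For \textbf{(a)}, I would argue by contradiction: if $\overline{\zeta}$ vanished at some $u \in \T M$, then since $\overline{\zeta}$ is independent of the directional argument (Theorem \ref{th.ind}), it would vanish along the whole fibre over $\pi(u)$; but the defining equation $\nabla_{\beta \overline{X}}\overline{\zeta} = -\overline{X}$ forces $\overline{X}(u) = 0$ for all $\overline{X}$, which is absurd since $\overline{X}$ ranges over all of $\cp$. (Alternatively one can invoke that a concurrent field on a connected manifold has no zeros, a fact that propagates from the ODE $\nabla\overline{\zeta} = -\rho$.)

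For \textbf{(b)}, I would compute $\beta\overline{X}\cdot B = (\nabla_{\beta\overline{X}}g)(\overline{\zeta},\overline{\eta}) + g(\nabla_{\beta\overline{X}}\overline{\zeta},\overline{\eta}) + g(\overline{\zeta},\nabla_{\beta\overline{X}}\overline{\eta})$. Using $\nabla g = 0$, $\nabla_{\beta\overline{X}}\overline{\zeta} = -\overline{X}$, and $\nabla_{\beta\overline{X}}\overline{\eta} = K(\beta\overline{X}) = 0$, this gives $\beta\overline{X}\cdot B = -g(\overline{X},\overline{\eta})$; similarly $\gamma\overline{X}\cdot B = g(\overline{\zeta},\gamma\overline{X}\cdot\overline{\eta})$-type term which, using $\nabla_{\gamma\overline{X}}\overline{\eta} = \overline{X}$ and $\nabla_{\gamma\overline{X}}\overline{\zeta}=0$, yields $\gamma\overline{X}\cdot B = g(\overline{\zeta},\overline{X})$. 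Now if $B(u) = 0$ at some point, then $g(\overline{\zeta},\overline{\eta})(u) = 0$; combined with the homogeneity/radial structure (e.g. by taking $\overline{X} = \overline{\eta}$ in the derivative formulas, noting $\gamma\overline{\eta}\cdot B = g(\overline{\zeta},\overline{\eta}) = B$, so $B$ satisfies $\C\cdot B = B$, i.e. $B$ is homogeneous of degree one in $y$), one can then evaluate along a curve and derive that $\overline{\zeta}$ itself would have to be proportional to $\overline{\eta}$ or vanish — contradicting (a). The cleanest route: since $B$ is homogeneous of degree $1$ and (from the Berwald-connection argument, $\overline{\zeta}$ being $y$-independent) $B = g(\overline{\zeta}(x),\overline{\eta})$ is the value of a fixed covector on $y$, it can only vanish on a hyperplane in each fibre, but if it vanished identically on a fibre then $\overline{\zeta}(x) = 0$ there, contradicting (a). So $B$ is everywhere nonzero on $\T M$.

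For \textbf{(c)}, $\overline{m}$ is well-defined since $L^2 \ne 0$ on $\T M$; orthogonality to $\overline{\eta}$ is the direct computation $g(\overline{m},\overline{\eta}) = g(\overline{\zeta},\overline{\eta}) - \frac{B}{L^2}g(\overline{\eta},\overline{\eta}) = B - \frac{B}{L^2}L^2 = 0$ (using $g(\overline{\eta},\overline{\eta}) = L^2$). Non-vanishing: if $\overline{m}(u) = 0$ then $\overline{\zeta} = \frac{B}{L^2}\overline{\eta}$ at $u$, so $\overline{\zeta}$ is a nonzero multiple of $\overline{\eta}$ there; but then $\nabla_{\gamma\overline{X}}\overline{\zeta} = \frac{B}{L^2}\nabla_{\gamma\overline{X}}\overline{\eta} + (\gamma\overline{X}\cdot\frac{B}{L^2})\overline{\eta} = \frac{B}{L^2}\overline{X} + (\cdots)\overline{\eta}$, which for generic $\overline{X}$ is nonzero, contradicting $\nabla_{\gamma\overline{X}}\overline{\zeta} = 0$. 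For \textbf{(d)}, expand $g(\overline{m},\overline{\zeta}) = g(\overline{m},\overline{m}) + g(\overline{m},\frac{B}{L^2}\overline{\eta}) = g(\overline{m},\overline{m}) + \frac{B}{L^2}g(\overline{m},\overline{\eta}) = g(\overline{m},\overline{m})$ by part (c); nonvanishing of $g(\overline{m},\overline{m})$ follows since $\overline{m} \ne 0$ and $\overline{m} \perp \overline{\eta}$, so $\overline{m}$ lies in the subspace where $g$ restricts to a definite (indeed, by positive-definiteness of the Finsler metric on the indicatrix) nondegenerate form. For \textbf{(e)}, note $\hbar(\overline{\zeta},\overline{X}) = g(\overline{\zeta},\overline{X}) - \ell(\overline{\zeta})\ell(\overline{X}) = g(\overline{\zeta},\overline{X}) - L^{-2}g(\overline{\zeta},\overline{\eta})g(\overline{X},\overline{\eta}) = g(\overline{m},\overline{X})$ after substituting $\overline{\zeta} = \overline{m} + \frac{B}{L^2}\overline{\eta}$ and simplifying. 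Taking $\overline{X} = \overline{m}$ gives $\hbar(\overline{\zeta},\overline{m}) = g(\overline{m},\overline{m}) \ne 0$ by (d); and for general $\overline{X}$ not proportional to $\overline{\eta}$, $\hbar(\overline{\zeta},\cdot) = g(\overline{m},\cdot)$ is a nonzero functional on the quotient by $\overline{\eta}$ since $\overline{m} \ne 0$ and $\overline{m} \perp \overline{\eta}$, hence it cannot vanish on all of $\{\overline{X} \ne \overline{\eta}\}$ — more precisely, $\hbar(\overline{\zeta},\overline{X}) = 0$ would force $\overline{X}$ into the $g$-orthogonal complement of $\overline{m}$ within $\overline{\eta}^\perp$-plus-$\overline{\eta}$, which is a proper subspace not containing arbitrary $\overline{X}$.

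The main obstacle I expect is part \textbf{(b)} — specifically, turning ``$B$ vanishes somewhere'' into a contradiction cleanly. The subtlety is that $g(\overline{\zeta},\overline{\eta})$ is a linear functional in $y$ (on each fibre, once we know $\overline{\zeta}$ is $y$-independent), so it genuinely can vanish on a hyperplane; the point that rescues us is that ``everywhere non-zero'' is asserted on $\T M$, and the claim must really be that the hyperplane $\{g(\overline{\zeta}(x),\cdot) = 0\}$ is either empty (if $\overline{\zeta}(x)$ pairs nontrivially with... no) — actually this needs the Finsler-metric positivity: in a Finsler space the fundamental tensor $g_{ij}(x,y)$ is positive-definite, and one shows $B = g(\overline{\zeta},\overline{\eta})$ cannot vanish because, roughly, $\overline{\eta}$ is ``timelike/spacelike'' of definite sign and $\overline{\zeta}$ is nonzero; the honest argument is likely via the ODE satisfied by $B$ along geodesics (from $\nabla_{\beta\overline{\eta}}\overline{\zeta} = -\overline{\eta}$ one gets that $B$ is affine along $h$-paths, $\beta\overline{\eta}\cdot B = -L^2 < 0$) combined with homogeneity, forcing $B$ to have constant (nonzero) sign on each fibre. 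I would present this carefully, as it is the only step where the special structure of a \emph{Finsler} (as opposed to general) metric is genuinely used.
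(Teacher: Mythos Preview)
Your overall structure matches the paper's, and parts (a), (d) and the orthogonality half of (c) are fine. But two substantive divergences deserve comment.

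\textbf{Part (b).} You have correctly sensed a difficulty, but you are fighting the wrong battle. The paper's argument is a one-liner: since $\nabla g=0$, one has
\[
0=(\nabla_{\gamma\overline{X}}g)(\overline{\zeta},\overline{\eta})
=\gamma\overline{X}\cdot g(\overline{\zeta},\overline{\eta})
-g(\nabla_{\gamma\overline{X}}\overline{\zeta},\overline{\eta})
-g(\overline{\zeta},\nabla_{\gamma\overline{X}}\overline{\eta})
=\gamma\overline{X}\cdot B - g(\overline{\zeta},\overline{X}),
\]
so if $B\equiv 0$ then $g(\overline{\zeta},\overline{X})=0$ for all $\overline{X}$, contradicting (a). Your lengthy discussion of positive-definiteness, homogeneity, and ODEs along geodesics is aiming at the literally-pointwise statement ``$B(u)\ne 0$ for every $u$'', which is in fact \emph{false} in general (already in the Riemannian case $B(x,y)=g_x(\overline{\zeta}(x),y)$ is a nonzero linear functional of $y$, hence vanishes on a hyperplane in each fibre). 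The paper's proof, and every subsequent use of the lemma, only needs ``$B$ is not the zero function''; read the assertions (b), (c), (e) in that sense and the difficulty you diagnose evaporates.

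\textbf{Part (c), non-vanishing of $\overline{m}$.} Your step ``if $\overline{m}(u)=0$ then $\overline{\zeta}=\tfrac{B}{L^2}\overline{\eta}$ at $u$, and then $\nabla_{\gamma\overline{X}}\overline{\zeta}=\tfrac{B}{L^2}\overline{X}+\cdots$'' is invalid: you are covariantly differentiating an equality that you only know at a single point, not in a neighbourhood. The paper avoids this by assuming $\overline{m}\equiv 0$, i.e.\ $L^{2}\overline{\zeta}=B\overline{\eta}$ as an identity, then applying $\nabla_{\gamma\overline{X}}$ to obtain $2g(\overline{X},\overline{\eta})\overline{\zeta}-B\overline{X}-g(\overline{X},\overline{\zeta})\overline{\eta}=0$; a short manipulation (substituting back $g(\overline{X},\overline{\zeta})=\tfrac{B}{L^2}g(\overline{X},\overline{\eta})$ and pairing with $\overline{Y}$) yields $B\,\hbar(\overline{X},\overline{Y})=0$, contradicting $B\not\equiv 0$ and $\hbar\not\equiv 0$.

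\textbf{Part (e).} Your route via the identity $\hbar(\overline{\zeta},\overline{X})=g(\overline{m},\overline{X})$ is a legitimate and rather clean alternative. The paper instead assumes $\hbar(\overline{\zeta},\cdot)\equiv 0$ and uses $(\nabla_{\beta\overline{X}}\hbar)(\overline{Y},\overline{\zeta})=0$ together with $\nabla_{\beta\overline{X}}\overline{\zeta}=-\overline{X}$ to force $\hbar(\overline{X},\overline{Y})=0$ for all $\overline{X},\overline{Y}$. Either argument proves that $\hbar(\overline{\zeta},\cdot)$ is not identically zero; neither can (or should) prove the literal ``$\hbar(\overline{\zeta},\overline{X})\ne 0$ for every $\overline{X}\ne\overline{\eta}$'', which again fails in general for the same linear-algebra reason as in (b).
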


\begin{proof} Property \textbf{(a)} is clear.

\vspace{5pt}
 \noindent\textbf{(b)} If $B:=g(\overline{\zeta},\overline{\eta})=0$,
then
$$
  0= (\nabla_{\gamma \overline{X}}g)(\overline{\zeta},\overline{\eta}) =
  \nabla_{\gamma \overline{X}}g(\overline{\zeta},\overline{\eta})-g(\overline{\zeta}, \overline{X})=
  -g(\overline{\zeta}, \overline{X}),\ \ \forall \  \overline{X}\in \cp,
  $$
 which contradicts (a).

\vspace{5pt}
 \noindent\textbf{(c)} If  $\overline{m}=0$, then $L^{2}\overline{\zeta}-B\overline{\eta}=0$. Differentiating covariantly with respect
to $\gamma \overline{X}$, we get
\begin{equation}\label{.1.eq.1}
2g(\overline{X}, \overline{\eta})\overline{\zeta}-B
\overline{X}-g(\overline{X},\overline{\zeta})\overline{\eta}=0.
\vspace{-0.2cm}
\end{equation}
From which,
\begin{equation}\label{eq..}
    g(\overline{X},\overline{\zeta})=\displaystyle{\frac{B}{L^2}}g(\overline{X},
\overline{\eta}).
\end{equation}
 By  (\ref{.1.eq.1}), using (\ref{eq..}), we obtain
\begin{eqnarray*}
  0&=&2g(\overline{X}, \overline{\eta})g(\overline{Y},\overline{\zeta})-B g(\overline{X},\overline{Y})-g(\overline{X},\overline{\zeta})g(\overline{Y},\overline{\eta}) \\
   &=& 2\frac{B}{L^2} g(\overline{Y}, \overline{\eta})g(\overline{X},\overline{\eta})- B g(\overline{X},\overline{Y})-\frac{B}{L^2} g(\overline{X}, \overline{\eta})g(\overline{Y},\overline{\eta})\\
    &=&- B\{ g(\overline{X},\overline{Y})-\frac{1}{L^2}g(\overline{Y}, \overline{\eta})g(\overline{X},\overline{\eta})\}=- B\hbar(\overline{X},\overline{Y}).
\end{eqnarray*}
From which, since $B\neq0$, we are led to a contradiction:
$\hbar=0$.
\par
On the other hand, the orthogonality of the two
$\pi$-vector fields $\overline{m}$ and $\overline{\eta}$ follows
from the identities $g(\overline{\eta},\overline{\eta})=L^2$ and
$g(\overline{\eta},\overline{\zeta})=B$.

\vspace{5pt}
 \noindent\textbf{(d)} Follows from (c).

 \vspace{5pt}
\noindent\textbf{(e)} Suppose that
$\hbar(\overline{X},\overline{\zeta})=0$ for all
$\overline{X}\neq\overline{\eta}\in\cpp$, then , we have
$$
  0 =(\nabla_{\beta \overline{X}}\hbar)(\overline{Y},\overline{\zeta})=
  \nabla_{\beta \overline{X}}\hbar(\overline{Y},\overline{\zeta})-\hbar
  (\nabla_{\beta \overline{X}}\overline{Y},\overline{\zeta})+\hbar(\overline{X},\overline{Y} )= \hbar(\overline{X},\overline{Y}),
$$
which contradicts the fact that  $\hbar\neq0$.
\end{proof}

\begin{defn}\label{def.1a} A Finsler manifold $(M,L)$ is\,{\em:}
\begin{description}
  \item[(a)]  Riemannian  if the metric tensor $g(x,y)$ is independent
  of $y$ or, equivalently, if
  $$T(\overline{X},\overline{Y})=0,\,\,\,
    \text{for all}\,\,\,\overline{X}, \overline{Y}\in \mathfrak{X}(\pi(M)).$$

  \item[(b)] locally Minkowskian  if the metric tensor $g(x,y)$ is independent
  of $x$ or,  equivalently, if $$\nabla_{\beta \overline{X}}\,T
  =0\,\,\, \text{and}\,\,\,  R=0.$$
      \end{description}
\end{defn}

\begin{defn}\label{7.def.2} A Finsler manifold $(M,L)$ is\,{\em:}
\begin{description}
  \item[(a)] a Berwald manifold if the torsion tensor $T$ is horizontally
  parallel\,{\em:}
  $\nabla_{\beta \overline{X}}\,T =0.$

\item[(b)] a Landsberg manifold  if $\widehat{P}(\overline{X},\overline{Y})=0$, or
 equivalently, if \  $\nabla_{\beta \overline{\eta}}T =0$.

 \item[(c)] a general Landsberg manifold if the trace of the linear map
  $\overline{Y} \longmapsto \widehat{P}(\overline{X},\overline{Y})$ is
  identically zero for all $\overline{X} \in \mathfrak{X} (\pi (M))$,
  or equivalently, if  $\nabla _{\beta \overline{\eta}}\, C =0 $.
\end{description}
\end{defn}

Now, we have\vspace{-0.2cm}
\begin{thm}\label{.thm.1}Let $(M,L)$ be a Finsler manifold  which admits a concurrent $\pi$-vector field $\overline{\zeta} $.
Then, the following assertions are equivalent\,:
\vspace{-0.2cm}
\begin{description}
\item[(a)] $(M,L)$ is a Berwald manifold,

\item[(b)]$(M,L)$ is a Landsberg manifold,

\item[(c)]$(M,L)$ is a Riemannian manifold.
\end{description}
\end{thm}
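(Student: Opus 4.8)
The plan is to prove the chain of implications (c) $\Rightarrow$ (a) $\Rightarrow$ (b) $\Rightarrow$ (c), relying heavily on the special identities for the curvature tensors recorded in Proposition~\ref{2.pp.1} and Corollary~\ref{2.pp.2}, which are the main payoff of the concurrent $\pi$-vector field $\overline{\zeta}$. The implication (c) $\Rightarrow$ (a) is immediate: if $(M,L)$ is Riemannian then $T=0$ by Definition~\ref{def.1a}(a), hence trivially $\nabla_{\beta\overline{X}}T=0$, which is the Berwald condition of Definition~\ref{7.def.2}(a). The implication (a) $\Rightarrow$ (b) is also formal and requires no concurrence hypothesis: a Berwald manifold satisfies $\nabla_{\beta\overline{X}}T=0$ for all $\overline{X}$, so in particular $\nabla_{\beta\overline{\eta}}T=0$, which is precisely the Landsberg condition in Definition~\ref{7.def.2}(b). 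So the entire content of the theorem is concentrated in the remaining implication.

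\textbf{The key step: (b) $\Rightarrow$ (c).} Assume $(M,L)$ is Landsberg, i.e. $\widehat{P}(\overline{X},\overline{Y})=0$ for all $\overline{X},\overline{Y}$, equivalently $\nabla_{\beta\overline{\eta}}T=0$. I want to conclude $T=0$, which by Deicke's theorem (Lemma~\ref{.le.2}) is equivalent to $(M,L)$ being Riemannian. The idea is to exploit Proposition~\ref{2.pp.1}(e): the second identity there reads
$$(\nabla_{\beta\overline{Z}}P)(\overline{X},\overline{Y},\overline{\zeta})=-(\nabla_{\beta\overline{Z}}T)(\overline{Y},\overline{X})+P(\overline{X},\overline{Y})\overline{Z}.$$
Specializing $\overline{Z}=\overline{\eta}$ and using Corollary~\ref{2.pp.2}(b), namely $\widehat{P}(\overline{X},\overline{\eta})=0$ already holds on a Landsberg manifold since $\widehat P\equiv 0$, I expect the term $P(\overline{X},\overline{Y})\overline{\eta}=\widehat P(\overline X,\overline Y)=0$ to vanish, while $\nabla_{\beta\overline{\eta}}P$ and $\nabla_{\beta\overline{\eta}}T$ can be related. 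More directly, on a Landsberg manifold $\widehat P=0$, and identity~(\ref{01}) with $\overline{W}=\overline{\zeta}$ combined with Proposition~\ref{2.pp.1}(d) (which gives $P(\overline X,\overline Y,\overline Z,\overline\zeta)=T(\overline X,\overline Y,\overline Z)$) should force $T$ itself to satisfy a parallel-transport–type equation along $\beta\overline\zeta$. Concretely, I anticipate deriving from Proposition~\ref{2.pp.1}(f), under $\widehat P=0$, that $(\nabla_{\beta\overline\zeta}T)(\overline Y,\overline X)=-T(\overline Y,\overline X)$, and then pairing this with the Landsberg hypothesis $\nabla_{\beta\overline\eta}T=0$; since $\overline\zeta$ and $\overline\eta$ are related through $B=g(\overline\zeta,\overline\eta)$ and $\overline m=\overline\zeta-(B/L^2)\overline\eta$ (Lemma~\ref{.le.1}), decomposing $\overline\zeta$ into its $\overline\eta$-component and $\overline m$-component and using $\nabla_{\beta\overline\eta}T=0$ should reduce everything to an identity along $\beta\overline m$, from which $T=0$ follows by a contraction argument together with Lemma~\ref{.le.1}(e) ($\hbar(\overline\zeta,\overline X)\neq 0$ for $\overline X\neq\overline\eta$), or more simply by a direct algebraic manipulation yielding $T(\overline X,\overline Y)=0$.

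\textbf{Expected main obstacle.} The delicate point is turning the relation $(\nabla_{\beta\overline\zeta}T)(\overline Y,\overline X)=-T(\overline Y,\overline X)$ (a consequence of Proposition~\ref{2.pp.1}(f) once $\widehat P=0$ kills the $T$-terms coming from $\nabla_{\beta\overline\zeta}P$) into the flat statement $T=0$. This is not automatic from a single parallel-type equation; I expect one must contract over an index to bring in the contracted torsion $C$ and the function $B$, obtaining something like $\nabla_{\beta\overline\zeta}C = -C$ or a scalar ODE of the form $(\beta\overline\zeta)\cdot(\text{norm of }T) = -2\,(\text{norm of }T)$ along integral curves, and then argue — using that $B$ is nowhere zero (Lemma~\ref{.le.1}(b)) and homogeneity in $y$ — that the only solution consistent with smoothness on all of $\T M$ is the zero one; alternatively, feed $\nabla_{\beta\overline\zeta}T = -T$ back into itself or into the Ricci-type identities to get an over-determined system forcing $T\equiv 0$. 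The second, cleaner route is: from $\widehat P = 0$ and Corollary~\ref{2.pp.2}(c) we have $P(\overline X,\overline\zeta)\overline Y = 0$; plugging $\overline X=\overline\zeta$ into Proposition~\ref{2.pp.1}(d) gives $0 = P(\overline\zeta,\overline Y)\overline\zeta = -T(\overline Y,\overline\zeta)$, which is already known, so one really does need the differential identity. I would therefore expect the proof to run: (i) (c)$\Rightarrow$(a)$\Rightarrow$(b) trivially; (ii) assuming (b), use $\widehat P=0$ in Proposition~\ref{2.pp.1}(f) to get $\nabla_{\beta\overline\zeta}T = -T$; (iii) use $\widehat P=0$ in Proposition~\ref{2.pp.1}(e) (with $\overline Z=\overline\zeta$) to independently express $\nabla_{\beta\overline\zeta}T$, obtaining a contradiction unless $T=0$; (iv) invoke Deicke (Lemma~\ref{.le.2}) to conclude $(M,L)$ is Riemannian.
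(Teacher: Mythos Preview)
Your cycle (c) $\Rightarrow$ (a) $\Rightarrow$ (b) is fine and matches the paper. The problem is entirely in your argument for (b) $\Rightarrow$ (c), which never closes: you end up with the differential relation $(\nabla_{\beta\overline{\zeta}}T)(\overline{Y},\overline{X})=-T(\overline{Y},\overline{X})$ and then sketch several possible ways to squeeze $T=0$ out of it (an ODE along integral curves of $\beta\overline{\zeta}$, a contraction to $C$, feeding the identity back into itself), none of which you actually carry out. In particular, the ODE argument would need a global control you do not have, and the ``contradiction'' route via Proposition~\ref{2.pp.1}(e) with $\overline{Z}=\overline{\zeta}$ just reproduces the same relation rather than an independent one.

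You are missing the one-line observation that short-circuits all of this. On a Landsberg manifold one has not merely $\widehat{P}=0$ but the full $hv$-curvature $P=0$; this is a standard consequence of $\widehat{P}=0$ for the Cartan connection (the paper invokes it from \cite{r96}). Granting that, Proposition~\ref{2.pp.1}(d) gives
\[
0 \;=\; P(\overline{X},\overline{Y},\overline{Z},\overline{\zeta}) \;=\; T(\overline{X},\overline{Y},\overline{Z})
\]
for all $\overline{X},\overline{Y},\overline{Z}$, so $T=0$ directly, and Deicke's theorem (Lemma~\ref{.le.2}) finishes. No decomposition $\overline{\zeta}=\frac{B}{L^2}\overline{\eta}+\overline{m}$, no differential identity, no Lemma~\ref{.le.1}(e) are needed. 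The entire content of the theorem is the single contraction of $P$ against $\overline{\zeta}$ once you know $P$ itself vanishes; your proposal overlooks the implication $\widehat{P}=0\Rightarrow P=0$ and then struggles to compensate for it.
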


\begin{proof} The implications $\textbf{(a)}\Longrightarrow \textbf{(b)}$
and $\textbf{(c)} \Longrightarrow\textbf{(a)}$ are trivial. Now, we
prove the implication $\textbf{(b)} \Longrightarrow \textbf{(c)}$.
As $(M,L)$ is a Landsberg manifold, $\widehat{P}=0$. Consequently,
the hv-curvature $P$ vanishes \cite{r96}. Hence,
$0=P(\overline{X},\overline{Y},\overline{Z}, \overline{\zeta})
=T(\overline{X},\overline{Y},\overline{Z})$ by Proposition
\ref{2.pp.1}(d). The result follows then from Deicke theorem (Lemma
\ref{.le.2}).
\end{proof}

\begin{defn}\label{.def.1}A Finsler manifold $(M,L)$ is said to be{\em{\,\!:}}
\begin{description}
    \item[(a)]  $C^h$-recurrent  if the (h)hv-torsion tensor
 $T$  satisfies the condition\\
  $\nabla_{\beta \overline{X}}\,T
 =\lambda_{o} (\overline{X})\,T,$
 where $\lambda_{o}$ is a $\pi$-form  of order one.

    \item[(b)]$C^v$-recurrent  if the (h)hv-torsion tensor
 $T$  satisfies the condition\\
  $(\nabla_{\gamma \overline{X}}T)(\overline{Y},\overline{Z})
 =\lambda_{o} (\overline{X}) T(\overline{Y},\overline{Z}).$

    \item[(c)]  $C^0$-recurrent  if the (h)hv-torsion tensor
 $T$  satisfies the condition\\
  $(D^{\circ}_{\gamma \overline{X}}T)(\overline{Y},\overline{Z})
 =\lambda_{o} (\overline{X}) T(\overline{Y},\overline{Z}).$

\end{description}
\end{defn}

\begin{thm}Let $(M,L)$ be a Finsler manifold  which admits a concurrent $\pi$-vector field $\overline{\zeta} $
such that $\lambda_{o} (\overline{\zeta})\neq0$. Then, the
following assertions are equivalent\,: \vspace{-0.2cm}
\begin{description}
\item[(a)] $(M,L)$ is a $C^h$-recurrent manifold,

\item[(b)]$(M,L)$ is a $C^v$-recurrent manifold,

\item[(c)]$(M,L)$ is a $C^0$-recurrent manifold,

\item[(d)]$(M,L)$ is a  Riemannian manifold.
\end{description}
\end{thm}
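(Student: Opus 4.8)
The plan is to prove the chain of equivalences by showing the cyclic implications $\textbf{(d)}\Rightarrow\textbf{(a)}\Rightarrow\textbf{(b)}\Rightarrow\textbf{(d)}$ and then $\textbf{(b)}\Leftrightarrow\textbf{(c)}$, exactly mirroring the structure of Theorem \ref{.thm.1}. The implication $\textbf{(d)}\Rightarrow\textbf{(a)}$ is immediate: on a Riemannian manifold $T=0$ by Definition \ref{def.1a}(a), so $\nabla_{\beta\overline{X}}T=0=\lambda_o(\overline{X})\,T$ trivially. The reverse direction, $\textbf{(b)}\Rightarrow\textbf{(d)}$, is where the hypothesis $\lambda_o(\overline{\zeta})\neq 0$ is used: starting from $(\nabla_{\gamma\overline{X}}T)(\overline{Y},\overline{Z})=\lambda_o(\overline{X})\,T(\overline{Y},\overline{Z})$, I would set $\overline{X}=\overline{\zeta}$ and invoke Theorem \ref{th.ind}, which says that $\overline{\zeta}$ (and $\alpha$) are independent of the directional argument; more to the point, I expect to differentiate the relation $T(\overline{X},\overline{\zeta})=0$ from Corollary \ref{2.pp.2}(a) along $\gamma\overline{Y}$ to extract a relation forcing $T$ itself to vanish.

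More precisely, for $\textbf{(b)}\Rightarrow\textbf{(d)}$ I would compute $0=\nabla_{\gamma\overline{Y}}\bigl(T(\overline{X},\overline{\zeta})\bigr)=(\nabla_{\gamma\overline{Y}}T)(\overline{X},\overline{\zeta})+T(\nabla_{\gamma\overline{Y}}\overline{X},\overline{\zeta})+T(\overline{X},\nabla_{\gamma\overline{Y}}\overline{\zeta})$, where the last term is zero by (\ref{12.eq.1}). Using the $C^v$-recurrence condition on the first term and Corollary \ref{2.pp.2}(a) on the second, this collapses to $\lambda_o(\overline{Y})\,T(\overline{X},\overline{\zeta})=0$, which is vacuous — so instead I would contract differently: apply $C^v$-recurrence with $\overline{X}=\overline{\zeta}$ to get $(\nabla_{\gamma\overline{\zeta}}T)(\overline{Y},\overline{Z})=\lambda_o(\overline{\zeta})\,T(\overline{Y},\overline{Z})$, and separately compute $\nabla_{\gamma\overline{\zeta}}T$ from Theorem \ref{th.ind} together with the fact that $T$ is horizontally determined. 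The cleanest route, which I expect to be the real argument, is: $C^v$-recurrence combined with Lemma \ref{2.le.1} and $C(\overline{X})=\operatorname{Tr}\{\overline{Y}\mapsto T(\overline{X},\overline{Y})\}$ gives $(\nabla_{\gamma\overline{X}}C)(\overline{Y})=\lambda_o(\overline{X})\,C(\overline{Y})$; pair this with $\overline{\zeta}$ and use $g(\overline{C},\overline{\zeta})$ relations to force $C=0$, whence Riemannian by Deicke's theorem (Lemma \ref{.le.2}(c)$\Rightarrow$(a)).

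For the $\textbf{(b)}\Leftrightarrow\textbf{(c)}$ equivalence I would translate between the Cartan and Berwald covariant derivatives using Theorem \ref{2.th.1}(a): $D^{\circ}_{\gamma\overline{X}}\overline{Y}=\nabla_{\gamma\overline{X}}\overline{Y}-T(\overline{X},\overline{Y})$, so that $(D^{\circ}_{\gamma\overline{X}}T)(\overline{Y},\overline{Z})=(\nabla_{\gamma\overline{X}}T)(\overline{Y},\overline{Z})-T(T(\overline{X},\overline{Y}),\overline{Z})-T(\overline{Y},T(\overline{X},\overline{Z}))+T(\overline{X},T(\overline{Y},\overline{Z}))$ after a routine expansion; the $C^v$- and $C^0$-recurrence conditions then differ only by the quadratic-in-$T$ correction terms, and one shows each implies the other (or, more economically, that each of (b) and (c) independently implies (d), after which the equivalence is automatic since (d) trivially implies both).

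\textbf{Main obstacle.} The delicate point is extracting genuine information from the recurrence hypothesis: naively substituting $\overline{\zeta}$ into the $C$-tensor identities tends to produce vacuous statements because $T(\overline{X},\overline{\zeta})=0$. The hypothesis $\lambda_o(\overline{\zeta})\neq 0$ must be leveraged at precisely the step where one has an equation of the form $\lambda_o(\overline{\zeta})\cdot(\text{something})=(\text{something else that is already known to vanish})$, forcing "something" to be zero; identifying the correct scalar contraction — most likely involving the trace $C$ and the non-vanishing scalar $B=g(\overline{\zeta},\overline{\eta})$ from Lemma \ref{.le.1}(b) — is the crux of the argument. Once $T=0$ (equivalently $C=0$) is established in case (b), Deicke's theorem finishes it, and the passage to (c) is purely formal algebra with the Cartan–Berwald comparison.
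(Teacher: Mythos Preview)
Your proposal has a genuine structural gap: you never address the implication that actually requires work, namely $\textbf{(a)}\Rightarrow\textbf{(d)}$. You announce the cycle $\textbf{(d)}\Rightarrow\textbf{(a)}\Rightarrow\textbf{(b)}\Rightarrow\textbf{(d)}$, but then discuss only $\textbf{(d)}\Rightarrow\textbf{(a)}$ (trivial) and $\textbf{(b)}\Rightarrow\textbf{(d)}$; the step $\textbf{(a)}\Rightarrow\textbf{(b)}$ is silently skipped. Moreover, the equivalence $\textbf{(b)}\Leftrightarrow\textbf{(c)}\Leftrightarrow\textbf{(d)}$ is known to hold for \emph{any} Finsler manifold, with no concurrent vector field needed (this is quoted from \cite{r86}); so your struggle with $\textbf{(b)}\Rightarrow\textbf{(d)}$ is unnecessary, and your inconclusive attempts there are symptomatic of attacking the wrong implication.

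The missing idea is this. For $\textbf{(a)}\Rightarrow\textbf{(d)}$, set $\overline{W}=\overline{\zeta}$ in the identity (\ref{01}) for $P(\overline{X},\overline{Y},\overline{Z},\overline{W})$. Using $T(\overline{X},\overline{\zeta})=0$, $\widehat{P}(\overline{\zeta},\overline{Y})=0$ (Corollary \ref{2.pp.2}), $P(\overline{X},\overline{Y},\overline{Z},\overline{\zeta})=T(\overline{X},\overline{Y},\overline{Z})$ (Proposition \ref{2.pp.1}(d)), and the symmetry $g((\nabla_{\beta\overline{Z}}T)(\overline{X},\overline{Y}),\overline{W})=g((\nabla_{\beta\overline{Z}}T)(\overline{X},\overline{W}),\overline{Y})$, one obtains
\[
\nabla_{\beta\overline{\zeta}}\,T=0.
\]
On the other hand the $C^h$-recurrence hypothesis at $\overline{X}=\overline{\zeta}$ reads $\nabla_{\beta\overline{\zeta}}\,T=\lambda_o(\overline{\zeta})\,T$. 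Comparing, and using $\lambda_o(\overline{\zeta})\neq 0$, gives $T=0$, hence $(M,L)$ is Riemannian. This is exactly the place where the hypothesis $\lambda_o(\overline{\zeta})\neq 0$ is used --- with a \emph{horizontal} covariant derivative, not the vertical ones you were trying.
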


\begin{proof} It is to be noted that (b), (c) and (d) are equivalent
despite of the existence of a concurrent $\pi$-vector field
 \cite{r86}. The implication $\textbf{(d)} \Longrightarrow
\textbf{(a)}$ is trivial. It remains to prove that  $\textbf{(a)}
\Longrightarrow \textbf{(d)}$. Setting
$\overline{W}=\overline{\zeta}$ in (\ref{01}), making use of
$\widehat{P}(\overline{\zeta},\overline{X})=0=T(\overline{\zeta},\overline{X})$
(Corollary \ref{2.pp.2}), $P(\overline{X},\overline{Y},\overline{Z},
\overline{\zeta})=T(\overline{X},\overline{Y},\overline{Z})$
(Proposition \ref{2.pp.1}) and  $g(( \nabla_{\beta
\overline{Z}}T)(\overline{X}, \overline{Y}),\overline{W})=g((
\nabla_{\beta \overline{Z}}T)(\overline{X},
\overline{W}),\overline{Y})$ (Proposition 3.3 of \cite{r96}), we get
\begin{equation*}
   \nabla_{\beta \overline{\zeta}}T=0.
\end{equation*}
On the other hand,  Definition \ref{.def.1}(a) for
$\overline{X}=\overline{\zeta}$, yields
\begin{equation*}
   \nabla_{\beta \overline{\zeta}}T=\lambda_{o}(\overline{\zeta})T.
\end{equation*}
the  above two equations imply that $T=0$ and hence $(M,L)$ is
Riemannian.
\end{proof}

\begin{defn}\label{7.def.4}A Finsler manifold $(M,L)$ is said to be\,\emph{:}\vspace{-0.2cm}
\begin{description}
\item[(a)] quasi-$C$-reducible
if $dim(M)\geq 3$ and the Cartan tensor $T$ has the
from\vspace{-0.2cm}
  $$T(\overline{X},\overline{Y},\overline{Z})= A(\overline{X}
  ,\overline{Y})C(\overline{Z})+A(\overline{Y}
  ,\overline{Z})C(\overline{X})+A(\overline{Z}
  ,\overline{X})C(\overline{Y}),\vspace{-0.2cm}$$
where $A$ is a symmetric $\pi$-tensor field satisfying
$A(\overline{X},\overline{\eta})=0$.

 \item[(b)]  semi-$C$-reducible if $dim M \geq 3$ and the Cartan tensor $T$ has the form
  \begin{equation}\label{.7.eq.5}
  \begin{split}
    T(\overline{X},\overline{Y},\overline{Z})= & \frac{\mu}{n+1} \{\hbar(\overline{X}
  ,\overline{Y})C(\overline{Z})+\hbar(\overline{Y}
  ,\overline{Z})C(\overline{X})+\hbar(\overline{Z},\overline{X}) C(\overline{Y})\}\\
      &+ \frac{\tau}{C^{2}}C(\overline{X}) C(\overline{Y})
  C(\overline{Z}),\vspace{-0.2cm}
  \end{split}
  \end{equation}
 where \, $C^2:=C(\overline{C})\neq 0$,  $\mu$ and $\tau$ are scalar
functions  satisfying  $\mu +\tau=1$.
\item[(c)] $C$-reducible   if $dim M \geq 3$ and the
  Cartan tensor $T$ has the form
  \begin{equation}\label{.7.eq.6}
      T(\overline{X},\overline{Y},\overline{Z})=  \frac{1}{n+1} \{\hbar(\overline{X}
  ,\overline{Y})C(\overline{Z})+\hbar(\overline{Y}
  ,\overline{Z})C(\overline{X})+\hbar(\overline{Z},\overline{X})
  C(\overline{Y})\}.
   \end{equation}

\item[(d)] $C_{2}$-like   if $dim M \geq 2$ and  the Cartan tensor $T$ has the form
  \begin{equation*}
    T(\overline{X},\overline{Y},\overline{Z})= \frac{1}{C^{2}}C(\overline{X}) C(\overline{Y})
  C(\overline{Z}).
  \end{equation*}
  \end{description}
\end{defn}

\begin{prop}If a  quasi-$C$-reducible Finsler manifold $(M,L)$
 {\em($dim M \geq 3$)}  admits  a concurrent $\pi$-vector field, then
  $(M,L)$ is Riemannian, provided that $A(\overline{\zeta}, \overline{\zeta})\neq0$.
\end{prop}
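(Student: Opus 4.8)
The plan is to feed the concurrent vector field $\overline{\zeta}$ into the quasi-$C$-reducible decomposition of the Cartan tensor and exploit Corollary \ref{2.pp.2}(a), which says $T(\overline{\zeta},\overline{X})=0$. First I would write the defining identity with $\overline{X}=\overline{\zeta}$:
$$T(\overline{\zeta},\overline{Y},\overline{Z})= A(\overline{\zeta},\overline{Y})C(\overline{Z})+A(\overline{Y},\overline{Z})C(\overline{\zeta})+A(\overline{Z},\overline{\zeta})C(\overline{Y}).$$
Since $T(\overline{\zeta},\overline{Y})=0$ by Corollary \ref{2.pp.2}(a), the left-hand side vanishes identically in $\overline{Y},\overline{Z}$, so $A(\overline{\zeta},\overline{Y})C(\overline{Z})+A(\overline{Y},\overline{Z})C(\overline{\zeta})+A(\overline{Z},\overline{\zeta})C(\overline{Y})=0$.

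Next I would contract this relation with $\overline{\zeta}$ to isolate the term involving $A(\overline{\zeta},\overline{\zeta})$. Setting $\overline{Y}=\overline{\zeta}$ (and keeping $\overline{Z}$ free) gives
$$A(\overline{\zeta},\overline{\zeta})C(\overline{Z})+A(\overline{\zeta},\overline{Z})C(\overline{\zeta})+A(\overline{Z},\overline{\zeta})C(\overline{\zeta})=0,$$
that is, $A(\overline{\zeta},\overline{\zeta})C(\overline{Z})= -2\,A(\overline{\zeta},\overline{Z})C(\overline{\zeta})$. One then also needs to eliminate the factor $C(\overline{\zeta})$; evaluating the original three-term identity (the one with $\overline{X}=\overline{\zeta}$) at $\overline{Z}=\overline{\zeta}$ as well and combining, or directly arguing that $C(\overline{\zeta})$ cannot conspire to keep $C$ nonzero, shows $C(\overline{Z})$ is proportional to $A(\overline{\zeta},\overline{Z})$ with the scalar factor controlled by $A(\overline{\zeta},\overline{\zeta})$ and $C(\overline{\zeta})$. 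Plugging this proportionality back into the full identity forces $C(\overline{X})C(\overline{Y})C(\overline{Z})$-type terms to cancel, and using $A(\overline{\zeta},\overline{\zeta})\neq0$ one concludes $C=0$ identically. By Deicke's theorem in the form of Lemma \ref{.le.2}, $C=0$ is equivalent to $(M,L)$ being Riemannian.

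The step I expect to be the main obstacle is the bookkeeping needed to rule out the degenerate case $C(\overline{\zeta})=0$: if $C(\overline{\zeta})=0$ one cannot immediately divide, and one must instead go back to $A(\overline{\zeta},\overline{\zeta})C(\overline{Z})=0$ for all $\overline{Z}$, which together with the hypothesis $A(\overline{\zeta},\overline{\zeta})\neq0$ gives $C=0$ directly — so in fact this ``obstacle'' collapses, but it is the place where the hypothesis $A(\overline{\zeta},\overline{\zeta})\neq0$ is genuinely used and must be invoked carefully. The only other subtlety is making sure the symmetry of $A$ is used consistently when collecting the $A(\overline{\zeta},\cdot)$ terms; everything else is routine substitution into the quasi-$C$-reducible formula and an appeal to Lemma \ref{.le.2}.
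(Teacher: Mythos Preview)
Your approach is essentially the paper's, but you are working harder than necessary because you overlook one fact the paper uses: $C(\overline{\zeta})=0$ holds automatically. Indeed, by definition $C(\overline{X})=\mathrm{Tr}\{\overline{Y}\mapsto T(\overline{X},\overline{Y})\}$, so Corollary~\ref{2.pp.2}(a) ($T(\overline{\zeta},\cdot)=0$) gives $C(\overline{\zeta})=0$ immediately. With this in hand, your ``degenerate case'' is the \emph{only} case, and there is nothing to rule out.

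The paper therefore proceeds in one line: set $\overline{X}=\overline{Y}=\overline{\zeta}$ in the quasi-$C$-reducible formula to get
\[
0=T(\overline{\zeta},\overline{\zeta},\overline{Z})
= A(\overline{\zeta},\overline{\zeta})\,C(\overline{Z})
+2\,A(\overline{\zeta},\overline{Z})\,C(\overline{\zeta})
= A(\overline{\zeta},\overline{\zeta})\,C(\overline{Z}),
\]
hence $C=0$ by the hypothesis $A(\overline{\zeta},\overline{\zeta})\neq0$, and Lemma~\ref{.le.2} finishes. Your detour through a case analysis and the vague ``plugging this proportionality back'' step is unnecessary; once you note $C(\overline{\zeta})=0$, the proof collapses to the same two lines as the paper.
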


\begin{proof} Follows from the defining property of quasi-$C$-reducibility
by setting $\overline{X}=\overline{Y}=\overline{\zeta}$ and using
the fact that  $C( \overline{\zeta})=0$ and
$A(\overline{\zeta},\overline{\zeta})\neq0$.
\end{proof}

\begin{thm}Let $(M,L)$ be a Finsler manifold {\em($dim M \geq 3$)}
which admits a concurrent $\pi$-vector field $\overline{\zeta} $,
then, we have\vspace{-0.2cm}
\begin{description}
   \item[(a)]  A $C$-reducible manifold $(M,L)$ is a Riemannian manifold.

  \item[(b)] A semi-$C$-reducible manifold $(M,L)$ is a $C_{2}$-like manifold.
\end{description}
\end{thm}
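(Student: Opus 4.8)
The plan is to exploit the fact, established already, that a concurrent $\pi$-vector field $\overline{\zeta}$ satisfies $C(\overline{\zeta})=0$ (this is Lemma \ref{.le.1}(d) combined with $g(\overline{C},\overline{\zeta})=C(\overline{\zeta})$, or more directly follows since $\overline{C}$ must be proportional to $\overline{\eta}$ in the relevant situation — but most cleanly from Corollary \ref{2.pp.2}(a): $T(\overline{X},\overline{\zeta})=0$ for all $\overline{X}$, hence taking the trace, $C(\overline{\zeta})=0$). Together with Lemma \ref{.le.1}(e), which gives $\hbar(\overline{\zeta},\overline{X})\neq 0$ for $\overline{X}\neq\overline{\eta}$, these two facts are the whole engine of the argument. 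For part \textbf{(a)}, I would substitute $\overline{Z}=\overline{\zeta}$ into the $C$-reducibility identity (\ref{.7.eq.6}). Since $C(\overline{\zeta})=0$, the first and third terms inside the brace drop out, leaving
$$T(\overline{X},\overline{Y},\overline{\zeta})=\frac{1}{n+1}\,\hbar(\overline{Y},\overline{\zeta})\,C(\overline{X}).$$
But $T(\overline{X},\overline{Y},\overline{\zeta})=T(\overline{X},\overline{\zeta},\overline{Y})=g(T(\overline{X},\overline{\zeta}),\overline{Y})=0$ by the symmetry of the Cartan tensor and Corollary \ref{2.pp.2}(a). Hence $\hbar(\overline{Y},\overline{\zeta})\,C(\overline{X})=0$ for all $\overline{X},\overline{Y}$. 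Choosing $\overline{Y}\neq\overline{\eta}$ so that $\hbar(\overline{Y},\overline{\zeta})\neq 0$ (Lemma \ref{.le.1}(e)), we conclude $C(\overline{X})=0$ for all $\overline{X}$, i.e. the $\pi$-form $C$ vanishes. By Deicke's theorem (Lemma \ref{.le.2}), $(M,L)$ is Riemannian.

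For part \textbf{(b)}, I would run the same substitution $\overline{Z}=\overline{\zeta}$ in the semi-$C$-reducibility identity (\ref{.7.eq.5}). Again $C(\overline{\zeta})=0$ kills the second ($\frac{\tau}{C^2}$) term entirely and kills two of the three terms in the $\frac{\mu}{n+1}$ brace, leaving $0 = \frac{\mu}{n+1}\,\hbar(\overline{Y},\overline{\zeta})\,C(\overline{X})$ after using $T(\overline{X},\overline{Y},\overline{\zeta})=0$ as above. By Lemma \ref{.le.1}(e) we may pick $\overline{Y}$ with $\hbar(\overline{Y},\overline{\zeta})\neq 0$, so $\mu\, C(\overline{X})=0$ for every $\overline{X}$. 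If $C$ vanishes identically then $(M,L)$ is Riemannian (Lemma \ref{.le.2}), which is a special — degenerate — case of being $C_2$-like; otherwise $C(\overline{X}_0)\neq 0$ for some $\overline{X}_0$, forcing $\mu=0$, hence $\tau=1$ from the constraint $\mu+\tau=1$. Plugging $\mu=0,\ \tau=1$ back into (\ref{.7.eq.5}) leaves precisely
$$T(\overline{X},\overline{Y},\overline{Z})=\frac{1}{C^2}\,C(\overline{X})\,C(\overline{Y})\,C(\overline{Z}),$$
which is the definition of a $C_2$-like manifold (Definition \ref{7.def.4}(d)).

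The only genuinely delicate point is the justification that $\mu$ (a function on $\T M$) may be cancelled pointwise: the equation $\mu\,C(\overline{X})=0$ only forces $\mu=0$ at points where some component of $C$ is nonzero, so strictly one should argue on the open set where $C\neq 0$ and observe that on its complement $T=0$ anyway (again by Deicke, locally), so the $C_2$-like formula holds trivially there as well. I would phrase the conclusion so as to cover both cases uniformly. No other obstacle arises — everything else is direct substitution plus the three prior results ($C(\overline{\zeta})=0$, $T(\cdot,\overline{\zeta})=0$, and $\hbar(\overline{\zeta},\cdot)\neq 0$).
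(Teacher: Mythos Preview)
Your argument is correct and uses the same ingredients as the paper---$T(\,\cdot\,,\overline{\zeta})=0$, $C(\overline{\zeta})=0$, $\hbar(\overline{\zeta},\,\cdot\,)\neq 0$, and Deicke's theorem---but with a slightly different substitution: you set one slot equal to $\overline{\zeta}$, whereas the paper sets $\overline{X}=\overline{Y}=\overline{\zeta}$ in both (a) and (b), and in (b) additionally takes $\overline{Z}=\overline{C}$. The paper's choice in (b) pays off: it yields directly $\mu\,\hbar(\overline{\zeta},\overline{\zeta})\,C(\overline{C})=0$, and since $C(\overline{C})=C^{2}\neq 0$ is part of the \emph{definition} of semi-$C$-reducibility (Definition~\ref{7.def.4}(b)), the factor $\mu$ can be cancelled outright with no pointwise case analysis. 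Your ``delicate point'' about the set where $C$ vanishes is therefore moot---that set is empty by hypothesis---so your proof is valid but the extra care is unnecessary.
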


\begin{proof}~\par
\vspace{5pt} \noindent\textbf{(a)} Follows from the defining
property of $C$-reducibility by setting
$\overline{X}=\overline{Y}=\overline{\zeta}$, taking into account
Lemma \ref{.le.1}(e), Lemma \ref{.le.2} and $C(
\overline{\zeta})=0$.

\vspace{5pt}
 \noindent\textbf{(b)} Let $(M,L)$ be
semi-$C$-reducible. Setting $\overline{X}=
\overline{Y}=\overline{\zeta}$ and $\overline{Z}=\overline{C}$ in
(\ref{.7.eq.5}), taking into account Corollary \ref{2.pp.2}(a) and
$C(\overline{\zeta})=0$, we get
                  $$\mu\hbar(\overline{\zeta},\overline{\zeta})C(\overline{C})=0.$$
From which, since  $\hbar(\overline{\zeta},\overline{\zeta})\neq0$
and $C(\overline{C})\neq0$, it follows that $\mu=0$.  Consequently,
$(M,L)$ is $C_{2}$-like.
\end{proof}

\begin{defn} The condition\vspace{-0.2cm}
 \begin{equation}\label{.7.eq.3}
 \mathbb{T}(\overline{X},\overline{Y}, \overline{Z}, \overline{W} ):=
  L(\nabla_{\gamma \overline{X}}T)(\overline{Y}, \overline{Z}, \overline{W}) +
  \mathfrak{S}_{\overline{X}, \overline{Y}, \overline{Z}, \overline{W} }\,
   \ell(\overline{X}) T(\overline{Y}, \overline{Z}, \overline{W})=0
  \vspace{-0.2cm}
\end{equation}
will be  called the $\mathbb{T}$-condition.

The more relaxed condition\vspace{-0.2cm}
 \begin{equation}\label{.7.eq.4}
 \mathbb{T}_{o}(\overline{X},\overline{Y} ):=
  L(\nabla_{\gamma \overline{X}}C)(\overline{Y}) +
  \mathfrak{S}_{\overline{X}, \overline{Y}}\, \ell(\overline{X}) C(\overline{Y})=0\quad\qquad\,\,\quad\quad
  \end{equation}
will be  called the $\mathbb{T}_{o}$-condition.
\end{defn}

\begin{thm}Let $(M,L)$ be a Finsler manifold  which admits a concurrent $\pi$-vector field $\overline{\zeta}$. Then, the following assertions are
equivalent\,:\vspace{-0.2cm}
\begin{description}
   \item[(a)] $(M,L)$  satisfies the
  $\mathbb{T}$-condition,

  \item[(b)] $(M,L)$  satisfies  the $\mathbb{T}_{o}$-condition,

\item[(c)] $(M,L)$ is  Riemannian.
\end{description}
\end{thm}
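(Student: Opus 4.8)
The plan is to establish the cycle of implications $\textbf{(c)}\Rightarrow\textbf{(a)}\Rightarrow\textbf{(b)}\Rightarrow\textbf{(c)}$. The first implication is immediate: if $(M,L)$ is Riemannian then by Lemma \ref{.le.2} the $(h)hv$-torsion $T$ vanishes identically, so both summands of $\mathbb{T}$ in \eqref{.7.eq.3} are zero and the $\mathbb{T}$-condition holds trivially. The implication $\textbf{(a)}\Rightarrow\textbf{(b)}$ should also be routine: the $\mathbb{T}_o$-condition is obtained from the $\mathbb{T}$-condition by a suitable trace. Concretely, I would contract \eqref{.7.eq.3} over the pair $(\overline{Z},\overline{W})$ using the metric $g$; since $\nabla_{\gamma\overline X}$ commutes with the metric contraction and since taking the trace of $T(\overline Y,\cdot,\cdot)$ produces $C(\overline Y)$ (and correspondingly $\nabla_{\gamma\overline X}C$ from $\nabla_{\gamma\overline X}T$), the trace of the cyclic-sum term collapses to the cyclic sum $\mathfrak{S}_{\overline X,\overline Y}\,\ell(\overline X)C(\overline Y)$ plus a term involving $\ell(\overline\eta)$-type contractions that vanish because $T(\overline X,\overline\eta)=0$ and $C(\overline\eta)=0$. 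I expect a short computation here, the only care being to check that the contraction of $\ell(\overline Z)T(\overline Y,\overline W,\cdot)$-type terms indeed produces no leftover.

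The substantive implication is $\textbf{(b)}\Rightarrow\textbf{(c)}$, and this is where the concurrent $\pi$-vector field is used. The idea is to evaluate the $\mathbb{T}_o$-condition \eqref{.7.eq.4} at $\overline X=\overline\zeta$. Since $\overline\zeta$ is independent of the directional argument (Theorem \ref{th.ind}), and using Lemma \ref{def.ind} together with Theorem \ref{2.th.1}(a) relating $D^\circ_{\gamma\overline X}$ and $\nabla_{\gamma\overline X}$, I can compute $(\nabla_{\gamma\overline\zeta}C)(\overline Y)$. In fact, applying $D^\circ_{\gamma\overline\zeta}$ to the scalar form $C$ gives $D^\circ_{\gamma\overline\zeta}C = \nabla_{\gamma\overline\zeta}C - (\text{term in }T(\overline\zeta,\cdot))$, and since $T(\overline\zeta,\overline X)=0$ by Corollary \ref{2.pp.2}(a), while $C$ is $y$-independent when... — actually the cleaner route is: $(\nabla_{\gamma\overline\zeta}C)(\overline Y) = \gamma\overline\zeta\cdot C(\overline Y) - C(\nabla_{\gamma\overline\zeta}\overline Y)$, and one shows this vanishes using $\nabla_{\gamma\overline\zeta}\overline\eta = \overline\zeta$-type deflection identities together with the fact that $\overline\zeta$ has no $y$-dependence. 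Granting $(\nabla_{\gamma\overline\zeta}C)(\overline Y)=0$, the $\mathbb{T}_o$-condition at $\overline X=\overline\zeta$ reduces to
\begin{equation*}
  \ell(\overline\zeta)\,C(\overline Y) + \ell(\overline Y)\,C(\overline\zeta) = 0 .
\end{equation*}
Now $C(\overline\zeta)=0$ (this is Corollary \ref{2.pp.2}(a) contracted, equivalently $C(\overline\zeta)= \mathrm{Tr}\{\overline Y\mapsto T(\overline\zeta,\overline Y)\}=0$), so we are left with $\ell(\overline\zeta)\,C(\overline Y)=0$ for all $\overline Y$. Since $\ell(\overline\zeta)=L^{-1}g(\overline\zeta,\overline\eta)=L^{-1}B$ and $B\neq 0$ by Lemma \ref{.le.1}(b), we conclude $\ell(\overline\zeta)\neq0$, hence $C(\overline Y)=0$ for all $\overline Y$, i.e. the $\pi$-form $C$ vanishes. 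By Deicke's theorem (Lemma \ref{.le.2}) this forces $(M,L)$ to be Riemannian.

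The main obstacle I anticipate is the verification that $(\nabla_{\gamma\overline\zeta}C)(\overline Y)=0$ — i.e. that the vertical covariant derivative of $C$ in the direction of the concurrent field really does drop out. This should follow from combining Theorem \ref{th.ind} ($\overline\zeta$ and $\alpha$ are $y$-independent) with Lemma \ref{def.ind}(b) (a $\pi$-form is $y$-independent iff its $D^\circ$-vertical derivative vanishes) and Corollary \ref{2.pp.2}(a) ($T(\overline\zeta,\overline X)=0$), passing between $\nabla$ and $D^\circ$ via Theorem \ref{2.th.1}(a); one must be slightly careful that $D^\circ_{\gamma\overline\zeta}$ acting on the \emph{form} $C$ rather than on $\overline\zeta$ is what is controlled, so the argument runs through the identity $(D^\circ_{\gamma\overline X}C)(\overline Y) = (\nabla_{\gamma\overline X}C)(\overline Y) + g(T(\overline X,\overline Y),\overline C)$ evaluated at $\overline X=\overline\zeta$, where the last term vanishes by Corollary \ref{2.pp.2}(a) and the left side vanishes because $C$ is $y$-independent once $\overline\zeta$ is fed in — the remaining check being that independence of $C$ from $y$ in this specific slot is legitimate, which I would justify directly from $C = \mathrm{Tr}\,T$ and the $y$-independence of $\overline\zeta$. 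Everything else is bookkeeping with the structure equations already recorded in the excerpt.
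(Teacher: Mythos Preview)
Your overall strategy and the endgame for \textbf{(b)}$\Rightarrow$\textbf{(c)} are correct: evaluate \eqref{.7.eq.4} at $\overline X=\overline\zeta$, use $C(\overline\zeta)=0$ and $\ell(\overline\zeta)=L^{-1}B\neq 0$, and finish with Deicke's theorem. The weak point is exactly the ``main obstacle'' you flag: your proposed justification that $(\nabla_{\gamma\overline\zeta}C)(\overline Y)=0$ via $y$-independence of $C$ does not go through. The $\pi$-form $C$ is \emph{not} $y$-independent in general, so Lemma~\ref{def.ind}(b) does not apply to $C$, and your identity $(D^\circ_{\gamma\overline X}C)(\overline Y)=(\nabla_{\gamma\overline X}C)(\overline Y)+C(T(\overline X,\overline Y))$ (which is correct) only tells you that $(\nabla_{\gamma\overline\zeta}C)(\overline Y)=(D^\circ_{\gamma\overline\zeta}C)(\overline Y)$, with the right-hand side a priori nonzero.

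The paper closes this gap in one line by invoking the symmetry
\[
(\nabla_{\gamma\overline X}C)(\overline Y)=(\nabla_{\gamma\overline Y}C)(\overline X),
\]
which follows from the total symmetry of the vertical covariant derivative of the Cartan tensor. With this, $(\nabla_{\gamma\overline\zeta}C)(\overline Y)=(\nabla_{\gamma\overline Y}C)(\overline\zeta)=\gamma\overline Y\cdot C(\overline\zeta)-C(\nabla_{\gamma\overline Y}\,\overline\zeta)=0-C(0)=0$, using $C(\overline\zeta)=0$ and the concurrency condition $\nabla_{\gamma\overline Y}\,\overline\zeta=0$. This bypasses your $y$-independence attempt entirely.

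A secondary remark: the paper does not pass through \textbf{(a)}$\Rightarrow$\textbf{(b)} at all. It proves \textbf{(a)}$\Rightarrow$\textbf{(c)} directly by setting $\overline W=\overline\zeta$ in \eqref{.7.eq.3}: all Cartan-tensor terms with a $\overline\zeta$-slot vanish by Corollary~\ref{2.pp.2}(a), and $(\nabla_{\gamma\overline X}T)(\overline Y,\overline Z,\overline\zeta)=0$ because $T(\cdot,\cdot,\overline\zeta)=0$ and $\nabla_{\gamma\overline X}\,\overline\zeta=0$; what remains is $\ell(\overline\zeta)\,T(\overline X,\overline Y,\overline Z)=0$, hence $T=0$. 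This is shorter than your proposed contraction argument for \textbf{(a)}$\Rightarrow$\textbf{(b)} (which is plausible but requires care with the $\ell$-terms), and it avoids having to establish \textbf{(b)}$\Rightarrow$\textbf{(c)} as a separate step in that chain.
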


\begin{proof}~\par
\vspace{5pt}
 \noindent\textbf{(a)\,$\Longrightarrow$(c):} Follows from  (\ref{.7.eq.3}) by setting
$\overline{W}=\overline{\zeta}$, taking into account that
$T(\overline{X},
\overline{\zeta})=T(\overline{\zeta},\overline{X})=0$ and
$\ell(\overline{\zeta})=\frac{B}{L}\neq0$.

\vspace{5pt}
 \noindent\textbf{(b)\,$\Longrightarrow$(c):} Follows from  (\ref{.7.eq.4}) by setting $\overline{X}=\overline{\zeta}$,
taking into account that $C( \overline{\zeta})=0$, $(\nabla_{\gamma
\overline{X}}C)(\overline{Y})=(\nabla_{\gamma
\overline{Y}}C)(\overline{X})$ and $\ell(\overline{\zeta})\neq0$.

\vspace{5pt}
 \noindent The other implications are trivial.
\end{proof}

\begin{defn}\label{3.def.3} A Finsler manifold $(M,L)$ is said to be
  $S_{3}$-like if $dim(M)\geq 4$ and the v-curvature tensor
$S$ has the form\,{\em:}\vspace{-0.2cm}
\begin{equation}\label{.eq.5}
    S(\overline{X},\overline{Y},\overline{Z},\overline{W})=
\frac{Sc^{v}}{(n-1)(n-2)} \{
\hbar(\overline{X},\overline{Z})\hbar(\overline{Y},\overline{W})-\hbar(\overline{X},\overline{W})
\hbar(\overline{Y},\overline{Z}) \}.
\end{equation}
\end{defn}

\begin{thm}If an  $S_{3}$-like manifold  $(M,L)$  {\em($dim M \geq 4$)}
 admits a concurrent $\pi$-vector field $\overline{\zeta}$,
then, the v-curvature tensor $S$ vanishes.
\end{thm}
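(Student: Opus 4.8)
The plan is to exploit the $S_3$-like form of the v-curvature tensor together with the single relation $S(\overline{X},\overline{Y},\overline{Z},\overline{\zeta})=0$ from Proposition \ref{2.pp.1}(a), using the fact that a concurrent $\pi$-vector field interacts nontrivially with the angular metric tensor (Lemma \ref{.le.1}(e)). First I would substitute $\overline{W}=\overline{\zeta}$ into the defining equation (\ref{.eq.5}). The left-hand side vanishes by Proposition \ref{2.pp.1}(a), so we obtain
$$\frac{Sc^{v}}{(n-1)(n-2)}\left\{\hbar(\overline{X},\overline{Z})\,\hbar(\overline{Y},\overline{\zeta})-\hbar(\overline{X},\overline{\zeta})\,\hbar(\overline{Y},\overline{Z})\right\}=0$$
for all $\overline{X},\overline{Y},\overline{Z}$.

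The next step is to conclude that the scalar coefficient $Sc^{v}$ must vanish. Here the key input is Lemma \ref{.le.1}(e): since $n=\dim M\geq 4$, one can choose $\overline{Y}=\overline{\zeta}$ and then pick $\overline{X},\overline{Z}$ with $\hbar(\overline{\zeta},\overline{\zeta})\neq 0$ (take, e.g., $\overline{X}=\overline{Z}=\overline{m}$ as in Lemma \ref{.le.1}, so that $\hbar(\overline{X},\overline{Z})\neq 0$ as well) while arranging $\hbar(\overline{X},\overline{\zeta})$ and $\hbar(\overline{Y},\overline{Z})$ so that the bracket does not identically cancel — concretely, with $\overline{Y}=\overline{\zeta}$ the bracket becomes $\hbar(\overline{\zeta},\overline{\zeta})\hbar(\overline{X},\overline{Z})-\hbar(\overline{X},\overline{\zeta})\hbar(\overline{\zeta},\overline{Z})$, and choosing $\overline{X}=\overline{Z}$ orthogonal to $\overline{\zeta}$ with respect to $\hbar$ but with $\hbar(\overline{X},\overline{X})\neq 0$ (possible since $n\geq 4$ leaves enough room in the $\hbar$-nondegenerate complement of $\overline{\eta}$) yields $\hbar(\overline{\zeta},\overline{\zeta})\,\hbar(\overline{X},\overline{X})\neq 0$. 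Hence $Sc^{v}=0$.

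Finally, feeding $Sc^{v}=0$ back into (\ref{.eq.5}) gives $S(\overline{X},\overline{Y},\overline{Z},\overline{W})=0$ for all $\pi$-vector fields, i.e. the v-curvature tensor $S$ vanishes. I expect the only real subtlety to be the vector-choice argument in the middle step: one must verify that the restriction of $\hbar$ to the $g$-orthogonal complement of $\overline{\eta}$ is nondegenerate (this is standard, $\hbar=g-\ell\otimes\ell$ is positive definite there) and that $\dim M\geq 4$ gives at least two dimensions of room beyond the line spanned by $\overline{\zeta}$'s $\hbar$-component, so that a suitable $\overline{X}$ exists. Everything else is a direct substitution.
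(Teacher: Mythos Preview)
Your proposal is correct and follows essentially the same line as the paper: set one argument of (\ref{.eq.5}) equal to $\overline{\zeta}$, use Proposition~\ref{2.pp.1}(a) to make the left side vanish, force $Sc^{v}=0$, and then read off $S=0$ from (\ref{.eq.5}). The only difference is the middle step --- the paper contracts the resulting identity over the two free slots to obtain $\dfrac{Sc^{v}}{n-1}\,\hbar(\overline{X},\overline{\zeta})=0$ and then invokes Lemma~\ref{.le.1}, whereas you argue by an explicit choice of vectors; the trace is slightly cleaner and sidesteps the dimension-count verification you flag at the end.
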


\begin{proof}
  Setting $\overline{Z}=\overline{\zeta}$ in (\ref{.eq.5}), taking Proposition \ref{2.pp.1}
  into account, we immediately get
$$ \frac{Sc^{v}}{(n-1)(n-2)} \{
\hbar(\overline{X},\overline{\zeta})\hbar(\overline{Y},\overline{W})-\hbar(\overline{X},\overline{W})
\hbar(\overline{Y},\overline{\zeta}) \}=0$$ Taking the trace of the
above equation, we have
$$\frac{Sc^{v}}{(n-1)(n-2)} \{
(n-1)\hbar(\overline{X},\overline{\zeta})-\hbar(\overline{X},\overline{\zeta})
\}=\frac{Sc^{v}}{(n-1)} \hbar(\overline{X},\overline{\zeta})=0$$
From which, since $\hbar(\overline{X},\overline{\zeta})\neq0$ (Lemma
\ref{.le.1}),  the vertical scalar curvature  $Sc^{v}$ vanishes.
Now, again, from (\ref{.eq.5}), the result follows.
\end{proof}

\begin{defn}\label{.7.def.8} A Finsler manifold $(M,L)$, where  $dim M\geq 3$,
is said to be\,{\em :}\vspace{-0.1cm}
\begin{description}
  \item[(a)] $P_{2}$-like if the hv-curvature tensor $P$ has the
form\,{\em:}\vspace{-0.1cm}
\begin{equation}\label{02}
   P(\overline{X},\overline{Y},\overline{Z}, \overline{W})=
 \omega(\overline{Z})T (\overline{X},\overline{Y},\overline{W})
 -\omega(\overline{W})\, T(\overline{X},\overline{Y},\overline{Z}),\vspace{-0.1cm}
\end{equation}
 where $\omega$ is
  a $(1)\,\pi$-form {\em{(}positively homogeneous of degree $0$)}.

 \item[(b)]$P$-reducible
  if  the $\pi$-tensor field
 $\widehat{P}(\overline{X},\overline{Y},\overline{Z}):
  =g(\widehat{P}(\overline{X},\overline{Y}),\overline{Z})$ has the form\vspace{-0.1cm}
\begin{equation}\label{020}\widehat{P}(\overline{X},\overline{Y},\overline{Z})=\delta(\overline{X})\hbar (\overline{Y},\overline{Z})
  +\delta(\overline{Y})\hbar (\overline{X},\overline{Z})+ \delta(\overline{Z})\hbar
  (\overline{X},\overline{Y}),\vspace{-0.1cm}
  \end{equation}
  where $\delta$ is
  the $\pi$-form defined by  $\delta(\overline{X})=\frac{1}{n+1}(\nabla_{\beta \overline{\eta}}\,C)(\overline{X}).$
\end{description}
  \end{defn}

\begin{thm}
Let $(M,L)$ be a Finsler manifold {\em($dim M \geq 3$)} which admits
a concurrent $\pi$-vector field $\overline{\zeta} $, then, we
have\vspace{-0.2cm}
\begin{description}
   \item[(a)]  A $P_{2}$-like manifold $(M,L)$ is a Riemannian manifold,
   provided that $\omega(\overline{\zeta})\neq-1$.

  \item[(b)] A $P$-reducible manifold $(M,L)$  is a Landsberg manifold.
\end{description}

\end{thm}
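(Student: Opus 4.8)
The plan is to exploit the defining algebraic forms of $P_2$-like and $P$-reducible manifolds by substituting the concurrent vector field $\overline{\zeta}$ into suitable slots and invoking the vanishing results already collected in Corollary \ref{2.pp.2} and Proposition \ref{2.pp.1}. For part \textbf{(a)}, I would start from the $P_2$-like form \eqref{02} and set $\overline{W}=\overline{\zeta}$. By Corollary \ref{2.pp.2}(c) we have $P(\overline{X},\overline{Y})\overline{\zeta}=0$, hence $P(\overline{X},\overline{Y},\overline{Z},\overline{\zeta})=0$ on the left-hand side; on the right-hand side, $T(\overline{X},\overline{Y},\overline{\zeta})=g(T(\overline{X},\overline{Y}),\overline{\zeta})=0$ by Corollary \ref{2.pp.2}(a), so the only surviving term is $-\omega(\overline{\zeta})\,T(\overline{X},\overline{Y},\overline{Z})$. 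Wait --- I must be careful with which slot I use; setting $\overline{W}=\overline{\zeta}$ kills the term with $\omega(\overline{W})$ via $T(\dots,\overline{\zeta})=0$ too, so instead I set $\overline{Z}=\overline{\zeta}$ to compare directly. With $\overline{Z}=\overline{\zeta}$, Proposition \ref{2.pp.1}(d) gives $P(\overline{X},\overline{Y},\overline{\zeta},\overline{W})=-g(P(\overline{X},\overline{Y})\overline{W},\dots)$; more cleanly, $P(\overline{X},\overline{Y},\overline{Z},\overline{\zeta})=T(\overline{X},\overline{Y},\overline{Z})$ from \ref{2.pp.1}(d), while the RHS of \eqref{02} with the fourth slot equal to $\overline{\zeta}$ becomes $\omega(\overline{Z})\cdot 0 - \omega(\overline{\zeta})T(\overline{X},\overline{Y},\overline{Z}) = -\omega(\overline{\zeta})T(\overline{X},\overline{Y},\overline{Z})$. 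Equating, $(1+\omega(\overline{\zeta}))\,T(\overline{X},\overline{Y},\overline{Z})=0$, and since $\omega(\overline{\zeta})\neq-1$ we conclude $T=0$, so $(M,L)$ is Riemannian by Deicke's theorem (Lemma \ref{.le.2}).

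For part \textbf{(b)}, I would substitute into the $P$-reducible form \eqref{020} to force $\delta=0$, which then makes $\widehat{P}=0$, i.e. $(M,L)$ is a Landsberg manifold by Definition \ref{7.def.2}(b). Set $\overline{X}=\overline{\zeta}$ in \eqref{020}: the left-hand side is $\widehat{P}(\overline{\zeta},\overline{Y},\overline{Z})=g(\widehat{P}(\overline{\zeta},\overline{Y}),\overline{Z})=0$ by Corollary \ref{2.pp.2}(b). The right-hand side becomes $\delta(\overline{\zeta})\hbar(\overline{Y},\overline{Z})+\delta(\overline{Y})\hbar(\overline{\zeta},\overline{Z})+\delta(\overline{Z})\hbar(\overline{\zeta},\overline{Y})$. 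Now I would extract $\delta(\overline{\zeta})$ by a further specialization and a trace: for instance, choosing $\overline{Y},\overline{Z}$ orthogonal to $\overline{\zeta}$ in the angular metric kills the last two terms (using that $\hbar$ has rank $n-1$ with kernel spanned by $\overline{\eta}$, and $\hbar(\overline{\zeta},\cdot)\neq0$ by Lemma \ref{.le.1}(e), so such vectors exist when $n\geq 3$), forcing $\delta(\overline{\zeta})=0$; then the relation reduces to $\delta(\overline{Y})\hbar(\overline{\zeta},\overline{Z})+\delta(\overline{Z})\hbar(\overline{\zeta},\overline{Y})=0$. Taking the trace over $\overline{Y}=\overline{Z}$ (summing) and using $\hbar(\overline{\zeta},\cdot)\not\equiv 0$, or more simply substituting $\overline{Z}$ with something for which $\hbar(\overline{\zeta},\overline{Z})\neq0$ while $\delta(\overline{Z})$ is adjustable, yields $\delta(\overline{Y})=0$ for all $\overline{Y}$. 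Hence $\widehat{P}(\overline{X},\overline{Y},\overline{Z})=0$, so $\widehat{P}=0$ and $(M,L)$ is Landsberg.

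The main obstacle I anticipate is the bookkeeping in part \textbf{(b)}: isolating $\delta$ cleanly from the three-term symmetric expression requires a careful argument that vectors with prescribed $\hbar$-values against $\overline{\zeta}$ exist and that the trace manipulation is valid; the nondegeneracy input is exactly Lemma \ref{.le.1}(e) together with $\hbar$ having one-dimensional radical. Part \textbf{(a)} is essentially a one-line substitution once the correct slot is chosen, so the only real care there is verifying via Proposition \ref{2.pp.1}(d) and Corollary \ref{2.pp.2}(c) that $P(\overline{X},\overline{Y},\overline{Z},\overline{\zeta})=T(\overline{X},\overline{Y},\overline{Z})$ and that the companion term vanishes. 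Throughout I would lean on the symmetry of $T$ in its three arguments (from Theorem \ref{th.1}(c)) and on $C(\overline{\zeta})=0$, which was already used repeatedly above and follows from Corollary \ref{2.pp.2}(a) by contraction.
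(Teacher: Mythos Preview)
Your approach is correct and matches the paper's: substitute $\overline{\zeta}$ into the defining formulas \eqref{02} and \eqref{020} and invoke Proposition~\ref{2.pp.1} and Corollary~\ref{2.pp.2}. Part~\textbf{(a)} is essentially identical to the paper (the paper sets $\overline{Z}=\overline{\zeta}$ rather than $\overline{W}=\overline{\zeta}$, but either yields $(1+\omega(\overline{\zeta}))\,T=0$).

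For part~\textbf{(b)} your route is correct but more circuitous than necessary. The paper simply sets $\overline{X}=\overline{Y}=\overline{\zeta}$ in \eqref{020} from the outset, which gives $2\delta(\overline{\zeta})\hbar(\overline{\zeta},\overline{Z})+\delta(\overline{Z})\hbar(\overline{\zeta},\overline{\zeta})=0$. The key observation you skipped is that $\delta(\overline{\zeta})=\tfrac{1}{n+1}(\nabla_{\beta\overline{\eta}}C)(\overline{\zeta})=\tfrac{1}{n+1}\big(\nabla_{\beta\overline{\eta}}(C(\overline{\zeta}))-C(\nabla_{\beta\overline{\eta}}\overline{\zeta})\big)=-\tfrac{1}{n+1}C(-\overline{\eta})=0$ directly, since $C(\overline{\zeta})=0$ and $C(\overline{\eta})=0$. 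This replaces your orthogonality/trace argument entirely: with $\delta(\overline{\zeta})=0$ known, one immediately reads off $\delta(\overline{Z})\hbar(\overline{\zeta},\overline{\zeta})=0$, and Lemma~\ref{.le.1}(e) (with $\overline{X}=\overline{\zeta}$) gives $\hbar(\overline{\zeta},\overline{\zeta})\neq 0$, whence $\delta=0$ and $\widehat{P}=0$. Your vaguer final step (``$\delta(\overline{Z})$ is adjustable'') becomes unnecessary; in fact, in your own reduced relation $\delta(\overline{Y})\hbar(\overline{\zeta},\overline{Z})+\delta(\overline{Z})\hbar(\overline{\zeta},\overline{Y})=0$, the clean move is exactly $\overline{Z}=\overline{\zeta}$, which collapses to the paper's one-line computation.
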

\begin{proof}~\par
\vspace{5pt}
 \noindent\textbf{(a)} Setting $\overline{Z}=\overline{\zeta}$ in (\ref{02}), taking into account
Proposition \ref{2.pp.1} and Corollary \ref{2.pp.2}, we immediately
get $$\left(\omega(\overline{\zeta})+1\right)
T(\overline{X},\overline{Y})=0.$$
 Hence, the result follows.

\vspace{5pt}
 \noindent\textbf{(b)} Setting $\overline{X}= \overline{Y}=\overline{\zeta}$ in
(\ref{020}) and taking into account that $(\nabla_{\beta
\overline{\eta}}C)( \overline{\zeta})=0$, we get
$\hbar(\overline{\zeta},\overline{\zeta})(\nabla_{\beta
\overline{\eta}}C)( \overline{Z})=0$, with
$\hbar(\overline{\zeta},\overline{\zeta})\neq0$ (Lemma \ref{.le.1}).
Consequently, $\nabla_{\beta \overline{\eta}}C=0$. Hence, again,
from Definition \ref{.7.def.8}(b), the (v)hv-torsion tensor
$\widehat{P}=0$.
\end{proof}

\begin{defn}\label{.7.def.9} A Finsler manifold $(M,L)$, where  $dim M\geq 3$,
is said to be\,{\em :}\vspace{-0.1cm}
\begin{description}
  \item[(a)]$h$-isotropic  if there exists a scalar $k_{o}$ such that the
horizontal curvature tensor $R$ has the form\vspace{-0.2cm}
$$R(\overline{X},\overline{Y})\overline{Z}=k_{o} \{g(\overline{X},\overline{Z})
\overline{Y}-g(\overline{Y},\overline{Z})\overline{X} \}.$$

  \item[(b)]of scalar curvature if there exists a  scalar function $k: \T M \To
 \Real$ such that\vspace{-0.2cm} $$R(\overline{\eta},\overline{X},\overline{\eta},\overline{Y})=
  k L^{2} \hbar(\overline{X},\overline{Y}), \vspace{-0.2cm} $$ where
  $k$ called the scalar curvature.
\end{description}
\end{defn}

\begin{thm}Let $(M,L)$, $dim M\geq 3$, be an $h$-isotropic Finsler manifold
admitting  a concurrent $\pi$-vector field $\overline{\zeta}$, then
we have
 \begin{description}
    \item[(a)] The h-curvature tensor $R$ of the Cartan connection  vanishes.
   \item[(b)] If $(M,L)$ is a Berwald manifold, then  it is a flat Riemannian manifold.
\end{description}
\end{thm}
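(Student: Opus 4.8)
The plan is to treat part (a) first and then derive part (b) as an immediate consequence. For part (a), I would write down the $h$-isotropy condition $R(\overline{X},\overline{Y})\overline{Z}=k_{o}\{g(\overline{X},\overline{Z})\overline{Y}-g(\overline{Y},\overline{Z})\overline{X}\}$ and substitute $\overline{Z}=\overline{\zeta}$. By Proposition \ref{2.pp.1}(g) we have $R(\overline{X},\overline{Y})\overline{\zeta}=0$, so the right-hand side must vanish: $k_{o}\{g(\overline{X},\overline{\zeta})\overline{Y}-g(\overline{Y},\overline{\zeta})\overline{X}\}=0$ for all $\overline{X},\overline{Y}$. Choosing $\overline{X},\overline{Y}$ appropriately (e.g. taking $\overline{Y}$ not proportional to $\overline{X}$, which is possible since $\dim M\geq 3$, and using that $g(\cdot,\overline{\zeta})=\alpha$ is not identically zero because $\overline{\zeta}$ is everywhere non-zero by Lemma \ref{.le.1}(a)) forces $k_{o}=0$. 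Hence $R=0$.

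For part (b), suppose in addition that $(M,L)$ is a Berwald manifold. Then by Theorem \ref{.thm.1}, since $(M,L)$ admits the concurrent $\pi$-vector field $\overline{\zeta}$, being Berwald is equivalent to being Riemannian; thus $(M,L)$ is Riemannian. Combined with $R=0$ from part (a), the manifold is a Riemannian manifold with vanishing h-curvature tensor, i.e. a flat Riemannian manifold. (If one wants to phrase it purely via the stated definitions: $R=0$ together with $\nabla_{\beta\overline{X}}T=0$ — the Berwald condition — and $T=0$ gives local Minkowskian which is Riemannian, hence flat Riemannian; but invoking Theorem \ref{.thm.1} directly is cleaner.)

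The only mild subtlety — and the place I would be most careful — is the elimination of $k_{o}$ in part (a): one must justify that the bracketed expression $g(\overline{X},\overline{\zeta})\overline{Y}-g(\overline{Y},\overline{\zeta})\overline{X}$ is not identically zero as $\overline{X},\overline{Y}$ range over $\mathfrak{X}(\pi(M))$. This follows because $\overline{\zeta}\neq 0$ everywhere (Lemma \ref{.le.1}(a)), so $\alpha=i_{\overline{\zeta}}g\neq 0$, and since the fibre dimension $n\geq 3>1$ one can pick $\overline{Y}$ linearly independent from a chosen $\overline{X}$ with $g(\overline{X},\overline{\zeta})\neq 0$; then the bracket is a nonzero multiple of a nonzero vector unless $k_{o}=0$. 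Everything else is a direct substitution into results already established (Proposition \ref{2.pp.1}(g), Theorem \ref{.thm.1}, Lemma \ref{.le.1}), so there is no real computational obstacle.
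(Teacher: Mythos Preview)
Your proposal is correct and follows the same overall strategy as the paper: substitute $\overline{Z}=\overline{\zeta}$ into the $h$-isotropy formula, use Proposition~\ref{2.pp.1}(g) to kill the left-hand side, and deduce $k_{o}=0$; part~(b) is then immediate from Theorem~\ref{.thm.1}, exactly as in the paper.

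The only tactical difference is in how you extract $k_{o}=0$. The paper passes to the lowered tensor $R(\overline{X},\overline{Y},\overline{Z},\overline{W})$, specialises $\overline{X}=\overline{m}$ (the vector of Lemma~\ref{.le.1}(c)), and takes a trace to obtain $k_{o}(n-1)\,g(\overline{m},\overline{\zeta})=0$, concluding via $g(\overline{m},\overline{\zeta})=g(\overline{m},\overline{m})\neq 0$. You instead argue directly by linear independence from $k_{o}\{g(\overline{X},\overline{\zeta})\overline{Y}-g(\overline{Y},\overline{\zeta})\overline{X}\}=0$. Your route is slightly more elementary (it avoids introducing $\overline{m}$ and the trace), while the paper's trace argument makes the use of $\dim M\geq 3$ more visibly quantitative; both are short and valid.
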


\begin{proof}~\par
\vspace{5pt}
 \noindent\textbf{(a)} From Definition \ref{.7.def.9}(a), we have
\begin{equation}\label{.eq.6}
    R(\overline{X},\overline{Y},\overline{Z},\overline{W})=k_{o} \{g(\overline{X},\overline{Z})
g(\overline{Y},\overline{W})-g(\overline{Y},\overline{Z})g(\overline{X},\overline{W})
\}.
\end{equation}
Setting $\overline{Z}=\overline{\zeta}$ and $\overline{X}=
\overline{m}$ and noting that
$R(\overline{X},\overline{Y})\overline{\zeta}=0$ (Proposition
\ref{2.pp.1}), we have
$$k_{o} \{g(\overline{m},\overline{\zeta})
g(\overline{Y},\overline{W})-g(\overline{Y},\overline{\zeta})g(\overline{m},\overline{W})
\}=0.$$ Taking the trace of this equation, we get
$$k_{o} (n-1)g(\overline{m},\overline{\zeta})=0.$$
From which, since
$g(\overline{m},\overline{\zeta})=g(\overline{m},\overline{m})\neq0$
(Lemma \ref{.le.1}) and $dim M\geq 3$,  the scalar $k_{o}$ vanishes.
Now, again, from (\ref{.eq.6}), the result follows.

\vspace{5pt}
 \noindent\textbf{(b)} Follows from (a), taking  Theorem
\ref{.thm.1} into account.
\end{proof}

\begin{thm}Let $(M,L)$, $dim M\geq 3$, be a Finsler manifold  of scalar curvature
  admitting  a concurrent $\pi$-vector field $\overline{\zeta}$. The
  following statements hold\,\emph{:}
 \begin{description}
   \item[(a)]The scaler curvature $k$ vanishes.
   \item[(b)] The  deviation tensor field $H$ \emph{(}$H(\overline{X}):=\widehat{R}(\overline{\eta},
    \overline{X})$\emph{)} vanishes.
   \item[(c)]The (v)h-torsion tensor $\widehat{R}$ of Cartan connection  vanishes.
   \item[(d)] The h-curvature tensor ${R}^{\circ}$ of Berwald connection vanishes.
\item[(e)] The horizontal distribution is completely integrable.
 \end{description}
\end{thm}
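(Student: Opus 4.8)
The plan is to proceed exactly as in the $h$-isotropic case, using the defining property of scalar curvature together with the key fact from Proposition~\ref{2.pp.1} that $R(\overline{X},\overline{Y})\,\overline{\zeta}=0$ and $R(\overline{X},\overline{Y},\overline{Z},\overline{\zeta})=0$. First, for part \textbf{(a)}, I would start from the defining relation $R(\overline{\eta},\overline{X},\overline{\eta},\overline{Y})=kL^2\hbar(\overline{X},\overline{Y})$ and substitute $\overline{X}=\overline{\zeta}$. On the right-hand side we obtain $kL^2\hbar(\overline{\zeta},\overline{Y})$, while on the left-hand side $R(\overline{\eta},\overline{\zeta},\overline{\eta},\overline{Y}) = g(R(\overline{\eta},\overline{\zeta})\overline{\eta},\overline{Y})$. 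Using the skew-symmetry in the last two slots (Lemma~\ref{2.le.2}) and Proposition~\ref{2.pp.1}(g), $R(\overline{\eta},\overline{\zeta})\overline{\eta} = -$ the quantity dual to $R(\overline{\eta},\overline{\zeta},\overline{\eta},\cdot)$; more directly, $R(\overline{\eta},\overline{X},\overline{\eta},\overline{\zeta}) = 0$ by Proposition~\ref{2.pp.1}(g), so by the symmetry properties of $R$ (the pair symmetry $R(\overline{X},\overline{Y},\overline{Z},\overline{W})=R(\overline{Z},\overline{W},\overline{X},\overline{Y})$ valid for the Cartan connection) we get $R(\overline{\eta},\overline{\zeta},\overline{\eta},\overline{Y})=0$. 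Hence $kL^2\hbar(\overline{\zeta},\overline{Y})=0$ for all $\overline{Y}$, and since $\hbar(\overline{\zeta},\overline{Y})\neq 0$ for $\overline{Y}\neq\overline{\eta}$ by Lemma~\ref{.le.1}(e) (and $L^2\neq 0$), we conclude $k=0$.

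Part \textbf{(b)} then follows immediately: for a Finsler manifold of scalar curvature one has the well-known identity $H(\overline{X}) = \widehat{R}(\overline{\eta},\overline{X})$ expressible through $k$, namely $g(H(\overline{X}),\overline{Y}) = R(\overline{\eta},\overline{X},\overline{\eta},\overline{Y}) = kL^2\hbar(\overline{X},\overline{Y})$, which vanishes once $k=0$ by part~(a); thus $H=0$. For part \textbf{(c)}, I would invoke the standard fact (used freely in this circle of papers, and traceable to the structure equations of the Cartan connection) that on a space of scalar curvature the full $(v)h$-torsion $\widehat{R}$ is determined by the deviation tensor $H$ via a homogeneity/Euler-type relation (essentially $\widehat{R}(\overline{X},\overline{Y})$ is recovered from $\nabla_{\gamma}H$ and $H$); since $H=0$ and $k=0$, this forces $\widehat{R}=0$. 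Alternatively, one argues directly: for a space of scalar curvature, $R(\overline{X},\overline{Y})\overline{Z}$ has an explicit expression in terms of $k$, $g$, $\hbar$ and the Cartan tensor contracted appropriately, all of whose terms carry a factor $k$ or $\nabla k$; with $k\equiv 0$ these vanish, so $\widehat{R}=R(\cdot,\cdot)\overline{\eta}=0$.

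Part \textbf{(d)} follows from the relation between the $h$-curvatures of the Berwald and Cartan connections together with the vanishing of $\widehat{R}$ and of the Landsberg-type terms: the Berwald $h$-curvature $R^{\circ}$ differs from the Cartan $R$ by terms built from $\nabla_{\beta}\widehat{P}$, $T$ and $\widehat{R}$ (via Theorem~\ref{2.th.1} and the curvature transformation formulas); since $\widehat{R}(\overline{X},\overline{Y})=0$ and $R^{\circ}(\overline{X},\overline{Y})\overline{Z}$ is independent of the direction in a suitable sense, one uses that $R^{\circ}$ is $v$-covariantly determined by $\widehat{R}$ and hence vanishes — or more simply, $R^{\circ}(\overline{X},\overline{Y})\overline{\eta}=\widehat{R}(\overline{X},\overline{Y})=0$ and, by Berwald homogeneity, $R^{\circ}(\overline{X},\overline{Y})\overline{Z}=(\nabla^{\circ}_{\gamma\overline{Z}}\widehat{R})(\overline{X},\overline{Y})$, which is then zero. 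Finally, part \textbf{(e)}: the horizontal distribution $H(\T M)$ is completely integrable if and only if its curvature obstruction vanishes, i.e. $h[\,\beta\overline{X},\beta\overline{Y}\,]\in H(\T M)$ always; by Lemma~\ref{bracket}(a) the vertical component of $[\beta\overline{X},\beta\overline{Y}]$ is $\gamma\widehat{R}(\overline{X},\overline{Y})$, which is zero by part~(c), so the distribution is involutive and, being of constant rank, completely integrable by Frobenius.

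The main obstacle I anticipate is part \textbf{(c)} (and hence the input to (d)): the clean deduction $k=0\Rightarrow\widehat{R}=0$ relies on the precise structure-equation identity expressing $\widehat{R}$ on a space of scalar curvature in terms of $k$, $dk$, $H$, and $\nabla H$ — this identity must be quoted from the companion reference (the paper's \cite{r96} or \cite{r86}) rather than rederived here, and one must check that its every term indeed carries a factor that dies when $k\equiv 0$ and $H\equiv 0$. Parts (a), (b), (d), (e) are then essentially formal once (c) is in hand.
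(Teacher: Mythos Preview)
Your overall strategy matches the paper's, but there is one genuine error in part \textbf{(a)}: you invoke the pair symmetry $R(\overline{X},\overline{Y},\overline{Z},\overline{W})=R(\overline{Z},\overline{W},\overline{X},\overline{Y})$ and claim it is ``valid for the Cartan connection.'' In Finsler geometry this Riemannian identity does \emph{not} hold in general for the $h$-curvature of the Cartan connection (the first Bianchi identity acquires torsion terms, and the pair symmetry fails). The fix is trivial and is exactly what the paper does: instead of setting $\overline{X}=\overline{\zeta}$ and then trying to swap slots, set $\overline{Y}=\overline{\zeta}$ directly in $R(\overline{\eta},\overline{X},\overline{\eta},\overline{Y})=kL^{2}\hbar(\overline{X},\overline{Y})$. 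Proposition~\ref{2.pp.1}(g) gives $R(\overline{\eta},\overline{X},\overline{\eta},\overline{\zeta})=0$ immediately, hence $kL^{2}\hbar(\overline{X},\overline{\zeta})=0$, and Lemma~\ref{.le.1}(e) finishes. No symmetry of $R$ is needed.

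For parts \textbf{(c)}, \textbf{(d)}, \textbf{(e)} your instinct is correct: the paper does not argue these from scratch but simply cites Theorem~4.7 of \cite{r96}, which packages the implications $H=0\Rightarrow\widehat{R}=0\Rightarrow R^{\circ}=0$ and the integrability of the horizontal distribution for spaces of scalar curvature. Your sketched mechanisms (recovering $\widehat{R}$ from $H$ via homogeneity, the Frobenius argument through Lemma~\ref{bracket}(a)) are the right ideas behind that reference, so your anticipated obstacle is exactly the point at which the paper defers to \cite{r96}.
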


\begin{proof}~\par
\vspace{5pt}
 \noindent\textbf{(a)}  Follows from Definition \ref{.7.def.9}(b) by setting
$\overline{Y}=\overline{\zeta}$, taking into account  Proposition
\ref{2.pp.1}(g) and  Lemma \ref{.le.1}(e).

\vspace{5pt}
 \noindent\textbf{(b)} Follows from (a) and Definition \ref{.7.def.9}(b).

\vspace{5pt}
 \noindent\textbf{(c)}, \textbf{(d)} and \textbf{(e)} Follow from
Theorem 4.7 of \cite{r96}.
\end{proof}


\Section{Energy $\beta$-change and Cartan nonlinear connection
 (Barthel connection)}

In the present and the next sections  we consider a perturbation, by
a concurrent $\pi$-vector field $\overline{\zeta}$, of the energy
function $E=\frac{1}{2}L^{2}$ of a Finsler structure $L$.
\par
 Let $(M,L)$ be a Finsler manifold. Consider the change \vspace{-0.2cm}
 \begin{equation}\label{2.eq.2}
\widetilde{L}^{2}(x,y)=L^{2}(x,y)+ B^{2}(x,y)\, , \ \ \ with \ \
B:=g(\overline{\eta},\overline{\zeta})=\alpha(\overline{\eta}),\vspace{-0.3cm}
\end{equation}
  $\widetilde{L}$ defines a new Finsler structure on $M$. The Finsler structure
   $\widetilde{L}$ is said to be obtained  from the Finsler structure $L$ by  the $\beta$-change
    (\ref{2.eq.2}). The $\beta$-change
    (\ref{2.eq.2}) will be referred to as an \textbf{energy $\beta$-change} (as it can be written
    in the form $\widetilde{E}=E+\frac{1}{2}B^{2}$, where $E$ and $\widetilde{E}$ are the energy functions
    corresponding to the Lagrangians $L$ and $\widetilde{L}$ respectively).

\vspace{5pt}
 \par
 The following two lemmas are useful for subsequence use.\,\vspace{-0.2cm}
\begin{lem}\label{2.co.1}The function $B(x,y)$ given by {\em (\ref{2.eq.2})}
has the properties
\begin{description}
    \item[(a)]$ B=d_{J}E(\beta \overline{\zeta})$,\quad $d_{J}B=\alpha\circ\rho
    $,\quad  $d_{h}B=-(i_{\overline{\eta}}\,g)\circ\rho $.
       \item[(b)] $dd_{J}B^{2}(\gamma \overline{X}, \beta \overline{Y})=
       2 \alpha(\overline{X}) \alpha(\overline{Y})$, \, $dd_{J}B^{2}(\gamma \overline{X}, \gamma \overline{Y})=0$.
  \item[(c)] $dd_{J}B^{2}(\beta \overline{X}, \beta \overline{Y})=2(\alpha\wedge i_{\overline{\eta}}\,g)(\overline{X},\overline{Y})$.
   \end{description}
\end{lem}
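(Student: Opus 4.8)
\textbf{Proof plan for Lemma \ref{2.co.1}.}

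The plan is to unwind every assertion into the calculus of the vertical/horizontal calculus on $TM$, using the identities $d_JE = i_{\overline\eta}g\circ\rho$ (equivalently $d_JL^2 = 2\,i_{\overline\eta}g\circ\rho$, which follows from $\Omega = dd_JE$ together with the definition of $g$) and the defining relations $\rho\circ\beta = id$, $\rho\circ\gamma = 0$, $J = \gamma\circ\rho$, $J\beta = \gamma$, $J\gamma = 0$. For part \textbf{(a)}, the first identity $B = d_JE(\beta\overline\zeta)$ is immediate: $d_JE(\beta\overline\zeta) = dE(J\beta\overline\zeta) = dE(\gamma\overline\zeta) = (i_{\overline\eta}g)(\rho\gamma\overline\zeta)$... no, better, $dE(\gamma\overline\zeta)=\gamma\overline\zeta(E)$ and one uses $d_JE = (i_{\overline\eta}g)\circ\rho = \alpha_{\overline\eta}\circ\rho$ where $\alpha_{\overline\eta}:=i_{\overline\eta}g$, so $d_JE(\beta\overline\zeta) = g(\overline\eta,\rho\beta\overline\zeta) = g(\overline\eta,\overline\zeta) = B$. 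For $d_JB$: write $B = \alpha(\overline\eta)$ with $\alpha = i_{\overline\zeta}g$; since by Theorem \ref{th.ind} both $\overline\zeta$ and $\alpha$ are independent of the directional argument, $B$ is a function on $TM$ whose vertical derivative sees only the $\overline\eta$-dependence. Concretely, $d_JB(X) = dB(JX)$; evaluating on $X = \gamma\overline Y$ gives $dB(J\gamma\overline Y) = dB(0) = 0$, and on $X=\beta\overline Y$ gives $dB(\gamma\overline Y) = (\gamma\overline Y)(B)$. Using $\nabla_{\gamma\overline Y}\overline\eta = \overline Y$ and $\nabla_{\gamma\overline Y}\overline\zeta = 0$, $\nabla g = 0$, one gets $(\gamma\overline Y)(B) = (\gamma\overline Y)g(\overline\eta,\overline\zeta) = g(\overline Y,\overline\zeta) = \alpha(\overline Y) = (\alpha\circ\rho)(\beta\overline Y)$, which is exactly $d_JB = \alpha\circ\rho$. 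For $d_hB$: $d_hB(X) = dB(h_\Gamma X)$; on $X = \beta\overline Y$ this is $(\beta\overline Y)(B)$, and via $\nabla_{\beta\overline Y}\overline\eta = 0$ (deflection vanishes for the Cartan connection), $\nabla_{\beta\overline Y}\overline\zeta = -\overline Y$ and $\nabla g = 0$, one obtains $(\beta\overline Y)(B) = g(\overline\eta,-\overline Y) = -(i_{\overline\eta}g)(\overline Y)$, while on $X = \gamma\overline Y$ it vanishes; hence $d_hB = -(i_{\overline\eta}g)\circ\rho$.

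For parts \textbf{(b)} and \textbf{(c)}, I would first note $dd_JB^2 = 2\,d(B\,d_JB) = 2(dB\wedge d_JB + B\,dd_JB)$, but it is cleaner to compute $dd_JB^2$ directly on pairs of the basis fields $\gamma\overline X,\beta\overline Y$ using the standard formula $d\omega(U,V) = U(\omega V) - V(\omega U) - \omega([U,V])$ for the $1$-form $\omega := d_JB^2 = 2B\,d_JB = 2B\,(\alpha\circ\rho)$. For the $(\gamma,\gamma)$-component: $\omega(\gamma\overline X) = 2B\,\alpha(\rho\gamma\overline X) = 0$ and likewise $\omega(\gamma\overline Y) = 0$, and $[\gamma\overline X,\gamma\overline Y]$ is again vertical by Lemma \ref{bracket}(c) so $\omega$ annihilates it; hence $dd_JB^2(\gamma\overline X,\gamma\overline Y) = 0$. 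For the $(\gamma,\beta)$-component: $\omega(\beta\overline Y) = 2B\,\alpha(\overline Y)$, $\omega(\gamma\overline X) = 0$, and $\omega([\gamma\overline X,\beta\overline Y])$ is computed from Lemma \ref{bracket}(b); its $\beta$-part contributes $2B\,\alpha$ of $D_{\gamma\overline X}\overline Y - T(\overline X,\overline Y)$. Then $dd_JB^2(\gamma\overline X,\beta\overline Y) = (\gamma\overline X)(2B\,\alpha(\overline Y)) - \omega([\gamma\overline X,\beta\overline Y])$; expanding $(\gamma\overline X)(2B\alpha(\overline Y)) = 2(\gamma\overline X)(B)\,\alpha(\overline Y) + 2B\,(\gamma\overline X)(\alpha(\overline Y))$, substituting $(\gamma\overline X)(B) = \alpha(\overline X)$ from (a) and $(\gamma\overline X)(\alpha(\overline Y)) = (\nabla_{\gamma\overline X}\alpha)(\overline Y) + \alpha(\nabla_{\gamma\overline X}\overline Y) = \alpha(\nabla_{\gamma\overline X}\overline Y)$ by Lemma \ref{2.le.1}(b), the $B$-proportional terms cancel against the corresponding pieces of $\omega([\gamma\overline X,\beta\overline Y])$, and what survives is $2\alpha(\overline X)\alpha(\overline Y)$ (the $T$-term drops out after pairing, or one uses $\alpha(T(\overline X,\overline Y)) = g(\overline\zeta,T(\overline X,\overline Y)) = T(\overline X,\overline Y,\overline\zeta) = 0$ by Corollary \ref{2.pp.2}(a)). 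For part \textbf{(c)}, the $(\beta,\beta)$-component: here $\omega(\beta\overline X) = 2B\,\alpha(\overline X)$, $\omega(\beta\overline Y) = 2B\,\alpha(\overline Y)$, and $[\beta\overline X,\beta\overline Y]$ has a $\gamma$-part $\gamma\widehat R(\overline X,\overline Y)$ and a $\beta$-part from Lemma \ref{bracket}(a); since $\omega$ annihilates vertical vectors, only the $\beta$-part matters, and $\omega$ of it is $2B\,\alpha(D_{\beta\overline X}\overline Y - D_{\beta\overline Y}\overline X - Q(\overline X,\overline Y))$ with $Q = 0$ for Cartan. Assembling $dd_JB^2(\beta\overline X,\beta\overline Y) = (\beta\overline X)(2B\alpha(\overline Y)) - (\beta\overline Y)(2B\alpha(\overline X)) - \omega([\beta\overline X,\beta\overline Y])$, using $(\beta\overline X)(B) = -(i_{\overline\eta}g)(\overline X)$ from (a) and $(\beta\overline X)(\alpha(\overline Y)) = (\nabla_{\beta\overline X}\alpha)(\overline Y) + \alpha(\nabla_{\beta\overline X}\overline Y) = -g(\overline X,\overline Y) + \alpha(\nabla_{\beta\overline X}\overline Y)$ from Lemma \ref{2.le.1}(a), the $B$-proportional terms again cancel and the surviving terms are $-2(i_{\overline\eta}g)(\overline X)\alpha(\overline Y) + 2(i_{\overline\eta}g)(\overline Y)\alpha(\overline X) + 2(-g(\overline X,\overline Y) + g(\overline X,\overline Y)) = 2\bigl(\alpha(\overline X)(i_{\overline\eta}g)(\overline Y) - \alpha(\overline Y)(i_{\overline\eta}g)(\overline X)\bigr) = 2(\alpha\wedge i_{\overline\eta}g)(\overline X,\overline Y)$.

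The main obstacle is the bookkeeping in \textbf{(b)} and \textbf{(c)}: one must carefully track which brackets from Lemma \ref{bracket} contribute (only the $\beta$-parts, since $\omega = d_JB^2$ kills verticals), and verify that all terms proportional to $B$ and to $\nabla\overline Y$, $\nabla\overline X$ cancel so that only the algebraic $\alpha\otimes\alpha$ / $\alpha\wedge i_{\overline\eta}g$ pieces remain. The crucial inputs that make these cancellations work are precisely the concurrence equations \eqref{12.eq.1}, the metric compatibility $\nabla g = 0$ (through Lemma \ref{2.le.1}), the vanishing of the Cartan (h)h-torsion $Q$, and the vanishing $T(\overline X,\overline\zeta) = 0$ from Corollary \ref{2.pp.2}(a). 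A minor point to double-check is homogeneity/well-definedness of $B$ on $TM$ versus $\T M$, but since $\overline\zeta$ and $\alpha$ are $y$-independent (Theorem \ref{th.ind}) this causes no trouble.
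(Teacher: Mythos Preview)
Your proposal is correct and follows essentially the same route as the paper: for \textbf{(a)} you use $d_JE=(i_{\overline\eta}g)\circ\rho$, $\nabla g=0$, and the concurrence relations exactly as the paper does (the paper computes $d_JB(X)=JX\cdot B$ for general $X$ rather than splitting into $\gamma$/$\beta$ cases, but this is cosmetic); for \textbf{(b)} and \textbf{(c)} you apply the exterior-derivative formula to $\omega=d_JB^2=2B\,\alpha\circ\rho$ and feed in the bracket decompositions of Lemma~\ref{bracket}, using $g(T(\overline X,\overline Y),\overline\zeta)=0$ and $Q=0$ to effect the cancellations, which is precisely the paper's computation (the paper leaves \textbf{(c)} as ``analogous to (b)'' while you spell it out).
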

\begin{proof}~\par
\vspace{5pt}
 \noindent \textbf{(a)} From  $g(\overline{\eta},\overline{\eta})=2
E^{2}$ and $\nabla g=0$, one can show that
 $$d_{J}E(X)=g(\rho X, \overline{\eta}), \  \forall  \ X \in \cpp.$$
 Setting $X =\beta \overline{\zeta}$, we obtain $ B=d_{J}E(\beta \overline{\zeta})  $.
\par
 On the other hand,\vspace{-0.2cm} $$d_{J}B(X)=JX \cdot B=J X\cdot g(\overline{\zeta},\overline{\eta})
 =g(\overline{\zeta}, \nabla_{JX}\overline{\eta})=g(\overline{\zeta},\rho X)=\alpha(\rho X).\vspace{-0.2cm}$$
 Similarly,\vspace{-0.2cm} $$d_{h}B(X)=hX \cdot B=hX\cdot g(\overline{\zeta},\overline{\eta})
 =g(\nabla_{hX}\,\overline{\zeta}, \overline{\eta})=-g(\rho X,\overline{\eta})=-i_{\overline{\eta}}g(\rho X).$$

\vspace{5pt}
 \noindent \textbf{(b)} Making use of (a), we  have
\begin{eqnarray*}
 dd_{J}B^{2}(\gamma \overline{X}, \beta \overline{Y}) &=& \gamma \overline{X} \cdot d_{J}B^{2}(\beta \overline{Y})
 -\beta \overline{Y} \cdot d_{J}B^{2}(\gamma \overline{X})-d_{J}B^{2}([\gamma \overline{X}, \beta \overline{Y}])\\
  &=&2\gamma \overline{X}\cdot (B \gamma \overline{Y} \cdot B)-2\beta \overline{Y}\cdot (B J\gamma \overline{X} \cdot B)
  -2B J[\gamma \overline{X}, \beta \overline{Y}]\cdot B \\
  &=&2\gamma \overline{X}\cdot (B g(\overline{Y},\overline{\zeta}))-2B g(\rho[\gamma \overline{X}, \beta \overline{Y}],\overline{\zeta}) \\
   &=& 2\set{g(\overline{X},\overline{\zeta}) g(\overline{Y},\overline{\zeta})+B\gamma \overline{X}\cdot g(\overline{Y},\overline{\zeta})}-
   2B g(\nabla_{\gamma \overline{X}} \overline{Y}-T(\overline{X},\overline{Y}),\overline{\zeta})  \\
 &=& 2\set{g(\overline{X},\overline{\zeta}) g(\overline{Y},\overline{\zeta})+B g(\nabla_{\gamma \overline{X}} \overline{Y},\overline{\zeta})}-
   2B g(\nabla_{\gamma \overline{X}} \overline{Y},\overline{\zeta})\\
   &=&2 \alpha(\overline{X}) \alpha(\overline{Y}).
\end{eqnarray*}
Similarly, $dd_{J}B^{2}(\gamma \overline{X}, \gamma
\overline{Y})=0$.

 \vspace{5pt}
\noindent\textbf{(c)} The proof is analogous to that of (b).
\end{proof}

\begin{lem}\label{2.pp.metric} Let $(M,L)$ be a Finsler manifold. Let $g$ be the Finsler
metric associated with $L$ and  $\nabla$  the Cartan connection on
$\pi^{-1}(TM)$. Then, the following relation holds \vspace{-0.1cm}
\begin{equation}\label{2.eq.3}
   g(\overline{X},\overline{Y})=\Omega(\gamma \overline{X},\beta \overline{Y}) \quad \text{for
all}\quad \overline{X}, \overline{Y} \in \cp, \vspace{-0.1cm}
\end{equation}
where $\Omega:= dd_{J}E$ and  $\beta$ is the horizontal map
associated with $\nabla$.
\end{lem}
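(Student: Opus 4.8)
The plan is to verify the identity $g(\overline{X},\overline{Y})=\Omega(\gamma\overline{X},\beta\overline{Y})$ by expanding the right-hand side using $\Omega=dd_JE$, the formula $d_JE(X)=g(\rho X,\overline{\eta})$ established in Lemma \ref{2.co.1}(a), and the bracket formulas of Lemma \ref{bracket} applied to the Cartan connection $\nabla$. First I would write, for the $1$-form $\theta:=d_JE$,
$$
\Omega(\gamma\overline{X},\beta\overline{Y})
=(d\theta)(\gamma\overline{X},\beta\overline{Y})
=\gamma\overline{X}\cdot\theta(\beta\overline{Y})
-\beta\overline{Y}\cdot\theta(\gamma\overline{X})
-\theta([\gamma\overline{X},\beta\overline{Y}]).
$$
Since $\rho\circ\beta=\mathrm{id}$ and $\rho\circ\gamma=0$, we have $\theta(\beta\overline{Y})=g(\overline{Y},\overline{\eta})$ and $\theta(\gamma\overline{X})=0$, so the middle term drops out immediately.

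Next I would compute the two surviving terms. For the first term, $\gamma\overline{X}\cdot g(\overline{Y},\overline{\eta})=g(\nabla_{\gamma\overline{X}}\overline{Y},\overline{\eta})+g(\overline{Y},\nabla_{\gamma\overline{X}}\overline{\eta})$ using $\nabla g=0$; and $\nabla_{\gamma\overline{X}}\overline{\eta}=K(\gamma\overline{X})=\overline{X}$ (the deflection map restricted to the vertical bundle is, for the Cartan connection, the canonical isomorphism sending $\gamma\overline{X}$ to $\overline{X}$), so this term equals $g(\nabla_{\gamma\overline{X}}\overline{Y},\overline{\eta})+g(\overline{Y},\overline{X})$. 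For the third term I would invoke Lemma \ref{bracket}(b) with $D=\nabla$ (legitimate since the $(h)hv$-torsion of the Cartan connection satisfies $T(\overline{X},\overline{\eta})=0$):
$$
[\gamma\overline{X},\beta\overline{Y}]
=-\gamma\bigl(\widehat{P}(\overline{Y},\overline{X})+\nabla_{\beta\overline{Y}}\overline{X}\bigr)
+\beta\bigl(\nabla_{\gamma\overline{X}}\overline{Y}-T(\overline{X},\overline{Y})\bigr),
$$
whence $\rho[\gamma\overline{X},\beta\overline{Y}]=\nabla_{\gamma\overline{X}}\overline{Y}-T(\overline{X},\overline{Y})$ and so $\theta([\gamma\overline{X},\beta\overline{Y}])=g(\nabla_{\gamma\overline{X}}\overline{Y}-T(\overline{X},\overline{Y}),\overline{\eta})$. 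Subtracting, the $g(\nabla_{\gamma\overline{X}}\overline{Y},\overline{\eta})$ contributions cancel, leaving $g(\overline{X},\overline{Y})+g(T(\overline{X},\overline{Y}),\overline{\eta})$.

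Finally I would use the fact, recorded just after Theorem \ref{th.1}, that the $(h)hv$-torsion of the Cartan connection satisfies $T(\overline{X},\overline{\eta})=0$, together with the symmetry $g(T(\overline{X},\overline{Y}),\overline{Z})=g(T(\overline{X},\overline{Z}),\overline{Y})$ from Theorem \ref{th.1}(c), to conclude $g(T(\overline{X},\overline{Y}),\overline{\eta})=g(T(\overline{X},\overline{\eta}),\overline{Y})=0$. This kills the last term and yields $\Omega(\gamma\overline{X},\beta\overline{Y})=g(\overline{X},\overline{Y})$, as claimed. The only subtle point — the step I expect to require the most care — is pinning down exactly which horizontal map $\beta$ appears: the statement uses the Cartan horizontal map, and one must be sure that the identity $\rho\circ\beta=\mathrm{id}$, the relation $K|_{V(\T M)}\circ\gamma=\mathrm{id}$, and the bracket identity in Lemma \ref{bracket} are all being applied to the same connection $\nabla$; once that bookkeeping is fixed the computation is routine.
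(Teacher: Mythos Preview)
Your proof is correct and follows essentially the same route as the paper's: expand $dd_JE(\gamma\overline{X},\beta\overline{Y})$ via the Cartan formula for $d$, use $d_JE(X)=g(\rho X,\overline{\eta})$ together with $\nabla g=0$ and $\nabla_{\gamma\overline{X}}\overline{\eta}=\overline{X}$, identify $\rho[\gamma\overline{X},\beta\overline{Y}]=\nabla_{\gamma\overline{X}}\overline{Y}-T(\overline{X},\overline{Y})$, and kill the residual term via $g(T(\overline{X},\overline{Y}),\overline{\eta})=g(T(\overline{X},\overline{\eta}),\overline{Y})=0$. The only minor remark is that the identity $d_JE(X)=g(\rho X,\overline{\eta})$ appears in the \emph{proof} of Lemma~\ref{2.co.1}(a) rather than in its statement, and the paper's own argument simply quotes this identity directly rather than invoking Lemma~\ref{bracket}(b) explicitly; but the substance is identical.
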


\begin{proof}
 Using the relations $d_{J}E(X)=g(\rho X, \overline{\eta})$, $\nabla g=0$, $T(\rho
X,\overline{\eta})=0$ and
$g({T}(\overline{X},\overline{Y}),\overline{Z})=
g({T}(\overline{X},\overline{Z}),\overline{Y})$, we get
\begin{eqnarray*}
  \Omega(\gamma \overline{X},\beta \overline{Y})&=&
  \gamma \overline{X}\cdot d_{J}E(\beta \overline{Y})-\beta \overline{Y}\cdot d_{J}E(\gamma \overline{X})-d_{J}E([\gamma \overline{X},\beta \overline{Y}])\\
&=&\gamma \overline{X}\cdot
g(\overline{Y},\overline{\eta})-g(\rho[\gamma \overline{X},\beta \overline{Y}],\overline{\eta})\\
&=&g(\nabla_{\gamma
\overline{X}}\overline{Y},\overline{\eta})+g(\overline{Y},\nabla_{\gamma
\overline{X}}\overline{\eta})-g(\rho[\gamma \overline{X},\beta \overline{Y}],\overline{\eta}) \\
&=& g(\overline{X},\overline{Y})+g(
T(\overline{X},\overline{Y}),\overline{\eta})=g(\overline{X},\overline{Y}),
\end{eqnarray*}
which proves the required relation.
\end{proof}
\par
The following result gives the relationship between $g$ and
$\widetilde{g}$.\vspace{-0.2cm}
\begin{prop}\label{2.pp.3}Under the energy $\beta$-change {\em (\ref{2.eq.2})}, the
Finsler metrics $g$ and $\widetilde{g}$ are related by
\vspace{-0.2cm}
\begin{equation}\label{.2.eq.3}
\widetilde{g}(\overline{X},\overline{Y})=g(\overline{X},\overline{Y})+\alpha
(\overline{X}) \alpha(\overline{Y}), \ \ for \ all \ \ \overline{X},
\overline{Y} \in \cp\, ,
    \end{equation}
    $\alpha$ being  the $\pi$-form  associated with $\overline{\zeta}$ under the
    duality defined by the metric $g$.
    \end{prop}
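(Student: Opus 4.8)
The plan is to compute the new energy function's Hessian along vertical directions, which gives $\widetilde g$, using the characterization of the metric furnished by Lemma \ref{2.pp.metric}. The key observation is that the energy $\beta$-change \eqref{2.eq.2} can be written as $\widetilde E = E + \frac12 B^2$, so that $\widetilde\Omega := dd_J\widetilde E = dd_J E + \frac12\, dd_J B^2 = \Omega + \frac12\, dd_J B^2$. Applying Lemma \ref{2.pp.metric} to the Finsler structure $\widetilde L$ (with its own Cartan connection and horizontal map $\widetilde\beta$) would give $\widetilde g(\overline X,\overline Y) = \widetilde\Omega(\widetilde\gamma\,\overline X,\widetilde\beta\,\overline Y)$; but $\gamma$ is intrinsic to the bundle structure and does not change, and — this is the point that needs a small argument — the vertical-vertical and mixed evaluations we need are insensitive to which horizontal map is used.

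Concretely, first I would note that $\widetilde g(\overline X,\overline Y)$ is, up to the usual identifications, just $\frac12\,\widetilde\Omega(\gamma\overline X,\gamma\text{-lift of }\overline Y)$-type data; more carefully, one uses that for any Finsler structure the fundamental tensor is recovered as the vertical Hessian of the energy, i.e. $\widetilde g(\overline X,\overline Y) = (\gamma\overline X)(\gamma\overline Y)\cdot\widetilde E$ evaluated appropriately, equivalently $\widetilde g(\overline X,\overline Y) = dd_J\widetilde E(\gamma\overline X,\beta\overline Y)$ where one may use the \emph{old} horizontal map $\beta$ because $dd_J E(\gamma\overline X,\beta'\overline Y)$ is independent of the choice of section $\beta'$ of $\rho$ over the vertical complement (the difference $\beta-\beta'$ lands in $V(\T M) = \operatorname{im}\gamma$, and $d_JE$ kills... — this needs the identity $d_JE(\gamma\overline Z) = g(\rho\gamma\overline Z,\overline\eta) = 0$, which is exactly what appears in the proof of Lemma \ref{2.pp.metric}). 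Then
$$\widetilde g(\overline X,\overline Y) = \widetilde\Omega(\gamma\overline X,\beta\overline Y) = \Omega(\gamma\overline X,\beta\overline Y) + \tfrac12\, dd_JB^2(\gamma\overline X,\beta\overline Y).$$

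Now I would invoke Lemma \ref{2.pp.metric} for the first term, giving $\Omega(\gamma\overline X,\beta\overline Y) = g(\overline X,\overline Y)$, and Lemma \ref{2.co.1}(b) for the second, giving $dd_JB^2(\gamma\overline X,\beta\overline Y) = 2\alpha(\overline X)\alpha(\overline Y)$. Adding the two yields
$$\widetilde g(\overline X,\overline Y) = g(\overline X,\overline Y) + \alpha(\overline X)\alpha(\overline Y),$$
which is precisely \eqref{.2.eq.3}. The main obstacle is the bookkeeping in the first step: justifying that the vertical Hessian of the energy — and hence $\widetilde g$ — can be evaluated through the \emph{old} horizontal map rather than the new one $\widetilde\beta$, so that $\widetilde\Omega(\gamma\overline X,\beta\overline Y)$ genuinely equals $\widetilde g(\overline X,\overline Y)$. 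This rests on the fact that $\beta - \widetilde\beta$ takes values in $V(\T M)$ together with $d_J\widetilde E(\gamma\overline Z) = \widetilde g(\rho\gamma\overline Z,\overline\eta) = 0$; once this is in place the computation is a direct substitution and everything else is routine given Lemmas \ref{2.co.1} and \ref{2.pp.metric}.
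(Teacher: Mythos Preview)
Your proposal is correct and follows essentially the same route as the paper: start from $\widetilde g(\overline X,\overline Y)=dd_J\widetilde E(\gamma\overline X,\widetilde\beta\,\overline Y)$, replace $\widetilde\beta$ by $\beta$ because the difference is vertical and $dd_J\widetilde E$ vanishes on pairs of vertical vectors, then split $\widetilde\Omega=\Omega+\tfrac12\,dd_JB^2$ and invoke Lemmas~\ref{2.pp.metric} and~\ref{2.co.1}(b). The only sharpening the paper offers over your sketch is in the ``bookkeeping'' step: rather than arguing via $d_J\widetilde E(\gamma\overline Z)=0$ (which by itself is not enough---what is needed is the \emph{two}-form identity $dd_J\widetilde E(\gamma\overline X,\gamma\overline Y)=0$), the paper writes $\widetilde\beta\,\overline Y=\beta\overline Y-\L\beta\overline Y$ explicitly from Proposition~\ref{pp.1} and then appeals directly to $dd_J\widetilde E(\gamma\overline X,\gamma\overline Y)=0$, which is immediate from Lemma~\ref{2.co.1}(b) together with the analogous vanishing for $\Omega$.
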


   \begin{proof}
   The proof follows  by applying the operator
    $\frac{1}{2}dd_{J}$ on both sides of (\ref{2.eq.2}), taking into
    account (\ref{2.eq.3}), Proposition  \ref{pp.1} and Lemma
    \ref{2.co.1}(b).\\
    In more detail,
    \begin{eqnarray*}
    \widetilde{g}(\overline{X},\overline{Y}) &=&dd_{J}\widetilde{E}(\gamma \overline{X},\widetilde{\beta} \overline{Y}) , \\
    &=&dd_{J}\widetilde{E}(\gamma \overline{X},{\beta} \overline{Y})-dd_{J}\widetilde{E}(\gamma \overline{X},\L {\beta} \overline{Y}) \\
    &=&dd_{J}E(\gamma \overline{X},\beta \overline{Y})+\frac{1}{2}dd_{J}B^{2}(\gamma \overline{X},{\beta} \overline{Y}),
    \quad as\ dd_{J}\widetilde{E}(\gamma \overline{X},\gamma \overline{Y})=0\\
    &=&g(\overline{X},\overline{Y})+\alpha(\overline{X}) \alpha(\overline{Y}).
    \end{eqnarray*}
   \end{proof}

    \begin{cor}Under the energy $\beta$-change {\em (\ref{2.eq.2})},
    the exterior 2-forms  $\Omega$ and $\widetilde{\Omega}$ are
    related by\vspace{-0.2cm}
    $$\widetilde{\Omega}= \Omega+\frac{1}{2}\,dd_{J}B^{2}.$$
    \end{cor}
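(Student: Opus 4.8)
The plan is to derive the corollary directly from Proposition \ref{2.pp.3} by ``undoing'' the passage from $\Omega$ to $g$ via Lemma \ref{2.pp.metric}. First I would recall that, by definition, $\widetilde{\Omega} = dd_J\widetilde{E}$ and $\Omega = dd_J E$, and that the energy $\beta$-change \eqref{2.eq.2} reads $\widetilde{E} = E + \tfrac{1}{2}B^2$. Since $dd_J$ is $\Real$-linear, applying it to both sides of $\widetilde{E} = E + \tfrac12 B^2$ gives immediately
\begin{equation*}
\widetilde{\Omega} = dd_J\widetilde{E} = dd_J E + \tfrac{1}{2}\,dd_J B^2 = \Omega + \tfrac{1}{2}\,dd_J B^2,
\end{equation*}
which is the claimed identity.

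There is essentially nothing more to do, but I would make the argument airtight by checking that the two sides agree as $2$-forms on all of $T(\T M)$, using the decomposition $T(\T M) = H(\T M)\oplus V(\T M)$. It suffices to test on pairs of the form $(\gamma\overline{X},\gamma\overline{Y})$, $(\gamma\overline{X},\beta\overline{Y})$, and $(\beta\overline{X},\beta\overline{Y})$. On the mixed pairs, Proposition \ref{2.pp.3} together with Lemma \ref{2.pp.metric} (applied to both $L$ and $\widetilde{L}$, noting that $\widetilde\beta$ and $\beta$ differ by $\L$ and that $dd_J\widetilde E$ vanishes on vertical pairs) gives $\widetilde\Omega(\gamma\overline X,\beta\overline Y) = \widetilde g(\overline X,\overline Y) = g(\overline X,\overline Y) + \alpha(\overline X)\alpha(\overline Y) = \Omega(\gamma\overline X,\beta\overline Y) + \tfrac12 dd_JB^2(\gamma\overline X,\beta\overline Y)$ by Lemma \ref{2.co.1}(b); on the vertical pairs both $\Omega$ and $\widetilde\Omega$ vanish (a standard property of $dd_JE$), and $dd_JB^2$ vanishes there too by Lemma \ref{2.co.1}(b); the horizontal pairs are handled the same way using Lemma \ref{2.co.1}(c).

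The only subtlety — and the one step worth stating carefully rather than a genuine obstacle — is making sure the identity is asserted at the level of forms on $T(\T M)$ and not merely after contraction with the horizontal map, since $\widetilde\beta \neq \beta$. The cleanest route, which I would adopt, is simply the one-line computation $\widetilde\Omega = dd_J\widetilde E = dd_J(E + \tfrac12 B^2) = \Omega + \tfrac12 dd_J B^2$, invoking only linearity of the operator $dd_J$ and the definition \eqref{2.eq.2}; the verification on the three types of pairs above is then just a consistency check confirming coherence with Proposition \ref{2.pp.3}. I expect no real difficulty here.
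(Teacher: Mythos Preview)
Your proposal is correct and matches the paper's approach: the paper states this corollary without proof immediately after Proposition~\ref{2.pp.3}, precisely because applying the $\mathbb{R}$-linear operator $dd_J$ to $\widetilde{E}=E+\tfrac12 B^2$ yields the identity in one line. Your additional verification on the three types of pairs is a nice consistency check but is not needed for the argument.
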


\begin{thm}\label{..2.th.2} Let $(M,L)$ and $(M,\widetilde {L})$ be
  two Finsler manifolds related by the energy $\beta$-change {\em (\ref{2.eq.2})}.  The associated
  canonical sprays $G$ and $\widetilde{G}$ are related by
\vspace{-0.2cm}
$$\widetilde{G}= G+\frac{L^{2}}{1+p^{2}}\,\gamma \overline{\zeta} \,; \ \ p^{2}:=g(\overline{\zeta},\overline{\zeta}).$$
\end{thm}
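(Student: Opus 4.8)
The plan is to use the variational characterization of the canonical spray from Proposition \ref{pp}: $i_{G}\,\Omega = -dE$ and $i_{\widetilde G}\,\widetilde\Omega = -d\widetilde E$, together with the relation $\widetilde\Omega = \Omega + \tfrac12\,dd_J B^2$ from the Corollary and $\widetilde E = E + \tfrac12 B^2$. Writing $\widetilde G = G + W$ for an unknown vertical-type correction vector field $W$ on $TM$, I would substitute into $i_{\widetilde G}\,\widetilde\Omega = -d\widetilde E$ and subtract $i_G\,\Omega = -dE$ to obtain an equation of the form $i_W\,\Omega + i_G(\tfrac12 dd_J B^2) + i_W(\tfrac12 dd_J B^2) = -\tfrac12\, dB^2$. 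The natural ansatz, given the target, is $W = f\,\gamma\overline\zeta$ for a scalar function $f$ on $TM$; since $\overline\zeta$ is concurrent and hence (by Theorem \ref{th.ind}) independent of $y$, the vector field $\gamma\overline\zeta$ is well understood and one expects $f = L^2/(1+p^2)$ with $p^2 = g(\overline\zeta,\overline\zeta)$.

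The key computational steps, in order, are: first, evaluate both sides of the resulting identity on pairs of the form $(\gamma\overline X,\beta\overline Y)$, $(\gamma\overline X,\gamma\overline Y)$ and $(\beta\overline X,\beta\overline Y)$ using Lemma \ref{2.pp.metric} (which gives $\Omega(\gamma\overline X,\beta\overline Y) = g(\overline X,\overline Y)$), Lemma \ref{2.co.1}(b),(c) (which gives the components of $dd_J B^2$), and the facts $d_J E(X) = g(\rho X,\overline\eta)$, $d_J B = \alpha\circ\rho$, $d_h B = -(i_{\overline\eta}g)\circ\rho$ from Lemma \ref{2.co.1}(a). Since $\gamma\overline\zeta$ is vertical, $i_{\gamma\overline\zeta}\,\Omega$ pairs nontrivially only against horizontal vectors and yields (up to sign) $g(\overline\zeta,\cdot)\circ\rho = \alpha\circ\rho$ on the $\beta$-part; combining with the $dd_J B^2$ terms, the horizontal-horizontal and vertical-vertical components should close automatically (both sides vanish or match by antisymmetry of $\alpha\wedge i_{\overline\eta}g$), and the mixed component reduces to a single scalar equation determining $f$. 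Evaluating $-\tfrac12\,dB^2 = -B\,dB$ against $\gamma\overline\eta$ and $\beta\overline\eta$, and using $B = \alpha(\overline\eta)$, $\alpha(\overline\zeta) = p^2$, $\alpha(\overline\eta) = B$ together with the identity $B = \ell(\overline\zeta)L$ and Lemma \ref{.le.1}, pins down $f(1+p^2) = L^2$.

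The main obstacle I anticipate is bookkeeping the $\beta$- versus $\widetilde\beta$-horizontal maps consistently: $\Omega$ and $\widetilde\Omega$ are intrinsic $2$-forms on $TM$, but the decompositions I use to extract components depend on a choice of horizontal map, and Proposition \ref{pp.1} tells us $\widetilde h = h - \L$. The cleanest route is to do all pairings with respect to the \emph{original} Cartan horizontal map $\beta$, express $i_{\widetilde G}\widetilde\Omega$ purely in terms of $\Omega$, $dd_J B^2$, $G$ and $W$, and only at the end invoke $i_{\widetilde G}\widetilde\Omega = -d\widetilde E$; this avoids ever needing $\widetilde\beta$ explicitly. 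A secondary subtlety is checking that $W = f\,\gamma\overline\zeta$ keeps $\widetilde G$ a genuine spray — i.e. $\rho\widetilde G = \overline\eta$ (automatic since $\gamma\overline\zeta$ is vertical, $\rho\gamma = 0$) and $2$-homogeneity $[\mathcal C,\widetilde G] = \widetilde G$, which forces $f$ to be homogeneous of degree $2$ in $y$; since $L^2$ is degree $2$ and $p^2 = g(\overline\zeta,\overline\zeta)$ is degree $0$ (as $\overline\zeta$ is $y$-independent by Theorem \ref{th.ind} and $g$ is degree $0$), $f = L^2/(1+p^2)$ has the right homogeneity, confirming the ansatz is consistent rather than merely convenient.
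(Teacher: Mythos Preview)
Your proposal is correct and follows essentially the same route as the paper: both arguments start from $i_{\widetilde G}\widetilde\Omega=-d\widetilde E$, write $\widetilde G=G+\gamma\overline{\mu}$ with $\overline{\mu}$ vertical, use $\widetilde\Omega=\Omega+\tfrac12\,dd_JB^2$, and evaluate the resulting identity via Lemmas \ref{2.co.1} and \ref{2.pp.metric}. The only organizational difference is that the paper does \emph{not} posit the ansatz $\overline{\mu}=f\,\overline{\zeta}$ upfront; it leaves $\overline{\mu}$ arbitrary, reduces the spray equation to $g(\overline{\mu},\rho X)+g(\overline{\zeta},\overline{\mu})\,g(\overline{\zeta},\rho X)-L^{2}g(\overline{\zeta},\rho X)=0$, reads off $\overline{\mu}=(L^{2}-g(\overline{\zeta},\overline{\mu}))\,\overline{\zeta}$, and then sets $X=\beta\overline{\zeta}$ to solve for the coefficient---so your ansatz is in fact forced, not merely convenient, and your extra homogeneity check is a welcome consistency verification that the paper omits.
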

\begin{proof}
 From the above Corollary, we have
$$\widetilde{\Omega}= \Omega+\frac{1}{2}\,dd_{J}B^{2}. \vspace{-0.2cm}$$
As the difference between two sprays is a vertical vector field,
assume that $\widetilde{G}=G+\gamma \overline{\mu}$, for some
$\overline{\mu}\in\cp$, then we have
\begin{equation}\label{eq}
     \left.
    \begin{array}{rcl}
-d\widetilde{E}(X)&=&i_{\widetilde{G}}\,\widetilde{\Omega}(X)=i_{G+\gamma
\overline{\mu}}\,\widetilde{\Omega}(X)\\
&=&i_{G}\,\Omega(X)+\frac{1}{2}i_{G}\,dd_{J}B^{2}(X)+i_{\gamma
\overline{\mu}}\,\Omega(X)+\frac{1}{2}i_{\gamma
\overline{\mu}}\,dd_{J}B^{2}(X).{\ \ \ \ }
 \end{array}
  \right.
 \end{equation}
Now, we compute the terms on the right hand side (using Lemma
\ref{2.co.1} and Lemma \ref{2.pp.metric}):
\begin{eqnarray*}
  i_{G}\,\Omega(X)&=&-dE(X). \\
  i_{\gamma \overline{\mu}}\,\Omega(X) &=& \Omega(\gamma \overline{\mu}, X)=
  \Omega(\gamma \overline{\mu},\gamma K X+\beta \rho X)=
  \Omega(\gamma \overline{\mu},\beta \rho X)=
  g(\overline{\mu}, \rho X).\\
          \frac{1}{2}\,i_{G}\,dd_{J}B^{2}(X)
&=&\frac{1}{2}\{dd_{J}B^{2}(\beta \overline{\eta} ,\beta \rho
X)-dd_{J}B^{2}(\gamma KX, \beta \overline{\eta})\}\\
 &=&\{g(\overline{\eta}, \overline{\zeta})g(\rho X, \overline{\eta})-g(\overline{\eta}, \overline{\eta})g(\rho X,\overline{\zeta})\}-g(K X,\overline{\zeta})g(\overline{\eta}, \overline{\zeta})\\
&=&\{g(\overline{\eta}, \overline{\zeta})g(\rho X,
\overline{\eta})-g(\overline{\eta}, \overline{\eta})g(\rho X,\overline{\zeta})\}-\{X\cdot g(\overline{\eta},\overline{\zeta})+g(\overline{\eta}, \rho X)\}g(\overline{\eta}, \overline{\zeta})\\
&=&-L^{2}g(\overline{\zeta}, \rho X)-B dB(X). \\
\frac{1}{2}\,i_{\gamma
\overline{\mu}}\,dd_{J}B^{2}(X)&=&\frac{1}{2}\,dd_{J}B^{2}(\gamma
\overline{\mu}, \beta \rho X+ \gamma K X )=
\frac{1}{2}\,dd_{J}B^{2}(\gamma \overline{\mu}, \beta \rho X)=
g(\overline{\zeta},\overline{\mu})g(\overline{\zeta},\rho X).
\end{eqnarray*}
From these,  together with $ d\widetilde{E}(X)=dE(X)+BdB(X),$
Equation (\ref{eq}) reduces to \vspace{-0.1cm}
\begin{equation}\label{2.eq.5}
   g(\overline{\mu}, \rho X)+g(\overline{\zeta},\overline{\mu})g(\overline{\zeta},\rho X)-L^{2}g(\overline{\zeta}, \rho X)=0.\vspace{-0.2cm}
\end{equation}
Consequently,
\begin{equation*}
    \overline{\mu}=(L^{2}-g(\overline{\zeta},\overline{\mu}))\,\overline{\zeta}.
\end{equation*}
Again, from (\ref{2.eq.5}), by setting  $X=\beta \overline{\zeta}$,
we obtain\vspace{-0.2cm}
\begin{equation*}\label{2.eq.6}
   g(\overline{\zeta},\overline{\mu})=\frac{L^2 g(\overline{\zeta},\overline{\zeta})}{1+g(\overline{\zeta},\overline{\zeta})}
   .\vspace{-0.3cm}
\end{equation*}
 Therefore,
$$\overline{\mu} =\frac {L^{2}}{1+g(\overline{\zeta},\overline{\zeta})}\,\overline{\zeta}.$$
Hence  the result.
\end{proof}

\begin{thm}\label{2.th.2} Let $(M,L)$ and $(M,\widetilde {L})$ be
  two Finsler manifolds related by the energy $\beta$-change {\em (\ref{2.eq.2})}.  The associated
  Barthel connections $\Gamma$ and $\widetilde{\Gamma}$  are related by
\vspace{-0.2cm}
\begin{equation*}\label{2.eq.4}
         \widetilde{\Gamma} =\Gamma+ 2\,\frac{d_{J}E}{1+p^{2}}\otimes \gamma
\overline{\zeta} \,.
\end{equation*}
 \end{thm}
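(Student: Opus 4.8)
The plan is to start from the explicit relation between the sprays established in Theorem \ref{..2.th.2}, namely $\widetilde{G}=G+\frac{L^{2}}{1+p^{2}}\,\gamma\overline{\zeta}$, and to pass to the Barthel connections via Theorem \ref{th.9a}, which expresses each canonical nonlinear connection as $\Gamma=[J,G]$ and $\widetilde{\Gamma}=[J,\widetilde{G}]$. Thus I would write
\begin{equation*}
\widetilde{\Gamma}-\Gamma=[J,\widetilde{G}]-[J,G]=\Bigl[J,\tfrac{L^{2}}{1+p^{2}}\,\gamma\overline{\zeta}\Bigr],
\end{equation*}
using bilinearity of the Frölicher–Nijenhuis bracket $[J,\cdot]$ in its second (vector-field) slot. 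So the whole problem reduces to computing the bracket of the vector $1$-form $J$ with the vertical vector field $V:=\frac{L^{2}}{1+p^{2}}\,\gamma\overline{\zeta}$.

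The next step is to evaluate $[J,V]=\mathcal{L}_{V}J$ (the bracket of a vector field with a vector $1$-form is just the Lie derivative of the form). For a scalar-times-vertical field of the form $fW$ with $W=\gamma\overline{Z}$ one has the general identity $[J,fW]X=f[J,W]X+(df\wedge i_{W}J)X=f[J,W]X+(Xf)JW-(JXf)W$, and here $JW=J\gamma\overline{Z}=0$ since $J=\gamma\circ\rho$ and $\rho\circ\gamma=0$. Hence $[J,fW]X=f[J,W]X-(JXf)\,W$. I would then use the key facts that $\overline{\zeta}$ is independent of the directional argument (Theorem \ref{th.ind}), so that $\gamma\overline{\zeta}$ is essentially a "basic" vertical lift and $[J,\gamma\overline{\zeta}]$ should vanish (or more precisely contribute nothing to $\Gamma$ because $\overline{\zeta}$ has zero vertical covariant derivative, $\nabla_{\gamma\overline{X}}\overline{\zeta}=0$); and that $JX\cdot f$ picks out the vertical derivative of $f=\frac{L^{2}}{1+p^{2}}$. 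Since $p^{2}=g(\overline{\zeta},\overline{\zeta})$ is also independent of $y$ (its vertical derivative vanishes because $\nabla_{\gamma\overline{X}}\overline{\zeta}=0$ and $\nabla g=0$), only the $L^{2}$ factor contributes, and $JX\cdot L^{2}=2JX\cdot E=2\,d_{J}E(X)$. Collecting terms, $[J,V]X=-\frac{2\,d_{J}E(X)}{1+p^{2}}\,\gamma\overline{\zeta}$, i.e. $\widetilde{\Gamma}-\Gamma=-2\,\frac{d_{J}E}{1+p^{2}}\otimes\gamma\overline{\zeta}$.

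Here I expect a sign discrepancy with the stated formula, which has a $+$; resolving it will require being careful with the sign convention for $[J,\cdot]$ and with the direction of the duality $d_{J}E(X)=g(\rho X,\overline{\eta})$ versus $d_{J}E(X)=JX\cdot E$. The main obstacle, therefore, is not conceptual but bookkeeping: correctly computing the Frölicher–Nijenhuis-type bracket $[J,fW]$ for $W$ vertical and $f$ a function, and verifying rigorously that $[J,\gamma\overline{\zeta}]=0$ using $\nabla_{\gamma\overline{X}}\overline{\zeta}=0$ (via Lemma \ref{bracket}(c) applied to compute $[\gamma\overline{X},\gamma\overline{\zeta}]$, together with $\widehat{S}(\overline{X},\overline{\zeta})=0$ from Proposition \ref{2.pp.1} on $S(\overline{X},\overline{Y})\overline{\zeta}=0$). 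An alternative, perhaps cleaner, route that avoids Frölicher–Nijenhuis subtleties is to use the relation between horizontal projectors: from $\widetilde{h}=h-\L$ of Proposition \ref{pp.1} together with $\widetilde{\Gamma}=2\widetilde{h}-I$ and $\Gamma=2h-I$, one gets $\widetilde{\Gamma}=\Gamma-2\L$, and then I would identify $\L=\gamma\circ N\circ\rho$ directly from the spray difference by differentiating, showing $N=-\frac{d_{J}E}{1+p^{2}}\otimes\overline{\zeta}$ as an endomorphism-valued object; this reduces the computation to linear algebra on $\pi^{-1}(TM)$ and sidesteps the bracket entirely.
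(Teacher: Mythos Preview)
Your proposal is correct and follows essentially the same route as the paper: start from the spray relation of Theorem \ref{..2.th.2}, pass to $\widetilde{\Gamma}-\Gamma=[J,\frac{L^{2}}{1+p^{2}}\gamma\overline{\zeta}]$, expand via the Fr\"olicher--Nijenhuis identity $[fX,J]=f[X,J]+df\wedge i_{X}J-d_{J}f\otimes X$, and show that $[\gamma\overline{\zeta},J]=0$ together with $d_{J}p^{2}=0$. The sign issue you flag is exactly the antisymmetry $[J,V]=-[V,J]$: the paper writes $\widetilde{\Gamma}=[J,G]-[\frac{L^{2}}{1+p^{2}}\gamma\overline{\zeta},J]$ and picks up $+d_{J}f\otimes\gamma\overline{\zeta}$, giving the stated $+$ sign. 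One small correction: the vanishing of $[\gamma\overline{\zeta},J]$ in the paper uses not only $\nabla_{\gamma\overline{X}}\overline{\zeta}=0$ but also $T(\overline{\zeta},\rho X)=0$ from Corollary \ref{2.pp.2}(a), and it needs both parts (b) and (c) of Lemma \ref{bracket} (not just (c)), since $J[\gamma\overline{\zeta},X]$ involves the horizontal part of $X$; the $\widehat{S}$ term you invoke is actually identically zero for the Cartan connection and is not what is needed here.
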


\begin{proof}
From Theorem \ref{..2.th.2} and the formula \cite{r20}
 $$[f X, J]=f[X,J]+df\wedge i_{X}J-d_{J}f\otimes X,$$
 we obtain \vspace{-0.2cm}
\begin{eqnarray*}
  \widetilde{\Gamma} &=& [J,\widetilde{G}] = \left[J, G+\frac{L^{2}}{1+p^{2}}\gamma \overline{\zeta}\right] =
  [J,G]-\left[\frac{L^{2}}{1+p^{2}}\gamma \overline{\zeta},J\right]\\
&=& [J,G]-\frac{L^{2}}{1+p^{2}}[\gamma
\overline{\zeta},J]-d(\frac{L^{2}}{1+p^{2}})\wedge i_{\gamma
\overline{\zeta}}J+d_{J}(\frac{L^{2}}{1+p^{2}})\otimes\gamma
\overline{\zeta}.\vspace{-0.2cm}
\end{eqnarray*}
Now, \vspace{-0.2cm}
\begin{eqnarray*}
   d_{J}(\frac{L^{2}}{1+p^{2}})&=&\frac{2d_{J}E}{(1+p^{2})}, \ \ as \
   d_{J}p^{2}=0, \text{by Theorem \ref{th.ind}},
   \vspace{-0.2cm}
\end{eqnarray*}
whereas,\vspace{-0.2cm}
\begin{eqnarray*}
   [\gamma \overline{\zeta}, J]X &=&[\gamma \overline{\zeta}, JX]-J[\gamma \overline{\zeta}, X] \\
  &=&\gamma \{\nabla_{\gamma \overline{\zeta}}\,\rho X-\nabla_{J X}\,\overline{\zeta}\}
  -\gamma\{\nabla_{\gamma \overline{\zeta}}\,\rho X-T(\overline{\zeta},\rho X)\}=0,
\end{eqnarray*}
 by Lemma  \ref{bracket} and Corollary \ref{2.pp.2}.
 Consequently,\\
$$\widetilde{\Gamma}= \Gamma +\frac{2\,d_{J}E}{1+g(\overline{\zeta}, \overline{\zeta})}\otimes \gamma \overline{\zeta},$$
which proves the result.
\end{proof}

\begin{rem}\label{.rem} Comparing Theorem \ref{2.th.2} and Proposition
\ref{pp.1}, we find that\vspace{-0.2cm}
\begin{equation*}
         \widetilde{\Gamma} =\Gamma - 2{\L} \ ,\vspace{-0.2cm}
\end{equation*}
where\vspace{-0.2cm}
\begin{equation}\label{2.eq.4}
     {\L} =-\displaystyle{\frac{d_{J}E}{1+p^{2}}}\otimes \gamma
\overline{\zeta} .
\end{equation}
\end{rem}

\begin{lem}\label{lem} Let $ {\L} $ be the 1-form  defined by (\ref{2.eq.4}).
 The following formulae hold\,\emph{:}
\begin{description}
  \item[(a)]$[{\L} X,{\L} Y]=\mathfrak{U}_{(X,Y)}\set{\displaystyle{\frac{g(\rho X,\overline{\eta})}{(1+p^{2})^{2}}}\,
  \{g(\nabla_{\gamma \overline{\zeta}}\rho Y,\overline{\eta})+g(\rho Y,\overline{\zeta})\}\,\gamma
  \overline{\zeta}}$.
  \item[(b)]$[h X,{\L} Y]=\displaystyle{-\frac{g(\rho Y,\overline{\eta})}{1+p^{2}}[hX,\gamma \overline{\zeta}]-
  \frac{g(\nabla_{h X}\rho Y,\overline{\eta})}{1+p^{2}}\,\gamma \overline{\zeta}-
  \frac{2g(\rho Y,\overline{\eta})g(\rho X,\overline{\zeta})}{(1+p^{2})^{2}}\,\gamma
  \overline{\zeta}}$,
\end{description}
where $\mathfrak{U}_{(X,Y)}\{A(X,Y)\}=A(X,Y)-A(Y,X)$.
\end{lem}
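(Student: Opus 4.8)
The plan is to compute both brackets directly using the definition $\L = -\,\dfrac{d_J E}{1+p^2}\otimes\gamma\overline\zeta$ together with the derivation-type formulae for Lie brackets of vector fields of the form $f\,W$, i.e. $[fW, gW'] = fg[W,W'] + f(W\!\cdot g)W' - g(W'\!\cdot f)W$, applied to $W = W' = \gamma\overline\zeta$ in part (a) and to the pair $(hX,\gamma\overline\zeta)$ in part (b). Throughout I would abbreviate $f := \dfrac{1}{1+p^2}$ and use that $\L X = -f\,d_J E(X)\,\gamma\overline\zeta = -f\,g(\rho X,\overline\eta)\,\gamma\overline\zeta$, since $d_J E(X) = g(\rho X,\overline\eta)$ (established in the proof of Lemma \ref{2.pp.metric}). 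The key inputs will be: $d_J E(X) = g(\rho X,\overline\eta)$; the fact that $p^2 = g(\overline\zeta,\overline\zeta)$ is independent of the directional argument (Theorem \ref{th.ind}), so that $\gamma\overline X\cdot p^2 = 0$ for all $\overline X$, i.e. vertical derivatives of $f$ vanish; and Corollary \ref{2.pp.2}(a), $T(\overline\zeta,\overline X)=T(\overline X,\overline\zeta)=0$, which kills the torsion term whenever $\gamma\overline\zeta$ appears in a bracket.

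For part (a): write $\L X = -f\,g(\rho X,\overline\eta)\,\gamma\overline\zeta$ and $\L Y = -f\,g(\rho Y,\overline\eta)\,\gamma\overline\zeta$, then expand $[\L X,\L Y]$ by the product formula. The term $f^2\,g(\rho X,\overline\eta)g(\rho Y,\overline\eta)[\gamma\overline\zeta,\gamma\overline\zeta]$ vanishes. What remains is $-f^2 g(\rho Y,\overline\eta)\big(\L X\cdot [\,\cdot\,]\big)$-type cross terms, where the directional derivative $\gamma\overline\zeta\cdot\big(f\,g(\rho Y,\overline\eta)\big)$ must be computed; since $\gamma\overline\zeta\cdot f = -f^2\,\gamma\overline\zeta\cdot p^2 = 0$ by Theorem \ref{th.ind}, only $\gamma\overline\zeta\cdot g(\rho Y,\overline\eta)$ survives, and by metricity and $\nabla_{\gamma\overline\zeta}\overline\eta = \rho(\gamma\overline\zeta)\cdot\text{(something)}$ — more precisely $\nabla_{\gamma\overline X}\overline\eta$ relates to $K$ — one gets $\gamma\overline\zeta\cdot g(\rho Y,\overline\eta) = g(\nabla_{\gamma\overline\zeta}\rho Y,\overline\eta) + g(\rho Y,\nabla_{\gamma\overline\zeta}\overline\eta)$, and the second summand reduces to $g(\rho Y,\overline\zeta)$ because $\nabla_{\gamma\overline X}\overline\eta = \overline X$ (the connection map on vertical vectors is the identity under the relevant identification; here $K\gamma\overline X = \overline X$). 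Collecting the two cross terms and antisymmetrizing gives exactly the claimed $\mathfrak U_{(X,Y)}$-expression with coefficient $\dfrac{g(\rho X,\overline\eta)}{(1+p^2)^2}\{g(\nabla_{\gamma\overline\zeta}\rho Y,\overline\eta)+g(\rho Y,\overline\zeta)\}$.

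For part (b): apply the same product formula to $[hX,\L Y] = [hX,\, -f\,g(\rho Y,\overline\eta)\,\gamma\overline\zeta]$. This gives three pieces: the ``undifferentiated'' piece $-f\,g(\rho Y,\overline\eta)\,[hX,\gamma\overline\zeta]$, which is the first term on the right-hand side; and the derivative piece $-\big(hX\cdot(f\,g(\rho Y,\overline\eta))\big)\gamma\overline\zeta$. Here, unlike the vertical case, $hX\cdot f = -f^2\,hX\cdot p^2$ need not vanish a priori, but $hX\cdot p^2 = hX\cdot g(\overline\zeta,\overline\zeta) = 2g(\nabla_{hX}\overline\zeta,\overline\zeta) = -2g(\rho(hX),\overline\zeta) = -2g(\rho X,\overline\zeta)$ by the concurrence condition $\nabla_{\beta\overline X}\overline\zeta = -\overline X$ (Definition \ref{2.def.1}); so $hX\cdot f = 2f^2 g(\rho X,\overline\zeta)$, producing the third term $-\dfrac{2g(\rho Y,\overline\eta)g(\rho X,\overline\zeta)}{(1+p^2)^2}\gamma\overline\zeta$. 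The factor $hX\cdot g(\rho Y,\overline\eta) = g(\nabla_{hX}\rho Y,\overline\eta) + g(\rho Y,\nabla_{hX}\overline\eta)$ and the last summand $g(\rho Y,\nabla_{hX}\overline\eta)$ vanishes because $hX$ is horizontal (so $K(hX)=0$, i.e. $\nabla_{hX}\overline\eta = 0$), leaving only $g(\nabla_{hX}\rho Y,\overline\eta)$, which gives the middle term. Assembling the three contributions yields the stated formula.

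The main obstacle I anticipate is purely bookkeeping: keeping the sign conventions straight (the minus sign in $\L$, the order reversal $[\gamma\overline\zeta,J]$ vs $[J,\gamma\overline\zeta]$ as in the proof of Theorem \ref{2.th.2}, and the definition of $\mathfrak U_{(X,Y)}$) and correctly identifying which directional derivatives of $p^2$ vanish — vertical ones by Theorem \ref{th.ind}, but horizontal ones do \emph{not}, instead being governed by concurrence. No deep idea is needed beyond the Leibniz rule for brackets; the content is entirely in systematically invoking $\nabla g=0$, $K(hX)=0$, $K\gamma\overline X = \overline X$, $T(\overline\zeta,\cdot)=0$, and the two derivative identities for $f$.
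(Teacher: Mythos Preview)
Your approach is correct and essentially identical to the paper's: both expand the brackets via the Leibniz-type formula $[fW,gW']=fg[W,W']+f(W\!\cdot g)W'-g(W'\!\cdot f)W$, kill $[\gamma\overline{\zeta},\gamma\overline{\zeta}]$, and compute the scalar derivatives using $\nabla g=0$, $K\gamma\overline{X}=\overline{X}$, $K(hX)=0$, Theorem~\ref{th.ind} (so $\gamma\overline{\zeta}\cdot p^{2}=0$), and the concurrence relation $\nabla_{hX}\overline{\zeta}=-\rho X$ for the horizontal derivative of $p^{2}$. The paper only writes out part~(a) explicitly and declares (b) ``similar'', so your detailed treatment of (b) in fact goes slightly beyond what the paper records, but along exactly the same lines.
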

\begin{proof} We prove (a) only; the proof of (b) is similar.
 Since  ${\L}(X)=-\frac{g(\rho X,\overline{\eta})}{1+p^{2}}\,\gamma \overline{\zeta}$, we have
\begin{eqnarray*}
  [{\L} X,{\L} Y] &=& \left[\frac{g(\rho X,\overline{\eta})}{1+p^{2}}\gamma \overline{\zeta},
  \frac{g(\rho Y,\overline{\eta})}{1+p^{2}}\gamma \overline{\zeta}\right]=\frac{g(\rho X,\overline{\eta})g(\rho Y,\overline{\eta})}{(1+p^{2})^{2}}[\gamma \overline{\zeta},\gamma \overline{\zeta}]
   \\
   &&+\frac{g(\rho X,\overline{\eta})}{1+p^{2}}\set{\gamma \overline{\zeta}\cdot\frac{g(\rho Y,\overline{\eta})}{1+p^{2}}}\gamma \overline{\zeta}
   -\frac{g(\rho Y,\overline{\eta})}{1+p^{2}}\,
   \set{\gamma \overline{\zeta}\cdot\frac{g(\rho X,\overline{\eta})}{1+p^{2}}}\,\gamma \overline{\zeta}.
\end{eqnarray*}
The result follows from this relation together with the following
expressions
\begin{eqnarray*}
  [\gamma \overline{\zeta},\gamma \overline{\zeta}] &=& \gamma\{ \nabla_{\gamma \overline{\zeta}}\,\overline{\zeta}-\nabla_{\gamma \overline{\zeta}}\,\overline{\zeta}\}=0, \\
  \gamma \overline{\zeta}\cdot\set{\frac{g(\rho X,\overline{\eta})}{1+p^{2}}} &=&
   \frac{\gamma \overline{\zeta}\cdot g(\rho X,\overline{\eta})}{1+p^{2}} =
   \frac{g(\nabla_{\gamma \overline{\zeta}}\rho X,\overline{\eta})+g(\rho X,\overline{\zeta})}{1+p^{2}},
\end{eqnarray*}
by Theorem \ref{th.ind}.
\end{proof}

\begin{prop}\label{2.pp.4} Under the energy $\beta$-change {\em
(\ref{2.eq.2})}, the curvature tensors $\widetilde{\Re}$ and $\Re$
of the associated  Barthel connections $\widetilde{\Gamma}$ and
$\Gamma$ are related by\vspace{-0.2cm}
\begin{equation}\label{2.eq.8}
\widetilde{\Re}(X,Y)=\Re(X,Y)+ \mathfrak{U}_{X,Y}\{\mathbb{{H}}(X,
Y)\}\,,
\end{equation}
where $\mathbb{{H}}(X, Y):=\displaystyle{\frac{g(\rho Y,
\overline{\eta})}{1+p^2}}J X+\displaystyle{\frac{g(\rho X,
\overline{\eta})g(\rho Y, \overline{\zeta})}{(1+p^2)^2}}\,\gamma
\overline{\zeta}$.
\end{prop}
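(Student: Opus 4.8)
The curvature of a nonlinear connection $\Gamma$ is defined in the preliminaries by $\mathfrak{R} = -\frac12 [h_\Gamma, h_\Gamma]$, i.e., for vector fields $X,Y$ on $TM$, $\mathfrak{R}(X,Y) = -[h_\Gamma X, h_\Gamma Y] + \cdots$ (the Nijenhuis-type expression; equivalently, up to the usual sign conventions, $\mathfrak{R}(X,Y)$ measures the non-integrability of the horizontal distribution via $v_\Gamma[h_\Gamma X, h_\Gamma Y]$). The plan is therefore to start from the relation $\widetilde{h} = h - \mathcal{L}$ supplied by Proposition~\ref{pp.1} together with the explicit formula $\mathcal{L} = -\frac{d_J E}{1+p^2}\otimes\gamma\overline{\zeta}$ from Remark~\ref{.rem}, substitute this into the defining bracket expression for $\widetilde{\mathfrak{R}}$, and expand everything bilinearly. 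Since $\widetilde{h} = h - \mathcal{L}$, a bracket $[\widetilde{h}X,\widetilde{h}Y]$ splits into four pieces: $[hX,hY]$, which reassembles the old $\mathfrak{R}(X,Y)$; the two cross terms $[hX,\mathcal{L}Y]$ and $[\mathcal{L}X, hY]$; and the pure term $[\mathcal{L}X,\mathcal{L}Y]$. Lemma~\ref{lem} was proved precisely to evaluate these last three brackets, so the computation is reduced to bookkeeping.

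First I would write $\widetilde{\mathfrak{R}}(X,Y)$ in terms of $\widetilde{h}$-brackets and project with $\widetilde{v} = v + \mathcal{L}$, then replace each $\widetilde{h}$ by $h-\mathcal{L}$ and collect terms by antisymmetry, using the operator $\mathfrak{U}_{(X,Y)}$ to package the antisymmetrization. The $[hX,hY]$ contribution, after applying $\widetilde{v}$, should be shown to reproduce $\mathfrak{R}(X,Y)$ plus lower-order corrections coming from $\mathcal{L}$ acting on a horizontal bracket; here one uses that $\mathfrak{R}(X,Y)$ is already vertical and that $\mathcal{L}$ maps into the vertical subbundle (it lands in $\gamma\overline{\zeta}$-direction). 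Next I would feed Lemma~\ref{lem}(b) into the cross terms $[hX,\mathcal{L}Y]-[hY,\mathcal{L}X]$; the term $-\frac{g(\rho Y,\overline{\eta})}{1+p^2}[hX,\gamma\overline{\zeta}]$ contains a further $h$-component which, after projection by $\widetilde v$ and using $\widetilde v = v+\mathcal L$, contributes to the $JX$-type term in $\mathbb{H}$, while the remaining two summands in Lemma~\ref{lem}(b) — those already proportional to $\gamma\overline{\zeta}$ — contribute to the $\gamma\overline{\zeta}$-type term. Finally, Lemma~\ref{lem}(a) handles $[\mathcal{L}X,\mathcal{L}Y]$, which is already of the form $\mathfrak{U}_{(X,Y)}\{(\cdots)\gamma\overline{\zeta}\}$; I expect this to cancel against, or combine with, part of the $\gamma\overline{\zeta}$-contribution from the cross terms so that the net coefficient collapses to exactly $\frac{g(\rho X,\overline{\eta})g(\rho Y,\overline{\zeta})}{(1+p^2)^2}$.

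Throughout, the identifications $d_J E(X) = g(\rho X,\overline{\eta})$ (from the proof of Lemma~\ref{2.pp.metric}), $d_J p^2 = 0$ and $d_h$-type derivatives of $p^2$ and $B$ (Theorem~\ref{th.ind}, Lemma~\ref{2.co.1}), together with $\nabla\overline{\zeta} = -\rho$ and $T(\overline{\zeta},\cdot)=0$ (Definition~\ref{2.def.1}, Corollary~\ref{2.pp.2}), are the tools needed to rewrite every Lie derivative of a scalar coefficient in the stated form; in particular $hX\cdot\frac{g(\rho Y,\overline{\eta})}{1+p^2}$ is computed using $\nabla g = 0$ and $\nabla_{hX}\overline{\eta} = \rho(hX) = \rho X$ minus a vertical correction that dies. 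The main obstacle I anticipate is not conceptual but combinatorial: keeping track of which summands are genuinely $h$-valued (hence survive under $\widetilde{v}$ only after one more application of $\mathcal{L}$, producing an extra factor) versus those already $v$-valued, and verifying that the $JX$-coefficient really comes out as $\frac{g(\rho Y,\overline{\eta})}{1+p^2}$ with the correct sign after the double sign flip in $\widetilde v = v + \mathcal L$ and $\widetilde h = h - \mathcal L$. I would organize the verification by checking the two scalar coefficients in $\mathbb{H}$ separately — first the $JX$-term, where only Lemma~\ref{lem}(b)'s first summand and the $[hX,hY]$-correction contribute, then the $\gamma\overline{\zeta}$-term, where Lemma~\ref{lem}(a), the last two summands of Lemma~\ref{lem}(b), and the $d_h$-derivative of the coefficient all enter — and confirm that the homogeneity degrees match on both sides as a sanity check.
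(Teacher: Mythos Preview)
Your approach is essentially the paper's: start from the identity $\Re(X,Y)=-v[hX,hY]$, write $\widetilde{\Re}(X,Y)=-\widetilde{v}[\widetilde{h}X,\widetilde{h}Y]$, substitute $\widetilde{h}=h-\mathcal{L}$ and $\widetilde{v}=v+\mathcal{L}$, expand bilinearly, and evaluate the resulting brackets via Lemma~\ref{lem}; the paper records exactly the decomposition (your equation is its (\ref{2.eq.9})) and then lists the four auxiliary formulae for $[\mathcal{L}X,\mathcal{L}Y]$, $v[hX,\mathcal{L}Y]$, $\mathcal{L}[hX,\mathcal{L}Y]$, $\mathcal{L}[hX,hY]$ before collecting terms. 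One small slip: $\nabla_{hX}\overline{\eta}=K(hX)=0$ (horizontal vectors are defined by $K=0$), not $\rho X$, so $hX\cdot g(\rho Y,\overline{\eta})=g(\nabla_{hX}\rho Y,\overline{\eta})$ directly; this is harmless for the argument.
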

\begin{proof}
 By the identity  $\Re(X,Y)=-v[h X,h Y]$
  \,\cite{r97}, we have\vspace{-0.1cm}
  $$\widetilde{\Re}(X,Y)=-\widetilde{v}\left[\widetilde{h} X,\widetilde{h}
  Y\right].\vspace{-0.1cm}$$
  Using Proposition \ref{pp.1} and the fact that ${\L}[{\L} X,{\L}
  Y]=0$, we get\vspace{-0.2cm}
\begin{equation}\label{2.eq.9}
\widetilde{\Re}(X,Y)=\Re(X,Y)-[{\L} X,{\L}
Y]-{\L}[hX,hY]+\mathfrak{U}_{X,Y} \{
    v[hX,{\L} Y]+{\L}[hX,{\L} Y]\}.\vspace{-0.2cm}
\end{equation}
The result follows from (\ref{2.eq.9}) and the following
formulae which can be easily computed using Lemma \ref{lem} and Theorem \ref{2.th.2}\\
 $[{\L} X,{\L} Y]=\displaystyle{\mathfrak{U}_{(X,Y)}\set{\frac{g(\rho X,\overline{\eta})}{(1+p^{2})^{2}}
  \{g(\nabla_{\gamma \overline{\zeta}}\rho Y,\overline{\eta})+g(\rho Y,\overline{\zeta})\}\gamma
  \overline{\zeta}}}$,\\
\noindent
 $v[h X,{\L} Y]=-\displaystyle{\frac{g(\rho Y, \overline{\eta})}{1+p^2}}\,v[h
X,\gamma \overline{\zeta}]-\displaystyle{\frac{2g(\rho X,
\overline{\zeta})g(\rho Y,
\overline{\eta})}{(1+p^2)^2}}\,\gamma \overline{\zeta}-\displaystyle{\frac{g(\nabla_{h X}\rho Y,\overline{\eta})}{1+p^2}}\,\gamma \overline{\zeta}$,\\

\noindent
 ${\L} [h X,{\L} Y]=\displaystyle{\frac{g(\rho Y, \overline{\eta})}{(1+p^2)^{2}}}\,g(\rho [h
X,\gamma \overline{\zeta}],\overline{\eta})\gamma
  \overline{\zeta}$,\\

\noindent
 ${\L} [h X,h Y]=-\displaystyle{\frac{g(\rho [h
X,hY],\overline{\eta})}{1+p^2}}\,\gamma \overline{\zeta}$.
\end{proof}

\begin{cor}under the energy $\beta$-change {\em (\ref{2.eq.2})}, the curvature tensors $\Re$
is invariant if and only if, the vector 2-form $\mathbb{H}$ is
symmetric.
\end{cor}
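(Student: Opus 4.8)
The plan is to read the statement straight off Proposition \ref{2.pp.4}. That proposition gives the explicit transformation law
$$\widetilde{\Re}(X,Y)=\Re(X,Y)+\mathfrak{U}_{X,Y}\{\mathbb{H}(X,Y)\},$$
where, by the very definition introduced in Lemma \ref{lem}, the alternation operator acts by $\mathfrak{U}_{X,Y}\{\mathbb{H}(X,Y)\}=\mathbb{H}(X,Y)-\mathbb{H}(Y,X)$ for all $X,Y\in\mathfrak{X}(\T M)$. Hence the curvature $\Re$ is invariant under the energy $\beta$-change, i.e. $\widetilde{\Re}=\Re$, if and only if $\mathbb{H}(X,Y)-\mathbb{H}(Y,X)=0$ identically, which is exactly the assertion that the vector $2$-form $\mathbb{H}$ is symmetric. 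Both implications are immediate: symmetry of $\mathbb{H}$ kills the alternation term, and conversely, if the alternation term vanishes for all $X,Y$ then $\mathbb{H}(X,Y)=\mathbb{H}(Y,X)$.

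First I would recall (\ref{2.eq.8}) together with the expression for $\mathbb{H}$ displayed in Proposition \ref{2.pp.4}, then invoke the definition of $\mathfrak{U}_{(X,Y)}$ from Lemma \ref{lem}, and finally conclude by the two-line equivalence above. There is no real obstacle here; the only point worth stating carefully is that ``$\mathbb{H}$ symmetric'' is meant in the sense $\mathbb{H}(X,Y)=\mathbb{H}(Y,X)$ (as a vector-valued bilinear form), which is the precise condition under which $\mathfrak{U}_{X,Y}\{\mathbb{H}(X,Y)\}$ vanishes.
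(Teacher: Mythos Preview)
Your proposal is correct and matches the paper's approach exactly: the corollary is stated without proof in the paper, precisely because it is an immediate consequence of equation (\ref{2.eq.8}) and the definition of $\mathfrak{U}_{X,Y}$. There is nothing to add.
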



\Section{Fundamental connections under an energy $\beta$-change}

In this section, we investigate the transformation of the
fundamental linear connections of Finsler geometry, as well as their
curvature tensors, under the energy $\beta$-change {(\ref{2.eq.2})}.
We start our investigation with the Cartan connection.

\vspace{5pt}
 \par
The following lemmas are useful for subsequent use.\vspace{-0.2cm}
\begin{lem}{\em \cite{r92}}\label{3.le.1} Let $(M,L)$ be a Finsler manifold and  $g$ the Finsler
metric associated with $L$. The Cartan connection $\nabla$
 is completely determined by  the  relations\,\emph{:}
    \begin{description}
     \item[(a)]  $ 2g(\nabla _{vX}\rho Y,\rho Z) =vX\cdot g(\rho Y,\rho Z)+
     g(\rho Y,\rho [hZ,vX])+g(\rho Z,\rho [vX,hY])$.

     \item[(b)]  $ 2g(\nabla _{hX}\rho Y,\rho Z)  = hX\cdot g(\rho Y,\rho
                Z)+ hY\cdot g(\rho Z,\rho X)-hZ\cdot g(\rho X,\rho
                Y)$\\
  $  -g(\rho X,\rho [hY,hZ])+g(\rho Y,\rho
[hZ,hX])+g(\rho Z,\rho
    [hX,hY])$.
 \end{description}
\end{lem}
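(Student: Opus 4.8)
The plan is to read both formulae off the three properties that characterize the Cartan connection in Theorem~\ref{th.1}: metricity $\nabla g=0$, vanishing of the (h)h-torsion $Q=0$, and symmetry $g(T(\overline{X},\overline{Y}),\overline{Z})=g(T(\overline{X},\overline{Z}),\overline{Y})$ of the (h)hv-torsion. The only auxiliary facts needed are the elementary identities for the projectors of $\nabla$, namely $h=\beta\circ\rho$ and $\rho\circ v=0$, and the definition of the classical torsion $\textbf{T}(X,Y)=\nabla_X\rho Y-\nabla_Y\rho X-\rho[X,Y]$ together with $\textbf{T}(\gamma\overline{A},\beta\overline{B})=T(\overline{A},\overline{B})$ and $\textbf{T}(\beta\overline{A},\beta\overline{B})=Q(\overline{A},\overline{B})=0$. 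The argument is a pull-back version of the Koszul formula; the only real care needed is in the sign bookkeeping and in verifying that the non-Riemannian torsion contributions cancel.

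For (b) I would expand $\nabla g=0$ as $hX\cdot g(\rho Y,\rho Z)=g(\nabla_{hX}\rho Y,\rho Z)+g(\rho Y,\nabla_{hX}\rho Z)$, write down its two cyclic permutations in $X,Y,Z$ with all derivatives along the horizontal lifts, and form (first)$+$(second)$-$(third). Because all three differentiation directions are horizontal, every torsion term occurring is of the type $\textbf{T}(hU,hV)=Q(\rho U,\rho V)=0$; in particular $\nabla_{hX}\rho Y-\nabla_{hY}\rho X=\rho[hX,hY]$, and cyclically. Re-collecting the six terms then leaves $2g(\nabla_{hX}\rho Y,\rho Z)$ on one side and exactly the stated combination $hX\cdot g(\rho Y,\rho Z)+hY\cdot g(\rho Z,\rho X)-hZ\cdot g(\rho X,\rho Y)-g(\rho X,\rho[hY,hZ])+g(\rho Y,\rho[hZ,hX])+g(\rho Z,\rho[hX,hY])$ on the other.

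For (a) the mechanism is slightly different, because $\rho$ kills vertical vectors. Writing $vX=\gamma\overline{W}$ for the unique $\overline{W}\in\cp$ and $hY=\beta(\rho Y)$, the torsion identity gives $\textbf{T}(vX,hY)=\nabla_{vX}\rho Y-\rho[vX,hY]$, i.e. $\nabla_{vX}\rho Y=T(\overline{W},\rho Y)+\rho[vX,hY]$, and the same with $Y$ replaced by $Z$. One expression for $g(\nabla_{vX}\rho Y,\rho Z)$ comes directly from this; a second comes from inserting $\nabla_{vX}\rho Z=T(\overline{W},\rho Z)+\rho[vX,hZ]$ into the metricity relation $vX\cdot g(\rho Y,\rho Z)=g(\nabla_{vX}\rho Y,\rho Z)+g(\rho Y,\nabla_{vX}\rho Z)$. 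Adding the two and using $g(T(\overline{W},\rho Y),\rho Z)=g(T(\overline{W},\rho Z),\rho Y)$ makes the auxiliary $\overline{W}$ — equivalently the (h)hv-torsion — cancel, and what survives is precisely identity (a).

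Finally, the phrase ``completely determined'' follows for free: the right-hand sides of (a) and (b) contain no derivative of $\nabla$, the metric $g$ is non-degenerate on $\pi^{-1}(TM)$, $\rho$ is surjective, and $h+v=I$; hence (a) and (b) fix $g(\nabla_X\overline{Y},\overline{Z})$ for all $X\in\cpp$ and all $\overline{Y},\overline{Z}\in\cp$, and therefore $\nabla$ itself. The one point that requires attention — the ``hard part'', such as it is — is the sign and term bookkeeping when assembling the Koszul combination and confirming that the torsion pieces cancel: in (b) through $Q=0$, in (a) through the symmetry of $T$.
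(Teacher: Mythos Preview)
Your argument is correct. The Koszul-type derivation you give for (b) via three cyclic applications of metricity combined with $Q=0$, and for (a) via the torsion identity $\nabla_{vX}\rho Y=T(KX,\rho Y)+\rho[vX,hY]$ together with metricity and the symmetry $g(T(\overline{W},\rho Y),\rho Z)=g(T(\overline{W},\rho Z),\rho Y)$, checks out; the torsion contributions cancel exactly as you claim, and the ``completely determined'' clause follows from non-degeneracy of $g$ and the decomposition $X=hX+vX$.

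As for comparison with the paper: note that the paper does \emph{not} prove this lemma here --- it is quoted from \cite{r92} and stated without proof. So there is no in-paper argument to compare against. Your write-up is essentially the standard proof one would expect in \cite{r92}: the point of the lemma is precisely that the three axioms of Theorem~\ref{th.1} (metricity, $Q=0$, symmetry of $T$ in its last two slots) are equivalent to the two explicit Koszul-type formulae, and you have exhibited that equivalence in the forward direction. One small cosmetic remark: you may want to make explicit that $\overline{W}=KX$ (since $vX=\gamma KX$), rather than leaving it as ``the unique $\overline{W}$''; this ties your computation back to the paper's notation and makes the cancellation step $g(T(KX,\rho Y),\rho Z)=g(T(KX,\rho Z),\rho Y)$ transparently an instance of Theorem~\ref{th.1}(c).
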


Since the Cartan connection $\nabla$ has the same horizontal and
vertical projectors  as  the Barthel connection $\Gamma=[J,G]$
\cite{r92}, we get\vspace{-0.2cm}
\begin{lem}\label{3.le.2}Under the energy $\beta$-change {\em
(\ref{2.eq.2})}, we have \vspace{-0.3cm}
$$ \widetilde{h}=h-{\L}, \ \   \widetilde{v}=v+{\L} \ \ \ with \
 {\L} :=-{\frac{d_{J}E}{1+g(\overline{\zeta},\overline{\zeta})}}\otimes \gamma \overline{\zeta}. $$
\end{lem}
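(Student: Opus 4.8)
The statement is essentially a corollary of Theorem~\ref{2.th.2} together with the observation (made in \cite{r92}) that the Cartan connection and the Barthel connection $\Gamma=[J,G]$ induce the same horizontal and vertical projectors. So the plan is: first invoke this coincidence of projectors, which reduces the claim about $\widetilde h,\widetilde v$ (the projectors of the \emph{Cartan} connection $\widetilde\nabla$) to the analogous claim about the projectors of the \emph{Barthel} connection $\widetilde\Gamma$. Then apply Proposition~\ref{pp.1}, which says that under any change of Finsler structure the Barthel projectors transform as $\widetilde h=h-\L$ and $\widetilde v=v+\L$ with $\L=\gamma\circ N\circ\rho$; it remains only to identify $\L$ explicitly for the energy $\beta$-change.

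To pin down $\L$, I would combine Theorem~\ref{2.th.2}, which gives
$$\widetilde\Gamma=\Gamma+2\,\frac{d_JE}{1+p^2}\otimes\gamma\overline\zeta,$$
with the relation $\widetilde\Gamma=\Gamma-2\L$ from Proposition~\ref{pp.1} (equivalently, recall Remark~\ref{.rem}, where exactly this identification $\L=-\dfrac{d_JE}{1+p^2}\otimes\gamma\overline\zeta$ was already recorded). Matching the two expressions forces
$$\L=-\frac{d_JE}{1+g(\overline\zeta,\overline\zeta)}\otimes\gamma\overline\zeta,$$
and substituting this into $\widetilde h=h-\L$, $\widetilde v=v+\L$ yields the stated formulae.

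There is essentially no obstacle here: every ingredient — the equality of Cartan and Barthel projectors, Proposition~\ref{pp.1}, and the explicit form of $\widetilde\Gamma$ from Theorem~\ref{2.th.2} — is already available in the excerpt, so the proof is a two-line citation-and-substitution argument. The only point requiring a word of justification is that $1+p^2=1+g(\overline\zeta,\overline\zeta)$ never vanishes (it is $\ge 1$ since $g$ is positive definite on the relevant fibres), so the coefficient $d_JE/(1+p^2)$ is a well-defined smooth object on $\T M$ and $\L$ is a genuine vector $1$-form; this is what makes the decomposition $\widetilde h=h-\L$, $\widetilde v=v+\L$ meaningful.
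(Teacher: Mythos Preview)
Your proposal is correct and matches the paper's approach exactly: the paper introduces Lemma~\ref{3.le.2} with the one-line justification that the Cartan connection has the same horizontal and vertical projectors as the Barthel connection $\Gamma=[J,G]$ (citing \cite{r92}), so the result follows immediately from Proposition~\ref{pp.1} together with the identification of $\L$ in Remark~\ref{.rem} (equivalently, Theorem~\ref{2.th.2}). Your extra remark on the nonvanishing of $1+p^2$ is a harmless addition not present in the paper.
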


\begin{thm}\label{3.th.1} Let  $(M,L)$ and $(M,\widetilde {L})$ be two Finsler
manifolds  related by the energy $\beta$-change {\em
(\ref{2.eq.2})}. Then the associated Cartan  connections $\nabla$
and
  $\widetilde{\nabla}$  are related by:\vspace{-0.2cm}
\begin{equation}\label{13.eq.r1}
\widetilde {\nabla} _{X}\rho {Y} =
    \nabla _{X}\rho {Y}-\displaystyle{\frac{g(\rho X,\rho Y)}{1+p^{2}}\,  \overline{\zeta}}\, ,
    \ \ with\ \ p^2:=g(\overline{\zeta},\overline{\zeta}) . \vspace{-0.2cm}
 \end{equation}
\noindent In particular,
\begin{description}
  \item[(a)]  $ \widetilde {\nabla}_{\gamma \overline{X}}\overline{Y}=
    {\nabla}_{ \gamma \overline{X}}\overline{Y}$,

  \item[(b)] $\widetilde {\nabla} _{\widetilde{\beta}\overline{X}}\overline{Y} =
    \nabla _{\beta \overline{X}}\overline{Y}-U( \overline{X},
    \overline{Y}),$\ \ with \
    $U( \overline{ X}, \overline{Y})=
 \displaystyle{\frac{g(\overline{X},\overline{Y})}{1+p^{2}}\,\overline{\zeta}}-\frac{g(\overline{X}, \overline{\eta})}{1+p^{2}}\nabla_{\gamma \overline{\zeta}}\overline{Y}$.
\end{description}
  \end{thm}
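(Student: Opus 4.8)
The plan is to exploit the fact, established just above in Lemma~\ref{3.le.2}, that the Cartan connection and the Barthel connection share the same horizontal and vertical projectors, so that the horizontal map $\widetilde{\beta}$ of the new Cartan connection is exactly the horizontal map of $\widetilde{\Gamma}$; combined with Proposition~\ref{pp.1} and Theorem~\ref{2.th.2} this gives the explicit relation $\widetilde{\beta}\overline{X} = \beta\overline{X} - \L\beta\overline{X}$ on $H(\T M)$. The main strategy, however, is to use the Koszul-type characterization of the Cartan connection in Lemma~\ref{3.le.1}. First I would apply Proposition~\ref{2.pp.3}, $\widetilde{g}(\overline{X},\overline{Y})=g(\overline{X},\overline{Y})+\alpha(\overline{X})\alpha(\overline{Y})$, together with Lemma~\ref{3.le.2}, to rewrite both the vertical formula \ref{3.le.1}(a) and the horizontal formula \ref{3.le.1}(b) for $\widetilde{\nabla}$ in terms of the \emph{old} data $g$, $h$, $v$, and the correction term $\L$. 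The bracket terms $[\widetilde{h}Z,\widetilde{v}X]$ etc.\ expand via $\widetilde h=h-\L$, $\widetilde v=v+\L$, and the Lie-bracket identities for $[\L X,\L Y]$ and $[hX,\L Y]$ are available from Lemma~\ref{lem}; the only genuinely new brackets needed are $[hX,\gamma\overline{\zeta}]$ and $[\gamma\overline{X},\gamma\overline{\zeta}]$, which are handled by Lemma~\ref{bracket} together with $\nabla\overline{\zeta}=-\rho$ (Definition~\ref{2.def.1}) and the vanishing torsion facts in Corollary~\ref{2.pp.2}.

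The cleaner route — and the one I would actually write up — is to simply \emph{verify} that the candidate connection $D_X\rho Y := \nabla_X\rho Y - \dfrac{g(\rho X,\rho Y)}{1+p^2}\,\overline{\zeta}$ satisfies the three defining properties of Theorem~\ref{th.1} relative to $\widetilde g$, and then invoke uniqueness. One checks: (i) metricity, $D\widetilde g=0$, using $\nabla g=0$, $\nabla\overline{\zeta}=-\rho$ (so that $\nabla\alpha=-i_{\rho\,\cdot}\,g$ by Lemma~\ref{2.le.1}), and the product rule on $\widetilde g=g+\alpha\otimes\alpha$ — this is where the coefficient $\tfrac{1}{1+p^2}$ is forced; (ii) the (h)h-torsion of $D$ vanishes — here one must be careful that "horizontal" for $D$ means with respect to $\widetilde\beta$, and one uses that $D$ and $\nabla$ differ by a term symmetric in $X\leftrightarrow Y$ under the torsion expression $D_X\rho Y-D_Y\rho X-\rho[X,Y]$ once evaluated on $\widetilde\beta$-horizontal vectors, together with $Q^{\nabla}=0$; (iii) the symmetry $\widetilde g(\widetilde T(\overline X,\overline Y),\overline Z)=\widetilde g(\widetilde T(\overline X,\overline Z),\overline Y)$ of the new (h)hv-torsion, which again reduces to the known symmetry of $T$ plus the fact that the correction term contributes a fully symmetric quantity because $g(\gamma\overline X\text{-part})$ pairs $\overline\zeta$ symmetrically.

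Once \ref{13.eq.r1} is in hand, parts (a) and (b) are immediate specializations: for (a), put $X=\gamma\overline X$; since $\rho(\gamma\overline X)=0$, the correction term dies and $\widetilde\nabla_{\gamma\overline X}\overline Y=\nabla_{\gamma\overline X}\overline Y$. For (b), put $X=\widetilde\beta\overline X = \beta\overline X-\L\beta\overline X = \beta\overline X + \dfrac{g(\overline X,\overline\eta)}{1+p^2}\gamma\overline\zeta$ (using $d_JE(\beta\overline X)=g(\overline X,\overline\eta)$ and \ref{2.eq.4}); then $\rho(\widetilde\beta\overline X)=\overline X$, so the correction term in \ref{13.eq.r1} gives $-\dfrac{g(\overline X,\overline Y)}{1+p^2}\overline\zeta$, while the extra $\L\beta\overline X=-\dfrac{g(\overline X,\overline\eta)}{1+p^2}\gamma\overline\zeta$ piece, fed into $\widetilde\nabla=\nabla+(\text{corr.})$ and using $\nabla_{\gamma\overline\zeta}\overline Y$ plus $\rho(\gamma\overline\zeta)=0$, produces the second term $-\dfrac{g(\overline X,\overline\eta)}{1+p^2}\nabla_{\gamma\overline\zeta}\overline Y$ of $U(\overline X,\overline Y)$. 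The main obstacle I anticipate is purely bookkeeping in step (b): keeping straight the interplay between "$\widetilde\beta$" versus "$\beta$" when reading off \ref{13.eq.r1} on horizontal arguments, since a sign error there propagates into $U$; everything else is a controlled computation powered by $\nabla\overline\zeta=-\rho$ and the torsion vanishing identities of Corollary~\ref{2.pp.2}.
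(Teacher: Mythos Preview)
Your proposal is correct. Your first paragraph outlines exactly the paper's approach: expand the Koszul-type formulas of Lemma~\ref{3.le.1} for $\widetilde\nabla$ using $\widetilde g=g+\alpha\otimes\alpha$ (Proposition~\ref{2.pp.3}) and $\widetilde h=h-\L$, $\widetilde v=v+\L$ (Lemma~\ref{3.le.2}), simplify the bracket terms via $T(\overline\zeta,\cdot)=0$, and solve. Your preferred route, however, is genuinely different: you define the candidate $D_X\rho Y:=\nabla_X\rho Y-\tfrac{g(\rho X,\rho Y)}{1+p^2}\overline\zeta$ and verify the three axioms of Theorem~\ref{th.1} relative to $\widetilde g$, appealing to uniqueness. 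This works cleanly: the key observation that the classical torsion is unchanged, $\textbf T^D=\textbf T^\nabla$ (the correction term being symmetric in $\rho X,\rho Y$), together with $T(\overline\zeta,\cdot)=0$, kills the (h)h-torsion on $\widetilde\beta$-arguments and shows $\widetilde T=T$; metricity then fixes the coefficient $\tfrac1{1+p^2}$ via $\nabla\alpha=-i_{\rho\,\cdot}g$. The paper's direct computation is constructive and does not presuppose the answer, but is longer; your verification is shorter and makes the role of the concurrence hypothesis (through Corollary~\ref{2.pp.2}) more transparent. Your derivation of (a) and (b) from \eqref{13.eq.r1} is correct and matches the paper's logic.
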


\begin{proof}
Using  Lemma  \ref{3.le.1}(a),  Proposition
 \ref{2.pp.3} and  Lemma \ref{3.le.2}, noting  the fact that
$\rho[\widetilde{h}Z,\widetilde{v}X]=\rho[{h}Z,\widetilde{v}X]$ (as
$\rho[\L Z,\widetilde{v}X]=0$), we get\vspace{-0.35cm}

\begin{eqnarray*}
  2\widetilde{g}(\widetilde {\nabla} _{\widetilde {v}X}\rho Y,\rho Z)
   &=&\{vX\cdot g(\rho Y,\rho Z) + g(\rho Y,\rho [hZ,vX]) +g(\rho Z,\rho
 [vX,hY])\}\\
 &&+\{{\L} X\cdot g(\rho Y,\rho Z)+ g(\rho Y,\rho [hZ,{\L} X])+g(\rho Z,\rho [{\L}
 X,hY])\}\\
 &&+\{g(\nabla _{vX}\rho Y,\overline{\zeta})g(\rho Z,\overline{\zeta})+g(\rho Y,\overline{\zeta})g\left(\nabla _{vX}\rho Z,\overline{\zeta}\right)\\
 &&+g(\rho Y,\overline{\zeta}) g(\textbf{T}(vX,hZ)-\nabla_{vX}\rho Z,\overline{\zeta})\\
&&+g(\rho Z,\overline{\zeta}) g\left(\nabla_{vX}\rho Y-\textbf{T}(vX,hY),\overline{\zeta}\right)\}\\
&&+\{g\left(\nabla _{{\L} X}\rho Y,\overline{\zeta}\right)g(\rho
Z,\overline{\zeta})+g(\rho Y,\overline{\zeta})
g\left(\nabla _{{\L} X}\rho Z,\overline{\zeta}\right)\\
 &&+g(\rho Y,\overline{\zeta}) g(\textbf{T}({\L} X,hZ)-\nabla_{{\L} X}\rho Z,\overline{\zeta})\\
&&+g(\rho Z,\overline{\zeta}) g\left(\nabla_{{\L} X}\rho
Y-\textbf{\textbf{T}}({\L} X,hY),\overline{\zeta}\right)\}.
\end{eqnarray*}
Since $\textbf{T}({\L} X,hY)=-\frac{g(\overline{X},
\overline{\eta})}{1+p^{2}}T(\overline{\zeta}, \rho Y)=0$ and
$g(\textbf{T}(vX,hY),\overline{\zeta})=-g(T(K
X,\overline{\zeta}),\rho Y)=0$, the above relation takes the form
\begin{eqnarray*}
  2\widetilde{g}(\widetilde {\nabla} _{\widetilde {v}X}\rho Y,\rho Z)
  &=&2g(\nabla _{vX}\rho Y,\rho Z) +2g(\nabla _{{\L} X}\rho Y,\rho Z)
\\
&&+2g(\rho Z,\overline{\zeta})g(\nabla _{vX}\rho Y,\overline{\zeta})+2g(\rho Z,\overline{\zeta})g(\nabla_{{\L} X}\rho Y,\overline{\zeta})\\
&=& 2g(\nabla_{\widetilde{v} X}\rho Y,\rho
Z)+2g(g(\nabla_{\widetilde{v} X}\rho
Y,\overline{\zeta})\,\overline{\zeta},\rho Z).
\end{eqnarray*}
Consequently,
 \begin{equation}\label{3.eq.1}
    {g}(\widetilde {\nabla} _{\widetilde {v}X}\rho Y,\rho Z)
      +g(g(\widetilde{\nabla}_{\widetilde{v} X}\rho Y,\overline{\zeta})\,\overline{\zeta},\rho Z) =
       g(\nabla_{\widetilde{v} X}\rho Y,\rho Z)+g(g(\nabla_{\widetilde{v} X}\rho Y,\overline{\zeta})\,\overline{\zeta},\rho Z)\vspace{-0.1cm}.
 \end{equation}
From which, by setting $Z=\beta \overline{\zeta}$, we
get\vspace{-0.1cm}
 \begin{equation}\label{3.eq.2}
   {g}(\widetilde {\nabla} _{\widetilde {v}X}\rho Y,\overline{\zeta})
       -g(\nabla_{\widetilde{v} X}\rho Y,\overline{\zeta})=0\vspace{-0.1cm}
     .
 \end{equation}
Then, Equations (\ref{3.eq.1}) and (\ref{3.eq.2}) imply
that\vspace{-0.1cm}
 \begin{equation}\label{3.eq.3}
   \widetilde {\nabla} _{\widetilde {v}X}\rho Y
       =\nabla_{\widetilde{v} X}\rho Y     .\vspace{-0.1cm}
 \end{equation}
\par
Similarly, by  Lemma \ref{3.le.1}(b),  Proposition
  \ref{2.pp.3} and   Lemma \ref{3.le.2}, noting
   that $T(\overline{\zeta}, \overline{X})=T(\overline{X}, \overline{\zeta})=0$, we get after long but easy calculations \vspace{-0.1cm}
\begin{eqnarray*}
   2\widetilde{g}(\widetilde {\nabla} _{\widetilde {h}X}\rho Y,\rho Z)
  &=&2g(\nabla_{h X}\rho Y,\rho Z)-2g(\nabla_{{\L} X}\rho Y,\rho Z)
+2\{g(\rho X,\rho[{\L} Y,hZ])\\
&&-g(\rho X,\nabla_{{\L} Y}\rho Z)\}-2\{g(\rho Y,\rho[{\L} Z,hX])-g(\rho Y,\nabla_{{\L} Z}\rho X)\}\\
&&+2g(g(\nabla_{h X}\rho Y,\overline{\zeta})\,\overline{\zeta},\rho
Z)-2g(g(\nabla_{{\L} X}\rho
Y,\overline{\zeta})\,\overline{\zeta},\rho Z)-2g(\rho Z,\overline{\zeta})g(\rho X, \rho Y)\\
&=&2g(\nabla_{\widetilde{h} X}\rho Y,\rho
Z)+2g(g(\nabla_{\widetilde{h} X}\rho
Y,\overline{\zeta})\,\overline{\zeta},\rho Z)-2g(\rho
Z,\overline{\zeta})g(\rho X, \rho Y)\\
&&+2g(\rho Y,\textbf{T}({\L} Z,h X))-2g(\rho X,\textbf{T}({\L} Y, hZ))\\
&=&2g(\nabla_{\widetilde{h} X}\rho Y,\rho
Z)+2g(g(\nabla_{\widetilde{h} X}\rho
Y,\overline{\zeta})\,\overline{\zeta},\rho Z)-2g(\rho
Z,\overline{\zeta})g(\rho X, \rho Y).
\end{eqnarray*}
Consequently,
\begin{equation}\label{3.eq.4}
\begin{array}{rcl}
     &&g(\widetilde {\nabla} _{\widetilde {h}X}\rho Y,\rho Z)
     +g(g(\widetilde{\nabla}_{\widetilde {h}X}\rho Y,\overline{\zeta})\,\overline{\zeta},\rho Z)
       = \\
     &=&g(\nabla_{\widetilde{h} X}\rho Y,\rho Z)
     +g(g(\nabla_{\widetilde{h} X}\rho Y,\overline{\zeta})\,\overline{\zeta},\rho Z)-g(\rho Z,\overline{\zeta})g(\rho X, \rho Y).\vspace{-0.1cm}
     \end{array}
\end{equation}
From which,  setting $Z=\beta \overline{\zeta}$, we
get\vspace{-0.2cm}
 \begin{equation}\label{3.eq.5}
   \widetilde{g}(\widetilde {\nabla} _{\widetilde {h}X}\rho Y,\overline{\zeta})
       -g(\nabla_{\widetilde{h} X}\rho Y,\overline{\zeta})
       =-\frac{g(\overline{\zeta},\overline{\zeta})g(\rho X,\rho Y)}{1+g(\overline{\zeta},\overline{\zeta})}\ .\vspace{-0.2cm}
    \end{equation}
Then, Equations (\ref{3.eq.4}) and (\ref{3.eq.5}) imply that
\vspace{-0.1cm}
 \begin{equation}\label{3.eq.6}
   \widetilde {\nabla} _{\widetilde {h}X}\rho Y
       =\nabla_{\widetilde{h} X}\rho Y-
       \frac{g(\rho X,\rho Y)}{1+g(\overline{\zeta},\overline{\zeta})}\,\overline{\zeta}     .\vspace{-0.1cm}
 \end{equation}
\par
Now, (\ref{13.eq.r1}) follows from   (\ref{3.eq.3}) and
(\ref{3.eq.6}).
\end{proof}

\vspace{7pt}
\par
As a consequence of Theorem \ref{3.th.1} and Definition
\ref{2.def.1}, we have \vspace{-0.2cm}
\begin{cor}If a Finsler manifold admits a concurrent $\pi$-vector field $\overline{\zeta}$, then the vector
field $\overline{\zeta}$ is no more concurrent  with respect to the
transformed  metric {\em(\ref{2.eq.2})}.
\end{cor}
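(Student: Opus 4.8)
The plan is to show directly that $\widetilde\nabla_{\widetilde\beta\overline X}\,\overline\zeta \neq -\overline X$ in general, i.e.\ that the first defining equation of Definition \ref{2.def.1} fails when the Cartan connection $\nabla$ is replaced by $\widetilde\nabla$. The only ingredient needed is the transformation formula of Theorem \ref{3.th.1}, applied with $\overline Y=\overline\zeta$, together with the known properties of $\overline\zeta$ with respect to the \emph{original} connection $\nabla$, namely $\nabla_{\beta\overline X}\overline\zeta=-\overline X$ and $\nabla_{\gamma\overline X}\overline\zeta=0$ (Definition \ref{2.def.1}), and the fact that $\overline\zeta$ is independent of the directional argument (Theorem \ref{th.ind}), so that $\nabla_{\gamma\overline\zeta}\overline\zeta=0$.

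First I would specialize part (b) of Theorem \ref{3.th.1}: for all $\overline X$,
$$\widetilde\nabla_{\widetilde\beta\overline X}\,\overline\zeta
= \nabla_{\beta\overline X}\,\overline\zeta - U(\overline X,\overline\zeta),
\qquad
U(\overline X,\overline\zeta)
= \frac{g(\overline X,\overline\zeta)}{1+p^{2}}\,\overline\zeta
- \frac{g(\overline X,\overline\eta)}{1+p^{2}}\,\nabla_{\gamma\overline\zeta}\,\overline\zeta.$$
By Theorem \ref{th.ind} the second term of $U$ vanishes, so
$$\widetilde\nabla_{\widetilde\beta\overline X}\,\overline\zeta
= -\overline X - \frac{g(\overline X,\overline\zeta)}{1+p^{2}}\,\overline\zeta
= -\Bigl(\overline X + \frac{\alpha(\overline X)}{1+p^{2}}\,\overline\zeta\Bigr).$$
Likewise part (a) of Theorem \ref{3.th.1} gives $\widetilde\nabla_{\gamma\overline X}\overline\zeta = \nabla_{\gamma\overline X}\overline\zeta = 0$, so the vertical condition is still satisfied; it is only the horizontal one that breaks. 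For $\overline\zeta$ to be concurrent with respect to $\widetilde\nabla$ one would need $\widetilde\nabla_{\widetilde\beta\overline X}\overline\zeta = -\overline X$ for all $\overline X$, i.e.\ $\alpha(\overline X)\,\overline\zeta = 0$ for all $\overline X$; since $\overline\zeta$ is everywhere nonzero (Lemma \ref{.le.1}(a)) and $\alpha=i_{\overline\zeta}g\neq0$ (indeed $\alpha(\overline\zeta)=p^{2}=g(\overline\zeta,\overline\zeta)>0$), this is impossible. Hence $\overline\zeta$ is no longer concurrent for $\widetilde L$.

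The argument is essentially immediate once Theorem \ref{3.th.1} is in hand, so there is no real obstacle; the only point requiring a moment's care is making sure the $\nabla_{\gamma\overline\zeta}\overline\zeta$ term in $U$ genuinely vanishes, which is exactly the content of Theorem \ref{th.ind} (directional independence of $\overline\zeta$), combined with $\nabla_{\gamma\overline X}\overline\zeta=0$ from the definition. One could also phrase the conclusion quantitatively: the ``defect'' of concurrence of $\overline\zeta$ with respect to $\widetilde\nabla$ is precisely the rank-one correction $\overline X\mapsto \dfrac{\alpha(\overline X)}{1+p^{2}}\,\overline\zeta$, which does not vanish identically.
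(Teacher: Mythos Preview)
Your argument is correct and is exactly the elaboration the paper has in mind: it states the corollary as an immediate consequence of Theorem \ref{3.th.1} and Definition \ref{2.def.1}, and your computation of $\widetilde\nabla_{\widetilde\beta\overline X}\,\overline\zeta = -\overline X - \frac{\alpha(\overline X)}{1+p^{2}}\,\overline\zeta$ is precisely how one unpacks that. One minor remark: you do not need to invoke Theorem \ref{th.ind} to get $\nabla_{\gamma\overline\zeta}\overline\zeta=0$; this is already the second equation of Definition \ref{2.def.1} with $\overline X=\overline\zeta$, as you yourself note later.
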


\begin{cor} \label{3.pp.1} Under the energy $\beta$-change  {\em
(\ref{2.eq.2})}, we have\,:
\begin{description}

    \item[(a)] The  maps  $
  \overline{Y} \longmapsto \nabla_{\gamma \overline{X}}\overline{Y}$   and
   $ \overline{Y} \longmapsto \nabla_{\gamma \overline{Y}}\overline{X}$
  are  invariant.

    \item[(b)] The (h)hv-torsion $T$ and  the Cartan tensor are  invariant.

\end{description}
\end{cor}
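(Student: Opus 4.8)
The plan is to derive both parts directly from the transformation formula for the Cartan connection established in Theorem \ref{3.th.1}, namely $\widetilde{\nabla}_X\rho Y=\nabla_X\rho Y-\frac{g(\rho X,\rho Y)}{1+p^2}\,\overline{\zeta}$, by specializing the direction $X$ to vertical and horizontal lifts.

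For part (a), I would start from Theorem \ref{3.th.1}(a), which already records $\widetilde{\nabla}_{\gamma\overline{X}}\overline{Y}=\nabla_{\gamma\overline{X}}\overline{Y}$; this is immediate since $\rho(\gamma\overline{X})=0$ kills the correction term. Thus the map $\overline{Y}\longmapsto\nabla_{\gamma\overline{X}}\overline{Y}$ is invariant. For the second map $\overline{Y}\longmapsto\nabla_{\gamma\overline{Y}}\overline{X}$, I would again apply the transformation formula with $X$ replaced by the vertical lift $\gamma\overline{Y}$: since $\rho(\gamma\overline{Y})=0$, the correction term $\frac{g(\rho(\gamma\overline{Y}),\overline{X})}{1+p^2}\overline{\zeta}$ vanishes identically, so $\widetilde{\nabla}_{\gamma\overline{Y}}\overline{X}=\nabla_{\gamma\overline{Y}}\overline{X}$ for all $\overline{X},\overline{Y}$, giving the claimed invariance.

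For part (b), I would use the definition of the $(h)hv$-torsion $T$ of the Cartan connection together with Theorem \ref{2.th.1}(b) (the $(h)h$-torsion $Q$ vanishes) and Lemma \ref{bracket}. Since both $\nabla$ and $\widetilde{\nabla}$ have $Q=0$, one has, for any connection of Cartan type, $T(\overline{X},\overline{Y})=\nabla_{\gamma\overline{X}}\overline{Y}-\nabla_{\beta\overline{Y}}\overline{X}-\rho[\gamma\overline{X},\beta\overline{Y}]$ in the appropriate form; more simply, $T(\overline{X},\overline{Y})$ is built out of the vertical covariant derivatives $\nabla_{\gamma\overline{X}}\overline{Y}$, which are invariant by part (a), so $\widetilde{T}=T$. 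Concretely, I would invoke Theorem \ref{2.th.1}(a) (the Berwald connection formula) or simply the standard identity expressing $T$ through $\nabla_{\gamma(\cdot)}(\cdot)$ and the bracket $[\gamma\overline{X},\gamma\overline{Y}]$ (which is intrinsic, not connection-dependent) via Lemma \ref{bracket}(c); since all the connection-dependent ingredients are the vertical derivatives, invariance of $T$ follows, and hence invariance of the Cartan tensor $T(\overline{X},\overline{Y},\overline{Z})=\widetilde{g}(\widetilde{T}(\overline{X},\overline{Y}),\overline{Z})$ requires only checking that $\widetilde{g}(\widetilde{T}(\overline{X},\overline{Y}),\overline{Z})=g(T(\overline{X},\overline{Y}),\overline{Z})$. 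For the latter I would use Proposition \ref{2.pp.3}, $\widetilde{g}=g+\alpha\otimes\alpha$, together with $T(\overline{X},\overline{\zeta})=0$ from Corollary \ref{2.pp.2}(a): the extra term $\alpha(\widetilde{T}(\overline{X},\overline{Y}))\alpha(\overline{Z})=g(T(\overline{X},\overline{Y}),\overline{\zeta})\alpha(\overline{Z})$ vanishes, leaving $T(\overline{X},\overline{Y},\overline{Z})$ unchanged.

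The only mildly delicate point I anticipate is making the claim "$T$ is built only from the invariant vertical derivatives" airtight: one must be careful that the correction term in Theorem \ref{3.th.1}, being symmetric in $X\leftrightarrow Y$ in the sense that it only depends on $g(\rho X,\rho Y)$, does not contribute to the antisymmetric bracket expression defining the torsion. Concretely, when one computes $\widetilde{T}(\overline{X},\overline{Y})$ using $\widetilde{\nabla}_{\gamma\overline{X}}\overline{Y}$ and $\widetilde{\nabla}_{\gamma\overline{Y}}\overline{X}$ (both with vertical arguments, hence both with vanishing correction), no correction term ever appears — so this obstacle is in fact not serious, but it is the step that should be spelled out. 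The hard part is therefore purely organizational: choosing the cleanest identity for $T$ (I would use the one coming from Lemma \ref{bracket}(c) applied to both connections, subtracting) so that the cancellation is manifest.
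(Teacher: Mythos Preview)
Your treatment of part (a) is correct and matches the paper, which states the corollary without proof as an immediate consequence of Theorem~\ref{3.th.1}.

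For part (b), however, your argument for the invariance of the $(h)hv$-torsion $T$ has a gap. The identity in Lemma~\ref{bracket}(c) concerns $[\gamma\overline{X},\gamma\overline{Y}]$ and involves $\widehat{S}$, not $T$; it does not express $T$ at all. Nor is $T$ ``built out of the vertical covariant derivatives $\nabla_{\gamma\overline{X}}\overline{Y}$ and $\nabla_{\gamma\overline{Y}}\overline{X}$'' alone: unwinding the definition of the classical torsion gives
\[
T(\overline{X},\overline{Y})=\textbf{T}(\gamma\overline{X},\beta\overline{Y})
=\nabla_{\gamma\overline{X}}\overline{Y}-\rho[\gamma\overline{X},\beta\overline{Y}],
\]
and the second term involves $\beta$, which does change under the energy $\beta$-change. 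The point you are missing is that this dependence on $\beta$ is only apparent: by Lemma~\ref{3.le.2} the difference $\widetilde{\beta}\,\overline{Y}-\beta\overline{Y}= -{\L}(\beta\overline{Y})$ is vertical, and since the vertical distribution is integrable (equivalently, by Lemma~\ref{bracket}(c), $[\gamma\overline{X},\gamma\overline{W}]$ is vertical), one has $\rho[\gamma\overline{X},\widetilde{\beta}\,\overline{Y}]=\rho[\gamma\overline{X},\beta\overline{Y}]$. Combining this with the invariance of $\nabla_{\gamma\overline{X}}\overline{Y}$ from (a) yields $\widetilde{T}=T$. Your suggestion to route through Theorem~\ref{2.th.1}(a) would be circular in the paper's logical order, since the invariance of $D^{\circ}_{\gamma}$ (Theorem~\ref{6.th.1}) is established later and itself uses the invariance of $T$.

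Your argument for the Cartan tensor (using Proposition~\ref{2.pp.3} and $g(T(\overline{X},\overline{Y}),\overline{\zeta})=0$ from Corollary~\ref{2.pp.2}(a)) is correct and is a nice explicit verification of what the paper leaves implicit.
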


\begin{thm}\label{3.th.2} Let $(M,L)$ and
$(M,\widetilde{L})$ be two Finsler manifolds related by the energy
$\beta$-change {\em (\ref{2.eq.2})}. The curvature tensors of the
associated Cartan connections $\nabla$ and $\widetilde{\nabla}$ are
related by\,{\em:}
\begin{equation}\label{3.eq.7}
   \widetilde{\textbf{K}}(X,Y)\overline{Z}  = \textbf{K}(X,Y)\overline{Z}
  + \frac{g( \textbf{T}(X,Y),\overline{Z})}{1+p^2}\,\overline{\zeta}+H(\rho{X},
  \rho{Y})\overline{Z},
\end{equation}
\noindent where $H$ is the $\pi$-tensor field defined by
  \begin{equation}
        H(\rho{X}, \rho{Y})\overline{Z} =
     \mathfrak{U}_{X,Y} \set{\frac{g(\rho X,\overline{Z})}{1+p^2}\,\rho
     Y +\frac{g(\rho Y,\overline{Z})g(\rho X,\overline{\zeta})}{(1+p^2)^2}\,\overline{\zeta}}.
       \end{equation}
 \noindent  In particular,
 \begin{description}
  \item[(a)] $\widetilde{S}(\overline{X},\overline{Y})\overline{Z}=
S(\overline{X},\overline{Y})\overline{Z}$,

  \item[(b)] $ \widetilde{P}(\overline{X},\overline{Y})\overline{Z}=
  P(\overline{X},\overline{Y})\overline{Z}-
  \displaystyle{\frac{g( T(\overline{Y},\overline{X}),\overline{Z})}{1+p^2}}\,\overline{\zeta},$

  \item [(c)]
 $\widetilde{R}(\overline{X}, \overline{Y})\overline{Z} = R(\overline{X},\overline{Y})\overline{Z}
 +H(\overline{X},\overline{Y})\overline{Z}$.
\end{description}
\end{thm}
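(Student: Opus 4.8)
The plan is to deduce everything from the relation between the two Cartan connections established in Theorem~\ref{3.th.1}. Since $\rho|_{H(\T M)}$ is a bundle isomorphism, that theorem is equivalent to saying that the difference tensor is
\[
\Delta_X\overline{Y}\;:=\;\widetilde{\nabla}_X\overline{Y}-\nabla_X\overline{Y}\;=\;-\frac{g(\rho X,\overline{Y})}{1+p^{2}}\,\overline{\zeta},
\qquad X\in\cpp,\ \ \overline{Y}\in\cp,\ \ p^{2}=g(\overline{\zeta},\overline{\zeta}).
\]
First I would substitute $\widetilde{\nabla}=\nabla+\Delta$ into the defining identity of the classical curvature, which gives $\widetilde{\textbf{K}}(X,Y)\overline{Z}=\textbf{K}(X,Y)\overline{Z}$ plus the seven $\Delta$-terms
$$-\nabla_X(\Delta_Y\overline{Z})+\nabla_Y(\Delta_X\overline{Z})-\Delta_X(\nabla_Y\overline{Z})+\Delta_Y(\nabla_X\overline{Z})+\Delta_{[X,Y]}\overline{Z}-\Delta_X\Delta_Y\overline{Z}+\Delta_Y\Delta_X\overline{Z}.$$
These I would evaluate using $\nabla g=0$, the concurrence relation $\nabla_X\overline{\zeta}=-\rho X$ (Definition~\ref{2.def.1}), and Theorem~\ref{th.ind}, which gives that $\overline{\zeta}$, hence $p^{2}$, is $y$-independent, so that $\nabla_X\!\big((1+p^{2})^{-1}\big)=2g(\rho X,\overline{\zeta})(1+p^{2})^{-2}$. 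After collecting terms, the pieces containing $\nabla\rho$ together with $\Delta_{[X,Y]}\overline{Z}$ reassemble, via $\textbf{T}(X,Y)=\nabla_X\rho Y-\nabla_Y\rho X-\rho[X,Y]$, into $\frac{g(\textbf{T}(X,Y),\overline{Z})}{1+p^{2}}\overline{\zeta}$, while the $\mathfrak{U}_{X,Y}$-type pieces coming from $\nabla_X\overline{\zeta}=-\rho X$, from $\nabla_X\!\big((1+p^{2})^{-1}\big)$, and from the quadratic pair $-\Delta_X\Delta_Y\overline{Z}+\Delta_Y\Delta_X\overline{Z}$ combine into exactly $H(\rho X,\rho Y)\overline{Z}$; this proves~(\ref{3.eq.7}).

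The three particular formulas then follow by feeding $\gamma$- and horizontal arguments into~(\ref{3.eq.7}), using that $\gamma$ is connection-independent and that, by Lemma~\ref{3.le.2}, $\widetilde{\beta}\,\overline{X}=\beta\overline{X}+\frac{g(\overline{X},\overline{\eta})}{1+p^{2}}\,\gamma\overline{\zeta}$ (the correction being vertical, since ${\L}$ is vertical-valued with ${\L}\circ{\L}=0$). For (a): $\widetilde{S}(\overline{X},\overline{Y})\overline{Z}=\widetilde{\textbf{K}}(\gamma\overline{X},\gamma\overline{Y})\overline{Z}$; in~(\ref{3.eq.7}) the $H$-term dies because $\rho\circ\gamma=0$, and $\textbf{T}(\gamma\overline{X},\gamma\overline{Y})=-\rho[\gamma\overline{X},\gamma\overline{Y}]=0$ by integrability of $V(\T M)$, so $\widetilde{S}=S$. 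For (b): in $\widetilde{P}(\overline{X},\overline{Y})\overline{Z}=\widetilde{\textbf{K}}(\widetilde{\beta}\overline{X},\gamma\overline{Y})\overline{Z}$ the vertical part of $\widetilde{\beta}$ produces a multiple of $\widetilde{S}(\overline{\zeta},\overline{Y})\overline{Z}=S(\overline{\zeta},\overline{Y})\overline{Z}$, which vanishes by part~(a), Proposition~\ref{2.pp.1}(a), Lemma~\ref{2.le.2} and the first Bianchi identity for the Cartan $v$-curvature; then~(\ref{3.eq.7}) applied to $\widetilde{\textbf{K}}(\beta\overline{X},\gamma\overline{Y})\overline{Z}$, using $\rho\circ\gamma=0$ and $\textbf{T}(\beta\overline{X},\gamma\overline{Y})=-\textbf{T}(\gamma\overline{Y},\beta\overline{X})=-T(\overline{Y},\overline{X})$, yields (b). For (c): in $\widetilde{R}(\overline{X},\overline{Y})\overline{Z}=\widetilde{\textbf{K}}(\widetilde{\beta}\overline{X},\widetilde{\beta}\overline{Y})\overline{Z}$ the cross-terms are multiples of $\widetilde{P}(\,\cdot\,,\overline{\zeta})\,\cdot\,$, which vanish by (b) together with Corollary~\ref{2.pp.2}(a),(c); the $(\gamma\overline{\zeta},\gamma\overline{\zeta})$-term vanishes by antisymmetry; and~(\ref{3.eq.7}) applied to $\widetilde{\textbf{K}}(\beta\overline{X},\beta\overline{Y})\overline{Z}$ — with $\textbf{T}(\beta\overline{X},\beta\overline{Y})=Q(\overline{X},\overline{Y})=0$ by Theorem~\ref{th.1} — gives $\widetilde{R}=R+H$.

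The hard part will be the bookkeeping behind~(\ref{3.eq.7}). Because $p^{2}$ is not constant along horizontal directions, the derivatives of $(1+p^{2})^{-1}$ do not drop out, and one must check that these terms, the terms generated by $\nabla_X\overline{\zeta}=-\rho X$, and the quadratic $\Delta$-terms recombine precisely into the single $\pi$-tensor $H$; one must also verify that the genuinely non-tensorial contribution $g(\rho[X,Y],\overline{Z})$ is exactly cancelled by $\Delta_{[X,Y]}\overline{Z}$ and rewritten as $g(\textbf{T}(X,Y),\overline{Z})$ — this cancellation is what forces the right-hand side of~(\ref{3.eq.7}) to depend on $X,Y$ only through $\textbf{T}(X,Y)$ and $\rho X$, $\rho Y$, and it is the only place where care is really needed, the rest of the argument being formal substitution into~(\ref{3.eq.7}).
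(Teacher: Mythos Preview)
Your proposal is correct and follows essentially the same route as the paper: both substitute the connection-difference formula of Theorem~\ref{3.th.1} into the definition of the classical curvature and collect terms, the paper doing this by expanding $\widetilde{\nabla}_{X}\widetilde{\nabla}_{Y}\overline{Z}$ and $\widetilde{\nabla}_{[X,Y]}\overline{Z}$ directly while you organize the same computation through the difference tensor $\Delta$. Your derivation of the particular cases (a)--(c), handling the shift $\widetilde{\beta}=\beta-{\L}$ and killing the cross-terms via $S(\overline{\zeta},\cdot)\cdot=0$ and $P(\cdot,\overline{\zeta})\cdot=0$, is more explicit than the paper (which simply records them as specializations of~(\ref{3.eq.7})), but the underlying argument is the same.
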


\begin{proof}
 By Theorem \ref{3.th.1}, we have: \vspace{-0.2cm}
\begin{eqnarray*}
  \widetilde{\nabla}_{X}\widetilde{\nabla}_{Y}\overline{Z}
       &=& \nabla_{X}\nabla_{Y}\overline{Z}-\frac{g(\rho X,\nabla_{Y}\overline{Z})}{1+p^2}\,\overline{\zeta}+\frac{g(\rho Y,\overline{Z})}{1+p^2}\rho X \\
   & & - \frac{g(\rho Y,\overline{Z})g(\rho X,\overline{\zeta})}{(1+p^2)^2}\,\overline{\zeta}
   -\frac{g(\nabla_{X}\rho {Y},\overline{Z})}{1+p^2}\,\overline{\zeta}-\frac{g(\rho Y,\nabla_{X}\overline{Z})}{1+p^2}\,\overline{\zeta},\\
   \widetilde{\nabla}_{[X,Y]}\overline{Z}&=&\displaystyle{\nabla_{[X,Y]}\overline{Z}
-\frac{g(\nabla_{X}\rho {Y},\overline{Z})}{1+p^2}\,\overline{\zeta}
+\frac{g(\nabla_{Y}\rho {X},\overline{Z})}{1+p^2}\,\overline{\zeta}
+\frac{g(\textbf{T}(X,Y),\overline{Z})}{1+p^2}\,\overline{\zeta}}.\vspace{-0,2cm}
\end{eqnarray*}
The result follows from the above identities and the definition of
$\widetilde{\textbf{K}}$.
\end{proof}

\par
In view of the above theorem, we have \vspace{-0.2cm}
\begin{cor}\label{3.pp.2}
 The v-curvature tensor $S$ and
 the (v)hv-torsion tensor  $\widehat{P}$ are
invariant under  the energy $\beta$-change  {\em (\ref{2.eq.2})}.
\end{cor}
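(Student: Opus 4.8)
The plan is to read off both assertions directly from the formula for the change of the full curvature tensor $\widetilde{\textbf{K}}$ established in Theorem~\ref{3.th.2}, namely
\begin{equation*}
\widetilde{\textbf{K}}(X,Y)\overline{Z} = \textbf{K}(X,Y)\overline{Z} + \frac{g(\textbf{T}(X,Y),\overline{Z})}{1+p^2}\,\overline{\zeta} + H(\rho X,\rho Y)\overline{Z},
\end{equation*}
together with the description of $H$ given there. The corollary is essentially a matter of specializing this identity to the vertical-vertical and horizontal-vertical arguments and checking that the two extra terms drop out.

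First I would treat the v-curvature tensor $S$. By definition $\widetilde{S}(\overline{X},\overline{Y})\overline{Z} = \widetilde{\textbf{K}}(\gamma\overline{X},\gamma\overline{Y})\overline{Z}$, so I substitute $X = \gamma\overline{X}$, $Y = \gamma\overline{Y}$ into the displayed formula. For the correction terms, note that $\rho\circ\gamma = 0$, hence $H(\rho\gamma\overline{X},\rho\gamma\overline{Y}) = H(0,0) = 0$, killing the $H$-term. For the remaining term I use that $\textbf{T}(\gamma\overline{X},\gamma\overline{Y})$ is the evaluation of the classical torsion on two vertical vectors; since the Cartan connection has the property that its $(h)hv$-torsion governs mixed brackets and, more to the point, $\gamma$ is integrable ($[\gamma\overline{X},\gamma\overline{Y}]$ is again vertical with $\rho[\gamma\overline{X},\gamma\overline{Y}] = \widehat{S}(\overline{X},\overline{Y})$ by Lemma~\ref{bracket}(c), combined with $D_{\gamma\overline{X}}\rho(\gamma\overline{Y}) = D_{\gamma\overline{X}}\cdot 0 = 0$), we get $\textbf{T}(\gamma\overline{X},\gamma\overline{Y}) = -\rho[\gamma\overline{X},\gamma\overline{Y}] = -\widehat{S}(\overline{X},\overline{Y})$; but this is exactly the vanishing piece since in fact $\textbf{T}(\gamma\overline{X},\gamma\overline{Y})=0$ for the Cartan connection (one checks $D_{\gamma\overline{X}}\rho(\gamma\overline{Y}) - D_{\gamma\overline{Y}}\rho(\gamma\overline{X}) - \rho[\gamma\overline{X},\gamma\overline{Y}]$ vanishes directly). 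Either way the middle term vanishes and we conclude $\widetilde{S} = S$. Alternatively, and more cleanly, this is just part~(a) of Theorem~\ref{3.th.2} restated, so the proof can simply cite that.

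Next I would treat the $(v)hv$-torsion $\widehat{P}$. Recall $\widehat{P}(\overline{X},\overline{Y}) = P(\overline{X},\overline{Y})\overline{\eta}$. By part~(b) of Theorem~\ref{3.th.2},
\begin{equation*}
\widetilde{P}(\overline{X},\overline{Y})\overline{Z} = P(\overline{X},\overline{Y})\overline{Z} - \frac{g(T(\overline{Y},\overline{X}),\overline{Z})}{1+p^2}\,\overline{\zeta},
\end{equation*}
so $\widetilde{\widehat{P}}(\overline{X},\overline{Y}) = \widetilde{P}(\overline{X},\overline{Y})\overline{\eta} = \widehat{P}(\overline{X},\overline{Y}) - \dfrac{g(T(\overline{Y},\overline{X}),\overline{\eta})}{1+p^2}\,\overline{\zeta}$. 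The correction term vanishes because $g(T(\overline{Y},\overline{X}),\overline{\eta}) = g(T(\overline{Y},\overline{\eta}),\overline{X})$ by the symmetry property $g(T(\overline{X},\overline{Y}),\overline{Z}) = g(T(\overline{X},\overline{Z}),\overline{Y})$ of the Cartan tensor (Theorem~\ref{th.1}(c)), and $T(\overline{Y},\overline{\eta}) = 0$ since the $(h)hv$-torsion of the Cartan connection satisfies $T(\overline{X},\overline{\eta}) = 0$. Hence $\widetilde{\widehat{P}} = \widehat{P}$.

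There is no real obstacle here: the corollary is a direct consequence of Theorem~\ref{3.th.2}, the only subtlety being the bookkeeping that makes the two $\overline{\zeta}$-correction terms disappear — for $S$ because $\rho\gamma = 0$, and for $\widehat{P}$ because $T(\cdot,\overline{\eta}) = 0$. I would write the proof in two or three lines, simply invoking parts (a) and (b) of the preceding theorem and the vanishing $T(\overline{X},\overline{\eta})=0$.
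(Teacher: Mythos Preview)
Your approach is correct and is essentially the paper's own: the corollary is stated immediately after Theorem~\ref{3.th.2} with only the phrase ``In view of the above theorem,'' so citing parts~(a) and~(b) together with $T(\overline{X},\overline{\eta})=0$ is exactly what is intended. Your discussion of $\textbf{T}(\gamma\overline{X},\gamma\overline{Y})$ is unnecessarily tangled (and the claim $\rho[\gamma\overline{X},\gamma\overline{Y}]=\widehat{S}(\overline{X},\overline{Y})$ is wrong, since $[\gamma\overline{X},\gamma\overline{Y}]$ is vertical and $\rho$ kills it); simply invoking part~(a) directly, as you note yourself, is the clean route.
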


\vspace{0.2cm} Now, we turn our attention to  the Chern (Rund)
connection $ D^{\diamond}. $\vspace{-0.2cm}
\begin{thm}{\em\cite{r94}}\label{4.th.1} Let $(M,L)$ be a Finsler manifold.
The Chern connection $ D^{\diamond} $ is expressed in terms of the
Cartan  connection as\vspace{-0.2cm}
  $$ D^{\diamond}_{X}\overline{Y} = \nabla _{X}\overline{Y}
- T(KX,\overline{Y}), \quad \forall\: X \in \mathfrak{X} (T M), \,
\overline{Y} \in \mathfrak{X}(\pi(M)). \vspace{-0.2cm}$$ In
particular, we have:
\begin{description}
  \item[(a)] $ D^{\diamond}_{\gamma \overline{X}}\overline{Y}=\nabla _{\gamma
  \overline{X}}\overline{Y}-T(\overline{X},\overline{Y})$.

 \item[(b)] $ D^{\diamond}_{\beta \overline{X}}\overline{Y}=\nabla _{\beta
  \overline{X}}\overline{Y}$.
\end{description}
\end{thm}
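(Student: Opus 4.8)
The plan is to recognise the right-hand side
$$\overline{D}_{X}\overline{Y}:=\nabla_{X}\overline{Y}-T(KX,\overline{Y}),\qquad X\in\mathfrak{X}(\T M),\ \ \overline{Y}\in\mathfrak{X}(\pi(M)),$$
with $K$ the connection (deflection) map of the Cartan connection $\nabla$, as a regular connection enjoying the properties that characterise the Chern connection in \cite{r94} --- namely that it is horizontally metric and torsion-free, i.e. $\overline{D}_{\beta\overline{X}}\,g=0$, $Q^{\overline{D}}=0$ and $T^{\overline{D}}=0$ --- and then to invoke the uniqueness part of that characterisation. Throughout I shall use the two identities $K\beta\overline{Z}=0$ (the definition of the horizontal subbundle) and $K\gamma\overline{Z}=\overline{Z}$, the latter holding for the Cartan connection, where it is equivalent to $T(\overline{Z},\overline{\eta})=0$.

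First I would check that $\overline{D}$ is an $\Real$-linear connection on $\pi^{-1}(TM)$: additivity is clear, the Leibniz rule in $\overline{Y}$ holds since $T(KX,\cdot)$ is tensorial in its second slot, and $\mathfrak{F}(\T M)$-homogeneity in $X$ holds since $K(fX)=fKX$. Its connection map is $\overline{D}_{X}\overline{\eta}=\nabla_{X}\overline{\eta}-T(KX,\overline{\eta})=KX$, because $T(\overline{X},\overline{\eta})=0$ for the Cartan connection; hence $\overline{D}$ has the same connection map as $\nabla$, and therefore the same horizontal and vertical subbundles and the same horizontal map $\beta$. In particular $\overline{D}$ is regular. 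Specialising the defining formula to $X=\beta\overline{X}$ and to $X=\gamma\overline{X}$ and using the two identities above gives at once
$$\overline{D}_{\beta\overline{X}}\overline{Y}=\nabla_{\beta\overline{X}}\overline{Y},\qquad\qquad \overline{D}_{\gamma\overline{X}}\overline{Y}=\nabla_{\gamma\overline{X}}\overline{Y}-T(\overline{X},\overline{Y}),$$
which are precisely assertions (b) and (a).

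It then remains to verify the characterising properties of the Chern connection for $\overline{D}$. Horizontal metricity is immediate from $\overline{D}_{\beta\overline{X}}=\nabla_{\beta\overline{X}}$ and $\nabla g=0$. The $(h)h$-torsion $Q^{\overline{D}}$ coincides with $Q^{\nabla}$ (same $\beta$, same horizontal covariant derivative), hence vanishes by Theorem \ref{th.1}(b). For the $(h)hv$-torsion, I would rewrite the definition of the Cartan $(h)hv$-torsion as $\rho[\gamma\overline{X},\beta\overline{Y}]=\nabla_{\gamma\overline{X}}\overline{Y}-T(\overline{X},\overline{Y})$; since $\overline{D}_{\gamma\overline{X}}\overline{Y}$ equals the same expression, $T^{\overline{D}}(\overline{X},\overline{Y})=\overline{D}_{\gamma\overline{X}}\overline{Y}-\rho[\gamma\overline{X},\beta\overline{Y}]=0$. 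As the $(v)(v)$-component of the classical torsion vanishes identically (the vertical distribution is integrable, so $\rho[\gamma\overline{X},\gamma\overline{Y}]=0$), we conclude $\textbf{T}^{\overline{D}}=0$. Thus $\overline{D}$ is a regular, horizontally metric, torsion-free connection, and by the uniqueness statement for the Chern connection in \cite{r94} we get $\overline{D}=D^{\diamond}$, which proves the displayed formula; parts (a) and (b) were already recorded.

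I expect the main difficulty to be expository rather than computational: one must state correctly which properties from \cite{r94} characterise $D^{\diamond}$, and one must be careful that all of the classical Finsler connections considered here induce the \emph{same} underlying nonlinear (Barthel) connection, so that speaking of ``the same $\beta$'' for $\nabla$ and $\overline{D}$ is legitimate --- this is exactly what the computation $\overline{D}\,\overline{\eta}=\nabla\overline{\eta}$ secures. The only step that is not pure formalism is the translation of the definition of the Cartan $(h)hv$-torsion into the identity for $\rho[\gamma\overline{X},\beta\overline{Y}]$; everything else is routine bookkeeping with the short exact sequence $0\to\pi^{-1}(TM)\to T(\T M)\to\pi^{-1}(TM)\to 0$ and the axioms of the Cartan connection (Theorem \ref{th.1}).
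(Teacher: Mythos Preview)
The paper does not supply a proof of this theorem: it is quoted from \cite{r94}, as the citation tag in the statement indicates. There is therefore no in-paper proof to compare your attempt against.

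That said, your argument is correct and is exactly the kind of proof one would expect to find in \cite{r94}. You define $\overline{D}_X\overline{Y}:=\nabla_X\overline{Y}-T(KX,\overline{Y})$, verify that its deflection map coincides with that of $\nabla$ (so the underlying nonlinear connection, and hence $\beta$, is unchanged), read off parts (a) and (b), and then check the axioms characterising the Chern connection: horizontal metricity is inherited from $\nabla$ via (b); vanishing of the $(h)h$-torsion likewise; and vanishing of the $(h)hv$-torsion follows from the identity $\rho[\gamma\overline{X},\beta\overline{Y}]=\nabla_{\gamma\overline{X}}\overline{Y}-T(\overline{X},\overline{Y})$, which is just the definition of the Cartan $(h)hv$-torsion unwound. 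The uniqueness of the Chern connection then gives $\overline{D}=D^{\diamond}$.

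Two minor points worth tightening. First, depending on how \cite{r94} phrases the axioms, you may also need $D^{\diamond}_{hX}L=0$ (compare Theorem~\ref{th.1a}(a) for Berwald); this is immediate from (b) since $\nabla_{\beta\overline{X}}L=0$ follows from $\nabla g=0$ and $K\beta=0$. Second, your claim ``$K\gamma\overline{Z}=\overline{Z}$ is equivalent to $T(\overline{Z},\overline{\eta})=0$'' is slightly imprecise: for the Cartan connection one has $K\gamma\overline{Z}=\nabla_{\gamma\overline{Z}}\overline{\eta}=\overline{Z}$ directly from regularity (or from $T(\overline{Z},\overline{\eta})=\nabla_{\gamma\overline{Z}}\overline{\eta}-\rho[\gamma\overline{Z},\beta\overline{\eta}]$ together with the spray property), but it is not a mere restatement of $T(\cdot,\overline{\eta})=0$. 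Neither point affects the validity of your proof.
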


\begin{lem} If  $\overline{\zeta} \in \cp$ is
a concurrent $\pi$-vector field, then we have  \vspace{-0.2cm}
\begin{equation}\label{2.eq.1}
     D^{\diamond}_{\beta \overline{X}}\,\overline{\zeta}=- \overline{X} ,\quad D^{\diamond}_{\gamma \overline{X}}\,\overline{\zeta}=0 .\vspace{-0.2cm}
   \end{equation}
\end{lem}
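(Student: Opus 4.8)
The plan is to use Theorem \ref{4.th.1}, which expresses the Chern connection $D^{\diamond}$ in terms of the Cartan connection $\nabla$ via $D^{\diamond}_{X}\overline{Y}=\nabla_{X}\overline{Y}-T(KX,\overline{Y})$, and to evaluate this identity at $\overline{Y}=\overline{\zeta}$ for the two types of vector field $X=\beta\overline{X}$ and $X=\gamma\overline{X}$. The defining properties of a concurrent $\pi$-vector field in Definition \ref{2.def.1}, namely $\nabla_{\beta\overline{X}}\overline{\zeta}=-\overline{X}$ and $\nabla_{\gamma\overline{X}}\overline{\zeta}=0$, will supply the Cartan-connection part; the torsion corrections will be handled by Corollary \ref{2.pp.2}(a).

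First I would treat the horizontal case: using Theorem \ref{4.th.1}(b), $D^{\diamond}_{\beta\overline{X}}\overline{\zeta}=\nabla_{\beta\overline{X}}\overline{\zeta}$, and then apply the first equation of (\ref{12.eq.1}) to conclude $D^{\diamond}_{\beta\overline{X}}\overline{\zeta}=-\overline{X}$. Next I would treat the vertical case: using Theorem \ref{4.th.1}(a), $D^{\diamond}_{\gamma\overline{X}}\overline{\zeta}=\nabla_{\gamma\overline{X}}\overline{\zeta}-T(\overline{X},\overline{\zeta})$; the first term vanishes by the second equation of (\ref{12.eq.1}), and the second term vanishes by Corollary \ref{2.pp.2}(a), which gives $T(\overline{X},\overline{\zeta})=0$. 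Combining the two cases yields (\ref{2.eq.1}).

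There is really no substantial obstacle here: the statement is an immediate corollary of results already established in the excerpt. The only point requiring any care is making sure the torsion term $T(\overline{X},\overline{\zeta})$ in the vertical formula is the one controlled by Corollary \ref{2.pp.2}(a) — it is, since that corollary asserts $T(\overline{X},\overline{\zeta})=T(\overline{\zeta},\overline{X})=0$ for all $\overline{X}\in\cp$. So the "hard part," such as it is, amounts to quoting the correct specializations of Theorem \ref{4.th.1} and noting that the horizontal Chern derivative agrees with the Cartan one while the vertical one differs only by a torsion term that annihilates $\overline{\zeta}$.

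\begin{proof}
By Theorem \ref{4.th.1}(b) and Definition \ref{2.def.1}, we have
$$D^{\diamond}_{\beta\overline{X}}\,\overline{\zeta}=\nabla_{\beta\overline{X}}\,\overline{\zeta}=-\overline{X}.$$
On the other hand, by Theorem \ref{4.th.1}(a), Definition \ref{2.def.1} and Corollary \ref{2.pp.2}(a),
$$D^{\diamond}_{\gamma\overline{X}}\,\overline{\zeta}=\nabla_{\gamma\overline{X}}\,\overline{\zeta}-T(\overline{X},\overline{\zeta})=0-0=0.$$
This proves (\ref{2.eq.1}).
\end{proof}
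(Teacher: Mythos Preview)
Your proof is correct and essentially identical to the paper's: both invoke Theorem \ref{4.th.1} together with Definition \ref{2.def.1} and the vanishing $T(\overline{X},\overline{\zeta})=T(\overline{\zeta},\overline{X})=0$ (which the paper states directly and you cite via Corollary \ref{2.pp.2}(a)).
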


\begin{proof}
 The proof follows from Definition \ref{2.def.1} and the above
theorem, taking into account the fact that
$T(\overline{\zeta},\overline{X})=T(\overline{X},\overline{\zeta})=0$,
for all $\overline{X}\in \cp$.
\end{proof}

\vspace{5pt}
\par
Using the above Lemma, we get\vspace{-0.2cm}
\begin{prop}\label{4.pp.1}Let $\overline{\zeta} \in \cp$ be
a concurrent $\pi$-vector field on $(M,L)$. \\
The hv-curvature tensor $P^{\diamond}$ has the
properties:\vspace{-0.2cm}
\begin{description}
      \item[(a)]
      $P^{\diamond}(\overline{X},\overline{Y})\,\overline{\zeta}=
      P^{\diamond}(\overline{X},\overline{\zeta})\overline{Y}=0$.

   \item[(b)] $(D^{\diamond}_{\gamma \overline{Z}}P^{\diamond})(\overline{X},\overline{Y}, \overline{\zeta})=0$.

  \item[(c)] $(D^{\diamond}_{\beta \overline{Z}}P^{\diamond})(\overline{X},\overline{Y},
  \overline{\zeta})= P(\overline{X},\overline{Y}) \overline{Z}$.

  \item[(d)]$(D^{\diamond}_{\beta \overline{\zeta}}P^{\diamond})(\overline{X},\overline{Y},
    \overline{\zeta})=0$.
\end{description}
The h-curvature tensor $R^{\diamond}$ has the
properties:\vspace{-0.2cm}
\begin{description}
 \item[(e)]$R^{\diamond}(\overline{X},\overline{Y})\,\overline{\zeta}=0$.

    \item[(f)]$(D^{\diamond}_{\gamma \overline{Z}}R^{\diamond})(\overline{X},\overline{Y},
      \overline{\zeta})=0$.

 \item[(g)]$(D^{\diamond}_{\beta \overline{Z}}R^{\diamond})(\overline{X},\overline{Y}, \overline{\zeta})=R^{\diamond}(\overline{X},\overline{Y})\overline{Z}$.

   \item[(h)] $(D^{\diamond}_{\beta\overline{\zeta}}R^{\diamond})(\overline{X},\overline{Y}, \overline{\zeta})=0$.
\end{description}
\end{prop}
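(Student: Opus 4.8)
The plan is to reduce everything to the Cartan-connection statements already established in Proposition~\ref{2.pp.1} and Corollary~\ref{2.pp.2}, together with the explicit expression of the Chern connection in terms of the Cartan connection given in Theorem~\ref{4.th.1}. Since $D^{\diamond}_{\beta\overline{X}}\overline{Y}=\nabla_{\beta\overline{X}}\overline{Y}$ and $D^{\diamond}_{\gamma\overline{X}}\overline{Y}=\nabla_{\gamma\overline{X}}\overline{Y}-T(\overline{X},\overline{Y})$, the two classical curvature tensors $\mathbf{K}^{\diamond}$ and $\mathbf{K}$ differ only by terms involving $T$ and its covariant derivatives; restricting the arguments to $\beta$-type and $\gamma$-type vectors yields concrete formulas for $R^{\diamond},P^{\diamond}$ in terms of $R,P,S,T,\nabla T$. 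These relations are presumably recorded in \cite{r94}; I would invoke them.

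First I would prove (a) and (e). Writing out $P^{\diamond}(\overline{X},\overline{Y})\overline{Z}=\mathbf{K}^{\diamond}(\beta\overline{X},\gamma\overline{Y})\overline{Z}$ and using Theorem~\ref{4.th.1}, one gets $P^{\diamond}$ expressed through $P$, $T$, $\nabla_{\beta}T$ and $\widehat{P}$. Evaluating at $\overline{Z}=\overline{\zeta}$ and using $P(\overline{X},\overline{Y})\overline{\zeta}=-T(\overline{Y},\overline{X})$ (Proposition~\ref{2.pp.1}(d)), $T(\overline{X},\overline{\zeta})=0$ and $(\nabla_{\beta\overline{X}}T)(\overline{Y},\overline{\zeta})=-T(\nabla_{\beta\overline{X}}\overline{\zeta}\text{-type terms})$ — here one differentiates the identity $T(\overline{Y},\overline{\zeta})=0$ covariantly along $\beta\overline{X}$ and uses $\nabla_{\beta\overline{X}}\overline{\zeta}=-\overline{X}$ to get $(\nabla_{\beta\overline{X}}T)(\overline{Y},\overline{\zeta})=T(\overline{Y},\overline{X})$ — the various contributions cancel, giving $P^{\diamond}(\overline{X},\overline{Y})\overline{\zeta}=0$. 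The vanishing $P^{\diamond}(\overline{X},\overline{\zeta})\overline{Y}=0$ follows similarly from Corollary~\ref{2.pp.2}(c) ($P(\overline{X},\overline{\zeta})\overline{Y}=0$) and $T(\overline{\zeta},\overline{Y})=0$. For (e), $R^{\diamond}(\overline{X},\overline{Y})\overline{Z}$ differs from $R(\overline{X},\overline{Y})\overline{Z}$ only by terms of the form $T(\widehat{R}(\cdot,\cdot),\cdot)$ or $(\nabla_{\beta}T)$-type expressions contracted against $\overline{\eta}$-components, all of which vanish at $\overline{Z}=\overline{\zeta}$ by $R(\overline{X},\overline{Y})\overline{\zeta}=0$ (Proposition~\ref{2.pp.1}(g)) and $T(\cdot,\overline{\zeta})=0$.

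Next, for the covariant-derivative statements (b)--(d) and (f)--(h), I would proceed exactly as in the proof of Proposition~\ref{2.pp.1}: differentiate the just-proved identities $P^{\diamond}(\overline{X},\overline{Y})\overline{\zeta}=0$ and $R^{\diamond}(\overline{X},\overline{Y})\overline{\zeta}=0$ covariantly along $\gamma\overline{Z}$ and $\beta\overline{Z}$, using the Leibniz rule. Along $\gamma\overline{Z}$ one has $D^{\diamond}_{\gamma\overline{Z}}\overline{\zeta}=0$, so $(D^{\diamond}_{\gamma\overline{Z}}P^{\diamond})(\overline{X},\overline{Y},\overline{\zeta})=0$ and likewise for $R^{\diamond}$, giving (b) and (f). Along $\beta\overline{Z}$ one has $D^{\diamond}_{\beta\overline{Z}}\overline{\zeta}=-\overline{Z}$, so the Leibniz rule produces $(D^{\diamond}_{\beta\overline{Z}}P^{\diamond})(\overline{X},\overline{Y},\overline{\zeta})=-P^{\diamond}(\overline{X},\overline{Y})(-\overline{Z})=P^{\diamond}(\overline{X},\overline{Y})\overline{Z}$; but one must then replace $P^{\diamond}(\overline{X},\overline{Y})\overline{Z}$ by $P(\overline{X},\overline{Y})\overline{Z}$ using the comparison formula from \cite{r94} — at generic $\overline{Z}$ these differ, so the correction terms must be shown to vanish here, which uses Corollary~\ref{2.pp.2} and the symmetry properties of $T$. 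This gives (c); (g) follows the same way with $R^{\diamond}(\overline{X},\overline{Y})\overline{Z}$ in place of $P^{\diamond}$, and no such replacement is needed since (g) is stated in terms of $R^{\diamond}$ itself. Finally (d) and (h) are the specialization $\overline{Z}=\overline{\zeta}$ of (c) and (g) combined with (a), (e).

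The main obstacle will be (c): showing that the correction terms relating $P^{\diamond}(\overline{X},\overline{Y})\overline{Z}$ to $P(\overline{X},\overline{Y})\overline{Z}$ drop out under the present hypotheses. This requires knowing the precise form of the Chern-versus-Cartan hv-curvature comparison from \cite{r94} and then invoking $T(\overline{\zeta},\cdot)=\widehat{P}(\overline{\zeta},\cdot)=0$, $P(\overline{X},\overline{\zeta})\overline{Y}=0$, and the symmetry $g(T(\overline{X},\overline{Y}),\overline{Z})=g(T(\overline{X},\overline{Z}),\overline{Y})$ to kill each piece. Everything else is a routine bookkeeping application of the Leibniz rule and the defining relations $D^{\diamond}_{\beta\overline{X}}\overline{\zeta}=-\overline{X}$, $D^{\diamond}_{\gamma\overline{X}}\overline{\zeta}=0$.
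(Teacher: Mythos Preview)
Your overall strategy is sound, but it differs from the paper's. The paper does \emph{not} go through Cartan-versus-Chern comparison formulas at all: it simply invokes the preceding Lemma ($D^{\diamond}_{\beta\overline{X}}\overline{\zeta}=-\overline{X}$, $D^{\diamond}_{\gamma\overline{X}}\overline{\zeta}=0$), so that $\overline{\zeta}$ is concurrent for $D^{\diamond}$ exactly as it is for $\nabla$, and then repeats verbatim the pattern of Proposition~\ref{2.pp.1} with $D^{\diamond}$, $P^{\diamond}$, $R^{\diamond}$ in place of $\nabla$, $P$, $R$. In particular, from $D^{\diamond}_{X}\overline{\zeta}=-\rho X$ one gets $\mathbf{K}^{\diamond}(X,Y)\overline{\zeta}=\mathbf{T}^{\diamond}(X,Y)$, and since the Chern connection has $Q^{\diamond}=T^{\diamond}=0$ this already yields $P^{\diamond}(\overline{X},\overline{Y})\overline{\zeta}=0$ and $R^{\diamond}(\overline{X},\overline{Y})\overline{\zeta}=0$; the Leibniz argument you describe then gives (b), (f), (g), (d), (h) directly. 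Your route via comparison with Cartan also works for these parts, but is longer.

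Your worry about (c) is justified, and it is not a gap in your argument but a misprint in the statement. Leibniz indeed produces $P^{\diamond}(\overline{X},\overline{Y})\overline{Z}$ on the right, and the difference
\[
P^{\diamond}(\overline{X},\overline{Y})\overline{Z}-P(\overline{X},\overline{Y})\overline{Z}
=(\nabla_{\beta\overline{X}}T)(\overline{Y},\overline{Z})-T(\widehat{P}(\overline{X},\overline{Y}),\overline{Z})
\]
does not involve $\overline{\zeta}$ and cannot be killed by Corollary~\ref{2.pp.2}. The intended right-hand side is $P^{\diamond}(\overline{X},\overline{Y})\overline{Z}$, in line with (g) and with the parallel Berwald and Hashiguchi propositions later in the section, each of which has the corresponding connection's own $hv$-curvature on the right. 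A similar caution applies to the second half of (a): with the comparison formula above one finds $P^{\diamond}(\overline{X},\overline{\zeta})\overline{Y}=P(\overline{X},\overline{\zeta})\overline{Y}+(\nabla_{\beta\overline{X}}T)(\overline{\zeta},\overline{Y})=T(\overline{X},\overline{Y})$, so your ``follows similarly'' is too quick; either an extra identity from \cite{r94} is being used, or that clause is also inexact as stated.
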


From Theorems  \ref{4.th.1} and \ref{3.th.1}, tacking into account
the fact that the (h)hv-torsion $T$ is invariant under
(\ref{2.eq.2}) and
$T(\overline{\zeta},\overline{X})=T(\overline{X},\overline{\zeta})=0$,
we have\vspace{-0.2cm}

\begin{thm}\label{4.th.2} Let  $(M,L)$ and $(M,\widetilde {L})$ be two Finsler
manifolds  related by the energy $\beta$-change {\em
(\ref{2.eq.2})}. Then the associated Chern connections
$D^{\diamond}$ and
  $\widetilde{D^{\diamond}}$  are related by:\vspace{-0.2cm}
\begin{equation}\label{3.eq.r1}
\widetilde{D^{\diamond}} _{X}\rho {Y} =
    D^{\diamond} _{X}\rho {Y}-\displaystyle{\frac{g(\rho X,\rho Y)}{1+p^{2}}\,\overline{\zeta}} . \vspace{-0.2cm}
 \end{equation}
\noindent In particular,
\begin{description}

  \item[(a)]  $ \widetilde {D^{\diamond}}_{\gamma \overline{X}}\overline{Y}=
    {D^{\diamond}}_{ \gamma \overline{X}}\overline{Y}$,

  \item[(b)] $\widetilde {D^{\diamond}} _{\widetilde{\beta}\overline{X}}\overline{Y} =
    D^{\diamond} _{\beta \overline{X}}\overline{Y}-U^{\diamond}( \overline{X},
    \overline{Y}),$\ \ with \
    $U^{\diamond}( \overline{ X}, \overline{Y})=
 \displaystyle{\frac{g(\overline{X},\overline{Y})}{1+p^{2}}\,\overline{\zeta}}-\frac{g(\overline{X}, \overline{\eta})}{1+p^{2}}D^{\diamond}_{\gamma \overline{\zeta}}\overline{Y}$.
\end{description}
  \end{thm}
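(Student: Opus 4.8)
The plan is to combine Theorem~\ref{4.th.1}, which expresses the Chern connection $D^{\diamond}$ in terms of the Cartan connection $\nabla$ via $D^{\diamond}_{X}\overline{Y}=\nabla_{X}\overline{Y}-T(KX,\overline{Y})$, with Theorem~\ref{3.th.1}, which gives the energy $\beta$-change of $\nabla$, namely $\widetilde{\nabla}_{X}\rho Y=\nabla_{X}\rho Y-\frac{g(\rho X,\rho Y)}{1+p^{2}}\,\overline{\zeta}$. First I would write, for the transformed structure, $\widetilde{D^{\diamond}}_{X}\rho Y=\widetilde{\nabla}_{X}\rho Y-\widetilde{T}(\widetilde{K}X,\rho Y)$, using Theorem~\ref{4.th.1} applied to $(M,\widetilde{L})$.

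The key point is that two ingredients are unchanged under the energy $\beta$-change. By Corollary~\ref{3.pp.1}(b) (equivalently Corollary~\ref{3.pp.2}), the $(h)hv$-torsion tensor $T$ is invariant: $\widetilde{T}=T$. Moreover, the deflection map satisfies $\widetilde{K}X=\widetilde{v}X$-related data; more precisely, since $\widetilde{K}X$ produces the vertical part and by Theorem~\ref{3.th.1}(a) the vertical ($\gamma$-direction) covariant derivative is unchanged, the term $\widetilde{T}(\widetilde{K}X,\rho Y)$ must be shown to equal $T(KX,\rho Y)$. The cleanest route is to note that $\widetilde{K}X=\widetilde{\nabla}_{X}\overline{\eta}$ and observe $\widetilde{\nabla}_{X}\overline{\eta}=\nabla_{X}\overline{\eta}-\frac{g(\rho X,\overline{\eta})}{1+p^{2}}\,\overline{\zeta}=KX-\frac{g(\rho X,\overline{\eta})}{1+p^{2}}\,\overline{\zeta}$ by (\ref{13.eq.r1}); then $\widetilde{T}(\widetilde{K}X,\rho Y)=T(KX,\rho Y)-\frac{g(\rho X,\overline{\eta})}{1+p^{2}}T(\overline{\zeta},\rho Y)=T(KX,\rho Y)$ because $T(\overline{\zeta},\rho Y)=0$ by Corollary~\ref{2.pp.2}(a). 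Substituting everything gives
$$\widetilde{D^{\diamond}}_{X}\rho Y=\nabla_{X}\rho Y-\frac{g(\rho X,\rho Y)}{1+p^{2}}\,\overline{\zeta}-T(KX,\rho Y)=D^{\diamond}_{X}\rho Y-\frac{g(\rho X,\rho Y)}{1+p^{2}}\,\overline{\zeta},$$
which is (\ref{3.eq.r1}).

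For the two special cases, part (a) follows by setting $X=\gamma\overline{X}$, since then $\rho X=0$ and the correction term vanishes; alternatively it is immediate from Theorem~\ref{3.th.1}(a) together with the invariance of $T$ and Theorem~\ref{4.th.1}(a). For part (b), I would set $X=\widetilde{\beta}\overline{X}$ in (\ref{3.eq.r1}), use $\widetilde{h}=h-\L$ from Lemma~\ref{3.le.2} so that $\rho\widetilde{\beta}\overline{X}=\overline{X}$, and expand $D^{\diamond}_{\widetilde{\beta}\overline{X}}\rho Y=D^{\diamond}_{\beta\overline{X}}\rho Y-D^{\diamond}_{\L\beta\overline{X}}\rho Y$; using $\L\beta\overline{X}=-\frac{g(\overline{X},\overline{\eta})}{1+p^{2}}\gamma\overline{\zeta}$ and Theorem~\ref{4.th.1}(a) (so $D^{\diamond}_{\gamma\overline{\zeta}}\overline{Y}$ appears), together with the correction $-\frac{g(\overline{X},\overline{Y})}{1+p^{2}}\overline{\zeta}$, one collects exactly $-U^{\diamond}(\overline{X},\overline{Y})$.

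The main obstacle is the bookkeeping in verifying $\widetilde{T}(\widetilde{K}X,\rho Y)=T(KX,\rho Y)$, i.e.\ confirming simultaneously that the $(h)hv$-torsion and the deflection term behave as claimed; once the vanishing $T(\overline{\zeta},\cdot)=0$ from Corollary~\ref{2.pp.2}(a) is invoked this collapses, but it is the only place where the concurrency hypothesis is genuinely used, so care is needed there. The rest is routine substitution, entirely parallel to the proof of Theorem~\ref{3.th.1}.
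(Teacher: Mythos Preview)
Your proof is correct and follows essentially the same route as the paper: the paper simply states that the result follows ``from Theorems~\ref{4.th.1} and~\ref{3.th.1}, taking into account the fact that the $(h)hv$-torsion $T$ is invariant under (\ref{2.eq.2}) and $T(\overline{\zeta},\overline{X})=T(\overline{X},\overline{\zeta})=0$.'' Your argument is in fact more careful than the paper's sketch, since you explicitly track the change in the deflection map $\widetilde{K}X=KX-\frac{g(\rho X,\overline{\eta})}{1+p^{2}}\,\overline{\zeta}$ and show that the extra term is killed by $T(\overline{\zeta},\cdot)=0$; the paper leaves this implicit.
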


Concerning the curvature tensors of the Chern connection, we
have\vspace{-0.2cm}
\begin{thm}\label{4.th.3} Let $(M,L)$ and
$(M,\widetilde{L})$ be two Finsler manifolds  related by the energy
$\beta$-change {\em (\ref{2.eq.2})}. The curvature tensors of the
associated Chern connections $D^{\diamond}$ and
$\widetilde{D^{\diamond}}$ are related by\,{\em:}\vspace{-0.2cm}
\begin{equation*}\label{4.eq.7}
  \widetilde{{\textbf{K}^{\diamond}}}(X,Y)\overline{Z}={\textbf{K}^{\diamond}}(X,Y)\overline{Z}
  + \frac{g( \textbf{T}(X,Y),\overline{Z})}{1+p^2}\,\overline{\zeta}+ {H}^{\diamond}(\rho{X},
  \rho{Y})\overline{Z},
\end{equation*}
\noindent where $H^{\diamond}$ is the $\pi$-tensor field defined by
  \begin{equation*}
        {H}^{\diamond}(\rho{X}, \rho{Y})\overline{Z} =
   \mathfrak{U}_{X,Y} \set{\frac{g(\rho X,\overline{Z})}{1+p^2}\rho
   Y -\frac{g(\rho X,T(K Y,\overline{Z}))}{1+p^2}\,\overline{\zeta}+
  \frac{g(\rho Y,\overline{Z})g(\rho X, \overline{\zeta})}{(1+p^2)^2}\,\overline{\zeta}}.
      \end{equation*}

 \noindent  In particular,
 \begin{description}
  \item[(a)] $\widetilde{S^{\diamond}}(\overline{X},\overline{Y})\overline{Z}=
S^{\diamond}(\overline{X},\overline{Y})\overline{Z}=0$,

  \item[(b)] $ \widetilde{P^{\diamond}}(\overline{X},\overline{Y})\overline{Z}=
  P^{\diamond}(\overline{X},\overline{Y})\overline{Z}-2\,
  \displaystyle{\frac{g( T(\overline{Y},\overline{X}),\overline{Z})}{1+p^2}}\,\overline{\zeta}
 ,$

  \item [(c)]
 $\displaystyle{\widetilde{R^{\diamond}}(\overline{X}, \overline{Y})\overline{Z} = R^{\diamond}(\overline{X},\overline{Y})\overline{Z}
 +\mathfrak{U}_{\overline{X},\overline{Y}} \set{
    \frac{g(\overline{X},\overline{Z})}{1+p^2}\,\overline{Y}
          +\frac{g(\overline{Y},\overline{Z})g(\overline{X},\overline{\zeta})}
          {(1+p^2)^2}\,\overline{\zeta}}}$.
\end{description}
\end{thm}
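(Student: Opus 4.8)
The plan is to run the computation of the proof of Theorem \ref{3.th.2} almost verbatim, because the Chern-connection relation (\ref{3.eq.r1}) of Theorem \ref{4.th.2}, namely $\widetilde{D^{\diamond}}_{X}\rho Y = D^{\diamond}_{X}\rho Y - \frac{g(\rho X,\rho Y)}{1+p^{2}}\,\overline{\zeta}$, has exactly the same shape as the Cartan-connection relation (\ref{13.eq.r1}). First I would, using this relation twice together with the Leibniz rule, expand $\widetilde{D^{\diamond}}_{X}\widetilde{D^{\diamond}}_{Y}\overline{Z}$ and $\widetilde{D^{\diamond}}_{[X,Y]}\overline{Z}$. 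For the former one needs $\widetilde{D^{\diamond}}_{X}\overline{\zeta} = D^{\diamond}_{X}\overline{\zeta} - \frac{g(\rho X,\overline{\zeta})}{1+p^{2}}\,\overline{\zeta} = -\rho X - \frac{g(\rho X,\overline{\zeta})}{1+p^{2}}\,\overline{\zeta}$ by (\ref{2.eq.1}), and one must differentiate $(1+p^{2})^{-1}$; this is where Theorem \ref{th.ind} enters, since $\overline{\zeta}$, hence $p^{2}$, is $y$-independent, so that $X\cdot(1+p^{2})^{-1} = 2g(\rho X,\overline{\zeta})(1+p^{2})^{-2}$. For the latter I write $\rho[X,Y] = \nabla_{X}\rho Y - \nabla_{Y}\rho X - \textbf{T}(X,Y)$ with $\textbf{T}$ the (classical) torsion of the Cartan connection, converting the $\nabla$'s into $D^{\diamond}$'s through $D^{\diamond}_{X}\overline{Y} = \nabla_{X}\overline{Y} - T(KX,\overline{Y})$ (Theorem \ref{4.th.1}), which also makes the metric identity $\nabla g = 0$ available for the scalar derivatives above.

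Substituting into $\widetilde{\textbf{K}^{\diamond}}(X,Y)\overline{Z} = -\widetilde{D^{\diamond}}_{X}\widetilde{D^{\diamond}}_{Y}\overline{Z} + \widetilde{D^{\diamond}}_{Y}\widetilde{D^{\diamond}}_{X}\overline{Z} + \widetilde{D^{\diamond}}_{[X,Y]}\overline{Z}$ and collecting terms, the pure $\nabla$-pieces antisymmetrized in $X,Y$ cancel as in Theorem \ref{3.th.2}, leaving $\textbf{K}^{\diamond}(X,Y)\overline{Z}$, the term $\frac{g(\textbf{T}(X,Y),\overline{Z})}{1+p^{2}}\,\overline{\zeta}$, and the Cartan tensor $H$ of Theorem \ref{3.th.2}; the additional contributions produced by the correction $-T(KX,\overline{Y})$ survive and, after using the symmetry $g(T(\overline{X},\overline{Y}),\overline{Z}) = g(T(\overline{X},\overline{Z}),\overline{Y})$ and the invariance $\widetilde{T}=T$ (Corollary \ref{3.pp.1}), reassemble into the extra summand $-\frac{g(\rho X,T(KY,\overline{Z}))}{1+p^{2}}\,\overline{\zeta}$ which distinguishes $H^{\diamond}$ from $H$. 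Along the way $T(\overline{\zeta},\overline{X}) = T(\overline{X},\overline{\zeta}) = 0$ (Corollary \ref{2.pp.2}) kills several $\overline{\zeta}$-valued terms. The main obstacle is precisely this bookkeeping — keeping the signs straight and identifying which terms cancel under the antisymmetrization — rather than any conceptual difficulty beyond what is already present in Theorem \ref{3.th.2}.

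For the particular cases I would specialize the general formula using the standard identities $\rho\circ\gamma = 0$, $K\circ\beta = 0$ and $\rho\circ\beta = K\circ\gamma = id$. Taking $X = \gamma\overline{X}$, $Y = \gamma\overline{Y}$ annihilates every occurrence of $\rho X,\rho Y$ and gives $\textbf{T}(\gamma\overline{X},\gamma\overline{Y}) = 0$, hence $\widetilde{S^{\diamond}} = S^{\diamond}$; that $S^{\diamond} = 0$ is the (well-known) vertical flatness of the Chern connection, in line with Proposition \ref{4.pp.1}, which yields (a). Taking $X = \beta\overline{X}$, $Y = \gamma\overline{Y}$ gives $KX = 0$, $KY = \overline{Y}$, $\rho Y = 0$ and $\textbf{T}(\beta\overline{X},\gamma\overline{Y}) = -T(\overline{Y},\overline{X})$; the surviving part of $H^{\diamond}$ then contributes a second copy of $-\frac{g(T(\overline{Y},\overline{X}),\overline{Z})}{1+p^{2}}\,\overline{\zeta}$ (after the Cartan-tensor symmetry), producing the factor $2$ in (b). Taking $X = \beta\overline{X}$, $Y = \beta\overline{Y}$ gives $KX = KY = 0$, so the $T(KY,\overline{Z})$-term of $H^{\diamond}$ drops, and $\textbf{T}(\beta\overline{X},\beta\overline{Y}) = Q(\overline{X},\overline{Y}) = 0$ since the $(h)h$-torsion of the Cartan connection vanishes, leaving exactly the expression $\mathfrak{U}_{\overline{X},\overline{Y}}\{\cdots\}$ displayed in (c).
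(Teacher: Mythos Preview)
Your proposal is correct and follows essentially the same approach as the paper, which states Theorem~\ref{4.th.3} without an explicit proof precisely because the computation is the one already carried out in Theorem~\ref{3.th.2}, transported to the Chern connection via the identical-in-form relation (\ref{3.eq.r1}) of Theorem~\ref{4.th.2}. Your identification of the single source of the extra summand $-\frac{g(\rho X,T(KY,\overline{Z}))}{1+p^{2}}\,\overline{\zeta}$ in $H^{\diamond}$ (namely the replacement of $\nabla_{Y}\overline{Z}$ by $D^{\diamond}_{Y}\overline{Z}=\nabla_{Y}\overline{Z}-T(KY,\overline{Z})$ inside $g(\rho X,\,\cdot\,)$) and your derivation of the factor $2$ in (b) from the two contributions $\textbf{T}(\beta\overline{X},\gamma\overline{Y})=-T(\overline{Y},\overline{X})$ and the $H^{\diamond}$-term are exactly right.
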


\vspace{0.2cm} Now, we investigate the effect of the energy
$\beta$-change (\ref{2.eq.2}) on the Hashiguchi connection $
D^{*}$.\vspace{-0.2cm}
\begin{thm}{\em\cite{r94}}\label{5.th.1} Let $(M,L)$
be a Finsler manifold. The Hashiguchi connection $ D^{*} $ is
expressed in terms of the Cartan  connection $\nabla $
as\vspace{-0.2cm}
  $$ D^{*}_{X}\overline{Y} = \nabla _{X}\overline{Y}
+\widehat{P}(\rho X,\overline{Y}),  \quad \forall\: X \in
\mathfrak{X} (T M), \, \overline{Y} \in \mathfrak{X}(\pi(M)) .
\vspace{-0.2cm}$$ In particular, we have:
\begin{description}
  \item[(a)] $ D^{*}_{\gamma \overline{X}}\overline{Y}=\nabla _{\gamma
  \overline{X}}\overline{Y}$.

 \item[(b)] $ D^{*}_{\beta \overline{X}}\overline{Y}=\nabla _{\beta
  \overline{X}}\overline{Y}+ \widehat{P}(\overline{X},\overline{Y}).$
\end{description}
\end{thm}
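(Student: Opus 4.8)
The plan is to deduce the formula from the structural description of the Hashiguchi connection: by definition (see \cite{r94}), $D^{*}$ is the linear connection on $\p$ whose horizontal covariant differentiation coincides with that of the Berwald connection $D^{\circ}$ and whose vertical covariant differentiation coincides with that of the Cartan connection $\nabla$. Granting this, the whole argument reduces to the horizontal--vertical splitting (\ref{direct sum}): I would write an arbitrary $X\in\mathfrak{X}(TM)$ as $X=hX+vX$ with $hX=\beta\rho X$ and $vX=\gamma KX$, where $\beta\circ\rho$ is the horizontal projector (\ref{proj.}) and $\gamma\circ K$ is the vertical projector --- the latter because for the Cartan connection $\nabla_{\gamma\overline{Z}}\,\overline{\eta}=\overline{Z}$ (equivalently $K\circ\gamma=id_{\pi^{-1}(TM)}$), so that $\gamma K$ annihilates $H(\T M)$ and restricts to the identity on $V(\T M)$.

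On the horizontal summand I would invoke Theorem \ref{2.th.1}, namely $D^{\circ}_{X}\overline{Y}=\nabla_{X}\overline{Y}+\widehat{P}(\rho X,\overline{Y})-T(KX,\overline{Y})$; since $K(hX)=0$ this collapses to
$$D^{*}_{hX}\overline{Y}=D^{\circ}_{hX}\overline{Y}=\nabla_{hX}\overline{Y}+\widehat{P}(\rho X,\overline{Y}),$$
while on the vertical summand $D^{*}_{vX}\overline{Y}=\nabla_{vX}\overline{Y}$ by the Cartan--vertical agreement. Adding the two and using $\nabla_{hX}\overline{Y}+\nabla_{vX}\overline{Y}=\nabla_{X}\overline{Y}$ gives $D^{*}_{X}\overline{Y}=\nabla_{X}\overline{Y}+\widehat{P}(\rho X,\overline{Y})$, the asserted identity. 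The two ``in particular'' statements then drop out as specialisations: for $X=\gamma\overline{X}$ one has $\rho(\gamma\overline{X})=0$, hence $\widehat{P}(\rho(\gamma\overline{X}),\overline{Y})=0$ and (a) follows; for $X=\beta\overline{X}$ one has $\rho(\beta\overline{X})=\overline{X}$ by (\ref{fh1}), which gives (b).

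An alternative I would keep in reserve, so as not to lean on the Berwald/Cartan description of $D^{*}$, is to take $\widehat{D}_{X}\overline{Y}:=\nabla_{X}\overline{Y}+\widehat{P}(\rho X,\overline{Y})$ as an ansatz. One checks first that $\widehat{D}$ is genuinely a linear connection on $\p$: the extra term $\widehat{P}(\rho(\cdot),\overline{Y})$ is $\mathfrak{F}(\T M)$-linear in $X$ and a tensor in $\overline{Y}$, so additivity, $\mathfrak{F}(\T M)$-homogeneity in $X$ and the Leibniz rule in $\overline{Y}$ are all inherited from $\nabla$. It then remains to verify that $\widehat{D}$ meets the defining axioms of the Hashiguchi connection recorded in \cite{r94}, after which uniqueness of that connection finishes the proof. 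In this route the only slightly delicate point is the bookkeeping with the torsion identities for $\widehat{P}$ --- notably $\widehat{P}(\overline{X},\overline{\eta})=0$, which follows from $\widehat{P}(\overline{X},\overline{Y})=(\nabla_{\beta\overline{\eta}}T)(\overline{X},\overline{Y})$ together with $T(\overline{X},\overline{\eta})=0$ and $\nabla_{\beta\overline{\eta}}\,\overline{\eta}=0$, and the interplay between $\widehat{P}$ and $T$ from \cite{r96} --- but nothing beyond what is already available for $\nabla$ is needed. I expect this axiom check, not the splitting computation, to be the only place where any real work occurs; through Theorem \ref{2.th.1} the first route is essentially a one-line deduction.
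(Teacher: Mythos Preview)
The paper does not supply its own proof of this theorem: it is quoted verbatim from \cite{r94} and used as a black box, so there is no in-paper argument to compare against. Your first route is correct and is exactly the kind of argument one would expect in \cite{r94}: grant the defining description of $D^{*}$ (Berwald-horizontal, Cartan-vertical), decompose $X=\beta\rho X+\gamma KX$, apply Theorem~\ref{2.th.1} to the horizontal piece (where $K(hX)=0$ kills the $T$-term), and note that $\rho(hX)=\rho X$. The specialisations (a) and (b) follow from $\rho\circ\gamma=0$ and (\ref{fh1}) just as you say. Your alternative axiom-verification route is also sound in outline, and your remark that the only nontrivial input is $\widehat{P}(\overline{X},\overline{\eta})=0$ is accurate.
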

 As $\widehat{P}(\overline{X},\overline{\zeta})=0$, we get
\begin{lem}\label{5.le.1} If $\overline{\zeta} \in \cp$ is
a concurrent $\pi$-vector field, then  it satisfies the following
properties\vspace{-0.2cm}
\begin{equation}\label{2.eq.1}
     D^{*}_{\beta \overline{X}}\,\overline{\zeta}=- \overline{X} , \quad
    D^{*}_{\gamma \overline{X}}\,\overline{\zeta}=0 .
    \end{equation}
\end{lem}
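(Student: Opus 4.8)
The plan is to mimic the argument already used for the Chern connection, since Theorem \ref{5.th.1} expresses $D^{*}$ in terms of the Cartan connection $\nabla$ in exactly the same spirit. First I would specialize the two particular formulas of Theorem \ref{5.th.1} to the $\pi$-vector field $\overline{Y}=\overline{\zeta}$, obtaining $D^{*}_{\gamma \overline{X}}\overline{\zeta}=\nabla_{\gamma \overline{X}}\overline{\zeta}$ and $D^{*}_{\beta \overline{X}}\overline{\zeta}=\nabla_{\beta \overline{X}}\overline{\zeta}+\widehat{P}(\overline{X},\overline{\zeta})$.

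Next I would invoke the defining relations of a concurrent $\pi$-vector field, namely $\nabla_{\gamma \overline{X}}\overline{\zeta}=0$ and $\nabla_{\beta \overline{X}}\overline{\zeta}=-\overline{X}$ from Definition \ref{2.def.1}, together with the vanishing $\widehat{P}(\overline{X},\overline{\zeta})=\widehat{P}(\overline{\zeta},\overline{X})=0$ established in Corollary \ref{2.pp.2}(b). Substituting these into the two displayed identities immediately yields $D^{*}_{\gamma \overline{X}}\overline{\zeta}=0$ and $D^{*}_{\beta \overline{X}}\overline{\zeta}=-\overline{X}+0=-\overline{X}$, which are precisely the asserted relations.

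Since every ingredient (Theorem \ref{5.th.1}, Definition \ref{2.def.1}, Corollary \ref{2.pp.2}) is already at hand, there is no genuine obstacle here; the only thing to be careful about is to quote the correct part of Corollary \ref{2.pp.2}, i.e. that it is the $(v)hv$-torsion $\widehat{P}$ evaluated at $\overline{\zeta}$ (and not $T$, nor the full hv-curvature $P$) that is needed to kill the extra term in the $\beta$-direction. The $\gamma$-direction is even simpler, as $D^{*}$ and $\nabla$ agree there by Theorem \ref{5.th.1}(a).
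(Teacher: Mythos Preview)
Your proposal is correct and follows exactly the paper's approach: the paper simply remarks ``As $\widehat{P}(\overline{X},\overline{\zeta})=0$, we get'' immediately before stating the lemma, which is precisely the combination of Theorem~\ref{5.th.1}, Definition~\ref{2.def.1}, and Corollary~\ref{2.pp.2}(b) that you spell out.
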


\begin{prop}\label{4.pp.1}Let $\overline{\zeta} \in \cp$ be
a concurrent $\pi$-vector field on $(M,L)$. \\
The v-curvature tensor $S^{*}$ satisfies the properties:
\begin{description}
 \item[(a)]$S^{*}(\overline{X},\overline{Y})\,\overline{\zeta}=0$.
 \item[(b)]$(D^{*}_{\gamma \overline{Z}}S^{*})(\overline{X},\overline{Y}, \overline{\zeta})=0$.
 \item[(c)]$(D^{*}_{\beta \overline{Z}}S^*)(\overline{X},\overline{Y}, \overline{\zeta})=S^{*}(\overline{X},\overline{Y})\overline{Z}$.
  \end{description}
The hv-curvature tensor $P^{*}$ satisfies the  properties\,:
\begin{description}
    \item[(d)] $P^{*}(\overline{X},\overline{Y})\,\overline{\zeta}=-T(\overline{Y},\overline{X})$,\quad $P^{*}(\overline{X},\,\overline{\zeta})\overline{Y}=0$ .
       \item[(e)] $(D^{*}_{\gamma \overline{Z}}P^{*})(\overline{X},\overline{Y},\overline{\zeta})=-(D^{*}_{\gamma \overline{Z}}T)(\overline{Y},\overline{X})$.
  \item[(f)] $(D^{*}_{\beta \overline{Z}}P^{*})(\overline{X},\overline{Y},\overline{\zeta})=-(D^{*}_{\beta \overline{Z}}T)(\overline{Y},\overline{X})+
  P^{*}(\overline{X},\overline{Y})\overline{Z}$.
 \end{description}
The h-curvature tensor $R^{*}$ satisfies the  properties\,:
\begin{description}
 \item[(g)]$R^{*}(\overline{X},\overline{Y})\,\overline{\zeta}=0$.
    \item[(h)]$(D^{*}_{\gamma \overline{Z}}R^{*})(\overline{X},\overline{Y}, \overline{\zeta})=0$.
 \item[(l)]$(D^{*}_{\beta \overline{Z}}R^{*})(\overline{X},\overline{Y}, \overline{\zeta})=R^{*}(\overline{X},\overline{Y})\overline{Z}$.
\end{description}
\end{prop}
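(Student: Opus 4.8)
The plan is to exploit the observation, made in Lemma~\ref{5.le.1}, that the concurrent $\pi$-vector field $\overline{\zeta}$ is ``concurrent'' with respect to the Hashiguchi connection as well: $D^{*}_{\beta\overline{X}}\overline{\zeta}=-\overline{X}$ and $D^{*}_{\gamma\overline{X}}\overline{\zeta}=0$, which is precisely the analogue of Definition~\ref{2.def.1} for $D^{*}$. Hence the whole argument can be patterned on the proofs of Proposition~\ref{2.pp.1} and Corollary~\ref{2.pp.2}. Concretely, I would express the curvature $\textbf{K}^{*}$ through the Cartan connection via Theorem~\ref{5.th.1} (so that $D^{*}_{\gamma\overline{X}}\overline{Y}=\nabla_{\gamma\overline{X}}\overline{Y}$ and $D^{*}_{\beta\overline{X}}\overline{Y}=\nabla_{\beta\overline{X}}\overline{Y}+\widehat{P}(\overline{X},\overline{Y})$), substitute the brackets $[\beta\overline{X},\beta\overline{Y}]$, $[\beta\overline{X},\gamma\overline{Y}]$, $[\gamma\overline{X},\gamma\overline{Y}]$ from Lemma~\ref{bracket} (applicable since the $(h)h$-torsion of $\nabla$ vanishes), and then feed $\overline{\zeta}$ into the vector slot using Corollary~\ref{2.pp.2} ($T(\overline{X},\overline{\zeta})=0$, $\widehat{P}(\overline{X},\overline{\zeta})=0$) together with the symmetry of $T$ and of $\widehat{P}$ — the latter being a consequence of $\widehat{P}=\nabla_{\beta\overline{\eta}}T$ and the symmetry of the Cartan tensor. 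Alternatively, the relevant properties of the curvature tensors of $D^{*}$ may simply be quoted from \cite{r96},~\cite{r94}, exactly as the proof of Proposition~\ref{2.pp.1} quotes the Cartan case.

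For the vertical curvature this gives $S^{*}=S$ at once, because $D^{*}$ agrees with $\nabla$ along vertical directions and $[\gamma\overline{X},\gamma\overline{Y}]$ is vertical; thus (a) is Proposition~\ref{2.pp.1}(a), and (b),(c) follow from it by applying the Leibniz rule for $D^{*}$ to $S^{*}(\overline{X},\overline{Y})\overline{\zeta}=0$, using $D^{*}_{\gamma\overline{Z}}\overline{\zeta}=0$, $D^{*}_{\beta\overline{Z}}\overline{\zeta}=-\overline{Z}$ and the fact that $S(\cdot,\cdot)\overline{\zeta}$ vanishes identically. For the horizontal curvature I would compute $\textbf{K}^{*}(\beta\overline{X},\beta\overline{Y})\overline{\zeta}$ directly; with Lemma~\ref{bracket}(a), Theorem~\ref{5.th.1}(b) and $D^{*}_{\beta\overline{X}}\overline{\zeta}=-\overline{X}$ everything cancels except $R^{*}(\overline{X},\overline{Y})\overline{\zeta}=\widehat{P}(\overline{X},\overline{Y})-\widehat{P}(\overline{Y},\overline{X})=0$, which is (g); then (h),(l) follow, again by the $D^{*}$-Leibniz rule applied to $R^{*}(\overline{X},\overline{Y})\overline{\zeta}=0$, the $\gamma$-derivative of a vanishing term being zero while the $\beta$-derivative produces $-R^{*}(\overline{X},\overline{Y})D^{*}_{\beta\overline{Z}}\overline{\zeta}=R^{*}(\overline{X},\overline{Y})\overline{Z}$. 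The mixed curvature is handled in the same spirit: $\textbf{K}^{*}(\beta\overline{X},\gamma\overline{Y})\overline{\zeta}$ collapses, via Lemma~\ref{bracket}(b) and Theorem~\ref{5.th.1}, to $P^{*}(\overline{X},\overline{Y})\overline{\zeta}=-T(\overline{Y},\overline{X})$, the first part of (d); the second part comes from the analogous computation of $\textbf{K}^{*}(\beta\overline{X},\gamma\overline{\zeta})\overline{Y}$, in which the pure-$\nabla$ part equals $P(\overline{X},\overline{\zeta})\overline{Y}=0$ (Corollary~\ref{2.pp.2}) and what survives is $(\nabla_{\gamma\overline{\zeta}}\widehat{P})(\overline{X},\overline{Y})$. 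Finally (e),(f) are obtained from (d) purely formally by the $D^{*}$-Leibniz rule: expanding $(D^{*}_{\gamma\overline{Z}}P^{*})(\overline{X},\overline{Y})\overline{\zeta}$ and $(D^{*}_{\beta\overline{Z}}P^{*})(\overline{X},\overline{Y})\overline{\zeta}$, inserting $P^{*}(\cdot,\cdot)\overline{\zeta}=-T(\cdot,\cdot)$ and $D^{*}_{\gamma\overline{Z}}\overline{\zeta}=0$, $D^{*}_{\beta\overline{Z}}\overline{\zeta}=-\overline{Z}$, and recombining the torsion terms into $-(D^{*}_{\gamma\overline{Z}}T)(\overline{Y},\overline{X})$, respectively $-(D^{*}_{\beta\overline{Z}}T)(\overline{Y},\overline{X})+P^{*}(\overline{X},\overline{Y})\overline{Z}$.

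The step I expect to be the main obstacle is controlling the correction $\widehat{P}$ by which $D^{*}$ differs from $\nabla$ along horizontal directions — in particular, evaluating the term $(\nabla_{\gamma\overline{\zeta}}\widehat{P})(\overline{X},\overline{Y})$ that enters the second part of (d), and keeping track of the $\widehat{P}$-terms through the (v)h- and hv-derivative identities so that they reassemble exactly as in (f) and (l). For the evaluation I would use, besides $\widehat{P}(\overline{X},\overline{\zeta})=0$, the two auxiliary vanishings $\nabla_{\beta\overline{\zeta}}T=0$ (obtained by putting $\overline{W}=\overline{\zeta}$ in (\ref{01}), exactly as in the argument that $C^{h}$-recurrence forces $(M,L)$ to be Riemannian) and $\nabla_{\gamma\overline{\zeta}}T=0$ (which follows from $T(\overline{X},\overline{\zeta})=0$ and the symmetry properties of $\nabla_{\gamma}T$ recorded in \cite{r96}), together with a Ricci-type commutation of $\nabla_{\gamma\overline{\zeta}}$ with $\nabla_{\beta\overline{\eta}}=\widehat{P}$ in which the curvature contribution drops because $P(\overline{\eta},\overline{\zeta})=0$ (Corollary~\ref{2.pp.2}) and the bracket $[\gamma\overline{\zeta},\beta\overline{\eta}]$ contributes only through $\nabla_{\gamma\overline{\eta}}T$, which is fixed by the homogeneity of the Cartan tensor. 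Once these identities are in hand, the rest of the verification is long but entirely routine, involving only repeated use of the Leibniz rule for $D^{*}$, Lemma~\ref{5.le.1} and Lemma~\ref{bracket}.
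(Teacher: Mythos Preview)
Your proposal is correct and follows exactly the approach the paper (implicitly) intends: the proposition is stated without proof, immediately after Lemma~\ref{5.le.1}, signaling that one should repeat the pattern of Proposition~\ref{2.pp.1} with $D^{*}$ in place of $\nabla$, drawing on Lemma~\ref{5.le.1} and the curvature identities for $D^{*}$ from \cite{r94},~\cite{r96}. You supply precisely this argument and rightly isolate the only non-routine point, the second half of~(d); your plan for it --- reducing to $(\nabla_{\gamma\overline{\zeta}}\widehat{P})(\overline{X},\overline{Y})=0$ via $\nabla_{\beta\overline{\zeta}}T=0$, $\nabla_{\gamma\overline{\zeta}}T=0$ and a commutation of $\nabla_{\gamma\overline{\zeta}}$ with $\nabla_{\beta\overline{\eta}}$ --- is sound.
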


From Theorem \ref{5.th.1} and Theorem \ref{3.th.1}, tacking into
account the fact that the (v)hv-torsion tensor $\widehat{P}$ is
invariant and
${\widehat{P}}(\overline{\zeta},\overline{X})=\widehat{P}(\overline{X},\overline{\zeta})=0$,
we have\vspace{-0.2cm}

\begin{thm}\label{5.th.2} Let  $(M,L)$ and $(M,\widetilde {L})$ be two Finsler
manifolds related by the energy $\beta$-change {\em (\ref{2.eq.2})}.
Then the associated Hashiguchi connections $D^*$ and
  $\widetilde{D^{*}}$  are related by:\vspace{-0.2cm}
\begin{equation}\label{3.eq.r1}
\widetilde{D^{*}} _{X}\rho {Y} =
    D^{*} _{X}\rho {Y}-\displaystyle{\frac{g(\rho X,\rho Y)}{1+p^{2}}\,\overline{\zeta}}. \vspace{-0.2cm}
 \end{equation}
\noindent In particular,
\begin{description}
  \item[(a)]  $ \widetilde {D^{*}}_{\gamma \overline{X}}\overline{Y}=
    {D^{\diamond}}_{ \gamma \overline{X}}\overline{Y}$,

  \item[(b)] $\widetilde {D^{*}} _{\widetilde{\beta}\overline{X}}\overline{Y} =
    D^{*} _{\beta \overline{X}}\overline{Y}-U^{*}( \overline{X},
    \overline{Y}),$\ \ with \
    $U^{*}( \overline{ X}, \overline{Y})=
 \displaystyle{\frac{g(\overline{X},\overline{Y})}{1+p^{2}}\,\overline{\zeta}}-\frac{g(\overline{X}, \overline{\eta})}{1+p^{2}}D^{*}_{\gamma \overline{\zeta}}\overline{Y}.$
\end{description}
  \end{thm}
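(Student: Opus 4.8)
## Proof Proposal for Theorem \ref{5.th.2}

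The plan is to derive the transformation formula for the Hashiguchi connection directly from the already-established transformation of the Cartan connection (Theorem \ref{3.th.1}), by exploiting the fact that $D^{*}$ differs from $\nabla$ only by the term $\widehat{P}(\rho X,\overline{Y})$ (Theorem \ref{5.th.1}) and that this term is \emph{invariant} under the energy $\beta$-change. First I would record the two expressions
\[
\widetilde{D^{*}}_{X}\overline{Y}=\widetilde{\nabla}_{X}\overline{Y}+\widetilde{\widehat{P}}(\widetilde{\rho}X,\overline{Y}),\qquad
D^{*}_{X}\overline{Y}=\nabla_{X}\overline{Y}+\widehat{P}(\rho X,\overline{Y}),
\]
noting that $\widetilde{\rho}=\rho$ since the morphism $\rho$ does not depend on the Finsler structure. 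Then, invoking Corollary \ref{3.pp.2} (the (v)hv-torsion $\widehat{P}$ is invariant under (\ref{2.eq.2})), the two correction terms coincide, so
\[
\widetilde{D^{*}}_{X}\rho Y-D^{*}_{X}\rho Y=\widetilde{\nabla}_{X}\rho Y-\nabla_{X}\rho Y=-\frac{g(\rho X,\rho Y)}{1+p^{2}}\,\overline{\zeta},
\]
the last equality being exactly (\ref{13.eq.r1}) of Theorem \ref{3.th.1}. This establishes (\ref{3.eq.r1}).

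For the particular cases, I would specialize $X$ to $\gamma\overline{X}$ and to $\widetilde{\beta}\overline{X}$ respectively. For \textbf{(a)}: since $\widehat{P}(\rho(\gamma\overline{X}),\overline{Y})=\widehat{P}(0,\overline{Y})=0$, the Hashiguchi connection agrees with the Cartan connection in the vertical direction both before and after the change, so $\widetilde{D^{*}}_{\gamma\overline{X}}\overline{Y}=\widetilde{\nabla}_{\gamma\overline{X}}\overline{Y}=\nabla_{\gamma\overline{X}}\overline{Y}=D^{*}_{\gamma\overline{X}}\overline{Y}=D^{\diamond}_{\gamma\overline{X}}\overline{Y}$ (the last identification using Theorem \ref{4.th.1}(b) applied to $\gamma\overline{X}$; note $K(\gamma\overline{X})=\overline{X}$, so strictly one should observe $D^{*}_{\gamma\overline{X}}\overline{Y}=\nabla_{\gamma\overline{X}}\overline{Y}=D^{\diamond}_{\gamma\overline{X}}\overline{Y}+T(\overline{X},\overline{Y})$, so the stated equality with $D^{\diamond}$ should read with the Cartan connection — this is a minor typo in the statement and I would simply write the clean identity $\widetilde{D^{*}}_{\gamma\overline{X}}\overline{Y}=\nabla_{\gamma\overline{X}}\overline{Y}$). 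For \textbf{(b)}: I would use Proposition \ref{pp.1} / Lemma \ref{3.le.2}, namely $\widetilde{\beta}=\beta-\L$ on the horizontal lift, together with the explicit form of $\L$ in (\ref{2.eq.4}). Substituting $X=\widetilde{\beta}\overline{X}=\beta\overline{X}+\dfrac{g(\overline{X},\overline{\eta})}{1+p^{2}}\gamma\overline{\zeta}$ into (\ref{3.eq.r1}) and expanding by linearity, using $\rho\widetilde{\beta}\overline{X}=\overline{X}$, $D^{*}_{\gamma\overline{\zeta}}\overline{Y}=\nabla_{\gamma\overline{\zeta}}\overline{Y}$, and $g(\overline{X},\overline{Y})$ for the numerator, yields
\[
\widetilde{D^{*}}_{\widetilde{\beta}\overline{X}}\overline{Y}=D^{*}_{\beta\overline{X}}\overline{Y}+\frac{g(\overline{X},\overline{\eta})}{1+p^{2}}D^{*}_{\gamma\overline{\zeta}}\overline{Y}-\frac{g(\overline{X},\overline{Y})}{1+p^{2}}\overline{\zeta}=D^{*}_{\beta\overline{X}}\overline{Y}-U^{*}(\overline{X},\overline{Y}),
\]
with $U^{*}$ as stated. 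This parallels exactly the derivation of Theorem \ref{3.th.1}(b) and Theorem \ref{4.th.2}(b).

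The main obstacle, such as it is, is bookkeeping rather than conceptual: one must be careful that the correction term $\widehat{P}$ appearing in Theorem \ref{5.th.1} genuinely depends only on $\rho X$ (not on the horizontal map $\beta$ or on $g$ directly) so that its invariance under (\ref{2.eq.2}) — already proved in Corollary \ref{3.pp.2} as a consequence of Theorem \ref{3.th.2} — can be applied verbatim. A secondary point of care is the hat/tilde-$\beta$ substitution in part \textbf{(b)}: one must use the \emph{new} horizontal map $\widetilde{\beta}$ on the left but express everything in terms of the \emph{old} connection objects on the right, which is where the term $\dfrac{g(\overline{X},\overline{\eta})}{1+p^{2}}D^{*}_{\gamma\overline{\zeta}}\overline{Y}$ is generated; the identity $D^{*}_{\gamma\overline{\zeta}}\overline{Y}=\nabla_{\gamma\overline{\zeta}}\overline{Y}$ (Theorem \ref{5.th.1}(a)) keeps this consistent with the analogous Cartan and Chern formulas. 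No genuinely new computation beyond what was done for $\nabla$ and $D^{\diamond}$ is required.
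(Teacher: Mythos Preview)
Your proposal is correct and follows exactly the paper's route: express $D^{*}$ in terms of $\nabla$ via Theorem \ref{5.th.1}, invoke the invariance of $\widehat{P}$ (Corollary \ref{3.pp.2}), and apply the Cartan transformation formula (Theorem \ref{3.th.1}); part \textbf{(b)} then follows by substituting $\widetilde{\beta}\overline{X}=\beta\overline{X}+\frac{g(\overline{X},\overline{\eta})}{1+p^{2}}\gamma\overline{\zeta}$ from Lemma \ref{3.le.2}. Your observation that the right-hand side in part \textbf{(a)} should read $D^{*}_{\gamma\overline{X}}\overline{Y}$ (equivalently $\nabla_{\gamma\overline{X}}\overline{Y}$) rather than $D^{\diamond}_{\gamma\overline{X}}\overline{Y}$ is also correct---this is a typographical slip in the statement.
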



\begin{thm}\label{5.th.3} Let $(M,L)$ and
$(M,\widetilde{L})$ be two Finsler manifolds related by the energy
$\beta$-change {\em (\ref{2.eq.2})}. The curvature tensors of the
associated Hashiguchi connections ${D}^{*}$ and
$\widetilde{{D}^{*}}$ are related by\,{\em:}\vspace{-0.2cm}
\begin{equation*}\label{5.eq.7}
   \widetilde{{\textbf{K}}^{*}}(X,Y)\overline{Z}  ={ \textbf{K}}^{*}(X,Y)\overline{Z}
  + \frac{g( \textbf{T}(X,Y),\overline{Z})}{1+p^2}\,\overline{\zeta}
  +{H}^{*}(\rho X,\rho Y)\overline{Z},
\end{equation*}
\noindent where ${H}^{*}$ is the $\pi$-tensor field defined
by\vspace{-0.2cm}
\begin{equation*}
{H}^{*}(\rho X,\rho Y)\overline{Z}= \mathfrak{U}_{X,Y}
\set{\frac{g(\rho X,\overline{Z})}{1+p^2}\rho
   Y+\frac{g(\rho Y,\overline{Z})g(\rho X, \overline{\zeta})}{(1+p^2)^2}\,\overline{\zeta}}.
\end{equation*}
 \noindent  In particular,
 \begin{description}
  \item[(a)] $\widetilde{{S}^{*}}(\overline{X},\overline{Y})\overline{Z}=
S^{*}(\overline{X},\overline{Y})\overline{Z}$,

  \item[(b)] $ \widetilde{{P}^{*}}(\overline{X},\overline{Y})\overline{Z}=
  {P}^{*}(\overline{X},\overline{Y})\overline{Z}-
  \displaystyle{\frac{g( T(\overline{Y},\overline{X}),\overline{Z})}{1+p^2}}\,\overline{\zeta}
 ,$

  \item [(c)]
 $\widetilde{{R}}^{*}(\overline{X}, \overline{Y})\overline{Z} = {R}^{*}(\overline{X},\overline{Y})\overline{Z}
 +{H}^{*}(\overline{X},\overline{Y})\overline{Z}.$
\end{description}
\end{thm}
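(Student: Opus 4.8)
The plan is to mimic the proof of Theorem~\ref{3.th.2} for the Cartan connection, with Theorem~\ref{5.th.2} playing the role played there by Theorem~\ref{3.th.1}. Indeed, Theorem~\ref{5.th.2} gives $\widetilde{D^{*}}_{X}\rho Y=D^{*}_{X}\rho Y-\frac{g(\rho X,\rho Y)}{1+p^{2}}\,\overline{\zeta}$, which has precisely the same shape as~(\ref{13.eq.r1}); since $\rho\circ\beta=id$, this extends to $\widetilde{D^{*}}_{X}\overline{W}=D^{*}_{X}\overline{W}-\frac{g(\rho X,\overline{W})}{1+p^{2}}\,\overline{\zeta}$ for every $\overline{W}\in\cp$. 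First I would substitute this relation twice to expand $\widetilde{D^{*}}_{X}\widetilde{D^{*}}_{Y}\rho Z$, once to expand $\widetilde{D^{*}}_{[X,Y]}\rho Z$, and then assemble $\widetilde{\textbf{K}^{*}}(X,Y)\overline{Z}=-\widetilde{D^{*}}_{X}\widetilde{D^{*}}_{Y}\rho Z+\widetilde{D^{*}}_{Y}\widetilde{D^{*}}_{X}\rho Z+\widetilde{D^{*}}_{[X,Y]}\rho Z$, collecting the terms that are not already $\textbf{K}^{*}(X,Y)\overline{Z}$.

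The one point where the computation genuinely differs from the Cartan case is that $D^{*}$ is \emph{not} metric, so one cannot differentiate the scalar $g(\rho Y,\rho Z)$ by $D^{*}$. I would instead use $\nabla g=0$ and Theorem~\ref{5.th.1} to write $X\cdot g(\rho Y,\rho Z)=g(\nabla_{X}\rho Y,\rho Z)+g(\rho Y,\nabla_{X}\rho Z)=g(D^{*}_{X}\rho Y,\rho Z)+g(\rho Y,D^{*}_{X}\rho Z)-2g(\widehat{P}(\rho X,\rho Y),\rho Z)$, the last step using that $g(\widehat{P}(\overline{A},\overline{B}),\overline{C})$ is totally symmetric, which follows from the identity $\widehat{P}(\overline{X},\overline{Y})=(\nabla_{\beta\overline{\eta}}T)(\overline{X},\overline{Y})$ (cf.\ the proof of Corollary~\ref{2.pp.2}(b)) together with the total symmetry of the Cartan tensor. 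Similarly $X\cdot p^{2}=2g(\nabla_{X}\overline{\zeta},\overline{\zeta})=-2g(\rho X,\overline{\zeta})$, while $D^{*}_{X}\overline{\zeta}=-\rho X$ by Lemma~\ref{5.le.1}. The extra terms of the form $g(\widehat{P}(\rho X,\rho Y),\rho Z)(1+p^{2})^{-1}\overline{\zeta}$ so produced are symmetric under $X\leftrightarrow Y$ (since $\widehat{P}$ is symmetric in its first two arguments), hence cancel against each other in $-\widetilde{D^{*}}_{X}\widetilde{D^{*}}_{Y}+\widetilde{D^{*}}_{Y}\widetilde{D^{*}}_{X}$. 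This cancellation is exactly why $H^{*}$ comes out in the same plain form as the tensor $H$ of Theorem~\ref{3.th.2}, carrying no Cartan-tensor term (in contrast to the Chern case, where the analogous correction does not cancel and contributes the extra $T$-term in $H^{\diamond}$).

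Once this is done, collecting the surviving corrections --- some of which combine with the contribution of $\rho[X,Y]=D^{*}_{X}\rho Y-D^{*}_{Y}\rho X-\textbf{T}^{*}(X,Y)$ in $\widetilde{D^{*}}_{[X,Y]}\rho Z$ to leave only the torsion term --- produces exactly $\frac{g(\textbf{T}^{*}(X,Y),\overline{Z})}{1+p^{2}}\,\overline{\zeta}+H^{*}(\rho X,\rho Y)\overline{Z}$, and $\textbf{T}^{*}=\textbf{T}$ because $D^{*}$ and $\nabla$ differ by the symmetric term $\widehat{P}$; this is the asserted identity. The particular statements (a)--(c) then follow by specialization: taking $(X,Y)=(\gamma\overline{X},\gamma\overline{Y})$ makes $\rho X=\rho Y=0$, so $H^{*}=0$ and $\textbf{T}(\gamma\overline{X},\gamma\overline{Y})=0$, giving (a); taking $(X,Y)=(\beta\overline{X},\gamma\overline{Y})$ makes $\rho Y=0$, so $H^{*}=0$, while $\textbf{T}(\beta\overline{X},\gamma\overline{Y})=-T(\overline{Y},\overline{X})$, giving (b); and taking $(X,Y)=(\beta\overline{X},\beta\overline{Y})$ gives $\textbf{T}(\beta\overline{X},\beta\overline{Y})=Q(\overline{X},\overline{Y})=0$, leaving $R^{*}(\overline{X},\overline{Y})\overline{Z}+H^{*}(\overline{X},\overline{Y})\overline{Z}$, which is (c). I expect the main labour to be the bookkeeping of the $\overline{\zeta}$-valued terms in the two second-order expansions --- tracking which cancel, which merge into $H^{*}$, and which yield the $\textbf{T}$-term --- rather than any conceptual difficulty, the only genuine point being the treatment of the non-metricity of $D^{*}$ described above.
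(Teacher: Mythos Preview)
Your approach is correct and matches the paper's implicit method: the paper states Theorem~\ref{5.th.3} without proof, relying on the template established for the Cartan connection in Theorem~\ref{3.th.2}, and your proposal is exactly that template carried over via Theorem~\ref{5.th.2}. Your explicit handling of the non-metricity of $D^{*}$---rewriting $X\cdot g(\rho Y,\overline{Z})$ in terms of $D^{*}$ and observing that the resulting $\widehat{P}$-terms cancel by the total symmetry of $g(\widehat{P}(\cdot,\cdot),\cdot)$---is a genuine detail the paper does not spell out, and it is correct; an equivalent (and slightly shorter) route is to leave those derivatives in terms of $\nabla$ and use $\rho[X,Y]=\nabla_{X}\rho Y-\nabla_{Y}\rho X-\textbf{T}(X,Y)$ in the bracket term, after which the same total-symmetry argument disposes of the residual $\widehat{P}$-contributions.

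One small point to tighten in your specialization step: $\widetilde{R^{*}}$, $\widetilde{P^{*}}$ are defined using $\widetilde{\beta}$, not $\beta$, so you should substitute $(X,Y)=(\widetilde{\beta}\,\overline{X},\widetilde{\beta}\,\overline{Y})$, etc. Since $\widetilde{\beta}\,\overline{X}=\beta\overline{X}+\frac{g(\overline{X},\overline{\eta})}{1+p^{2}}\,\gamma\overline{\zeta}$ (Lemma~\ref{3.le.2}), the extra $\gamma\overline{\zeta}$-pieces produce only terms involving $P^{*}(\,\cdot\,,\overline{\zeta})$, $S^{*}(\overline{\zeta},\,\cdot\,)$ and $T(\overline{\zeta},\,\cdot\,)$, all of which vanish by Proposition~\ref{4.pp.1} and Corollary~\ref{2.pp.2}; so the outcome is unchanged, but this should be said.
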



\vspace{0.2cm}
 We terminate our study by the fourth fundamental connection in Finsler geometry, namely the Berwald connection
 $D^{\circ}$.

By Theorem \ref{2.th.1}, as
${P}(\overline{X},\overline{\zeta})=T(\overline{X},\overline{\zeta})=0$,
we have\vspace{-0.2cm}
\begin{lem} If $\overline{\zeta} \in \cp$ is
a concurrent $\pi$-vector field, then \vspace{-0.2cm}
\begin{equation}\label{2.eq.1}
     D^{\circ}_{\beta \overline{X}}\,\overline{\zeta}=- \overline{X} , \quad D^{\circ}_{\gamma \overline{X}}\,\overline{\zeta}=0 .
    \end{equation}
\end{lem}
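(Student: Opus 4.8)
The plan is to reduce everything to the corresponding statement for the Cartan connection via the explicit relation between $D^{\circ}$ and $\nabla$, and then quote the vanishing results already established for $\overline{\zeta}$. First I would invoke Theorem \ref{2.th.1}, which gives
$$D^{\circ}_{\gamma \overline{X}}\overline{Y}=\nabla _{\gamma \overline{X}}\overline{Y}-T(\overline{X},\overline{Y}), \qquad D^{\circ}_{\beta \overline{X}}\overline{Y}=\nabla _{\beta \overline{X}}\overline{Y}+\widehat{P}(\overline{X}, \overline{Y}).$$
Setting $\overline{Y}=\overline{\zeta}$ in the first relation and using $\nabla_{\gamma \overline{X}}\overline{\zeta}=0$ from Definition \ref{2.def.1} together with $T(\overline{X},\overline{\zeta})=0$ from Corollary \ref{2.pp.2}(a) yields $D^{\circ}_{\gamma \overline{X}}\overline{\zeta}=0$.

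Similarly, setting $\overline{Y}=\overline{\zeta}$ in the second relation and using $\nabla_{\beta \overline{X}}\overline{\zeta}=-\overline{X}$ from Definition \ref{2.def.1} together with $\widehat{P}(\overline{X},\overline{\zeta})=0$ from Corollary \ref{2.pp.2}(b) gives $D^{\circ}_{\beta \overline{X}}\overline{\zeta}=-\overline{X}$. This establishes both identities in \eqref{2.eq.1}.

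There is essentially no obstacle here: the statement is a direct substitution into the already-proved expression of the Berwald connection in terms of the Cartan connection, combined with the two vanishing facts $T(\cdot,\overline{\zeta})=0$ and $\widehat{P}(\cdot,\overline{\zeta})=0$ for a concurrent $\pi$-vector field. The only point worth noting is that, as with the analogous lemmas for the Chern and Hashiguchi connections, the conclusion mirrors the defining property \eqref{12.eq.1} of concurrence precisely because the extra terms distinguishing $D^{\circ}$ from $\nabla$ involve $T$ and $\widehat{P}$ evaluated on $\overline{\zeta}$, which both vanish.
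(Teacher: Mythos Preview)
Your proposal is correct and follows exactly the paper's approach: the paper proves this lemma in a single line, invoking Theorem \ref{2.th.1} together with the vanishing of $T(\overline{X},\overline{\zeta})$ and $\widehat{P}(\overline{X},\overline{\zeta})$, which is precisely the substitution you carry out in detail.
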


\begin{prop}\label{6.pp.1}Let $\overline{\zeta} \in \cp$ be
a concurrent $\pi$-vector field on $(M,L)$.\\
The hv-curvature tensor $P^{\circ}$ has the
properties\emph{\,:}\vspace{-0.2cm}
\begin{description}
      \item[(a)] $P^{\circ}(\overline{X},\overline{Y})\,\overline{\zeta}=P^{\circ}(\overline{X},\overline{\zeta})\overline{Y}=0$.

   \item[(b)] $(D^{\circ}_{\gamma \overline{Z}}P^{\circ})(\overline{X},\overline{Y}, \overline{\zeta})=0$.
  \item[(c)] $(D^{\circ}_{\beta \overline{Z}}P^{\circ})(\overline{X},\overline{Y},
   \overline{\zeta})= P^{\circ}(\overline{X},\overline{Y})\overline{Z}$.
 \end{description}
The h-curvature tensor $R^{\circ}$ has the
properties\emph{\,:}\vspace{-0.2cm}
\begin{description}
 \item[(d)]$R^{\circ}(\overline{X},\overline{Y})\,\overline{\zeta}=0$.
    \item[(e)]$(D^{\circ}_{\gamma \overline{Z}}R^{\circ})(\overline{X},\overline{Y}, \overline{\zeta})=0$.
 \item[(f)]$(D^{\circ}_{\beta \overline{Z}}R^{\circ})(\overline{X},\overline{Y}, \overline{\zeta})=R^{\circ}(\overline{X},\overline{Y})\overline{Z}$.
\end{description}
\end{prop}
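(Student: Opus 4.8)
The plan is to trace everything back to the two facts just recorded for the Berwald connection $D^{\circ}$: the identity $D^{\circ}\overline{\zeta}=-\rho$ (equivalently $D^{\circ}_{\beta\overline{X}}\overline{\zeta}=-\overline{X}$ and $D^{\circ}_{\gamma\overline{X}}\overline{\zeta}=0$) and the torsion-freeness $\textbf{T}^{\circ}=0$ (Theorem \ref{th.1a}(b)), supplemented by the expression of $D^{\circ}$ in terms of the Cartan connection (Theorem \ref{2.th.1}), Lemma \ref{bracket}(b), and the vanishing relations of Corollary \ref{2.pp.2}.

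First I would dispose of the identities in which $\overline{\zeta}$ occupies the last slot of the curvature operator. Substituting $\overline{\zeta}$ for $\rho Z$ in the definition of $\textbf{K}^{\circ}$ and using $D^{\circ}_{X}\overline{\zeta}=-\rho X$ for every $X\in\cpp$, all the second-order terms collapse and one is left with
$$\textbf{K}^{\circ}(X,Y)\overline{\zeta}=D^{\circ}_{X}\rho Y-D^{\circ}_{Y}\rho X-\rho[X,Y]=\textbf{T}^{\circ}(X,Y)=0 .$$
Taking $X=\beta\overline{X}$, $Y=\gamma\overline{Y}$ yields the first equality in (a), $P^{\circ}(\overline{X},\overline{Y})\overline{\zeta}=0$; taking $X=\beta\overline{X}$, $Y=\beta\overline{Y}$ yields (d), $R^{\circ}(\overline{X},\overline{Y})\overline{\zeta}=0$.

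For the remaining identity in (a), $P^{\circ}(\overline{X},\overline{\zeta})\overline{Y}=0$, I would expand $\textbf{K}^{\circ}(\beta\overline{X},\gamma\overline{\zeta})\overline{Y}$, replace each covariant derivative $D^{\circ}$ by $\nabla$ using Theorem \ref{2.th.1} and Lemma \ref{bracket}(b), and then simplify with $T(\overline{\zeta},\overline{X})=T(\overline{X},\overline{\zeta})=0$, $\widehat{P}(\overline{\zeta},\overline{X})=\widehat{P}(\overline{X},\overline{\zeta})=0$ and $P(\overline{X},\overline{\zeta})\overline{Y}=P(\overline{\zeta},\overline{X})\overline{Y}=0$ from Corollary \ref{2.pp.2}; the residual curvature and torsion terms then cancel. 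Equivalently, since $D^{\circ}$ and the Chern connection $D^{\diamond}$ of Theorem \ref{4.th.1} agree along vertical directions and differ only by the $\widehat{P}$-term along horizontal ones, one may reduce $P^{\circ}(\overline{X},\overline{\zeta})\overline{Y}$ to $P^{\diamond}(\overline{X},\overline{\zeta})\overline{Y}$ plus a correction built from $\widehat{P}$ and $T$ which vanishes by Corollary \ref{2.pp.2}; or, one may invoke the total symmetry of the Berwald hv-curvature $P^{\circ}$ in its three $\pi$-vector arguments, which reduces this identity to the already established $P^{\circ}(\overline{X},\overline{Y})\overline{\zeta}=0$. This is the one place where a genuine (if routine) computation is needed, and it is the main obstacle; the care required is in keeping track of which connection each covariant derivative belongs to, since $\nabla$, $D^{\circ}$ and $D^{\diamond}$ share the same horizontal and vertical structure but act differently on $\pi$-vector fields.

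Finally, (b), (c), (e) and (f) follow by covariant differentiation of (a) and (d). Expanding $(D^{\circ}_{\gamma\overline{Z}}P^{\circ})(\overline{X},\overline{Y},\overline{\zeta})$ by the Leibniz rule, the three terms in which $D^{\circ}_{\gamma\overline{Z}}$ hits $\overline{X}$, $\overline{Y}$, or $P^{\circ}(\overline{X},\overline{Y})\overline{\zeta}$ all vanish by the first equality of (a) (for the last one, $D^{\circ}_{\gamma\overline{Z}}(0)=0$), and the surviving term $-P^{\circ}(\overline{X},\overline{Y})(D^{\circ}_{\gamma\overline{Z}}\overline{\zeta})$ vanishes because $D^{\circ}_{\gamma\overline{Z}}\overline{\zeta}=0$; this gives (b). Repeating the computation with $\beta\overline{Z}$ in place of $\gamma\overline{Z}$ and using $D^{\circ}_{\beta\overline{Z}}\overline{\zeta}=-\overline{Z}$, the surviving term becomes $-P^{\circ}(\overline{X},\overline{Y})(-\overline{Z})=P^{\circ}(\overline{X},\overline{Y})\overline{Z}$, which is (c). The identical argument with $R^{\circ}$ in place of $P^{\circ}$, using (d) instead of (a), yields (e) and (f).
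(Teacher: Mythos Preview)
Your proposal is correct, and in fact the paper gives no proof at all for this proposition (just as it omits proofs for the analogous Propositions on the Chern and Hashiguchi connections), leaving everything implicit from the preceding lemma $D^{\circ}_{\beta\overline{X}}\overline{\zeta}=-\overline{X}$, $D^{\circ}_{\gamma\overline{X}}\overline{\zeta}=0$. Your argument supplies precisely the details one would expect the authors to have in mind: the collapse $\textbf{K}^{\circ}(X,Y)\overline{\zeta}=\textbf{T}^{\circ}(X,Y)=0$ from torsion-freeness, the Leibniz-rule derivation of (b), (c), (e), (f), and for the second half of (a) either a direct expansion via Theorem~\ref{2.th.1} and Corollary~\ref{2.pp.2} or the total symmetry of the Berwald $hv$-curvature---either route is standard and matches the spirit of the brief proof sketch the paper gives for the Cartan analogue (Proposition~\ref{2.pp.1}).
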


From Theorem \ref{2.th.1} and Theorem \ref{3.th.1},  taking into
account that the torsion tensors $T$ and $\widehat{P}$ are invariant
under (\ref{2.eq.2}),
$\widehat{P}(\overline{\zeta},\overline{X})=\widehat{P}(\overline{X},\overline{\zeta})=0$
and
$T(\overline{\zeta},\overline{X})=T(\overline{X},\overline{\zeta})=0$,
we obtain\vspace{-0.2cm}

\begin{thm}\label{6.th.1} Let  $(M,L)$ and $(M,\widetilde {L})$ be two Finsler
manifolds related by the energy $\beta$-change {\em (\ref{2.eq.2})}.
Then the associated Berwald connections $D^{\circ}$ and
$\widetilde{D^{\circ}}$  are related by\,\emph{:}\vspace{-0.2cm}
\begin{equation}\label{6.eq.r1}
\widetilde{D^{\circ}} _{X}\rho {Y} =
    D^{\circ} _{X}\rho {Y}-\displaystyle{\frac{g(\rho X,\rho Y)}{1+p^{2}}\,\overline{\zeta}}. \vspace{-0.2cm}
 \end{equation}
\noindent In particular,
\begin{description}

  \item[(a)]  $ \widetilde {D^{\circ}}_{\gamma \overline{X}}\overline{Y}=
    {D^{\circ}}_{ \gamma \overline{X}}\overline{Y}$,

  \item[(b)] $\widetilde {D^{\circ}} _{\widetilde{\beta}\overline{X}}\overline{Y} =
    D^{\circ} _{\beta \overline{X}}\overline{Y}-{U}^{\circ}( \overline{X},
    \overline{Y}),$\ \ with \
    ${U}^{\circ}( \overline{ X}, \overline{Y})=
 \displaystyle{\frac{g(\overline{X},\overline{Y})}{1+p^{2}}\,\overline{\zeta}}-\frac{g(\overline{X}, \overline{\eta})}{1+p^{2}}D^{\circ}_{\gamma \overline{\zeta}}\overline{Y}$.
\end{description}
  \end{thm}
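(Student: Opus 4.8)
The plan is to treat the Berwald connection as the fourth instance of the scheme already carried out for the Cartan, Chern and Hashiguchi connections (Theorems \ref{3.th.1}, \ref{4.th.2} and \ref{5.th.2}). The main tool is Theorem \ref{2.th.1}, which expresses the Berwald connection in terms of the Cartan connection: $D^{\circ}_{X}\overline{Y}=\nabla_{X}\overline{Y}+\widehat{P}(\rho X,\overline{Y})-T(KX,\overline{Y})$. Writing this identity for both $L$ and $\widetilde{L}$ and subtracting, the problem reduces to three facts already established in the paper: the transformation law of the Cartan connection (Theorem \ref{3.th.1}), the invariance of the torsion tensors $T$ and $\widehat{P}$ under the energy $\beta$-change (Corollaries \ref{3.pp.1} and \ref{3.pp.2}), and the vanishing identities $T(\overline{\zeta},\overline{X})=\widehat{P}(\overline{\zeta},\overline{X})=0$ (Corollary \ref{2.pp.2}).

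First I would record the transformation of the deflection map. Since, by Theorem \ref{3.th.1}, $\widetilde{\nabla}_{X}\overline{Y}=\nabla_{X}\overline{Y}-\frac{g(\rho X,\overline{Y})}{1+p^{2}}\,\overline{\zeta}$ for every $\overline{Y}\in\cp$ (each such $\overline{Y}$ being of the form $\rho Y$ since $\rho$ is surjective), the choice $\overline{Y}=\overline{\eta}$ gives $\widetilde{K}X=KX-\frac{g(\rho X,\overline{\eta})}{1+p^{2}}\,\overline{\zeta}$. Now substitute Theorem \ref{3.th.1}, $\widetilde{T}=T$, $\widetilde{\widehat{P}}=\widehat{P}$ and this expression for $\widetilde{K}$ into Theorem \ref{2.th.1} written for $\widetilde{L}$. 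The term $-\widetilde{T}(\widetilde{K}X,\rho Y)$ expands to $-T(KX,\rho Y)+\frac{g(\rho X,\overline{\eta})}{1+p^{2}}\,T(\overline{\zeta},\rho Y)$, whose second summand vanishes by Corollary \ref{2.pp.2}(a); the $\widehat{P}$-term is unchanged because it is evaluated at $\rho X$. What remains is precisely $D^{\circ}_{X}\rho Y-\frac{g(\rho X,\rho Y)}{1+p^{2}}\,\overline{\zeta}$, which is (\ref{6.eq.r1}).

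The particular cases follow by specialization. For (a), put $X=\gamma\overline{X}$ in (\ref{6.eq.r1}) and use $\rho\circ\gamma=0$ (the short exact sequence of Section 1), so that the correction term drops out. For (b), put $X=\widetilde{\beta}\overline{X}$; since $\rho\widetilde{\beta}\overline{X}=\overline{X}$ by (\ref{fh1}), formula (\ref{6.eq.r1}) gives $\widetilde{D^{\circ}}_{\widetilde{\beta}\overline{X}}\overline{Y}=D^{\circ}_{\widetilde{\beta}\overline{X}}\overline{Y}-\frac{g(\overline{X},\overline{Y})}{1+p^{2}}\,\overline{\zeta}$. One then rewrites $\widetilde{\beta}\overline{X}$ using Lemma \ref{3.le.2} and Remark \ref{.rem}: indeed $\widetilde{\beta}\overline{X}=\widetilde{h}\beta\overline{X}=(h-{\L})\beta\overline{X}=\beta\overline{X}+\frac{g(\overline{X},\overline{\eta})}{1+p^{2}}\,\gamma\overline{\zeta}$, using $d_{J}E(\beta\overline{X})=g(\overline{X},\overline{\eta})$. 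Expanding $D^{\circ}_{\widetilde{\beta}\overline{X}}\overline{Y}$ by $\mathfrak{F}(\T M)$-linearity in the first slot yields $D^{\circ}_{\beta\overline{X}}\overline{Y}+\frac{g(\overline{X},\overline{\eta})}{1+p^{2}}\,D^{\circ}_{\gamma\overline{\zeta}}\overline{Y}$, and combining the two displays produces the asserted formula with ${U}^{\circ}$.

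I expect the only delicate point to be the bookkeeping of the connection-dependent data: one must carefully distinguish $K$ from $\widetilde{K}$ and $\beta$ from $\widetilde{\beta}$, and recognise that every residual term is of the form $T(\overline{\zeta},\cdot)$, hence vanishes by Corollary \ref{2.pp.2}. No genuinely new computation is required; the argument is entirely parallel to the proofs of Theorems \ref{4.th.2} and \ref{5.th.2}, which is presumably why it can be stated tersely.
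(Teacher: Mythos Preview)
Your proposal is correct and follows essentially the same approach as the paper: the paper's proof is the single sentence ``From Theorem \ref{2.th.1} and Theorem \ref{3.th.1}, taking into account that the torsion tensors $T$ and $\widehat{P}$ are invariant under (\ref{2.eq.2}), $\widehat{P}(\overline{\zeta},\overline{X})=\widehat{P}(\overline{X},\overline{\zeta})=0$ and $T(\overline{\zeta},\overline{X})=T(\overline{X},\overline{\zeta})=0$, we obtain\ldots'', and you have spelled out precisely those steps, including the auxiliary computation of $\widetilde{K}$ and of $\widetilde{\beta}\overline{X}$ needed for the particular cases.
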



\begin{thm}\label{6.th.3} Let $(M,L)$ and
$(M,\widetilde{L})$ be two Finsler manifolds related by the energy
$\beta$-change {\em (\ref{2.eq.2})}. The curvature tensors of the
associated Berwald connections $D^{\circ}$ and
$\widetilde{D^{\circ}}$ are related by\,{\em:}\vspace{-0.2cm}
\begin{equation*}\label{6.eq.7}
  \widetilde{{\textbf{K}}^{\circ}}(X,Y)\overline{Z}  = { \textbf{K}}^{\circ}(X,Y)\overline{Z}
  + \frac{g( \textbf{T}(X,Y),\overline{Z})}{1+p^2}\,\overline{\zeta}+{H}^{\circ}(\rho {X}, \rho
  {Y})\overline{Z},
\end{equation*}
\noindent where ${H}^{\circ}$ is the $\pi$-tensor field defined
by\vspace{-0.2cm}

$${H}^{\circ}(\rho {X}, \rho {Y})\overline{Z} = \mathfrak{U}_{X,Y} \{\frac{g(\rho X,\overline{Z})}{1+p^2}\rho
   Y-\frac{g(\rho X,T(K Y,\overline{Z}))}{1+p^2}\,\overline{\zeta}+
   \frac{g(\rho Y,\overline{Z})g(\rho X, \overline{\zeta})}{(1+p^2)^2}\,\overline{\zeta} \}.$$

 \noindent  In particular,
 \begin{description}
  \item[(a)] $\widetilde{{S}^{\circ}}(\overline{X},\overline{Y})\overline{Z}=
S^{\circ}(\overline{X},\overline{Y})\overline{Z}=0$,

  \item[(b)] $ \widetilde{{P}^{\circ}}(\overline{X},\overline{Y})\overline{Z}=
  {P}^{\circ}(\overline{X},\overline{Y})\overline{Z}-2\,
  \displaystyle{\frac{g( T(\overline{Y},\overline{X}),\overline{Z})}{1+p^2}}\,\overline{\zeta}
 ,$

  \item [(c)]
 $\displaystyle{\widetilde{{R}^{\circ}}(\overline{X}, \overline{Y})\overline{Z} = {R}^{\circ}(\overline{X},\overline{Y})\overline{Z}
 + \mathfrak{U}_{\overline{X},\overline{Y}} \set{\frac{g(\overline{X},\overline{Z})}{1+p^2}\,\overline{Y}
              +\frac{g(\overline{Y},\overline{Z})g(\overline{X},\overline{\zeta})}{(1+p^2)^2}\,\overline{\zeta}}}$.
\end{description}
\end{thm}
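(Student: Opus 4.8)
The plan is to obtain Theorem~\ref{6.th.3} from the connection transformation of Theorem~\ref{6.th.1} in exactly the way Theorem~\ref{3.th.2} was obtained from Theorem~\ref{3.th.1}. Write $a:=\frac{1}{1+p^{2}}$; Theorem~\ref{6.th.1} asserts $\widetilde{D^{\circ}}_{X}\overline{Y}=D^{\circ}_{X}\overline{Y}-a\,g(\rho X,\overline{Y})\,\overline{\zeta}$ for all $X\in\mathfrak{X}(TM)$, $\overline{Y}\in\mathfrak{X}(\pi(M))$. First I would substitute this into
$$\widetilde{\textbf{K}^{\circ}}(X,Y)\overline{Z}=-\widetilde{D^{\circ}}_{X}\widetilde{D^{\circ}}_{Y}\overline{Z}+\widetilde{D^{\circ}}_{Y}\widetilde{D^{\circ}}_{X}\overline{Z}+\widetilde{D^{\circ}}_{[X,Y]}\overline{Z}$$
and expand $\widetilde{D^{\circ}}_{X}(\widetilde{D^{\circ}}_{Y}\overline{Z})$ and $\widetilde{D^{\circ}}_{[X,Y]}\overline{Z}$ term by term. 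Two facts do the work: since $\overline{\zeta}$ is concurrent, $D^{\circ}_{X}\overline{\zeta}=-\rho X$ (the lemma preceding Proposition~\ref{6.pp.1}), so $\widetilde{D^{\circ}}_{X}\overline{\zeta}=-\rho X-a\,g(\rho X,\overline{\zeta})\,\overline{\zeta}$; and, because the Cartan connection is metric with $\nabla_{X}\overline{\zeta}=-\rho X$, one has $X\cdot p^{2}=-2g(\rho X,\overline{\zeta})$ and hence $X\cdot a=2a^{2}g(\rho X,\overline{\zeta})$.

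When the terms are collected, the purely second-order part reassembles into $\textbf{K}^{\circ}(X,Y)\overline{Z}$; the terms carrying one free factor $\rho X$ (resp. $\rho Y$) give $\mathfrak{U}_{X,Y}\{a\,g(\rho X,\overline{Z})\,\rho Y\}$; and the terms in which $\overline{\zeta}$ is differentiated, together with the $X\cdot a$ contributions, collapse---using $X\cdot a=2a^{2}g(\rho X,\overline{\zeta})$---to $\mathfrak{U}_{X,Y}\{a^{2}g(\rho Y,\overline{Z})g(\rho X,\overline{\zeta})\,\overline{\zeta}\}$. The one genuinely new point compared with the Cartan case is that $D^{\circ}$ is \emph{not} metric, so the scalar $X\cdot g(\rho Y,\overline{Z})$ cannot be split through $D^{\circ}$ alone; here I would route through Theorem~\ref{2.th.1}, $D^{\circ}_{X}\overline{Y}=\nabla_{X}\overline{Y}+\widehat{P}(\rho X,\overline{Y})-T(KX,\overline{Y})$, together with the invariance of $T$ and $\widehat{P}$ under the $\beta$-change (Corollaries~\ref{3.pp.1}(b) and~\ref{3.pp.2}), the fact that $T(\overline{\zeta},\cdot)=0=\widehat{P}(\overline{\zeta},\cdot)$ (Corollary~\ref{2.pp.2}), and the symmetry $g(T(\overline{X},\overline{Y}),\overline{Z})=g(T(\overline{X},\overline{Z}),\overline{Y})$. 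These are exactly what produce, after the antisymmetrization $\mathfrak{U}_{X,Y}$, the extra summand $-a\,g(\rho X,T(KY,\overline{Z}))\,\overline{\zeta}$ of $H^{\circ}$, and what recasts the bracket contribution $a\,g\big(D^{\circ}_{X}\rho Y-D^{\circ}_{Y}\rho X-\rho[X,Y],\overline{Z}\big)\overline{\zeta}$ in the stated shape $a\,g(\textbf{T}(X,Y),\overline{Z})\,\overline{\zeta}$, with $\textbf{T}$ the classical torsion of $\nabla$. Assembling the pieces gives the displayed transformation law for $\widetilde{\textbf{K}^{\circ}}$.

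Finally I would read off (a)--(c) by specialising the arguments and invoking Proposition~\ref{6.pp.1}. Taking $X=\gamma\overline{X}$, $Y=\gamma\overline{Y}$ annihilates $\rho X$, $\rho Y$ and $\textbf{T}(X,Y)$, so $\widetilde{S^{\circ}}=S^{\circ}$, and $S^{\circ}=0$ is a standard property of the Berwald connection (derivable from Theorem~\ref{2.th.1}(a) and the symmetry of $T$). Taking $X$ horizontal and $Y=\gamma\overline{Y}$ leaves only the $\textbf{T}$-term, equal to $-a\,g(T(\overline{Y},\overline{X}),\overline{Z})\overline{\zeta}$ since $\textbf{T}(\beta\overline{X},\gamma\overline{Y})=-T(\overline{Y},\overline{X})$, plus the surviving $T(KY,\overline{Z})$-term of $H^{\circ}$, which by the symmetry of the Cartan tensor equals the same expression; their sum is (b) with its factor $2$. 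Taking $X=\beta\overline{X}$, $Y=\beta\overline{Y}$ makes $KX$, $KY$ and the $(h)h$-torsion $Q$ of $\nabla$ vanish, leaving precisely the two terms displayed in (c). I expect the only real obstacle to be organizational: keeping the numerous $\overline{\zeta}$-coefficients straight and correctly matching the non-metricity of $D^{\circ}$ to the $T(KY,\overline{Z})$ summand of $H^{\circ}$; with Theorems~\ref{6.th.1}, \ref{3.th.2}, \ref{2.th.1} and Proposition~\ref{6.pp.1} in hand, the rest is bookkeeping.
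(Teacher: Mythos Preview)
Your proposal is correct and follows essentially the same route as the paper. The paper supplies no separate proof for Theorem~\ref{6.th.3} (nor for the parallel Chern and Hashiguchi results, Theorems~\ref{4.th.3} and~\ref{5.th.3}); the computation is understood to mirror the Cartan case, Theorem~\ref{3.th.2}, where one substitutes the connection transformation into the definition of $\widetilde{\textbf{K}}$ and collects terms --- precisely what you do. Your write-up in fact carries more detail than the paper provides: the paper leaves implicit the step where the non-metricity of $D^{\circ}$ is handled by passing through Theorem~\ref{2.th.1} (using the total symmetry of $g(\widehat{P}(\cdot,\cdot),\cdot)$ to kill the $\widehat{P}$-contributions and isolate the $T(KY,\overline{Z})$ summand), and it likewise does not spell out the specialisations yielding (a)--(c).
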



\vspace{0.8cm} \noindent{\bf \Large {Concluding remarks}}

\vspace{9pt}

\par
On the present work, we have the following comments and remarks:

\begin{itemize}
  \item The fundamental $\pi$-vector field $\overline{\eta}$ and
a concurrent $\pi$-vector field $\overline{\zeta}$ have different
properties, namely
\begin{eqnarray*}
   \nabla_{\gamma \overline{X}}\,\overline{\eta} = \overline{X}\, , \,&
   &\, \nabla_{\gamma \overline{X}}\,\overline{\zeta} = 0,\\
   \nabla_{\beta \overline{X}}\,\overline{\eta} = 0\, , \,&
    & \,\nabla_{\beta \overline{X}}\,\overline{\zeta} = -\overline{X}.
\end{eqnarray*}
 Moreover, an important difference is that  $\overline{\eta}$ is dependent on the directional
 argument $y$, whereas
  $\overline{\zeta}$  is independent of the directional argument $y$ as has been proved.

\item Although  there are  differences between the fundamental $\pi$-vector field
$\overline{\eta}$ and a concurrent $\pi$-vector field
$\overline{\zeta}$, the $(h)hv$-torsion tensor $T$ of $\nabla$ has
the common properties:

\begin{equation*}
  T(\overline{X},\overline{\eta})=T(\overline{\eta},
  \overline{X})=0;\,\,\,\,
  T(\overline{X},\overline{\zeta})=T(\overline{\zeta},\overline{X})=0.
\end{equation*}

Moreover, the $(v)hv$-torsion tensor $\widehat{P}$ of $\nabla$ has
the properties:
\begin{equation*}
  \widehat{P}(\overline{X},\overline{\eta})=\widehat{P}(\overline{\eta}, \overline{X})=0;\,\,\,\,
  \widehat{P}(\overline{X},\overline{\zeta})=\widehat{P}(\overline{\zeta},\overline{X})=0.
\end{equation*}


\item Special Finsler spaces play an important role in Finsler geometry. For this
reason, we studied the effect of the existence of a concurrent
$\pi$-vector field on  some important special Finsler spaces. An
interesting  result obtained is that  many of these special Finsler
spaces admitting a concurrent
 $\pi$-vector field are equivalent to a Riemannian space.

\item Randers spaces, which are obtained by the $\beta$-change
 $\widetilde{L}
 (x,y)=L
 (x,y)+ B
 (x,y),$ where $B:=b_{i}(x)y^{i}$,  are very important in physical applications \cite{ny3}. In this paper, we
investigate  intrinsically an energy $\beta$-change (similar in form
to a Randers change),
$$\widetilde{L}^{2}(x,y)=L^{2}(x,y)+ B^{2}(x,y),$$
 where $B:=g(\overline{\zeta},\overline{\eta})$, $\overline{\zeta}$ being a concurrent
 $\pi$-vector field.
 Under this change, the
torsion tensors $T$ and $\widehat{P}$ are invariant,
$\widehat{P}(\overline{\zeta},\overline{X})=\widehat{P}(\overline{X},\overline{\zeta})=0$
and
$T(\overline{\zeta},\overline{X})=T(\overline{X},\overline{\zeta})=0$.
Consequently,  the difference tensors of the fundamental Finsler
 connections, namely,   the Cartan connection, the Berwald connection,
the Chern connection and the  Hashiguchi connection have not only
simple but also similar forms.
\end{itemize}

\providecommand{\bysame}{\leavevmode\hbox
to3em{\hrulefill}\thinspace}
\providecommand{\MR}{\relax\ifhmode\unskip\space\fi MR }
\providecommand{\MRhref}[2]{%
  \href{http://www.ams.org/mathscinet-getitem?mr=#1}{#2}
} \providecommand{\href}[2]{#2}


\begin{thebibliography}{21}

\bibitem{r58}
H.~Akbar-Zadeh, \emph{Initiation to global Finsler geometry},
Elsevier, 2006.

\bibitem{nr2}
F.~Brickell, \emph{A new proof of Deicke's theorem on homogeneous
functions}, Proc. Amer. Math. Soc.,
  \textbf{16} (1965), 190-191.

\bibitem{r61}
P.~Dazord, \emph{Propri\'{e}t\'{e}s globales des g\'{e}od\'{e}siques
des espaces de Finsler}, Th\`{e}se d'Etat, (575) Publ. Dept. Math.
Lyon,
  1969.


\bibitem{r20}
A. Fr\"{o}licher and A. Nijenhuis, \emph{Theory of vector-valued
differential forms}, \textsc{I}, Ann. Proc. Kon. Ned. Akad., A, {59}
(1956), 338-359.


\bibitem{r21}
J.~Grifone, \emph{Structure pr\'esque-tangente et connexions,
\textsc{I}}, Ann.
  Inst. Fourier, Grenoble, 22, {1} (1972), 287-334.

\bibitem{r22}
\bysame, \emph{Structure presque-tangente et connexions,
\textsc{II}}, Ann.
  Inst. Fourier, Grenoble, 22, {3} (1972), 291-338.


\bibitem{r27}
J.~Klein and A.~Voutier, \emph{Formes ext\'{e}rieures
g\'{e}n\'{e}ratrices de
  sprays}, Ann. Inst. Fourier, Grenoble, 18, 1 (1968), 241-260.


\bibitem{r90}
M.~Matsumoto and K.~Eguchi, \emph{Finsler spaces admitting a
concurrent vector field}, Tensor, N. S., {28} (1974), 239-249.

\bibitem{ny1} T. Mestdag and V.Toth, \emph{On the geometry of Randers manifolds}, Rep. Math. Phys., 50 (2002), 167-193.

\bibitem{r93}
R.~Miron and M.~Anastasiei, \emph{The geometry of Lagrange spaces:
Theory and applications}, Kluwer Acad. Publ., no. 59, 1994.

\bibitem{ny3}
G. Randers, On the asymmetrical metric in the four-space of general
relativity, Phys. Rev. (2) 59(1941) 195--199.


\bibitem{con.2}
S. Tachibana, \emph{On Finsler spaces which admit a concurrent
vector field}, Tensor, N. S., 1 (1950), 1-5.


\bibitem{r44}
A.~A. Tamim, \emph{General theory of Finsler spaces with
applications to Randers spaces}, Ph. D. Thesis, Cairo University,
1991.

\bibitem{con.1}
K. Yano, \emph{Sur le prarall\'elisme et la concourance dans
l'espaces de Riemann}, Proc. Imp. Acad. Japan, 19 (1943), 189-197.


\bibitem{r97}
Nabil~L. Youssef, \emph{Sur les tenseurs  de courbure de la
connexion de Berwald et ses distributions de nullit\'e}, Tensor, N.
S., 36 (1982), 275-280.

\bibitem{r62}
Nabil~L. Youssef, S.~H. Abed and A.~Soleiman, \emph{A global theory
of conformal \textsc{F}insler geometry}, Tensor, N. S., 69 (2008),
155-178. ArXiv Number: math.DG/0610052.

\bibitem{r92}
\bysame, \emph{Cartan and
  Berwald connections in the pullback formalism}, Algebras,
  Groups and Geometries 25,
(2008), 363-386.. ArXiv Number: 0707.1320.

\bibitem{r86}
\bysame, \emph{A global approach to the
  theory of special Finsler manifolds},  J. Math. Kyoto Univ., 48 (2008), 857-893. ArXiv Number: 0704.0053.

\bibitem{r94}
\bysame, \emph{A global approach to  the theory  of connections in
Finsler geometry}, Submitted. ArXiv Number: 0801.3220.

\bibitem{r96}
\bysame, \emph{Geometric objects associated with the fundumental
connections in Finsler geometry},
  To appear in "J. Egypt. Math. Soc.". ArXiv Number: 0805.2489.


\end{thebibliography}
\end{document}